\title[]{Metric properties of boundary maps, Hilbert entropy and non-differentiability}
\author[]{Beatrice Pozzetti}
\author[]{Andr\'es Sambarino}
\thanks{}
\date{\today}
\subjclass[]{}
\renewcommand*{\backref}[1]{}
\renewcommand*{\backrefalt}[4]{\quad \tiny
  \ifcase #1 (\textbf{NOT CITED.})%
  \or    (Cited on page~#2.)%
  \else   (Cited on pages~#2.)%
  \fi}
\def\MRbibitem{\@ifnextchar[\my@lbibitem\my@bibitem}
\def\mybiblabel#1#2{\@biblabel{{\hyperref{http://www.ams.org/mathscinet-getitem?mr=#1}{}{}{#2}}}}
\def\myhyperanchor#1{\Hy@raisedlink{\hyper@anchorstart{cite.#1}\hyper@anchorend}}
\def\my@lbibitem[#1]#2#3#4\par{%
  \item[\mybiblabel{#2}{#1}\myhyperanchor{#3}\hfill]#4%
  \@ifundefined{ifbackrefparscan}{}{\BR@backref{#3}}%
  \if@filesw{\let\protect\noexpand\immediate% write to aux-file
    \write\@auxout{\string\bibcite{#3}{#1}}}\fi\ignorespaces%
}
\def\my@bibitem#1#2#3\par{%
  \refstepcounter\@listctr% standard tex item counter for the generic item number
  \item[\mybiblabel{#1}{\the\value\@listctr}\myhyperanchor{#2}\hfill]#3%
  \@ifundefined{ifbackrefparscan}{}{\BR@backref{#2}}%
  \if@filesw\immediate\write\@auxout% write to aux-file
    {\string\bibcite{#2}{\the\value\@listctr}}\fi\ignorespaces%
}
\newcommand{\xqedhere}[2]{%
  \rlap{\hbox to#1{\hfil\llap{\ensuremath{#2}}}}}
\newcommand{\Z}{\mathbb{Z}} 
\newcommand{\R}{\mathbb{R}} \newcommand{\RR}{\R}
\newcommand{\C}{\mathbb{C}}
\newcommand{\N}{\mathbb{N}}
\renewcommand{\P}{\mathbb{P}}
\newcommand{\T}{\mathbb{T}}
\newcommand{\I}{\mathbb{I}}
\renewcommand{\H}{\mathbb{H}}
\newcommand{\lb}{\llbracket}
\newcommand{\rb}{\rrbracket}
\newcommand{\G}{\sf{\Gamma}}    % the group
\newcommand{\Gr}{\cal G}    % gromov product
\newcommand{\cone}{{\cal C}}
\newcommand{\<}{\langle}
\renewcommand{\>}{\rangle}
\newcommand{\g}{\gamma}
\newcommand{\bord}{\partial}
\newcommand{\posgen}{\cal F^{(2)}}
\renewcommand{\t}{\theta}
\newcommand{\p}{{\mathfrak{p}}}
\newcommand{\wk}{\check}
\newcommand{\bb}{{\sf{b}}}
\renewcommand{\/}{\backslash}
\newcommand{\II}{\mathbf{I}}
\newcommand{\grupo}{\varLambda}
\renewcommand{\circle}{\mathbb{S}^1}
\newcommand{\scr}{\mathscr}
\renewcommand{\sf}[1]{{\mathsf{#1}}}
\newcommand{\cal}{\mathcal}
\renewcommand{\frak}{\mathfrak}
\newcommand{\hJ}[1]{{\scr h}_{#1}}%entropy (wrt Jordan)
\newcommand{\hC}[1]{{\scr h}^{#1}}%critical exponent (wrt Cartan)
\renewcommand{\flat}{{\sf b}}  %old
\newcommand{\hol}{ \scr b} %new
\DeclareMathOperator{\bus}{b}
\newcommand{\ii}{\mathrm{i}}
\DeclareMathOperator{\spa}{span}
\DeclareMathOperator{\class}{C}
\DeclareMathOperator{\diam}{diam}
\DeclareMathOperator{\SL}{{\mathsf{SL}}}
\DeclareMathOperator{\PSL}{{\mathsf{PSL}}}
\DeclareMathOperator{\GL}{{\mathsf{GL}}}
\DeclareMathOperator{\SO}{{\mathsf{SO}}}
\DeclareMathOperator{\PGL}{{\mathsf{PGL}}}
\DeclareMathOperator{\PO}{{\mathsf{PO}}}
\DeclareMathOperator{\id}{id}
\DeclareMathOperator{\inte}{int}
\DeclareMathOperator{\Hff}{dim_{Hf{}f}}
\DeclareMathOperator{\ann}{Ann}
\DeclareMathOperator{\codim}{codim}
\DeclareMathOperator{\Ext}{\cal{H}}
\newcommand{\grass}{\mathrm{Gr}}
\renewcommand{\angle}{\measuredangle}
\DeclareMathOperator{\Sp}{\mathsf{Sp}}
\DeclareMathOperator{\SU}{\mathsf{SU}}
\newcommand{\K}{\mathbb K}
\renewcommand{\sl}{\mathfrak {sl}}
\renewcommand{\varXi}{\mathscr{G}}
\newcommand{\piS}{\pi_1S}%{\G_g}
\renewcommand{\ge}{{\frak g}}
\newcommand{\poids}{{\sf \Pi}}
\renewcommand{\a}{{\frak a}}
\newcommand{\roots}{{\mathsf{\Phi}}}
\newcommand{\Weyl}{{\cal W}}
\renewcommand{\k}{{\frak k}}
\newcommand{\ps}{\mu}
\newcommand{\cc}{{\scr v}}
\newcommand{\simple}{{\sf \Delta}}
\newcommand{\Fund}{\Phi}
\newcommand{\peso}{\varpi}
\newcommand{\rfr}{\beta}
\newcommand{\Fundeb}{\mathrm{Leb}}
\newcommand{\BM}{\Omega}
\newcommand{\df}{\omega}
\newcommand{\Ann}{\mathrm{Ann}}
\newcommand{\vi}{{\varphi^\infty_\hol}}
\newcommand{\cartan}{a}
\newcommand{\sroot}{{\sf{a}}}%root
\newcommand{\slroot}{{\tau}}%root
\newcommand{\Ndiff}{{\sf{ NDiff}}}%Non differentiability set
\DeclareMathOperator{\Diff}{Diff}
\newcommand{\ovrho}{\overline \rho}%Second representation
\newcommand{\ovxi}{\overline \xi}%its boundary map
\renewcommand{\epsilon}{\varepsilon}
\numberwithin{equation}{section}     % Makes labeled equations easier to find.
\setlist[enumerate,1]{label = {\upshape(\roman*)},ref = \roman*}
\setlist[enumerate,2]{label = {\upshape(\alph*)},ref = \alph*}
\newtheorem{thm}{Theorem}[section]
\newtheorem{thmA}{Theorem}
\newtheorem{corA}{Corollary}
\newtheorem*{fact}{Fact}
\newtheorem{cor}[thm]{Corollary}
\newtheorem{lemma}[thm]{Lemma}
\newtheorem{prop}[thm]{Proposition}
\theoremstyle{definition}
\newtheorem{defi}[thm]{Definition}
\newtheorem{assu}[thm]{Assumption}
\newtheorem{remark}[thm]{Remark}
\newtheorem{ex}[thm]{Example}
\theoremstyle{remark}
\newtheorem{obs}[thm]{Remark}
\newcommand{\calF}{\mathcal F}%flag manifold
\newcommand{\calL}{\mathcal L}%Limit cone
\newcommand{\ov}{\overline}
\newcommand{\cdc}[1]{\big(\calL_{#1}\big)^*}
\begin{document}
\thanks{B. P. is supported by the Deutsche Forschungsgemeinschaft under Germany’s Excellence Strategy EXC-2181/1 - 390900948 (the Heidelberg STRUCTURES Cluster of Excellence), and acknowledges further support by DFG grant 338644254 (within the framework of SPP2026) and 427903332 (in the Emmy Noether program). A. S. was partially financed by ANR DynGeo ANR-16-CE40-0025. Part of this work was carried out in Oberwolfach, we thank the institute for its great hospitality.  B.P. additionally thanks Prof. Farkas and Humboldt University for their hospitality, the Familienzimmer des mathematiches Institut made it possible to work on this paper while taking care of a small child.}
\begin{abstract}
	We interpret the Hilbert entropy of a convex projective structure on a closed higher-genus surface as the Hausdorff dimension of the non-differentiability points of the limit set in the full flag space $\cal F(\R^3)$. Generalizations for regularity properties of boundary maps  between locally conformal representations are also discussed. An ingredient for the proofs is the concept of \emph{hyperplane conicality} that we introduce for a $\t$-Anosov representation into a reductive real-algebraic Lie group $\sf G$. In contrast with directional conicality, hyperplane-conical points always have full mass for the corresponding Patterson-Sullivan measure.
\end{abstract}

\maketitle

\tableofcontents

\section{Introduction}\label{intro}

Consider a closed connected orientable surface $S$ of genus at least two, and let $\rho:\piS\to\PSL(3,\R)$ be a faithful representation preserving an open convex set $\Omega=\Omega_\rho\subset\P(\R^3)$, properly contained in an affine chart. The group $\rho(\piS)$ is necessarily discrete and acts co-compactly on $\Omega$:  one says that $\rho$ \emph{divides} $\Omega$.

The geometry of such convex set $\Omega$ is well studied, by Benoist \cite{convexes1} it is strictly convex with $\class^{1+\nu}$ boundary $\bord\Omega$ (that is not $\class^2$ unless it is an ellipse), and the Hilbert metric of $\Omega$ is Gromov-hyperbolic. The geodesic flow of $\Omega/\rho(\piS)$ is an Anosov flow and its topological entropy, the \emph{Hilbert entropy} $\hJ{\sf H}={(\hJ{\sf H})}_\rho$, satisfies $$\hJ{\sf H}\leq1,$$ an inequality proved by Crampon \cite{crampon} that is  strict if $\Omega$ is not an ellipse.

%\footnote{For $g\in\PSL(d,\R)$ we let $\lambda_1(g)$ be the logarithm of the spectral radius of a lift of $g$ to $\SL(d,\R).$} by 
%\begin{equation}\label{e.entropyIntro}\hJ{\sf H}={(\hJ{\sf H})}_\rho=\lim_{t\to\infty}\frac1t\log\#\big\{[g]\in[\piS]:(\lambda_1(\rho(g))+\lambda_1(\rho(g)^{-1}))/2\leq t\big\},\end{equation} satisfies 

A consequence of Theorem \ref{tutti} below is a new geometric interpretation of the Hilbert entropy which we now explain. For each $x\in\bord\Omega$ let $\Xi(x)\in\mathrm{Gr}_{2}(\R^3)$ be the unique plane whose projectivisation is tangent to $\bord\Omega$ at $x.$ By \cite{convexes1}, the image curve $\Xi(\bord\Omega)\subset\mathrm{Gr}_2(\R^3)\simeq\P((\R^3)^*)$ is also the boundary of a strictly convex divisible set $\Omega^*$ and is thus again a $\class^{1+\nu}$-circle. 
The \emph{full-flag-curve} $$\{(x,\Xi(x)):x\in\bord\Omega\}\subset\cal F(\R^3),$$
is the graph of a monotone map between $\class^1$ circles and thus is a Lipschitz submanifold that is therefore differentiable almost everywhere. We establish the following:

\begin{corA}\label{hilbert} Let $\rho:\piS\to\PSL(3,\R)$ divide a strictly convex set that is not an ellipse. Then, the set of non-differentiability points of the full flag curve 
	has Hausdorff dimension ${(\hJ{\sf H})}_\rho.$
\end{corA}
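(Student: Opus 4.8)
The plan is to deduce Corollary \ref{hilbert} from the main result (Theorem \ref{tutti}) by identifying the non-differentiability locus of the full-flag curve with a set whose Hausdorff dimension is computed there, namely the set of points that fail to be ``hyperplane-conical'' in the appropriate quantitative sense, or equivalently the points where the boundary maps lose a matching pair of Hölder exponents. Concretely, for a convex projective surface the limit map into $\cal F(\R^3)$ has two components, $\xi^1:\bord\piS\to\bord\Omega\subset\P(\R^3)$ and $\xi^2:\bord\piS\to\P((\R^3)^*)$, and the full-flag curve is the image of $t\mapsto(\xi^1(t),\xi^2(t))$. Since $\bord\Omega$ and $\bord\Omega^*$ are $\class^{1+\nu}$ circles, each component is a $\class^1$ embedded curve; a graph of a monotone correspondence between two $\class^1$ circles fails to be differentiable at a point precisely where the ``slope'' — measured via the derivatives of $\xi^1$ and $\xi^2$ along $\bord\piS$ — degenerates, i.e. where $(\xi^1)'$ or $(\xi^2)'$ vanishes or blows up relative to the other. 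The first step is therefore to make this equivalence precise: a point $x=\xi^1(t)$ is a differentiability point of the full-flag curve iff the two boundary parametrisations are bi-Lipschitz-comparable near $t$, which in turn is governed by the ratio of the relevant root functionals along the geodesic ray converging to $t$.

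The second step is to translate this into the dynamical/entropic language of the paper. The first fundamental weight $\peso_1$ and the second $\peso_2=\peso_1\circ(\text{opposition})$ control, respectively, the Hölder regularity of $\xi^1$ and of $\xi^2$; the simple root $\cart$ (which for $\PSL(3,\R)$ is $\peso_1+\peso_2$ up to the usual identifications — here I would use $\alpha_1=2\peso_1-\peso_2$ etc. carefully) measures the ``gap''. Non-differentiability of the full-flag curve at $t$ corresponds exactly to the Cartan projection of the sequence $\rho(\gamma_n)$ along the ray to $t$ having $\cart(\kappa(\rho\gamma_n))$ not bounded, i.e. the geodesic ray to $t$ is \emph{not} hyperplane-conical. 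Theorem \ref{tutti} (applied with $\sf G=\PSL(3,\R)$, $\t=\{\text{both simple roots}\}$) identifies the Hausdorff dimension of this non-hyperplane-conical set — measured in $\bord\piS$ with the visual metric, which by Benoist's $\class^{1+\nu}$ regularity and the identification of the Hilbert boundary is bi-Hölder to $\bord\Omega$ with a controlled exponent — with a critical exponent expressed through the entropy. One then checks that the combinatorial constant appearing in Theorem \ref{tutti} for this choice of $\t$ and this functional evaluates to the Hilbert entropy $(\hJ{\sf H})_\rho$; here I would use the known fact (Crampon, and the Falbel–Marquis–... normalisation used in the paper) that the Hilbert metric on $\Omega$ is, up to the factor coming from $\peso_1$, the metric whose volume growth is $(\hJ{\sf H})_\rho$, so that the Patterson–Sullivan dimension for $\peso_1$ equals $(\hJ{\sf H})_\rho$ and the defect functional in Theorem \ref{tutti} is exactly $\cart/\peso_1$-adjusted to give back $(\hJ{\sf H})_\rho$.

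The final step is the Hausdorff-dimension bookkeeping: transport the dimension computed in $\bord\piS$ (with visual metric, or equivalently with the metric induced by $\xi^1$ from the $\class^1$ structure of $\bord\Omega$) to a dimension on the full-flag curve itself. Because the full-flag curve is a Lipschitz graph over $\bord\Omega$ and $\xi^1:\bord\piS\to\bord\Omega$ is a homeomorphism that is bi-Hölder with exponent tending to $1$ (indeed the relevant comparison is via $\peso_1$ which, after the Hilbert normalisation, has unit ``speed''), Hausdorff dimension is preserved under these identifications, and one recovers precisely $(\hJ{\sf H})_\rho$ on $\cal F(\R^3)$. I would also record the strictness input: when $\Omega$ is not an ellipse, $(\hJ{\sf H})_\rho<1$ (Crampon), which a posteriori shows the non-differentiability set is a genuine proper subset — consistent with the Lipschitz curve being differentiable a.e.

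The step I expect to be the main obstacle is the \emph{second} one: pinning down that the non-differentiability points of the full-flag curve coincide \emph{exactly} (not just up to a set of the ``wrong'' dimension) with the complement of hyperplane-conical points, and that the scaling constant in Theorem \ref{tutti} specialises to $(\hJ{\sf H})_\rho$ on the nose rather than to some multiple of it. This requires a careful local analysis: near a point $t$ one must compare the increments $\|\xi^1(t)-\xi^1(s)\|$ and $\|\xi^2(t)-\xi^2(s)\|$ with $e^{-\peso_1(\kappa(\rho\gamma_{ts}))}$ and $e^{-\peso_2(\kappa(\rho\gamma_{ts}))}$ respectively (where $\gamma_{ts}$ tracks the geodesic through $t$ and $s$), show that the full-flag curve is differentiable at $t$ iff these two quantities are comparable, hence iff $\cart(\kappa(\rho\gamma_n))$ stays bounded along the ray to $t$, and then invoke the precise form of Theorem \ref{tutti}. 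The $\class^{1+\nu}$ but not $\class^2$ regularity of $\bord\Omega$ is exactly what guarantees the increments are not merely Hölder-comparable but genuinely control differentiability, so this regularity must be used, not just quoted.
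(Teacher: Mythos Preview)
Your proposal has the central identification \emph{inverted}. You write that ``non-differentiability of the full-flag curve at $t$ corresponds exactly to the Cartan projection $\ldots$ having $\cart(\kappa(\rho\gamma_n))$ not bounded, i.e.\ the geodesic ray to $t$ is \emph{not} hyperplane-conical.'' The paper establishes the opposite: a point $x$ is a \emph{differentiability} point precisely when the gap $|\slroot_1(\cartan(\alpha_i))-\slroot_2(\cartan(\alpha_i))|$ diverges along every geodesic ray to $x$, and $x$ is a \emph{non}-differentiability point precisely when this gap stays bounded along a subsequence --- i.e.\ when $x$ \emph{is} $\flat$-conical (hyperplane-conical for the wall $\ker(\slroot_1-\slroot_2)$). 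The geometric reason is that when the gap diverges, the rectangle $\varXi(\alpha_i\cone^c_\infty(\alpha_i))$ becomes arbitrarily thin in one direction, forcing the tangent to be horizontal or vertical; when the gap stays bounded, the rectangle stays genuinely two-dimensional and the incremental quotients oscillate. Your claim that ``differentiable iff the two boundary parametrisations are bi-Lipschitz-comparable near $t$'' is therefore also backwards: the paper shows (Proposition \ref{c.indepPer} and Corollary \ref{diffpointreal}) that an oblique derivative at a single point would force global bi-Lipschitz comparability and hence gap-isospectrality, contradicting that $\Omega$ is not an ellipse; so every differentiability point has horizontal or vertical derivative.

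Two further issues. First, the functionals controlling the sizes of $\xi(\alpha_i\cone^c_\infty(\alpha_i))$ and $\ov\xi(\alpha_i\cone^c_\infty(\alpha_i))$ are the simple roots $\slroot_1,\slroot_2$, not the fundamental weights $\peso_1,\peso_2$ (Proposition \ref{conetypesBalls}); this matters for the explicit identification $\hC{\max\{\slroot_1,\slroot_2\}}=\hC{\sf H}$ with $\sf H=(\slroot_1+\slroot_2)/2$, which uses the $\ii$-symmetry of $\cal Q_\rho$ and the fact that $\{\slroot_1,\slroot_2\}\subset\cal Q_\rho$. Second, your final ``dimension transport'' step is unnecessary and, as stated, incorrect: bi-H\"older maps do not preserve Hausdorff dimension. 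The paper computes the dimension directly on $\varXi(\bord\piS)\subset\P(\R^3)\times\P((\R^3)^*)$ using the $L^\infty$ product metric, via a covering by the squares of side $e^{-\max\{\slroot_1,\slroot_2\}(\cartan(\alpha_k))}$ for the upper bound and the Patterson--Sullivan measure $\mu^{\varphi^\infty_\flat}$ (which by Theorem \ref{Ahyper} gives full mass to the $\flat$-conical points) for the lower bound.
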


Throughout the paper the Hausdorff dimension is computed with respect to a(ny) Riemannian metric on the flag space. When $\Omega$ is an ellipse the result does not apply as the associated curve is differentiable everywhere while $\hJ{\sf H}=1$.

A classical result by Choi-Goldman \cite{choigoldman} states that the space of representations dividing a convex set forms a connected component of the character variety $\frak X\big(\piS,\PSL(3,\R)\big)$ of homomorphisms up to conjugation. This component is known today as \emph{the Hitchin component} of $\PSL(3,\R)$ and is diffeomorphic to a ball of dimension $-8\chi(S).$ Nie \cite{Nie0} and Zhang \cite{ZhangSL3} have found paths $(\rho_t)$ in this Hitchin component such that $({\hJ{\sf H}})_{\rho_t}\to 0$ as $t\to\infty$. Together with Corollary \ref{hilbert} this suggest that the closer $\Omega$ is to being an ellipse (the \emph{Fuchsian locus}), the less differentiable the flag curve is whilst the furthest away from Fuchsian locus, the more regular the flag curve becomes.

The proof of Corollary \ref{hilbert} is outlined in \S\,\ref{corAProof} and serves as a guide path for the strategy on the general case (Theorems \ref{LCdiff} and \ref{tutti}).

\subsection{Locally conformal representations and concavity properties}\label{2hyper}

Let $\K$ be $\R$, $\C$ or the non-commutative field of Hamilton's quaternions $\H$.
Denote by 
$$\a=\big\{(a_1,\ldots,a_d)\in\R^d:\sum_i a_i=0\big\}$$ 
the Cartan subspace of the real-algebraic group $\SL(d,\K)$, by 
\begin{equation}\label{e.slroot}
	\slroot_i(a_1,\ldots,a_d)=a_i-a_{i+1}
\end{equation}	 
the $i$-th simple root and by $\a^+\subset \a$ the Weyl chamber whose associated set of simple roots is $\simple=\{\slroot_i:i\in\lb1,d-1\rb\}.$ Let $\cartan:\SL(d,\K)\to\a^+$ be the \emph{Cartan projection} with respect to the choice of an inner (or Hermitian) product on $\K^d$. The $e^{\cartan_i(g)}$'s are the \emph{singular values} of the matrix $g$, namely the square roots of the modulus of the eigenvalues of the matrix $gg^*$. We also let  $d_\P$ denote the distance on $\P(\K^d)$ induced by the chosen Hermitian product. %where $g^*$ is the adjoint operator of $g$ with respect to the chosen inner product. 

Let $\G$ be a finitely generated word-hyperbolic group, consider a finite symmetric generating set and let $|\,|$ be the associated word-length. For $k\in\lb1,d-1\rb$,  a representation $\rho:\G\to\SL(d,\K)$ is $\{\slroot_k\}$-\emph{Anosov} if there exist positive constants $\mu$ and $c$ such that for all $\g\in\G$ one has $$\slroot_k\big(\cartan(\rho(\g))\big)\geq\mu|\g|-c.$$ 

A $\{\slroot_k\}$-{Anosov} representation is also $\{\slroot_{d-k}\}$-Anosov. Under such assumption there exists an equivariant H\"older-continuous map $$\xi^{k}_\rho:\bord\G\to\grass_k(\K^d),$$ 
called the \emph{limit map} in the Grassmannian $\grass_k(\K^d)$ of $k$-dimensional subspaces of $\K^d$, which is a homeomorphism onto its image. If $k\leq l\in\lb1,d-1\rb$ and $\rho$ is also $\{\slroot_l\}$-Anosov then the limit maps are {compatible}, i.e. $\xi^k_\rho(x)\subset\xi^l_\rho(x)$ $\forall x$, see \S\ref{Anosov} for references and details. %A $\{\slroot_1\}$-Anosov representation is commonly called a $\K$-\emph{projective Anosov} representation.

\begin{defi}\label{d.hyp}
Fix $p\in\lb2,d-1\rb$.
	A $\{\slroot_1,\slroot_{d-p}\}$-Anosov representation $\rho:\G\to\SL(d,\K)$ is \emph{$(1,1,p)$-hyperconvex} if for every pairwise distinct triple $x,y,z\in\bord\G$ one has 
	\begin{equation}\label{hypdefi}\big(\xi^{1}_\rho(x)+\xi^{1}_\rho(y)\big)\cap\xi^{{d-p}}_\rho(z)=\{0\}.
	\end{equation}If in addition one has $\cartan_2(\rho(\g))=\cartan_{p}(\rho(\g))$ $\forall\g$, we say that $\rho$ is \emph{locally conformal}.
\end{defi}	

Hyperconvex representations form an open subset of the character variety $$\frak X\big(\G,\SL(d,\K)\big)=\hom\big(\G,\SL(d,\K)\big)/\SL(d,\K)$$ and appear  naturally. For example, when $\K=\R$,  strictly convex divisible sets give rise to $(1,1,d-1)$-hyperconvex representations, while higher rank Teichm\"uller theory provides many examples of $(1,1,2)$-hyperconvex representations of surface groups, see Example \ref{ex.hyperconvex}.

When $p=2$ the second part of the definition is trivially true, so $(1,1,2)$-hyperconvex representations over $\K$ \emph{are} locally conformal, when $p>2$ the assumption constrains the Zariski closure of $\rho(\G)$. However, Zariski-dense locally conformal representations exist (and form open sets) for the groups locally isomorphic to $\SL(n,\R)$, $\SL(n,\C)$, $\SL(n,\H)$, $\SU(1,n)$, $\Sp(1,n)$, $\SO(p,q)$, see P.-S.-Wienhard \cite[\S\,8]{PSW1} for details, and, of course, $\SO(1,n)$ where every convex co-compact representation is locally conformal.

A concrete example in $\SU(1,n)$ consist on considering a convex co-compact group in $\H^n_\C$ whose limit set intersects the projectivization of any complex line in at most 2 points. These subgroups are locally conformal (\cite[Proposition 8.3]{PSW1}) and their limit set (though fractal) is tangent to the contact distribution of $\bord\H^n_\C$.

Consider also $\ov\K\in\{\R,\C,\H\}$ and positive integers $d$ and $\ov d$. Throughout the paper we mainly deal with a pair of locally conformal representations $$\rho:\G\to\SL(d,\K)\textrm{ and }\ov\rho:\G\to\SL(\ov d,\ov\K),$$ with equivariant maps $\xi=\xi^1_\rho\textrm{ and }\ov\xi=\xi^1_{\ov\rho},$ and we study regularity properties of the equivariant H\"older-continuous homeomorphism $$\Xi=\ov\xi\circ\xi^{-1}:\xi(\bord\G)\to\ov{\xi}(\bord\G).$$

To avoid confusion we denote the simple roots of $\SL(\ov d,\ov\K)$ by $\big\{\ov{\slroot}_i:i\in\lb1,\ov d-1\rb\big\},$ and to simplify notation we identify $\g$ with $\rho(\g)$ and we let $\ov\g=\ov\rho(\g)$. We consider also the graph of $\Xi$, or equivalently the \emph{graph map}, $$\varXi:\big(\xi,\ov{\xi}\big):\bord\G\to\P(\K^d)\times\P(\ov\K{}^{\ov d}).$$

\begin{defi}Fix  $\hol\in(0,1]$. We will say that that $\Xi$ is $\hol$\emph{-concave} at $x\in\bord\G$, or that $x$ is a $\hol$\emph{-concavity point for} $\Xi$,  if there exists a sequence $(y_k)$ converging to $x$ as $k\to\infty$ such that the incremental quotient \begin{equation}\label{incri}
	\frac{d_\P\big(\ov\xi(x),\ov\xi(y_k)\big)}{d_\P\big(\xi(x),\xi(y_k)\big)^\hol}
\end{equation} is bounded away from $\{0,\infty\}$. The set of $\hol$-concavity points is denoted by $\Ext_{\rho,\ov\rho}^\hol$. 
\end{defi}

Observe that $\Xi$ can be $\hol$-concave at $x$ for several $\hol$'s and that it is a $1$-concave point if one has $y_k\to x$ such that $d(\xi(x),\xi(y_k))$ and $d_\P(\ov\xi(x),\ov\xi(y_k))$ are comparable.

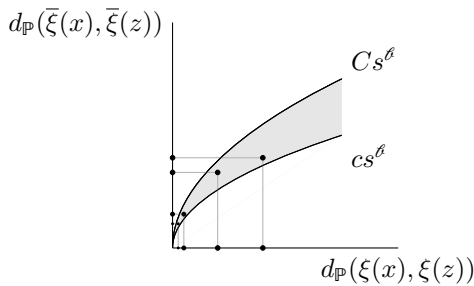
\begin{figure}[h]\centering
      \begin{tikzpicture}
      \begin{scope}[scale=1.5]
         \begin{scope}[rotate=-90]%[shift={(0,3)},scale=0.85]
      \draw[name path=Q] (0,0) parabola bend (0,0) (-1.5,1.5);
      \fill[color=gray!20]        (0,0) parabola bend (0,0) (-1.5,1.5)--(-1,1.5)--cycle;
      \fill[color=white]  (0,0) parabola bend (0,0) (-1,1.5)--cycle;
      \draw (0,0) parabola bend (0,0) (-1,1.5);
       \draw[name path=Q] (0,0) parabola bend (0,0) (-1.5,1.5);
      \node at (-1.5,1.5) [above right] {$Cs^\hol$};
      \node at (-1,1.5) [below right] {$cs^\hol$};
      \end{scope}
      
      \draw[gray!60] (0.8,0)--(0.8,0.8)--(0,0.8);
      \draw[gray!60] (0.4,0)--(0.4,0.67)--(0,0.67);
       \draw[gray!60] (0.1,0)--(0.1,0.3)--(0,0.3);
       \draw[gray!60] (0.05,0)--(0.05,0.215)--(0,0.215);
       
   \draw (0,0)--(2,0);
   \node[below] at (2,0) {$d_\P(\xi(x),\xi(z))$};
   \draw(0,0)--(0,2);
   \node[left] at (0,2) {$d_\P(\ov\xi(x),\ov\xi(z))$};
   
   \draw [fill] (0.8,0) circle [radius = 0.02];
   \draw [fill] (0.8,0.8) circle [radius = 0.02];
   %\node[below] at (0.8,0) {$\xi(y_k)$};
   %\node[left] at (0,0.80) {$\ov\xi(y_k)$};
   \draw [fill] (0,0.80) circle [radius = 0.02];

      \draw [fill] (0.4,0) circle [radius = 0.02];
      \draw [fill] (0.4,0.67) circle [radius = 0.02];
       \draw [fill] (0,0.67) circle [radius = 0.02];

      \draw [fill] (0.1,0.3) circle [radius = 0.017];
      \draw [fill] (0.1,0) circle [radius = 0.017];
       \draw [fill] (0,0.3) circle [radius = 0.017];
       
      \draw [fill] (0.05,0) circle [radius = 0.008];
      \draw [fill] (0.05,0.215) circle [radius = 0.008];
      \draw [fill] (0,0.215) circle [radius = 0.008];
%      \node (a) at (1,-1/4) [below right] {{\footnotesize{$d_\P(\xi(x),\xi(y_k))$}}};
%      \draw[->] (a) ..  controls (0.4,-0.11)  ..   (0.4,0);
%      \draw [->] (a) -- (0.8,0);
%      \draw[->] (a) ..  controls (0.2,-0.14)  ..   (0.2,0);
%      %\draw [->] (1,-1/4) -- (0.4,0);
      %\draw [->] (1,-1/4) -- (0.2,0);
      %\draw [->] (1,-1/4) -- (0.1,0);
      
         %\draw [fill] (0.003,0) circle [radius = 0.03];
          \begin{scope}[rotate=-90]%[shift={(0,3)},scale=0.85]
      \draw (0,0) parabola bend (0,0) (-1.5,1.5);
      
      \draw (0,0) parabola bend (0,0) (-1,1.5);
      \end{scope}

      \end{scope}
\end{tikzpicture}
\caption{A $\hol$-concave point $x$. The marked points on the axis' represent $d_\P(\xi(x),\xi(y_k))$ and $d_\P(\ov\xi(x),\ov\xi(y_k))$ respectively.}
\end{figure}

In what follows we will compute the Hausdorff dimension of $\varXi(\Ext_{\rho,\ov\rho}^\hol)$ with respect to the product metric on $\P(\K^d)\times\P(\ov\K{}^{\ov d})$ for $\hol$ lying on an interval that we now define. The \emph{dynamical intersection} between $\rho$ and $\ov\rho$ with respect to $\ov\slroot_1$ and $\slroot_1$ is defined by $$\II_{\ov\slroot_1}(\slroot_1)=\lim_{t\to\infty}\frac1{\#\sf R_t(\ov\slroot_1)}\sum_{\g\in \sf R_t(\ov\slroot_1)}\frac{\slroot_1(\lambda(\g))}{\ov\slroot_1(\lambda(\ov\g))},$$ 
where $\mathsf R_t(\ov\slroot_1)=\big\{[\g]\in[\G]:\ov\slroot_1\big(\lambda(\ov\g)\big)\leq t\big\}$ and $\lambda:\SL(d,\K)\to\a^+$ is the Jordan projection. This concept  (from Bridgeman-Canary-Labourie-S. \cite{pressure}, Burger \cite{burger}, Knieper \cite{kni95}, among others) generalizes Bonahon's intersection number between two elements in Teichm\"uller space. 

Let us say that $\rho$ and $\ov\rho$ are \emph{gap-isospectral} if for all $\g\in\G$ one has $$\slroot_1\big(\lambda(\g)\big)=\ov\slroot_1\big(\lambda(\ov\g)\big).$$ Corollary \ref{corraiz} (a consequence of \cite{pressure} together with Proposition \ref{nonA}) implies that if $\rho$ and $\ov\rho$ are not gap-isospectral, then $\II_{\slroot_1}(\ov\slroot_1)>\big(\II_{\ov\slroot_1}(\slroot_1)\big)^{-1}$.  We will study $\hol$-concavity for any $\hol\in(0,1]$ with $$\II_{\slroot_1}(\ov\slroot_1)>\hol>\big(\II_{\ov\slroot_1}(\slroot_1)\big)^{-1}.$$

Finally, consider the critical exponents \begin{alignat*}{2}\hC{\slroot_1} & =\lim_{t\to\infty}\frac1t\log\#\big\{\g\in\G:\slroot_1\big(\cartan(\g)\big)\leq t\big\},\\
\hC{\infty,\hol} & =\lim_{t\to\infty}\frac1t\log\#\big\{\g\in\G:\max\big\{\hol\slroot_1\big(\cartan(\g)\big),\ov{\slroot}_1\big(\cartan(\ov\g)\big)\big\}\leq t\big\}.\end{alignat*} 

\begin{thmA}[Theorem \ref{tutti.LCdiff}]\label{LCdiff} Let $\{\K,\ov\K\}\subset\{\R,\C\}$ and let $\rho:\G\to\SL(d,\K)$  and $\ov\rho:\G\to\SL(\ov d,\ov\K)$ be locally conformal, $\R$-irreducible and not gap-isospectral. Then for any $\hol\in(0,1]$ with $\II_{\slroot_1}(\ov\slroot_1)>\hol>\big(\II_{\ov\slroot_1}(\slroot_1)\big)^{-1}$, one has 
\begin{alignat*}{2}\hol\hC{\infty,\hol}\leq\Hff(\varXi({\Ext}_{\rho,\ov\rho}^\hol))&\leq\min\{\hC{\infty,\hol},\hol\hC{\infty,\hol}+1-\hol \}\nonumber\\&<\min\{\hC{\ov\slroot_1},\hC{\slroot_1}/\hol\}\nonumber\\&\leq\Hff(\varXi(\bord\G))\\&=\max\{\hC{\slroot_1},\hC{\ov\slroot_1}\}.\end{alignat*} If $\K=\H$ (resp. $\ov\K=\H$) we further assume that the Zariski closure if $\rho$ (resp. $\ov\rho$) does not have compact factors, then the same conclusion holds.
\end{thmA}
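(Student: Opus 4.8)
The plan is to pass to the thermodynamic formalism of the geodesic flow of $\G$ and to exploit the conformality of the limit maps of locally conformal representations. First I would fix a H\"older coding of the flow on $\UG$ as in \cite{pressure}, with H\"older roof functions representing the cocycles $\g\mapsto\slroot_1(\cartan(\g))$ and $\g\mapsto\ov\slroot_1(\cartan(\ov\g))$; write $S_n$ for the induced Birkhoff sums, $C_n(x)$ for the length-$n$ cylinder at $x\in\bord\G$, and $n(x,y)$ for the combinatorial alignment time of a pair. The conformality estimates for $\R$-irreducible locally conformal representations \cite{PSW1} read
\[
d_\P\big(\xi(x),\xi(y)\big)\asymp e^{-S_{n(x,y)}\slroot_1(x)},\qquad d_\P\big(\ov\xi(x),\ov\xi(y)\big)\asymp e^{-S_{n(x,y)}\ov\slroot_1(x)},
\]
valid at every hyperplane-conical $x$, which is a full-measure condition for each relevant Patterson--Sullivan/Gibbs class; this is where hyperplane conicality is used, and where, if $\K=\H$ or $\ov\K=\H$, the hypothesis excluding compact factors is needed to keep the corresponding limit map conformal. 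Consequently $x\in\Ext_{\rho,\ov\rho}^\hol$ exactly when the H\"older potential $\psi_\hol:=\ov\slroot_1-\hol\slroot_1$ satisfies $|S_n\psi_\hol(x)|\leq C'$ along an unbounded set of times $n$ (using that $\slroot_1,\ov\slroot_1$ are bounded away from $0$ and $\infty$), and $\diam\varXi(C_n(x))\asymp e^{-\min\{S_n\slroot_1,S_n\ov\slroot_1\}(x)}$, the minimum being $S_n\ov\slroot_1(x)\approx\hol\,S_n\slroot_1(x)$ at a $\hol$-concavity scale $n$.

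The identity $\Hff\big(\varXi(\bord\G)\big)=\max\{\hC{\slroot_1},\hC{\ov\slroot_1}\}$ is the easy end: the coordinate projections are Lipschitz surjections onto $\xi(\bord\G),\ov\xi(\bord\G)$, whose Hausdorff dimensions are $\hC{\slroot_1},\hC{\ov\slroot_1}$ by \cite{PSW1}, so $\Hff\geq\max$; conversely one covers $\varXi(\bord\G)$ by cylinders of $\varXi$-diameter $\leq r$ and splits $\sum\big(\diam\varXi(C)\big)^s$ according to whether $S\slroot_1\leq S\ov\slroot_1$ holds on $C$, each subsum being finite for $s$ larger than $\hC{\slroot_1}$, resp.\ $\hC{\ov\slroot_1}$. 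The link $\min\{\hC{\ov\slroot_1},\hC{\slroot_1}/\hol\}\leq\Hff\big(\varXi(\bord\G)\big)$ is then immediate, and for the strict inequality it suffices to prove $\hC{\infty,\hol}<\hC{\ov\slroot_1}$ and $\hC{\infty,\hol}<\hC{\slroot_1}/\hol$, since the left-hand minimum is at most $\hC{\infty,\hol}$. These two inequalities say that the growth rate of $\#\{\g:\max\{\hol\slroot_1(\cartan(\g)),\ov\slroot_1(\cartan(\ov\g))\}\leq t\}$ is strictly below the growth rate under either single constraint, which holds because $\rho$ and $\ov\rho$ are not gap-isospectral and $\hol$ lies in the \emph{open} interval $\big((\II_{\ov\slroot_1}(\slroot_1))^{-1},\II_{\slroot_1}(\ov\slroot_1)\big)$; this is a large-deviation statement of the type in \cite{pressure}, in the spirit of Proposition \ref{nonA} and Corollary \ref{corraiz}.

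For the two upper bounds on $\Hff\big(\varXi(\Ext_{\rho,\ov\rho}^\hol)\big)$ I would use that, for every $N$, $\Ext_{\rho,\ov\rho}^\hol$ is contained in the union over $n\geq N$ of $\{x:|S_n\psi_\hol(x)|\leq C'\}$, essentially a union of length-$n$ cylinders, which I group by the value $v:=\max\{\hol S_n\slroot_1,S_n\ov\slroot_1\}$; by definition of $\hC{\infty,\hol}$ through the coding there are $\lesssim e^{(\hC{\infty,\hol}+o(1))v}$ of them with a given $v$. Covering $\varXi$ of each such cylinder by a single ball of radius $\asymp e^{-S_n\ov\slroot_1}\asymp e^{-v}$ makes $\sum\big(\diam\big)^s\lesssim\sum_{v}e^{(\hC{\infty,\hol}-s)v}$, whose scale-$n$ contribution decays geometrically once $s>\hC{\infty,\hol}$; alternatively, covering $\varXi(C_n(x))$ — which lies in a box with sides $e^{-S_n\slroot_1}$ and $e^{-S_n\ov\slroot_1}$ — by the $\asymp e^{S_n\slroot_1-S_n\ov\slroot_1}=e^{(1-\hol)S_n\slroot_1}$ balls of the smaller radius $e^{-S_n\slroot_1}$ needed to fill the box, makes $\sum\big(\diam\big)^s\lesssim\sum_{u}e^{(\hol\hC{\infty,\hol}+1-\hol-s)u}$ with $u=S_n\slroot_1$, decaying once $s>\hol\hC{\infty,\hol}+1-\hol$. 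Summing over $n\geq N$ and letting $N\to\infty$ gives $\Hff\big(\varXi(\Ext_{\rho,\ov\rho}^\hol)\big)\leq\min\{\hC{\infty,\hol},\hol\hC{\infty,\hol}+1-\hol\}$.

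The lower bound is the heart of the argument. By convex duality for the pressure $(a,b)\mapsto P(-a\slroot_1-b\ov\slroot_1)$ one has $\hC{\infty,\hol}=\min\{a/\hol+b:P(-a\slroot_1-b\ov\slroot_1)=0,\ a,b\geq0\}$, and the minimiser $(a^\ast,b^\ast)$ has $a^\ast,b^\ast>0$ precisely when $\hol$ lies in the open interval above, the tangency (complementary-slackness) condition then forcing $\int\ov\slroot_1\,dm_0=\hol\int\slroot_1\,dm_0$, where $m_0$ is the equilibrium state of $-a^\ast\slroot_1-b^\ast\ov\slroot_1$. Hence $\psi_\hol$ has zero $m_0$-average and, not being an $m_0$-coboundary since $\rho,\ov\rho$ are not gap-isospectral, has $m_0$-a.e.\ recurrent Birkhoff sums by the central limit theorem, so $m_0\big(\Ext_{\rho,\ov\rho}^\hol\big)=1$ by the first paragraph. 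Finally, from the Gibbs property $m_0(C_n(x))\asymp e^{-a^\ast S_n\slroot_1(x)-b^\ast S_n\ov\slroot_1(x)}$ and $a^\ast+b^\ast=\hol\hC{\infty,\hol}+b^\ast(1-\hol)\geq\hol\hC{\infty,\hol}$, at the scale $m$ with $\min\{S_m\slroot_1,S_m\ov\slroot_1\}(x)\approx-\log r$ one gets
\[
a^\ast S_m\slroot_1(x)+b^\ast S_m\ov\slroot_1(x)\ \geq\ (a^\ast+b^\ast)\,\min\{S_m\slroot_1,S_m\ov\slroot_1\}(x)\ \geq\ \hol\hC{\infty,\hol}\,(-\log r),
\]
so $\varXi_\ast m_0\big(B(\varXi(x),r)\big)\lesssim r^{\hol\hC{\infty,\hol}}$, and the mass distribution principle yields $\Hff\big(\varXi(\Ext_{\rho,\ov\rho}^\hol)\big)\geq\hol\hC{\infty,\hol}$. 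The main obstacle is this thermodynamic core, together with the strict inequalities above: identifying $\hC{\infty,\hol}$ with the dual infimum, placing its optimiser in the interior of the dual cone exactly over the open interval $\big((\II_{\ov\slroot_1}(\slroot_1))^{-1},\II_{\slroot_1}(\ov\slroot_1)\big)$, and upgrading ``zero $m_0$-average'' to ``$m_0$-a.e.\ recurrence of $S_n\psi_\hol$''; the quaternionic case then requires only routine additional bookkeeping once compact factors are excluded.
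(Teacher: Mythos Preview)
Your approach is a thermodynamic-formalism transcription of the paper's argument and is correct in outline. The correspondence is tight: your equilibrium state $m_0$ for $-a^\ast\slroot_1-b^\ast\ov\slroot_1$ is the paper's Patterson--Sullivan measure $\mu^{\vi}$ for the functional $\vi$ minimising $\|\cdot\|^{1,\hol}$ on $\cal Q_\cc$ (Lemma~\ref{paraphi}); your interiority condition $a^\ast,b^\ast>0$ is Lemma~\ref{paraphi}'s characterisation of when $\vi/\|\vi\|^{1,\hol}$ is a strict convex combination; your recurrence of $S_n\psi_\hol$ plays the role of the paper's hyperplane-conicality Theorem~\ref{hyper}, proved there via ergodicity of a $2$-dimensional directional flow (Corollary~\ref{d=2}); and your cylinder covers match the cone-type covers of Propositions~\ref{upper} and~\ref{p.uppergeneral}. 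The paper's route isolates Theorem~\ref{Ahyper} as a reusable statement about arbitrary hyperplanes in $\a_\t$; your symbolic route is more self-contained once the coding is fixed.

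A few attributions need repair. First, the conformality estimates $d_\P(\xi(x),\xi(y))\asymp e^{-S_n\slroot_1}$ from \cite{PSW1} (Proposition~\ref{conetypesBalls} here) hold at \emph{every} $x$ along geodesic rays; hyperplane conicality enters only to obtain $m_0(\Ext^\hol_{\rho,\ov\rho})=1$, not for the metric estimates themselves. Second, the exclusion of compact factors when $\K=\H$ is required for the non-arithmeticity of $(\slroot_1,\ov\slroot_1)$ (Proposition~\ref{nonA}), not for conformality of the limit map. Third, ``not an $m_0$-coboundary since not gap-isospectral'' is not the right justification for general $\hol$: if $\psi_\hol$ were a coboundary then every period would satisfy $\ov\slroot_1(\lambda(\ov\g))=\hol\,\slroot_1(\lambda(\g))$, forcing $\II_{\slroot_1}(\ov\slroot_1)=\hol=(\II_{\ov\slroot_1}(\slroot_1))^{-1}$ and contradicting the open-interval hypothesis on $\hol$ --- so it is that hypothesis, not non-gap-isospectrality alone, that rules this out. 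Finally, ``recurrent Birkhoff sums by the CLT'' is imprecise: almost-sure recurrence of $S_n\psi_\hol$ for a zero-mean integrable observable over an ergodic system follows from Atkinson's theorem (equivalently, conservativity of the $\R$-extension), not from the CLT per se; this is exactly the $1$-dimensional shadow of the paper's $2$-dimensional flow ergodicity.
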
	

The proof of the above Theorem is completed in \S\,\ref{ProofA}. For representations in $\PSL(2,\C)$ we can furthermore give a geometric interpretation of the 1-weakly-bi-H\"older points, see \S\,\ref{klein}.

\subsection{Surface-group representations} Observe that the first line of inequalities in Theorem \ref{LCdiff} becomes an equality when $\hol=1$. We pursue now this situation while further restricting the source and ambient groups.

Let then $\K=\R$ and assume $\bord\G$ is homeomorphic to a circle. Real representations of $\G$ that are $(1,1,2)$-hyperconvex are necessarily locally conformal and form the prototype example of Anosov representations with $\class^1$ limit sets: indeed we have the following result from P.-S.-Wienhard \cite{PSW1} and Zhang-Zimmer \cite{ZZ}.

\begin{thm}\label{t.C1}
	Assume $\bord\G$ is homeomorphic to a circle and let $\rho:\G\to\PGL(d,\R)$ be $\{\slroot_1\}$-Anosov. \begin{itemize}\item[\cite{PSW1},\cite{ZZ}:] If $\rho$ is $(1,1,2)$-hyperconvex, then $\xi^1(\bord\G)\subset\P(\R^{d})$ is a $\class^1$ submanifold tangent at $\xi^1(x)$ to $\xi^2(x)$.
		\item[{\cite{ZZ}}:]If $\rho$ is irreducible and $\xi(\bord\G)$ is a $\class^1$ circle then $\rho$ is $(1,1,2)$-hyperconvex.
	\end{itemize}
\end{thm}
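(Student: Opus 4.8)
\emph{Sketch of a proof.}
For the first implication the plan is a renormalisation argument producing uniform differentiability of $\xi^1$ with $\xi^2$ as derivative. Since $\rho$ is $\{\slroot_1,\slroot_{d-2}\}$-Anosov it is also $\{\slroot_2\}$-Anosov, so the continuous equivariant limit maps $\xi^1\subset\xi^2$ and their duals $\xi^{d-2}\subset\xi^{d-1}$ are available, with $\xi^2$ transverse to $\xi^{d-2}$ (for $d=3$ these collapse to $\xi^1\subset\xi^2$ and the usual transversality). The analytic input is the standard contraction estimate attached to $\{\slroot_2\}$-Anosov: for Anosov constants $\mu,c$ and every $\eps_0>0$ there is a $C$ so that $d_{\grass_2}\big(\rho(\g)V,\rho(\g)W\big)\le Ce^{-\mu|\g|}\,d_{\grass_2}(V,W)$ whenever $V,W\in\grass_2(\R^d)$ lie at distance at least $\eps_0$ from the Schubert variety $\big\{U\st U\cap\xi^{d-2}(\g^-)\neq\{0\}\big\}$. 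Now fix $x\in\bord\G$ and a sequence $y_n\to x$ with $y_n\neq x$; ``blowing up'' at $x$, choose $\g_n\in\G$ with $|\g_n|\to\infty$ and, after passing to a subsequence, $\g_n^{-1}x\to a$, $\g_n^{-1}y_n\to b$, $\g_n^-\to c$ with $a,b,c$ pairwise distinct. Writing $\xi^1(x)+\xi^1(y_n)=\rho(\g_n)\big(\xi^1(\g_n^{-1}x)+\xi^1(\g_n^{-1}y_n)\big)$ and $\xi^2(x)=\rho(\g_n)\xi^2(\g_n^{-1}x)$, the key point is that hyperconvexity applied to the triples $(\g_n^{-1}x,\g_n^{-1}y_n,\g_n^-)$---which remain in a precompact set of pairwise distinct triples, where transversality is a positive continuous function---together with transversality of $\xi^2$ and $\xi^{d-2}$, keeps both $2$-planes uniformly $\eps_0$-transverse to the relevant Schubert variety. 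The contraction estimate then yields $d_{\grass_2}\big(\xi^1(x)+\xi^1(y_n),\xi^2(x)\big)\to0$, so $\xi^1$ is differentiable at $x$ with tangent $\xi^2(x)$; since $x$ is arbitrary and $\xi^2$ is continuous, $\xi^1(\bord\G)$ is a $\class^1$ embedded curve tangent at $\xi^1(x)$ to $\xi^2(x)$.

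For the second implication the plan has three steps. First, the $\class^1$ hypothesis produces a continuous $\rho$-equivariant tangent map $\zeta\colon\bord\G\to\grass_2(\R^d)$ with $\xi^1(x)\subset\zeta(x)$. Second, using $\R$-irreducibility one identifies $\zeta$ with the $\slroot_2$-limit map: one checks that $\zeta$ and a dual companion map into $\grass_{d-2}(\R^d)$ are transverse and satisfy the dynamics-preserving condition required by the boundary-map characterisation of Anosov representations; here irreducibility excludes degenerate behaviour---for instance $\rho(\g)^n\zeta(w)=\zeta(\g^n w)\to\zeta(\g^+)$ for every $w\neq\g^-$ forces $\zeta(\g^+)$ to be the Cartan-attracting $2$-plane of $\rho(\g)$, ruling out that $\zeta$ factors through a proper $\rho(\G)$-invariant subspace---so that $\rho$ is $\{\slroot_1,\slroot_2\}$-, hence $\{\slroot_1,\slroot_{d-2}\}$-Anosov, with $\xi^2=\zeta$. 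Third, one upgrades the pointwise tangency $\lim_{y\to x}\big(\xi^1(x)+\xi^1(y)\big)=\xi^2(x)$ to the full inequality $\big(\xi^1(x)+\xi^1(y)\big)\cap\xi^{d-2}(z)=\{0\}$ for pairwise distinct $x,y,z$. Since $\xi^1(w)\not\subset\xi^{d-2}(z)$ for $w\neq z$, this is equivalent to injectivity, on $\bord\G\setminus\{z\}$, of the composition of $\xi^1$ with the linear projection $\P(\R^d)\setminus\P(\xi^{d-2}(z))\to\P\big(\R^d/\xi^{d-2}(z)\big)$; near-diagonal configurations are harmless ($\xi^1(x)+\xi^1(y)$ is close to $\xi^2(x)$, which is transverse to $\xi^{d-2}(z)$, when $y$ is near $x$), and the closed $\G$-invariant set of ``bad'' triples is ruled out by a degree/intersection argument exploiting the $\class^1$-tangency and the $\slroot_2$-proximal dynamics.

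I expect the second implication to be the main obstacle: identifying the tangent map $\zeta$ with a genuine $\slroot_2$-Anosov limit map, and then promoting the pointwise tangency to the global hyperconvexity inequality, both use $\R$-irreducibility and $\class^1$ regularity in a genuinely non-local way, and constitute the technical heart of Zhang--Zimmer's argument. The first implication is, by contrast, a fairly mechanical instance of the ``blow-up, uniform transversality, Anosov contraction'' scheme, in which hyperconvexity enters only through precompactness and positivity of the transversality function.
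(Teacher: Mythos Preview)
The paper does not prove this theorem: it is stated as a result quoted from \cite{PSW1} and \cite{ZZ}, and is used as a black box elsewhere (e.g.\ in \S\ref{s.pB} and in Corollary \ref{nonC1}). So there is no ``paper's own proof'' to compare against.

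That said, your sketch is a reasonable outline of how the cited works proceed. For the first implication, the renormalisation/blow-up argument with Anosov contraction on $\grass_2$ is indeed the mechanism in \cite{PSW1}; the one point to be careful about is that you need the sequence $\g_n$ chosen so that both $\g_n^{-1}x$ and $\g_n^{-1}y_n$ stay bounded away from each other \emph{and} from $\g_n^-$ (this is arranged via cone types or geodesic rays, not just by passing to a subsequence), and you should justify uniformity in $x$ to get $\class^1$ rather than pointwise differentiability. For the second implication your three-step plan matches the shape of \cite{ZZ}, but step two---showing $\rho$ is $\{\slroot_2\}$-Anosov from the existence of a continuous equivariant $\zeta$---is where the real work lies and your sketch glosses over it: equivariant maps with the right limiting behaviour do not automatically give Anosov (one needs a uniform gap, and irreducibility is used more substantially than just to rule out invariant subspaces). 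Step three, promoting tangency to global hyperconvexity, is also nontrivial in \cite{ZZ}; your ``degree/intersection argument'' is vague and would need to be made precise.
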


The graph map $\varXi=\big(\xi,\ov{\xi}\big):\bord\G\to\P(\R^{d})\times\P(\R^{\ov d})$ has image contained in the $\class^{1+\nu}$ torus $\xi(\bord\G)\times\ov{\xi}(\bord\G)$ and $\varXi(\bord\G)$ is the graph of $\Xi$, a H\"older-continuous homeomorphism between $\class^{1+\nu}$-circles. By monotonicity of $\Xi$, $\varXi(\bord\G)$ is a Lipschitz curve and is thus differentiable almost everywhere. We let $$\Ndiff_{\rho,\ov\rho}\subset\varXi(\bord\G)$$ be the subset of points where the curve $\varXi(\bord\G)$ is not differentiable. The combination of Lemma \ref{generalcase} and Corollary \ref{nondiffbconicalR} establishes that in the current situation (with mild additional assumptions) $$\varXi\big(\Ext^1_{\rho,\ov\rho}\big)=\Ndiff_{(\rho,\ov\rho)},$$ whence with Theorem \ref{LCdiff} one obtains the following:

\begin{thmA}\label{tutti} Assume $\bord\G$ is homeomorphic to a circle and let $\rho:\G\to\SL(d,\R)$ and $\ovrho:\G\to\SL(\ov d,\R)$ be $(1,1,2)$-hyperconvex and not gap-isospectral. Then, $$\Hff\big(\Ndiff_{\rho,\ov\rho}\big)=\hC{\infty,1}<1.$$
\end{thmA}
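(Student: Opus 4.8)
The strategy is to reduce the computation of $\Hff(\Ndiff_{\rho,\ov\rho})$ to the already-established Theorem \ref{LCdiff} applied with the parameter $\hol=1$. First I would invoke Theorem \ref{t.C1}: since $\bord\G$ is a circle and both $\rho$ and $\ov\rho$ are $(1,1,2)$-hyperconvex, the limit curves $\xi(\bord\G)$ and $\ov\xi(\bord\G)$ are $\class^1$ (indeed $\class^{1+\nu}$ by \cite{convexes1}-type arguments, but $\class^1$ already suffices here), and $\Xi=\ov\xi\circ\xi^{-1}$ is a monotone homeomorphism between $\class^1$ circles. Consequently its graph $\varXi(\bord\G)$ is a Lipschitz submanifold of the torus $\xi(\bord\G)\times\ov\xi(\bord\G)$, hence differentiable a.e.\ by Rademacher's theorem, which is what makes $\Ndiff_{\rho,\ov\rho}$ a meaningful null set to measure.

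The heart of the argument is the identification $\varXi(\Ext^1_{\rho,\ov\rho})=\Ndiff_{\rho,\ov\rho}$, which the excerpt attributes to the combination of Lemma \ref{generalcase} and Corollary \ref{nondiffbconicalR}. Concretely, I would argue that at a point $x$ where the graph is differentiable, the incremental quotient \eqref{incri} with $\hol=1$ is forced to converge (to the slope of the tangent direction read off against the two coordinate tangent lines, using the $\class^1$ structure), so $x\notin\Ext^1_{\rho,\ov\rho}$; conversely, at a non-differentiability point the left and right incremental quotients disagree, so one can extract a sequence $y_k\to x$ along which \eqref{incri} stays bounded away from $0$ and $\infty$, placing $x$ in $\Ext^1_{\rho,\ov\rho}$. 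The subtlety requiring the cited lemmas — and the point where one must be slightly careful — is ruling out the degenerate possibility that the quotient tends to $0$ or $\infty$ everywhere along \emph{every} sequence at a non-differentiable point; this is exactly where hyperplane conicality (Corollary \ref{nondiffbconicalR}) enters, guaranteeing that the Patterson–Sullivan-generic points realize the comparable regime. Once this set equality is in hand, I apply Theorem \ref{LCdiff} with $\K=\ov\K=\R$ and $\hol=1$: the constraint $\II_{\slroot_1}(\ov\slroot_1)>1>(\II_{\ov\slroot_1}(\slroot_1))^{-1}$ holds because $\rho$ and $\ov\rho$ are not gap-isospectral (via Corollary \ref{corraiz}), and the first line of the displayed inequality chain collapses at $\hol=1$ to the equality $\Hff(\varXi(\Ext^1_{\rho,\ov\rho}))=\hC{\infty,1}$, while the strict inequality line gives $\hC{\infty,1}<\min\{\hC{\ov\slroot_1},\hC{\slroot_1}\}$. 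Since $\hC{\slroot_1}=\hC{\ov\slroot_1}=1$ for $(1,1,2)$-hyperconvex surface-group representations (the limit curve being a $\class^1$ circle of Hausdorff dimension $1$ — this is where one uses that the source boundary is a circle), one obtains $\hC{\infty,1}<1$, completing the proof.

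The main obstacle I anticipate is the equality $\varXi(\Ext^1_{\rho,\ov\rho})=\Ndiff_{\rho,\ov\rho}$, and specifically the inclusion $\Ndiff_{\rho,\ov\rho}\subseteq\varXi(\Ext^1_{\rho,\ov\rho})$: non-differentiability of a Lipschitz graph is an a priori weaker statement than having a comparable incremental quotient along some sequence, since the graph could in principle have one-sided derivatives that are both zero or both infinite relative to one of the two $\class^1$ coordinates. Resolving this is precisely what the notion of hyperplane conicality is designed for — it forces the Patterson–Sullivan-typical behaviour of the incremental quotient to be the comparable one — so the work is in verifying that Lemma \ref{generalcase} and Corollary \ref{nondiffbconicalR} combine to cover \emph{all} non-differentiability points and not merely a full-measure subset. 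The remaining ingredients ($\hC{\slroot_1}=1$, the non-gap-isospectrality giving the $\hol=1$ admissibility, and Rademacher) are comparatively routine.
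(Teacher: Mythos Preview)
Your overall architecture is right --- identify $\Ndiff_{\rho,\ov\rho}$ with $\varXi(\Ext^1_{\rho,\ov\rho})$ and then read off the Hausdorff dimension from the $\hol=1$ case of the general machinery --- but there are two genuine problems.

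\textbf{Irreducibility.} You invoke Theorem \ref{LCdiff} directly, but that theorem carries an $\R$-irreducibility hypothesis on $\rho$ and $\ov\rho$, while Theorem \ref{tutti} makes no such assumption (the paper emphasizes this). The paper does not apply Theorem \ref{LCdiff}; instead it uses Corollary \ref{l.weaklyirred} (Beyrer--P.) to replace each representation by an irreducible $(1,1,2)$-hyperconvex one with the same first-gap spectrum, applies Proposition \ref{nonA} to the irreducible proxies to obtain density of the pairs $\{(\slroot_1(\lambda(\g)),\ov\slroot_1(\lambda(\ov\g)))\}$, and then feeds this density into Theorem \ref{Hffconical} (which needs only non-arithmeticity, not irreducibility). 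Your appeal to Corollary \ref{corraiz} for the admissibility of $\hol=1$ has the same defect: that corollary also assumes irreducibility. Without this reduction step the argument has a gap.

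\textbf{The identification $\Ndiff=\varXi(\Ext^1)$.} You have the two inclusions reversed, and you misattribute the key tool. The inclusion $\Ndiff\subseteq\varXi(\Ext^1)$ is the \emph{easy} one and follows from Lemma \ref{generalcase} alone: its proof shows that if $x$ is not $1$-conical then the incremental quotient $d_\P(\ov\xi(x),\ov\xi(y))/d_\P(\xi(x),\xi(y))$ actually converges to $0$ or $\infty$, and by monotonicity of $\Xi$ this forces differentiability of the graph (with horizontal or vertical tangent). The delicate inclusion is $\varXi(\Ext^1)\subseteq\Ndiff$: if $x\in\Ext^1$ were a differentiability point, the derivative could not be $0$ or $\infty$ (a bounded-quotient sequence obstructs this), so it would be oblique. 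This is where Corollary \ref{nondiffbconicalR} enters, and its content has nothing to do with hyperplane conicality or Patterson--Sullivan measures; it rests on Proposition \ref{c.indepPer} (an oblique derivative at one point forces $\Xi$ to be globally bi-Lipschitz), which via Lemma \ref{lambda2} gives gap-isospectrality, contradicting the hypothesis. Hyperplane conicality (Theorem \ref{Ahyper}) is used elsewhere --- in the lower bound of Theorem \ref{Hffconical} --- not in this identification. Your worry that a non-differentiable point might have quotient tending to $0$ along every sequence is therefore unfounded: Lemma \ref{generalcase} already rules this out.
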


We emphasize  that no irreducibility assumption is made on the representations $\rho$ and $\ov\rho.$ On the other hand, if the representations are irreducible and gap-isospectral, we show that there exists an isomorphism between the Zariski closures of $\rho(\G)$ and of $\ov\rho(\G)$ intertwining the two representations. It follows then that $\varXi(\bord\G)$ is the diagonal of the  $\class^{1+\nu}$ torus, and thus differentiable everywhere. To prove this we give the following preliminary classification of Zariski-closures, established in \S\,\ref{s.Zd}. 

Recall that if $\sf G$ is a semi-simple real-algebraic group of non-compact type, then irreducible proximal  representations $\Fund:\sf G\to\PGL(V)$ are determined by their highest restricted weight $\chi_\Fund$. A special subset of dominant weights are the so-called \emph{fundamental weights} $\{\peso_\sroot:\sroot\in\simple\}$, and are indexed by the set of simple roots $\simple$ of $\sf G$ (see \S\,\ref{representaciones} for definitions and details).  

\begin{thmA}\label{t.Zcl} Assume $\bord\G$ is homeomorphic to a circle and let $\rho:\G\to\PGL(d,\R)$ be irreducible and $(1,1,2)$-hyperconvex.  Then the Zariski closure $\sf G$ of $\rho(\G)$ is simple and the highest weight of the induced representation $\Fund:\sf G\to\PGL(d,\R)$ is a multiple of a fundamental weight associated to a root whose root-space is one-dimensional.
\end{thmA}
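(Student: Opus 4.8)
\textbf{Proof plan for Theorem \ref{t.Zcl}.}

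The plan is to exploit the $\class^1$-regularity of $\xi(\bord\G)$ from Theorem \ref{t.C1} to force strong constraints on the pair of limit maps $(\xi^1,\xi^2)$, and then to translate these into highest-weight data for the induced representation $\Fund:\sf G\to\PGL(d,\R)$. First I would record that since $\rho$ is irreducible, its Zariski closure $\sf G$ is reductive, and since $\rho$ is $\{\slroot_1\}$-Anosov the projective representation $\Fund$ is proximal; hence $\Fund$ has a well-defined highest restricted weight $\chi=\chi_\Fund$ and the limit map $\xi^1$ takes values in the (unique) closed $\sf G$-orbit, the flag variety $\sf G/\sf P_\chi$ embedded in $\P(\R^d)$ via $\Fund$. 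I would similarly interpret $\xi^2=\xi^2_\rho$: being $\{\slroot_1\}$-Anosov makes $\rho$ also $\{\slroot_{d-1}\}$-Anosov, and $\xi^2$ is the limit map into $\grass_2(\R^d)$, compatible with $\xi^1$. The $\class^1$-tangency statement $T_{\xi^1(x)}\xi^1(\bord\G)=\xi^2(x)$ from Theorem \ref{t.C1} says precisely that the projective tangent line to the limit curve at $\xi^1(x)$ is $\P(\xi^2(x))$.

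The key step is an infinitesimal/representation-theoretic argument at the level of the parabolic $\sf P=\sf P_\chi$ stabilizing a highest weight line $\ell_\chi=\Fund(\rho)$-image of a base point. The tangent space to the full $\sf G$-orbit of $\ell_\chi$ inside $\P(\R^d)$ at $\ell_\chi$ is $\ge\cdot\ell_\chi/\ell_\chi$, which decomposes under the Levi as a sum of weight spaces $V_{\chi-\cart}$ for $\cart$ ranging over positive roots with $V_{\chi-\cart}\neq 0$; these are the ``first-order'' directions. Since the limit curve is a $1$-dimensional $\class^1$ submanifold whose tangent at $\ell_\chi$ is $2$-dimensional (it spans $\xi^2(x)$), and since $\bord\G$ is a circle so the tangent direction varies continuously and the curve is $\sf P$-equivariant in the appropriate sense along the stabilizer of $x$, I would argue that only \emph{one} weight space $V_{\chi-\cart_0}$ can be available as a tangent direction, i.e. there is a unique simple root $\cart_0\in\simple$ with $\chi-\cart_0$ a weight — equivalently $\langle\chi,\cart_0^\vee\rangle\neq 0$ — and moreover the corresponding root space in $\ge$ is one-dimensional (otherwise the first-order orbit directions at $\ell_\chi$ would already be more than $1$-dimensional inside $\P(V_{\chi-\cart_0}\oplus\ell_\chi)$, contradicting that the limit curve is a curve). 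This pins down $\chi$ to be supported on the single node $\cart_0$, i.e. $\chi=n\peso_{\cart_0}$ for some positive integer $n$; the requirement that $\Fund$ be proximal together with $\cart_0$ being a simple root makes $\peso_{\cart_0}$ a genuine fundamental weight, and the one-dimensionality of $\ge_{\cart_0}$ is what was claimed.

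Finally I would upgrade ``$\sf G$ reductive'' to ``$\sf G$ simple.'' The center acts by scalars so it is invisible in $\PGL(d,\R)$ and may be discarded; if $\sf G$ were a product of two noncompact simple factors $\sf G_1\times\sf G_2$ then, since $\Fund$ is irreducible, $V=V_1\otimes V_2$ with each $V_i$ nontrivial and proximal, and the highest weight would be supported on (at least) one node of each factor — contradicting that only one node carries $\chi$. Compact factors would make $\Fund$ non-faithful or contradict irreducibility of the limit-curve picture (the limit curve would be constant in the compact directions), so they are excluded too; thus $\sf G$ is simple, completing the proof.

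\textbf{Expected main obstacle.} The delicate point is the passage from ``the $\class^1$ tangent line is $\xi^2(x)$'' to the \emph{algebraic} statement that $\chi$ is supported on exactly one simple root with one-dimensional root space. One has to rule out, using only $\class^1$-regularity and not more, that the limit curve is tangent to a direction lying in a multi-dimensional weight space or straddling two weight spaces; this requires combining the $(1,1,2)$-hyperconvexity (which controls how $\xi^2$ meets nearby $\xi^1$'s, cf. Definition \ref{d.hyp}) with the homogeneity of the orbit and a careful analysis of which weight spaces $V_{\chi-\cart}$ can appear as a tangent to a $\sf P$-stable curve. I expect this to be where the bulk of the technical work lies, likely invoking the structure theory of the Frenet-type limit maps and the fact that $\xi^1,\xi^2$ come from a single Anosov representation.
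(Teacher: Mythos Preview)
Your approach differs substantially from the paper's, and the obstacle you flag is a genuine gap, not a technicality. The claim that the tangent line $\xi^2(x)/\xi^1(x)$ must lie in a \emph{single} weight space $V_{\chi-\sroot_0}$ (rather than being a generic line in $\bigoplus_{\sroot\in\t_\Fund}V_{\chi-\sroot}$) is exactly what needs proving, and neither continuity of the tangent nor ``$\sf P$-equivariance along the stabilizer of $x$'' delivers it: a generic $x\in\bord\G$ has trivial stabilizer in $\G$, so there is no equivariance to exploit pointwise. At attracting fixed points $\g_+$ one does get that $\xi^2(\g_+)$ is $\rho(\g)$-invariant, forcing the tangent into an eigenspace; but for different $\g$ this eigenspace can be $V_{\chi-\sroot}$ for \emph{different} $\sroot\in\t_\Fund$ (whichever minimizes $\sroot(\lambda(\g))$), and turning this into a contradiction via continuity would require showing these directions are separated in some $\sf G$-equivariant sense along the curve, which you do not supply.

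The paper bypasses any local weight-space analysis. Since $\xi^1_\rho=\zeta_{\t_\Fund}\circ\xi^{\t_\Fund}_{\rho_0}$ with $\zeta_{\t_\Fund}$ an analytic embedding, the curve $\xi^{\t_\Fund}_{\rho_0}(\bord\G)$ is $\class^1$, hence so is each projection $\xi^\sroot(\bord\G)\subset\cal F_{\{\sroot\}}$ for $\sroot\in\t_\Fund$. If $\t_\Fund$ contained two distinct roots $\sroot,\bb$, then the Tits compositions $\Fund_\sroot\rho_0$ and $\Fund_\bb\rho_0$ would both be irreducible with $\class^1$ limit circles, hence $(1,1,2)$-hyperconvex by the Zhang--Zimmer direction of Theorem~\ref{t.C1}. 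Benoist's density of Jordan projections (Theorem~\ref{densidad}) gives $\g$ with $\sroot(\lambda(\g))\neq\bb(\lambda(\g))$, so the pair is not gap-isospectral; but the $\class^1$ curve $\xi^{\{\sroot,\bb\}}(\bord\G)$ would then furnish a differentiability point of $\Xi$ with oblique derivative, and Corollary~\ref{diffpointreal} (resting on Proposition~\ref{c.indepPer}: one oblique-derivative point forces $\Xi$ bi-Lipschitz, hence gap-isospectral via Lemma~\ref{lambda2}) yields a contradiction. Thus $\t_\Fund=\{\sroot_0\}$; simplicity is then Lemma~\ref{simple}, and $\dim\ge_{\sroot_0}=1$ follows because $\{\slroot_2\}$-Anosov forces $\dim V_{\chi-\sroot_0}=1$ while Lemma~\ref{not} makes $\ge_{-\sroot_0}\to V_{\chi-\sroot_0}$ injective. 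The decisive ingredient is therefore the dynamical rigidity of \S\,\ref{s.6.1}, not a pointwise tangent computation.
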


In light of the following examples it is unclear if further restrictions can occur.

\begin{ex}\label{ex.hyperconvex} 
	Any pair of representations $\rho:\pi_1S\to\sf G$ and $\Fund:\sf G\to\PGL(V)$ in each of the following classes (and small deformations), gives rise to a $(1,1,2)$-hyperconvex representation via post-composition $\Fund\circ\rho$. In particular the limit set of $\rho$ in the specified flag manifold of $\sf G$ is a $\class^{1+\nu}$ curve:
\begin{itemize}
	\item[-] $\sf G$ is split, $\rho:\piS\to\sf G$ is Hitchin, and  $\Fund$ satisfies $\chi_\Fund=n\peso_\sroot$ for any $\sroot\in\simple$ and $n\in\N_{>0}$. This is non-trivial and requires results from Fock-Goncharov \cite{FG} and Labourie \cite{labourie} together with Lusztig's canonical basis \cite[Proposition 3.2]{Lusztig-TP} (see S. \cite[\S\,5.8]{clausurasPos} for details). As a result the limit set of $\rho$ in any maximal flag manifold $\calF_{\{\sroot\}}$ of $\sf G$ is a $\class^{1+\nu}$ curve.
	\item[-]  $\rho:\piS\to\PO(p,q)$ is $\Theta$-positive  and $\Fund$ has highest weight $\peso_\sroot$ for any root $\sroot$ in the interior\footnote{i.e. $\sroot$ is only connected to roots in $\Theta$ in the Dynkin diagram of $\simple$} of $\Theta$ (P.-S.-Wienhard \cite[Theorem 10.3]{PSW2}, see also Beyrer-P. \cite[Remark 4.6]{BP}). In particular the limit set in any flag manifold of the form ${\sf{Is}}_k(\R^{p,q})$ for $k\leq p-2$ is a $\class^{1+\nu}$-curve. When $\rho$ is moreover Zariski-dense, we can consider any $\Fund$ with $\chi_\Fund^+=n\peso_\sroot$ for any $\sroot \in\inte\Theta$ and $n\in\N_{>0}$. 
	\item[-] for all $k\geq 1$, $k$-positive representations $\rho:\piS\to\PSL(d,\R)$ introduced in Beyrer-P. \cite{BeyP2} are  $(1,1,2)$-hyperconvex.
\end{itemize}
\end{ex}

For these examples also  the following applies:

\begin{corA}\label{cor.B} Assume $\bord\G$ is homeomorphic to a circle, let $\sf G$ be a simple Lie group and let $\rho:\G\to\sf G$ have Zariski-dense image. Assume there exist $\{\sroot,\bb\}\subset\simple$ distinct such that both $\Fund_\sroot\circ\rho$ and $\Fund_\bb\circ\rho$ are $(1,1,2)$-hyperconvex. Then:
	\begin{enumerate}
		\item The image of  $\xi^{\{\sroot,\bb\}}:\bord\G\to\cal F_{\{\sroot,\bb\}}$ is Lipschitz  and the Hausdorff dimension of the points where it is non-differentiable is $\hC{\max\{\sroot,\bb\}}.$ 
		\item If the opposition involution $\ii$ on $\ge$ is non-trivial and $\bb=\ii\sroot$ then $$\hC{\max\{\sroot,\bb\}}=\hC{(\sroot+\bb)/2}.$$
	\end{enumerate}
\end{corA}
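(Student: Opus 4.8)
\textbf{Proof proposal for Corollary \ref{cor.B}.}

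The plan is to derive Corollary \ref{cor.B} from Theorem \ref{tutti} by choosing the right pair of representations to compare. Set $d$ and $\ov d$ to be the dimensions of the proximal irreducible representations $\Fund_\sroot$ and $\Fund_\bb$ respectively, and let $\rho=\Fund_\sroot\circ\rho$, $\ov\rho=\Fund_\bb\circ\rho$ (abusing notation). By hypothesis both are $(1,1,2)$-hyperconvex, and $\bord\G$ is a circle, so Theorem \ref{t.C1} applies: the limit sets $\xi^1_\rho(\bord\G)$ and $\xi^1_{\ov\rho}(\bord\G)$ are $\class^1$ (indeed $\class^{1+\nu}$) circles, and the graph map $\varXi=(\xi^1_\rho,\xi^1_{\ov\rho})$ lands in the $\class^{1+\nu}$ torus $\xi^1_\rho(\bord\G)\times\xi^1_{\ov\rho}(\bord\G)$, where it is the graph of the monotone homeomorphism $\Xi=\xi^1_{\ov\rho}\circ(\xi^1_\rho)^{-1}$, hence Lipschitz and a.e.\ differentiable. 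The first observation to record is that $\varXi(\bord\G)$ is, up to the natural identifications, the image of $\xi^{\{\sroot,\bb\}}:\bord\G\to\cal F_{\{\sroot,\bb\}}(\sf G)$: this is because the limit map into $\cal F_{\{\sroot,\bb\}}$ is the pair of limit maps into the two partial flag manifolds $\cal F_{\{\sroot\}}$ and $\cal F_{\{\bb\}}$, and these embed into $\P(V_\sroot)$ and $\P(V_\bb)$ via the respective highest-weight orbit maps, which are smooth embeddings. So the Hausdorff dimension of the non-differentiability locus of $\xi^{\{\sroot,\bb\}}(\bord\G)$ equals that of $\Ndiff_{\rho,\ov\rho}$ (Hausdorff dimension being computed with any Riemannian metric, and smooth embeddings being bi-Lipschitz on compacta).

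Next I need the hypotheses of Theorem \ref{tutti}: the two representations should be $(1,1,2)$-hyperconvex (already assumed) and \emph{not gap-isospectral}. Here I would argue that $\sroot\neq\bb$ together with Zariski-density of $\rho$ forces $\slroot_1(\lambda(\rho(\g)))=\chi_{\Fund_\sroot}(\lambda_{\sf G}(\rho(\g)))$ and $\ov\slroot_1(\lambda(\ov\rho(\g)))=\chi_{\Fund_\bb}(\lambda_{\sf G}(\rho(\g)))$ to differ for some $\g$; indeed, for a proximal irreducible $\Fund$ with highest weight $\chi$, the top gap $\slroot_1$ of the singular/Jordan data of $\Fund\circ\rho$ is $\chi - (\chi - \sroot) = \sroot$ evaluated on the Jordan projection (the first weight below the highest is $\chi-\sroot$), so $\slroot_1(\lambda(\Fund_\sroot\circ\rho(\g)))=\sroot(\lambda_{\sf G}(\rho(\g)))$ and likewise $\ov\slroot_1(\lambda(\Fund_\bb\circ\rho(\g)))=\bb(\lambda_{\sf G}(\rho(\g)))$. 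Since $\sf G$ is Zariski-dense and $\sroot\neq\bb$ are distinct simple roots, the linear functionals $\sroot$ and $\bb$ are distinct on $\a$, and the Jordan projections $\{\lambda_{\sf G}(\rho(\g)):\g\in\G\}$ are Zariski-dense in $\a^+$ (a standard consequence of Benoist's theorem on limit cones for Zariski-dense groups), so $\sroot(\lambda_{\sf G}(\rho(\g)))\neq\bb(\lambda_{\sf G}(\rho(\g)))$ for some $\g$; hence $\rho,\ov\rho$ are not gap-isospectral. Now Theorem \ref{tutti} gives $\Hff(\Ndiff_{\rho,\ov\rho})=\hC{\infty,1}<1$, where $\hC{\infty,1}$ is the critical exponent of $\g\mapsto\max\{\slroot_1(\cartan(\rho(\g))),\ov\slroot_1(\cartan(\ov\rho(\g)))\}=\max\{\sroot(\cartan_{\sf G}(\rho(\g))),\bb(\cartan_{\sf G}(\rho(\g)))\}$ (using the analogous identity at the level of Cartan projections, which holds up to bounded error and thus does not affect the exponent). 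This last quantity is by definition $\hC{\max\{\sroot,\bb\}}$, proving part (1).

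For part (2): assume $\ii\neq\id$ and $\bb=\ii\sroot$. The opposition involution satisfies $\sroot(\cartan_{\sf G}(g))=\ii\sroot(\cartan_{\sf G}(g^{-1}))$, but more to the point, for $(1,1,2)$-hyperconvex $\Fund_\sroot\circ\rho$ one has the additional local-conformality relation $\cartan_2=\cartan_p$ automatically (here $p=2$, so this is vacuous) — the real content is that $\max\{\sroot,\bb\}$ and $(\sroot+\bb)/2$ define the \emph{same} critical exponent. I would prove $\hC{\max\{\sroot,\bb\}}=\hC{(\sroot+\bb)/2}$ by showing that along the relevant sequences the two functionals $\sroot(\cartan_{\sf G}(\rho(\g)))$ and $\bb(\cartan_{\sf G}(\rho(\g)))$ are comparable up to a multiplicative constant bounded away from $0$ and $\infty$ — equivalently their ratio stays in a compact subinterval of $(0,\infty)$ — which forces $\max$ and average to have the same exponential growth rate. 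The comparability should follow from hyperconvexity of \emph{both} $\Fund_\sroot\circ\rho$ and $\Fund_{\ii\sroot}\circ\rho$: hyperconvexity of $\Fund_\sroot\circ\rho$ gives a two-sided control of $\sroot(\cartan)$ by $\slroot_1(\cartan)$ of an auxiliary representation (via the $(1,1,p)$-hyperconvex inequalities, cf.\ the role of $\II_{\slroot_1}(\ov\slroot_1)$), and combined with $\bb=\ii\sroot$ this pins $\sroot/\bb$ inside a compact interval — this is essentially the statement that $\rho$, being $\{\sroot,\ii\sroot\}$-hyperconvex, has limit cone meeting the wall $\{\sroot=\ii\sroot\}$ only in a controlled way, or more simply that the critical exponent functional computed at a point on the ray where $\sroot$ is maximized agrees with that computed using the average because of the $\ii$-symmetry $\sroot\leftrightarrow\bb$ of the Patterson--Sullivan data. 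I expect the main obstacle to be exactly this last comparison in part (2): making precise why $(1,1,2)$-hyperconvexity in two $\ii$-opposite directions forces the two root-functionals to be uniformly comparable along the orbit, rather than merely having the $\max$ dominate. The cleanest route is probably to invoke the fact (from the body of the paper, in the analysis behind Theorem \ref{LCdiff}) that the measure of full mass for the $\hC{\infty,1}$-dimensional Hausdorff measure on $\Ndiff$ is a Patterson--Sullivan measure for the functional $\max\{\sroot,\bb\}$, which by the $\ii$-symmetry of the setup must be invariant under the flip exchanging the two factors, and hence is also Patterson--Sullivan for $(\sroot+\bb)/2$; since both functionals admit a Patterson--Sullivan measure of the same exponent, their critical exponents coincide.
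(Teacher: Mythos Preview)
Your argument for part~(1) is essentially the paper's: identify $\xi^{\{\sroot,\bb\}}(\bord\G)$ with the graph $\varXi(\bord\G)$ via the analytic embeddings $\cal F_{\{\sroot\}}\hookrightarrow\P(V_\sroot)$, check non-gap-isospectrality from $\sroot\neq\bb$ and Zariski density (the limit cone has nonempty interior, so $\sroot$ and $\bb$ cannot agree on all Jordan projections), and read off $\hC{\infty,1}=\hC{\max\{\sroot,\bb\}}$ from Theorem~\ref{tutti}. This is fine.

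Part~(2), however, has a real gap. Your first line of attack---that $\sroot(\cartan(\g))$ and $\bb(\cartan(\g))$ are uniformly comparable along the orbit---is simply false: by Benoist the limit cone has nonempty interior, so the ratio $\sroot/\bb$ sweeps out an open interval and there is no pointwise reason for $\max\{\sroot,\bb\}$ and $(\sroot+\bb)/2$ to have the same exponential growth. Your fallback (``the Patterson--Sullivan measure for $\max\{\sroot,\bb\}$ is $\ii$-symmetric hence also PS for the average'') does not make sense as written, since $\max\{\sroot,\bb\}$ is not linear and does not carry a PS measure.

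What the paper actually does is work in the two-dimensional space $V^*=\spa\{\sroot,\bb\}$ with the strictly convex critical curve $\cal Q_\cc$ and the dual norm $\|\cdot\|^1$ to $\|\cdot\|_\infty=\max\{|\sroot|,|\bb|\}$. The unique minimizer $\vi$ of $\|\cdot\|^1$ on $\cal Q_\cc$ is $\ii$-fixed (since both $\cal Q_\cc$ and $\|\cdot\|^1$ are $\ii$-invariant and the minimizer is unique by strict convexity), hence lies on the line $\R(\sroot+\bb)$, giving $\vi=\hC{(\sroot+\bb)/2}\cdot(\sroot+\bb)/2$ and $\|\vi\|^1=\hC{(\sroot+\bb)/2}$. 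The missing ingredient in your sketch is the identity $\hC{\max\{\sroot,\bb\}}=\|\vi\|^1$: this is not formal and requires Quint's result \cite[Proposition~3.3.3]{Quint-Div} relating the critical exponent for a norm to the infimum of the dual norm over the critical hypersurface, which in turn needs the counting measure $\sum_\g\delta_{\Pi\cartan_\t(\g)}$ to be of \emph{concave growth}. The paper verifies this last point via a generic-product lemma (Lemma~\ref{l.Quint}) adapted to the $\{\sroot,\bb\}$-Anosov setting. Without this step the symmetry argument alone does not close.
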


\begin{obs}A different approach to Theorem \ref{tutti}, relying on Theorem \ref{t.Zcl} and Theorem \ref{t.C1}, would be to code the action of $\piS$ on $\bord\piS$ via Bowen-Series and apply Jordan-Kesseböhmer-Pollicott-Stratmann \cite[Theorem 1.1]{JKPS}. This method, followed by Pollicott-Sharp \cite{PS} for two representations in the Teichm\"uller space of $S$, is not applicable for groups other than $\piS$, in particular this approach cannot be used in the generality of Theorem \ref{LCdiff}. \end{obs}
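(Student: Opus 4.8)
Since the final statement is a remark, what one would write here is a sketch supporting its two assertions: that the Bowen--Series route reproves Theorem~\ref{tutti} when $\G=\piS$, and that it cannot be pushed to the generality of Theorem~\ref{LCdiff}. I outline how I would organize it.

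For the first assertion, the plan is to reduce Theorem~\ref{tutti} with $\G=\piS$ to \cite[Theorem~1.1]{JKPS}. First I would fix an auxiliary hyperbolic metric on $S$ and the associated Bowen--Series map $T\colon\circle\to\circle$: a finite-to-one, uniformly expanding $\class^{1+\nu}$ Markov map, modeled by a mixing subshift of finite type, orbit-equivalent to the $\piS$-action on $\bord\G\simeq\circle$ and piecewise a product of elements of $\piS$. Since the limit curves $\xi(\bord\G)\subset\P(\R^d)$ and $\ov\xi(\bord\G)\subset\P(\R^{\ov d})$ are $\class^1$, indeed $\class^{1+\nu}$, circles (Theorem~\ref{t.C1} and the regularity recalled before Theorem~\ref{tutti}), transporting the Bowen--Series partition through $\xi$ and through $\ov\xi$ produces expanding $\class^{1+\nu}$ Markov maps $T_\rho$, $T_{\ov\rho}$ of these circles --- piecewise given by the projective transformations $\rho(g_i)$, $\ov\rho(g_i)$ --- topologically conjugate via the homeomorphism $\Xi=\ov\xi\circ\xi^{-1}$. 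Theorem~\ref{t.Zcl} serves to control the Zariski closures, in particular to exclude the gap-isospectral case, in which $\Xi$ would already be $\class^{1+\nu}$ and $\Ndiff_{\rho,\ov\rho}$ empty. Next I would introduce the H\"older potential $\psi=\log|T_\rho'|-\log|T_{\ov\rho}'|\circ\Xi$, the derivative cocycle of the conjugacy, and check by a bounded-distortion estimate that $\varXi(\bord\G)$ is differentiable at $\varXi(x)$ exactly when the Birkhoff sums $\sum_{j<n}\psi(T_\rho^j x)$ converge to a finite limit; equivalently, that $\varXi\big(\Ndiff_{\rho,\ov\rho}\big)$ --- which equals $\varXi\big(\Ext_{\rho,\ov\rho}^1\big)$ by the identification used in the body of the paper --- is an irregular set for these Birkhoff sums. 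By Liv\v{s}ic's theorem and non-gap-isospectrality, $\psi$ is not cohomologous to a constant, so this set is nonempty, and \cite[Theorem~1.1]{JKPS} computes its Hausdorff dimension by a variational formula; the last step is to identify that formula with $\hC{\infty,1}$ by unwinding the definitions of $\hC{\slroot_1}$, $\hC{\ov\slroot_1}$, $\hC{\infty,1}$ and the pressure equations they solve, recovering $\Hff\big(\Ndiff_{\rho,\ov\rho}\big)=\hC{\infty,1}<1$. This is in substance the computation of Pollicott--Sharp \cite{PS}, carried out there for $d=\ov d=2$ with $\rho,\ov\rho$ two points of the Teichm\"uller space of $S$.

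The hard point --- and the actual content of the remark --- is the scope of this first step. The Bowen--Series coding exists because $\piS$ is (virtually) a cocompact Fuchsian group, so $\bord\piS\simeq\circle$ carries a $\class^{1+\nu}$ expanding Markov model of the boundary action; Theorems~\ref{t.C1} and \ref{t.Zcl} are exactly what is needed to upgrade Pollicott--Sharp's computation from pairs of Fuchsian representations to arbitrary pairs of $(1,1,2)$-hyperconvex representations, i.e.\ to Theorem~\ref{tutti}. But Theorem~\ref{LCdiff} imposes no hypothesis on $\bord\G$: there $\G$ is an arbitrary word-hyperbolic group, and $\bord\G$ may be a Cantor set, a Menger curve, a Sierpi\'nski carpet or a higher-dimensional sphere. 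In that generality there is no one-dimensional expanding Markov model of the boundary action, the set $\varXi\big(\Ext_{\rho,\ov\rho}^\hol\big)$ need not lie on any Lipschitz curve, and the interval-map multifractal machinery of \cite{JKPS} has no counterpart. This is why the body of the paper proceeds instead through hyperplane conicality, Patterson--Sullivan theory and the dynamical-intersection estimates underlying Theorem~\ref{LCdiff}, none of which reference a coding of the boundary: the obstruction is essential, not technical, so the coding route genuinely cannot reach the generality of Theorem~\ref{LCdiff}.
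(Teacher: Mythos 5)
This statement is a remark, not a theorem, and the paper offers no proof for it; your task was effectively to reconstruct the intended reasoning behind the remark. Your sketch does this convincingly and, as far as the published text allows one to check, correctly: the Bowen–Series coding of $\piS\curvearrowright\bord\piS$, transported by $\xi$ and $\ov\xi$ (which are homeomorphisms onto $\class^{1+\nu}$ circles by Theorem~\ref{t.C1} and the regularity the paper quotes), produces conjugate piecewise-projective expanding Markov maps; the derivative cocycle $\log|T'_\rho|-\log|T'_{\ov\rho}|\circ\Xi$ is exactly the object JKPS analyse; Liv\v{s}ic plus non-gap-isospectrality makes it cohomologically nontrivial; and the JKPS variational formula must then be unwound into $\hC{\infty,1}$, which is what Pollicott--Sharp do in the Fuchsian case. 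The obstruction you identify to extending the method to Theorem~\ref{LCdiff} --- that a general hyperbolic boundary carries no one-dimensional expanding Markov model, so the interval-map multifractal machinery has no analogue --- is precisely what the authors mean by ``is not applicable for groups other than $\piS$.''

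One small caveat. You assign Theorem~\ref{t.Zcl} the role of excluding the gap-isospectral case; but Theorem~\ref{tutti} already assumes non-gap-isospectrality, so nothing needs excluding. The more likely intended role of Theorem~\ref{t.Zcl} in this alternative route is structural: it pins down the Zariski closure (simple, highest weight a multiple of a fundamental weight attached to a one-dimensional root space), and this is what one would lean on when matching the JKPS pressure formula, written in terms of the derivative cocycle of the Markov map, with the singular-value quantities $\hC{\slroot_1}$, $\hC{\ov\slroot_1}$, $\hC{\infty,1}$ --- an identification you only wave at and which is really the substantive step of the approach. This is a matter of emphasis rather than a gap; since the authors do not spell out the remark, your reading is one of several defensible ones.
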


\subsection{Hyperplane vs directional conicality} To prove Theorems \ref{LCdiff} and \ref{tutti} we introduce the concept  of \emph{hyperplane conicality}, a generalization of directional conicality from Burger-Landesberg-Lee-Oh \cite{BLLO}.	

Let $\sf G$ be a real-algebraic semi-simple Lie group of non-compact type, $\a\subset\ge$ a Cartan subspace, $\roots\subset\a^*$ the associated root system and $\simple\subset\roots$ a choice of simple roots with associated Weyl chamber $\a^+.$

Consider a non-empty $\t\subset\simple$ and let $\a_\t$ be the associated Levi space. Fix a $\t$-Anosov representation $\rho:\G\to\sf G$ and denote by $\cal L_{\t,\rho}\subset\a_\t$ its $\t$-limit cone. We will recall in \S\,\ref{rfr} that,  when $\rho(\G)$ is Zariski-dense, there are natural bijections 
 \begin{alignat*}{2}
	\inte\P(\cal L_{\t,\rho})&   \leftrightarrow\cal Q_{\t,\rho}=\{\varphi  \in(\a_\t)^*:\hJ\varphi=1\} \\ & \leftrightarrow \big\{\textrm{Patterson-Sullivan measures supported on $\xi^\t(\bord\G)$}\big\}.
\end{alignat*} For $\varphi\in\cal Q_{\t,\rho}$ we let $\sf u_\varphi\in \inte\P(\cal L_{\t,\rho})$ be the associated direction and $\mu^\varphi$ the associated Patterson-Sullivan measure.

Consider now a hyperplane $\sf W\subset\a_\t$ and assume, for the notion to be interesting, that $\sf W$ intersects the relative interior of $\calL_{\t,\rho}.$ Then $x\in\bord\G$ is \emph{$\sf W$-conical} if there exists a conical sequence $(\g_n)_0^\infty\subset\G$ converging to $x,$ a constant $K$ and a sequence $(w_n)_0^\infty\in\sf  W$ such that for all $n$ one has 
$$\big\|\cartan_\t\big(\rho(\g_n))- w_n\big\|\leq K,$$ where $\cartan_\t:\sf G\to\a^+_\t$ is the $\t$-Cartan projection. The set of $W$-conical points will be denoted by $\bord_{\sf W,\rho}\G=\bord_{\sf W}\G.$ Inspired by \cite{BLLO}, in Theorem \ref{ps-sandwich} we show the following.

\begin{thmA}\label{Ahyper} Let $\rho:\G\to\sf G$ be a Zariski-dense $\t$-Anosov representation and $\sf W$ be a hyperplane of $\a_\t$ intersecting non-trivially the interior of $\calL_{\t,\rho}.$ Then for every $\varphi\in\cal Q_{\t,\rho}$ with $\sf u_\varphi\in\P(\sf W)$ one has $\mu^\varphi(\bord_\sf W\G)=1.$ 
\end{thmA}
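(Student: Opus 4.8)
The plan is to mimic the directional-conicality argument of Burger–Landesberg–Lee–Oh, but to replace the single direction $\sf u_\varphi$ by the whole hyperplane $\sf W$, exploiting that the codimension-one condition is much weaker than the codimension-full condition of directional conicality. First I would reduce the statement to a Borel–Cantelli-type estimate: writing $\bord_{\sf W}\G$ as an intersection over $K\in\N$ of the sets $\calO_K$ of points admitting a conical sequence $(\g_n)$ with $\mathrm{dist}\big(\cartan_\t(\rho(\g_n)),\sf W\big)\le K$, it suffices to show $\mu^\varphi(\calO_K)=1$ for $K$ large enough, and for this, by ergodicity of the $\G$-action on $(\bord\G,\mu^\varphi)$ together with the shadow lemma, it is enough to show that the $\mu^\varphi$-measure of the union of shadows of $\cartan$-balls centered along $\sf W$ is bounded below by a positive constant independent of the scale. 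Concretely, fix a large radius $R$; I would look at the set $F_R(K)=\{\g\in\G: \|\cartan_\t(\rho(\g))\|\le R,\ \mathrm{dist}(\cartan_\t(\rho(\g)),\sf W)\le K\}$ and the associated family of shadows $\{\calO_r(\g): \g\in F_R(K)\}$ for an appropriate $r=r(K)$, and estimate $\mu^\varphi$ of their union.

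The key input is the Patterson–Sullivan / shadow lemma in the $\t$-Anosov setting (valid because $\rho$ is $\t$-Anosov and Zariski-dense, so $\mu^\varphi$ is the unique $(\varphi,h_\varphi)$-conformal density on $\xi^\t(\bord\G)$ with $h_\varphi=1$): there are constants so that $\mu^\varphi\big(\calO_r(\g)\big)\asymp e^{-\varphi(\cartan_\t(\rho(\g)))}$ up to a factor depending on $r$. Summing over $\g\in F_R(K)$ one gets a lower bound of the shape $\sum_{\g\in F_R(K)} e^{-\varphi(\cartan_\t(\rho(\g)))}$, and the crucial point is that this sum \emph{diverges} as $R\to\infty$ uniformly in a way that beats the overlap multiplicity of the shadows. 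This is where the hyperplane hypothesis enters decisively: since $\sf u_\varphi\in\P(\sf W)$, the linear functional $\varphi$ restricted to the slab $\{\mathrm{dist}(\,\cdot\,,\sf W)\le K\}$ does \emph{not} grow like $\|\cartan_\t\|$; the growth indicator $\psi_\rho$ of $\rho$ on a neighborhood of $\sf W$ is, at the relevant direction $\sf u_\varphi$, tangent to $\varphi$ (because $\sf u_\varphi$ is the unique maximizing direction for the concave function $\psi_\rho-\varphi$, attained precisely at $\sf u_\varphi\in\sf W$), so the number of $\g$ with $\cartan_\t(\rho(\g))$ in the slab and $\varphi(\cartan_\t(\rho(\g)))\le t$ grows roughly like $e^{t}$, making $\sum e^{-\varphi(\cartan_\t(\rho(\g)))}$ over the slab divergent. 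One then feeds this divergence into a Borel–Cantelli argument, using a bounded-overlap / Besicovitch-type covering bound for the shadows to convert divergence of the series into $\mu^\varphi(\limsup)=1$, and a $0$–$1$ law coming from ergodicity to upgrade ``positive measure'' to ``full measure''.

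The main obstacle I expect is the quantitative counting estimate in the slab around $\sf W$: I need that the orbital counting function restricted to the slab, weighted by $e^{-\varphi}$, genuinely diverges, and this requires a local large-deviation / Gibbs-measure computation for the geodesic flow (or the reparametrized flow whose pressure realizes $h_\varphi=1$) showing that the Patterson–Sullivan measure gives the directions near $\sf u_\varphi$ — and in particular those lying in $\sf W$ — full ``entropy'', i.e. that the conditional measure on the slab is not exponentially small. Equivalently, one must show the flow-invariant measure $\mm^\varphi$ associated to $\mu^\varphi$ has its Cartan-projection cocycle concentrated near $\sf u_\varphi\in\sf W$, so that $\mm^\varphi$-a.e. orbit returns to the slab with positive frequency; a clean way to package this is via the ergodic theorem applied to the $\a_\t$-valued cocycle $\g\mapsto\cartan_\t(\rho(\g))$ along $\mu^\varphi$-typical geodesics, whose Birkhoff average is forced to be proportional to $\sf u_\varphi$ and hence to lie in $\sf W$. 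Granting this concentration, the shadow lemma plus a covering argument close the proof; the rest is routine bookkeeping with the constants $K$, $r$, $R$.
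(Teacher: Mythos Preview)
Your Borel--Cantelli / shadow-lemma framework is a reasonable way to organize the problem, but the proposal has a genuine gap at precisely the step you flag as the ``main obstacle''. You propose to obtain the needed concentration near $\sf W$ via the Birkhoff ergodic theorem for the $\a_\t$-valued cocycle: the average $\tfrac{1}{n}\cartan_\t(\rho(\alpha_n))$ along a $\mu^\varphi$-typical ray tends to $\sf u_\varphi\in\sf W$. But this only yields $\mathrm{dist}\big(\cartan_\t(\rho(\alpha_n)),\sf W\big)=o(n)$, not $O(1)$. A centered cocycle can (and generically does) have fluctuations of order $\sqrt{n}$ transverse to its mean direction, so nothing in Birkhoff prevents the trajectory from drifting away from $\sf W$ forever while staying sublinear. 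Consequently neither the divergence of the slab-restricted series $\sum_{\g\in F_R(K)} e^{-\varphi(\cartan_\t(\rho(\g)))}$ nor the ``positive-frequency return to the slab'' follows from the ergodic theorem. (There is also a minor slip: $\bord_{\sf W}\G$ is the \emph{union} of the $\calO_K$, not the intersection; and for the divergence direction of Borel--Cantelli you need a quasi-independence estimate on shadows, not merely bounded overlap.)

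What is actually required is recurrence of a \emph{one}-dimensional centered cocycle, namely the component of $\cartan_\t$ transverse to $\sf W$ (after centering at $\sf u_\varphi$). The paper obtains this not through counting but by a flow construction: it quotients $\a_\t$ by $\sf W_\varphi=\sf W\cap\ker\varphi$ to get a two-dimensional space $V$, pushes the refraction cocycle down to a $V$-valued cocycle $\cc$, and considers the associated directional flow $\df^\varphi$ on $\G\backslash(\bord^2\G\times V)$. Because $\dim V=2$, the ergodic dichotomy for such flows (Corollary~\ref{d=2}, ultimately a manifestation of recurrence of one-dimensional random walks) gives that the recurrence set $\cal K(\df^\varphi)$ has full $\BM^\varphi$-measure. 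The paper then identifies $\sf W$-conical points with $\sf u^\cc_\varphi$-conical points in $V$, and these in turn with points whose $\bord^2\G\times V$-lift lies in $\widetilde{\cal K}(\df^\varphi)$ (Proposition~\ref{conK}), from which $\mu^\varphi(\bord_{\sf W}\G)=1$ follows by disintegrating $\BM^\varphi$. If you want to salvage your approach, you would need to import exactly this rank-2 recurrence statement as a black box; the ergodic theorem alone will not produce it.
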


\subsection{Strategy of the proof of Corollary \ref{hilbert}}\label{specialcases} \label{corAProof}
Corollary \ref{hilbert} is  a consequence of Theorem \ref{tutti} where $\ov\rho$ is the dual representation of $\rho$. We sketch a direct  proof of Corollary \ref{hilbert} serving as a guide-path for the general result.

	Let  $\rho:\piS\to\SL(3,\R)$ be the holonomy of a strictly convex projective structure dividing the convex set $\Omega$.  We consider  the $L^\infty$ distance on the product $(\P(\R^3),d_\P)\times(\P((\R^3)^*),d_{\P}),$ which is equivalent to the Riemannian distance, and thus induces the same Hausdorff dimension.

As a replacement of Sullivan's shadows we use \emph{coarse cone type at infinity}, inspired by Cannon's work on \emph{cone types}  \cite{CannonCones} (see also \S\ref{cont}). Fix a finite symmetric generating set on $\piS$ and let $|\,|$ be the associated word length. For $\g\in\piS$ and $c>0$, the  \emph{coarse cone type at infinity} $\cone^{c}_\infty(\g)$ of  $\gamma$ is the set of endpoints at infinity of $(c,c)$-quasi geodesic rays based at $\g^{-1}$ passing through the identity.  See Figure \ref{figure:conetype}.

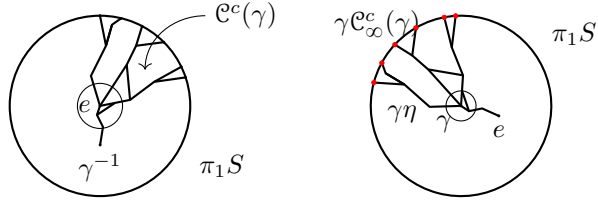
\begin{figure}[hh]
	\centering 
	\begin{tikzpicture}
	
	\begin{scope}[scale=0.4, rotate=75]
	\draw circle [radius=.5];
	\draw [thick] circle [radius = 3];
	\draw [fill] circle [radius = 0.03];	
	\draw[thick] (-0.1,-0.3 ) -- (0.5,0.1); 
	\node [below left] at (0,0) {$\g$};
	\node [below left] at (0,1.2) {$\g\eta$};
	\node [above] at (1.2,3.2) {$\g\cone^{c}_{\infty}(\g)$};
	\draw [thick] (0.004,-1.3) -- (0.1,-0.7) -- (-0.1,-0.3 ) -- (0,0);
	\draw [thick] (0,0) -- (-0.3,1) -- (0.2,2) -- (0,3);
	\draw [thick] (0.2,2) -- (0.4, 2.7) -- (0.7,2.91);
	\draw [thick] (0.2,2) -- (0.4, 2.7);
	\draw [thick] (1,0.2) -- (0.89,1.37);
	\draw [thick] (1.8,0.8) -- (2.7,1.3);
	\draw [thick] (0,0) -- (1,0.2) -- (1.8,0.8) -- (2.846,0.948);
	\node [below] at (0.004,-1.3) {$e$};
	\draw [fill] (0.7,2.91) circle [radius = 0.03];
	\draw [fill] (2.7,1.3) circle [radius = 0.03];
	\draw [fill] (0.004,-1.3) circle [radius = 0.03];
	\draw [fill] (2.12,2.12) circle [radius = 0.03];
	\draw [fill] (0,3) circle [radius = 0.03];	
	\draw [fill] (2.846,0.948) circle [radius = 0.03];
	\draw [thick] (0,0) -- (0.89,1.37) -- (1.2,2) -- (2.12,2.12);
	\draw [thick] (1.2,2) -- (1.4,2.65);
	\draw [fill] (1.4,2.65) circle [radius = 0.03];
		\node [right] at (3,-2) {$\piS$};
 \filldraw [red](0,3) circle [radius=2pt];
 \filldraw [red](0.7,2.91) circle [radius=2pt];
\filldraw [red](2.846,0.948) circle [radius=2pt];
\filldraw [red](2.7,1.3) circle [radius=2pt];
\filldraw [red](2.12,2.12) circle [radius=2pt];
\filldraw [red] (1.4,2.65) circle [radius=2pt];
\end{scope}

\begin{scope}[scale=0.4,shift = {(-12,0)}]
	\draw [thick] circle [radius = 3];
	\draw circle [radius=.75];
	\draw [fill] circle [radius = 0.03];	
	\node [left] at (0,0) {$e$};
	\draw[thick] (-0.1,-0.3 ) -- (0.5,0.1); 
	\draw [thick] (0.004,-1.3) -- (0.1,-0.7) -- (-0.1,-0.3 ) -- (0,0);
	\draw [thick] (0,0) -- (-0.3,1) -- (0.2,2) -- (0,3);
	\draw [thick] (0.2,2) -- (0.4, 2.7) -- (0.7,2.91);
	\draw [thick] (1,0.2) -- (0.89,1.37);
	\draw [thick] (0.2,2) -- (0.4, 2.7);
	\draw [thick] (1.8,0.8) -- (2.7,1.3);
	\draw [thick] (0,0) -- (1,0.2) -- (1.8,0.8) -- (2.846,0.948);
	\node [below] at (0.004,-1.3) {$\g^{-1}$};
	\draw [fill] (0.7,2.91) circle [radius = 0.03];
	\draw [fill] (2.7,1.3) circle [radius = 0.03];
	\draw [fill] (0.004,-1.3) circle [radius = 0.03];
	\draw [fill] (2.12,2.12) circle [radius = 0.03];
	\draw [fill] (0,3) circle [radius = 0.03];	
	\draw [fill] (2.846,0.948) circle [radius = 0.03];	
	\draw [thick] (0,0) -- (0.89,1.37) -- (1.2,2) -- (2.12,2.12);
	\draw [thick] (1.2,2) -- (1.4,2.65);
	\draw [fill] (1.4,2.65) circle [radius = 0.03];
	\node [right] at (3,-2) {$\piS$};
	\draw [->] (3.5,3) to [out = 180,in = 90] (1.5,1.2);
	\node [right] at (3.5,3) {$\cone^{c}(\g)$};
	\end{scope}
	\end{tikzpicture}
	\caption{The coarse cone type  of $\g\in\G$ (left). The set $\g\cdot\cone^{c}_\infty(\g)$ (right). Pictures  from P.-S.-Wienhard \cite{PSW1}.}\label{figure:conetype}
\end{figure}

We let $\xi:\bord\piS\to\bord\Omega$ be the natural identification via the action of $\rho(\piS)$ on $\Omega,$ and analogously $\ov\xi:\piS\to\bord\Omega^*.$ We denote by $\varXi:=(\xi,\ov \xi): \piS\to\bord\Omega\times\bord\Omega^*$ the flag curve. Consider $x\in\bord\piS$ and let $\alpha_i\to x$ be a geodesic ray on $\piS.$ The following fact is a consequence of Proposition \ref{conetypesBalls}.

\begin{fact} For big enough $i,$ the subset $\xi\big(\alpha_i\cone^{c}_\infty(\alpha_i)\big)\subset\bord\Omega$ is coarsely the intersection of a ball of radius $e^{-\slroot_1(\alpha_i)}$ about $\xi(x)$ with $\bord\Omega.$ By duality, one has $\ov\xi\big(\alpha_i\cone^{c}_\infty(\alpha_i)\big)\subset\bord\Omega^*$ is coarsely the intersection of a ball of radius $e^{-\slroot_2(\alpha_i)}$ about $\ov\xi(x)$ with $\bord\Omega^*.$
\end{fact}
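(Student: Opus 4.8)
The plan is to prove the first assertion (the one about $\xi$ and $\bord\Omega$) and then recover the second one for free by passing to the dual.

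\emph{Step 1: reduction to a shadow.} By definition $\cone^c_\infty(\alpha_i)$ is the set of endpoints of $(c,c)$-quasigeodesic rays based at $\alpha_i^{-1}$ and passing through the identity; since left translation by $\alpha_i$ is an isometry of the Cayley graph, $\alpha_i\cone^c_\infty(\alpha_i)\subset\bord\G$ is precisely the shadow $\calO_c(\alpha_i)$ of $\alpha_i$ seen from $e$, that is, the set of endpoints of $(c,c)$-quasigeodesic rays based at $e$ that pass within bounded distance of $\alpha_i$. Since the geodesic ray $[e,x)$ passes through $\alpha_i$, we have $x\in\alpha_i\cone^c_\infty(\alpha_i)$, hence $\xi(x)\in\xi\big(\alpha_i\cone^c_\infty(\alpha_i)\big)$; moreover, as $i$ increases these shadows are coarsely nested and shrink to $\{x\}$.

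\emph{Step 2: shadow versus metric ball.} Since $\rho$ divides a strictly convex set, $\G$ is word-hyperbolic, $\rho$ is $\{\slroot_1\}$-Anosov, and $\xi=\xi^1_\rho$ is a homeomorphism onto $\bord\Omega$. I would then invoke Proposition~\ref{conetypesBalls}, specialised to $\SL(3,\R)$ and the simple root $\slroot_1$: it provides a constant $\kappa=\kappa(\rho,c)\geq1$ such that for all sufficiently large $i$
\[
B\big(\xi(x),\ \kappa^{-1}e^{-\slroot_1(\cartan(\rho(\alpha_i)))}\big)\cap\bord\Omega\ \subseteq\ \xi\big(\alpha_i\cone^c_\infty(\alpha_i)\big)\ \subseteq\ B\big(\xi(x),\ \kappa\,e^{-\slroot_1(\cartan(\rho(\alpha_i)))}\big)\cap\bord\Omega,
\]
which is the first assertion (with $\slroot_1(\alpha_i)$ abbreviating $\slroot_1(\cartan(\rho(\alpha_i)))$, as elsewhere in the paper). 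Heuristically this is the shadow lemma for $\{\slroot_1\}$-Anosov representations: $\rho(\alpha_i)$ sends $\P(\R^3)$, away from a fixed-size neighbourhood of its stable hyperplane, into a ball of radius comparable to $e^{-\slroot_1(\cartan(\rho(\alpha_i)))}$ about its Cartan attracting direction, while $\xi(\cone^c_\infty(\alpha_i))$ stays uniformly outside that neighbourhood. The delicate point is the recentering at $\xi(x)$: a priori Proposition~\ref{conetypesBalls} produces a ball about the Cartan attracting direction of $\rho(\alpha_i)$, and one must know that $\xi(x)$ lies at depth comparable to $e^{-\slroot_1(\cartan(\rho(\alpha_i)))}$ inside $\xi(\calO_c(\alpha_i))$, not merely in it; this is exactly what is gained from $x$ lying on the geodesic ray through every $\alpha_i$ (the nested-shadow picture of Step~1), and it is part of what Proposition~\ref{conetypesBalls} records.

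\emph{Step 3: duality.} For the second assertion apply the first to the contragredient representation $\ov\rho={}^t\rho^{-1}:\piS\to\SL(3,\R)$, which divides the dual convex set $\Omega^*$. The group $\piS$, the generating set, the geodesic ray $(\alpha_i)$ and the sets $\cone^c_\infty(\alpha_i)$ are unchanged; the limit map $\xi^1_{\ov\rho}$ sends $y$ to the line of linear functionals vanishing on the plane $\xi^2_\rho(y)$, so it coincides with $\ov\xi$ under $\grass_2(\R^3)\simeq\P((\R^3)^*)$; and since the singular values of ${}^tg^{-1}$ are the reciprocals of those of $g$ in reverse order, $\slroot_1(\cartan(\ov\rho(\g)))=\slroot_2(\cartan(\rho(\g)))$ for every $\g$. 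Plugging this into the first assertion shows that $\ov\xi\big(\alpha_i\cone^c_\infty(\alpha_i)\big)$ is coarsely $B\big(\ov\xi(x),\ e^{-\slroot_2(\alpha_i)}\big)\cap\bord\Omega^*$, as claimed. The genuinely non-formal ingredient is Step~2, namely Proposition~\ref{conetypesBalls}: pinning a combinatorial shadow to a round metric ball of the precise radius $e^{-\slroot_1(\cartan(\rho(\alpha_i)))}$, two-sidedly and centred at $\xi(x)$, requires the quantitative contraction estimates for $\{\slroot_1\}$-Anosov representations together with the depth of $\xi(x)$ supplied by the geodesic ray; Steps~1 and~3 are bookkeeping.
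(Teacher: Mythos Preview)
Your proposal is correct and follows the paper's own approach: the paper simply declares the Fact to be ``a consequence of Proposition~\ref{conetypesBalls}'', which is exactly your Step~2, and the second assertion is obtained by duality just as you do in Step~3 (the paper compresses this to the words ``By duality''). Your Step~1 and the recentering discussion are accurate elaborations of what Proposition~\ref{conetypesBalls} already packages.
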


The coarse constants and the minimal length $i$ required in the above statement depend only on the representation and not on the point $x.$ 

\begin{figure}[h]\centering
\begin{tikzpicture}

\begin{scope}[rotate=0,scale=0.7,shift={(-3,-2)}]

\draw (0,0) rectangle (6,4);
\fill (3,0) circle (2pt);
\fill (0,2) circle (2pt);
\fill (3,2) circle (2pt);

%(3,2+5/8)

\draw[name path=c1,-] (0,0) ..  controls (1,2)  and (2,1.7)..   (3,2);
\draw[name path=c2,-] (3,2) .. controls (4.5,2.5) and (4.5,3) .. (6,4);

\draw[name path=l1,white] (2,0.2)--(2,3.8);
\draw[name path=l2,white] (4,0.2)--(4,3.8);

\draw[name intersections={of=c1 and l1, by={a}},
name intersections={of=c2 and l2, by={b}}] (a) rectangle (b);

\node [above left] at (b) {\footnotesize{$e^{-\slroot_1(\rho\alpha_i)}$}};
\node [below right] at (b) {\footnotesize{$e^{-\slroot_2(\rho\alpha_i)}$}};
%\draw [name path=slroot2] (0,0) -- (120:5cm);
%\draw (0,0) -- (180:2cm);

%\draw (0,0) -- (300:2cm);
%\draw [name path=slroot1] (0,0) -- (360:5cm);
%\node at (5,.5) [above] {$\a^*$};

%\fill [name intersections={of=Q and slroot2, by={a}}] (a) circle (2pt);
%\node [above right] at (a) {$\slroot_2$};

\begin{scope}[shift={(3,2)}]
%\begin{scope}[shift={(-15/16,-5/8)}]
%\draw (0,0) rectangle (15/8,5/4);
%\end{scope}
%\node [below] at (0,-0.08) {\footnotesize{$\Xi(x)$}};
\end{scope}

\node [above right] at (3,0) {$\footnotesize{\xi(x)}$};
\node [above right] at (0,2) {$\footnotesize{\ov\xi(x)}$};

%\draw[dashed] (0.3,0) -- (0.3,-2.5);
%\end{scope}

%\begin{scope}[xslant=1,shift={(0,0)}]%blue
%\filldraw[fill=gray!45,  draw=black] (0,0.8) rectangle (5,4.5);
%\draw[dashed] (4.75,0.8) -- (4.75,4.5);
%
%\draw[color=black] (4,0) -- (4,1.5);
%\draw[color=black,>-] (4,1.5) -- (4,3.15);
%\end{scope}

%\begin{scope}[xslant=0.2,yslant=0.1,shift={(2.2,2.5)}]%red
%\filldraw[fill=gray!10, draw=black] (0,0) rectangle (5.4,3.5);
%\draw[dashed] (0.3,0) -- (0.3,3.5);
%\draw[->] (1.5,0) -- (3,0);
%\draw[->] (0.5,0.5) .. controls (4,1) .. (4.5,3);
%\node[above] at (2.2,0.3) {$\umargulis(f)$};
%\draw[color=black] (4.32,0) -- (4.32,1.5);
%\draw[color=black,>-] (4.32,1.5) -- (4.32,3.15);
%\end{scope}

%\draw[thick,->] (1.4,2) .. controls (7,3.5) .. (8,7);

\end{scope}

\end{tikzpicture}
\caption{The image of the cone type $\alpha_i\cone^{c}_\infty(\alpha_i)$ by the graph curve $\varXi$ in the $\class^{1+\nu}$-torus $\bord\Omega\times\bord\Omega^*.$}\label{figg}
\end{figure}

For any point $x\in\bord\piS$ we  distinguish two complementary situations that don't depend on the choice of the geodesic ray $(\alpha_i)_{i\in\N}$ converging to $x$:

\begin{itemize}
	\item[i)] For all $R>0$ there exists $N\in\N$ with $|\slroot_1(\cartan(\alpha_i))-\slroot_2(\cartan(\alpha_i))|\geq R$ for all $i\geq N;$
	\item[ii)] There exists $R>0$ and an infinite set of indices $\I\subset\N$ such that for all $k\in\I$ one has $|\slroot_1(\cartan(\alpha_k))-\slroot_2(\cartan(\alpha_k))|\leq R.$ We say in this case that $x$ is $\flat$\emph{-conical} ($\flat$ stands for 'barycenter of the chamber').
\end{itemize}

In the first case one is easily convinced by looking at Figure \ref{figg} that the rectangle becomes flatter along one of its sides  (see \S\,\ref{s.Hffproof} for details in the general case). Furthermore, since $\slroot_1(\cartan(\alpha_i))-\slroot_1(\cartan(\alpha_{i+1}))$ is uniformly bounded, its sign is eventually constant, and thus the longer side only depends on the point. As a result $x$ is necessarily a differentiability point of the graph curve $\varXi,$ with either horizontal or vertical derivative. 

We are thus bound to understand the set of $\flat$-conical points. We show (see Corollary \ref{nondiffbconicalR}):

\begin{fact}
	The non-differentiabilty points of the curve $\varXi(\bord\piS)$ and the $\flat$-conical points coincide.
\end{fact}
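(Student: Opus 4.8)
The goal is to show that the non-differentiability points of the Lipschitz curve $\varXi(\bord\piS)\subset\bord\Omega\times\bord\Omega^*$ are exactly the images of the $\flat$-conical points. I would prove the two inclusions separately, using the geometric picture from the \emph{Fact} above (Figure \ref{figg}): for a geodesic ray $\alpha_i\to x$, the set $\varXi\big(\xi(\alpha_i\cone^c_\infty(\alpha_i))\big)$ is, coarsely, the intersection of $\bord\Omega\times\bord\Omega^*$ with a box of dimensions $e^{-\slroot_1(\cartan(\alpha_i))}\times e^{-\slroot_2(\cartan(\alpha_i))}$ around $\varXi(x)$, and as $i$ ranges over $\N$ these boxes form a \emph{coarsely nested} neighborhood basis of $\varXi(x)$ inside the $\class^{1+\nu}$-torus. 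The slope of the box is $e^{\slroot_1(\cartan(\alpha_i))-\slroot_2(\cartan(\alpha_i))}$, measured against the horizontal and vertical factors which, because $\bord\Omega$ and $\bord\Omega^*$ are $\class^1$ and the curve is monotone, are (up to bi-Lipschitz change of chart) the coordinate axes of the tangent space to the torus at $\varXi(x)$.

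\emph{Step 1: a $\flat$-conical point is a non-differentiability point.} If $x$ is $\flat$-conical, pick the infinite index set $\I$ and $R>0$ with $|\slroot_1(\cartan(\alpha_k))-\slroot_2(\cartan(\alpha_k))|\le R$ for $k\in\I$; then for $k\in\I$ the box around $\varXi(x)$ has bounded eccentricity, so it contains points $\varXi(y_k)$, $\varXi(z_k)$ with $\varXi(y_k)-\varXi(x)$ pointing (coarsely) horizontally and $\varXi(z_k)-\varXi(x)$ pointing (coarsely) vertically, both at comparable scale $\asymp e^{-\slroot_1(\cartan(\alpha_k))}$. Concretely one takes $y_k,z_k\in\alpha_k\cone^c_\infty(\alpha_k)$ realizing, respectively, the $\xi$-extremal and $\ov\xi$-extremal positions in the cone type (these exist because the cone type's image fills the box coarsely). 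Along these two subsequences the secant lines of $\varXi(\bord\piS)$ at $\varXi(x)$ converge to the horizontal and the vertical directions respectively — two distinct limits — so the curve has no tangent at $\varXi(x)$. (One must check the secants genuinely approach those two directions and not merely stay in two fixed cones; this follows by combining the coarse-box description with the $\class^1$-regularity of the two factor circles, so that the "coarse horizontal/vertical" directions converge to the actual tangent directions of the factors as $k\to\infty$, $k\in\I$.)

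\emph{Step 2: a non-$\flat$-conical point is a differentiability point.} This is the content already sketched after the itemized dichotomy: if $x$ is not $\flat$-conical it satisfies alternative (i), i.e. $|\slroot_1(\cartan(\alpha_i))-\slroot_2(\cartan(\alpha_i))|\to\infty$. Because consecutive Cartan projections along a geodesic differ by a bounded amount (a standard consequence of $\{\slroot_1\}$-Anosov together with the word-length comparison, or directly from the fellow-traveling of $\alpha_i$ and $\alpha_{i+1}$), the sign of $\slroot_1(\cartan(\alpha_i))-\slroot_2(\cartan(\alpha_i))$ is eventually constant and depends only on $x$ (not on the ray, since any two rays to $x$ fellow-travel). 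Say it is eventually positive; then the boxes become arbitrarily flat in the vertical direction: any $y\in\alpha_i\cone^c_\infty(\alpha_i)$ satisfies $d_\P(\ov\xi(x),\ov\xi(y))\lesssim e^{-\slroot_2(\cartan(\alpha_i))}=o\big(e^{-\slroot_1(\cartan(\alpha_i))}\big)=o\big(d_\P(\xi(x),\xi(y))\big)$ once the coarse ball description is inverted to control $d_\P(\xi(x),\xi(y))$ from below. Since the $\xi(\alpha_i\cone^c_\infty(\alpha_i))$ exhaust a neighborhood of $\xi(x)$ in $\bord\Omega$, this shows that for \emph{every} sequence $y_n\to x$ the incremental quotient $d_\P(\ov\xi(x),\ov\xi(y_n))/d_\P(\xi(x),\xi(y_n))\to 0$, i.e. $\varXi(\bord\piS)$ is differentiable at $\varXi(x)$ with horizontal tangent (and the symmetric statement, vertical tangent, in the eventually-negative case). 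Here I am using that $\bord\Omega$ and $\bord\Omega^*$ are $\class^1$ to identify "slope $\to 0$ in the coarse box coordinates" with "the secant directions converge to the tangent line of the $\bord\Omega$-factor".

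\emph{Main obstacle.} The delicate point is Step 2's quantitative control: upgrading the purely coarse statement "$\xi(\alpha_i\cone^c_\infty(\alpha_i))$ is comparable to a ball of radius $e^{-\slroot_1(\cartan(\alpha_i))}$" into a genuine \emph{lower} bound $d_\P(\xi(x),\xi(y))\gtrsim e^{-\slroot_1(\cartan(\alpha_i))}$ valid for the worst $y$ that still forces the comparison with $d_\P(\ov\xi(x),\ov\xi(y))$ — equivalently, making sure the "flatness" of the box is not an artifact of a point $y$ sitting near the bad corner. I expect this to be handled exactly as in Proposition \ref{conetypesBalls} / the coarse-cone-type machinery: one refines the box description so that when $y\in\alpha_i\cone^c_\infty(\alpha_i)\setminus\alpha_{i+1}\cone^c_\infty(\alpha_{i+1})$ one has \emph{two-sided} comparisons $d_\P(\xi(x),\xi(y))\asymp e^{-\slroot_1(\cartan(\alpha_i))}$ and $d_\P(\ov\xi(x),\ov\xi(y))\asymp e^{-\slroot_2(\cartan(\alpha_i))}$ simultaneously, after which the ratio tends to $0$ along the nested annuli and hence along any sequence. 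Everything else is bookkeeping with the $\class^1$-regularity of the two factors and the fellow-traveling of geodesic rays.
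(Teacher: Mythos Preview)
Your Step 2 is fine and matches the paper's approach (this is essentially Lemma \ref{generalcase} in the non-$\flat$-conical direction, combined with monotonicity to turn ``incremental quotient $\to 0$ or $\infty$'' into genuine differentiability with horizontal or vertical tangent).

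Step 1, however, has a genuine gap. You claim that inside a box of bounded eccentricity around $\varXi(x)$ one can find points $y_k,z_k$ on the curve whose secants from $\varXi(x)$ point, respectively, coarsely horizontally and coarsely vertically. But a monotone curve through a square need not have secants in both of those directions: the diagonal $t\mapsto(t,t)$ has all secants of slope $1$. More to the point, if $\Omega$ \emph{is} an ellipse then $\slroot_1(\cartan(\cdot))=\slroot_2(\cartan(\cdot))$ everywhere, so \emph{every} boundary point is $\flat$-conical, yet the flag curve is analytic and differentiable everywhere. Your Step 1 would thus prove a false statement; the hypothesis ``$\Omega$ is not an ellipse'' is essential and your argument never uses it.

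The paper's route for Step 1 is different and uses that hypothesis in a crucial way. From $\flat$-conicality one only extracts (via the box picture, as in Lemma \ref{generalcase}) a sequence $y_k\to x$ along which the incremental quotient $d_\P(\ov\xi(x),\ov\xi(y_k))/d_\P(\xi(x),\xi(y_k))$ stays bounded away from $0$ and $\infty$. Now argue by contradiction: if $\varXi$ were differentiable at $x$, this forces the derivative to be \emph{oblique}. The key non-trivial input is Proposition \ref{c.indepPer}: a single point with oblique derivative upgrades $\Xi$ to a global bi-Lipschitz map. Combined with Lemma \ref{lambda2} this forces $\slroot_1(\lambda(\g))=\slroot_2(\lambda(\g))$ for all $\g$, i.e.\ the representation is Fuchsian, contradicting that $\Omega$ is not an ellipse. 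So the missing ingredient in your plan is precisely this ``oblique derivative at one point $\Rightarrow$ bi-Lipschitz $\Rightarrow$ gap-isospectral'' step.
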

The main idea for this is to show that if a $\flat$-conical point $x$ were a differentiability point, then the derivative could not be horizontal nor vertical, and thus (by Proposition \ref{c.indepPer}) $\Xi$ would be bi-Lipschitz. In turn, this would force the periods of the two roots to agree, which in turn would imply that the representation is Fuchsian, contradicting the assumption that $\Omega$ is not an ellipse.

It remains to understand the Hausdorff dimension of the set of $\flat$-conical points. The upper bound (Proposition \ref{upper}) 
\begin{equation}\label{u}
	\Hff\big(\{\flat-\textrm{conical}\}\big)\leq\hC{\max\{\slroot_1,\slroot_2\}}
\end{equation} 

\noindent follows readily: since for a $\flat$-conical point the lengths $e^{-\slroot_1(\alpha_k)}$ and $e^{-\slroot_2(\alpha_k)}$ are comparable independently on $k\in\I,$ one can replace the rectangle in Figure \ref{figg} by the (smaller) square of length $$e^{-\max\{\slroot_1(\cartan(\alpha_k)),\slroot_2(\cartan(\alpha_k))\}}$$ 

\noindent and still get a covering\footnote{Choosing the longer side $e^{-\min\{\slroot_1(\cartan(\alpha_k)),\slroot_2(\cartan(\alpha_k))\}}$ gives the bound {$\Hff\varXi(\bord\piS)\leq 1.$}} (this time by balls on the $L^\infty$ metric) of the set $\{\flat-\textrm{conical}\}.$ Standard arguments on Hausdorff dimension give  Equation \eqref{u}.

Finding a lower bound for the Hausdorff dimension is more subtle; we use here an appropriate Patterson-Sullivan measure  to study how the mass of a ball of radius $r$ scales with $r$.

Since $\varXi(\bord\piS)$ is a subset the full flag space $\cal F(\R^3)$ and $$\|v\|_\infty:=\max\{|\slroot_1(v)|,|\slroot_2(v)|\}$$ is a norm on $\a_{\PSL(3,\R)},$ we can apply results by Quint \cite{Quint-Div} to determine a linear form $\varphi^\infty_\flat\in\a^*$ whose associated growth direction is the barycenter  $\scr{b}=\ker(\slroot_1-\slroot_2)$. By Quint \cite[Proposition 3.3.3]{Quint-Div}
$$	\hC{\max\{\slroot_1,\slroot_2\}}=\|\varphi^\infty_\flat\|^1,$$
\noindent where $\|\,\|^1$ is the operator norm on $\a^*$ defined by $\|\,\|_\infty,$ which turns out to be the $L^1$ norm $\|a\slroot_1+b\slroot_2\|^1=|a|+|b|.$
The form $\varphi^\infty_\flat$ additionally admits an associated \emph{Patterson-Sullivan} probability measure, namely a measure  $\ps^\infty$ such that for all $\g\in\piS$ one has (see Corollary \ref{existe})
\begin{equation}\label{radi}
	\ps^\infty\big(\varXi(\g\cone^{c}_\infty(\g))\big)\leq Ce^{-\varphi^\infty_\flat(\cartan(\g))}.
\end{equation}

A key extra information available in the case  of $\PSL(3,\R)$ is that the form $\varphi^\infty_\flat$ is explicit and doesn't depend on $\rho$. For this we need a small parenthesis on the \emph{critical hypersurface} $\cal Q_\rho$ of $\rho,$ depicted in Figure \ref{calQ},
and  characterized by $$\cal Q_\rho=\big\{\varphi\in\a^*: \hC\varphi =1\big\},$$ 
where the \emph{critical exponent} of a functional $\varphi\in\a^*$ is $$\hC\varphi:=\lim_{t\to\infty}\frac1t\log\#\big\{\g\in\piS:\varphi(\cartan(\g))\leq t\}\in(0,\infty].$$ 
\noindent
The critical hypersurface $\cal Q_\rho\subset\a^*$ is a closed analytic curve that bounds a strictly convex set (S. \cite{exponential} and Potrie-S. \cite{exponentecritico}), and thus by Quint \cite{Quint-Div}, the linear form $\varphi^\infty_\flat$ is uniquely determined by 
\begin{equation}\label{vi=norm}
\|\varphi^\infty_\flat\|^1=\inf\big\{\|\varphi\|^1:\varphi\in\cal Q_\rho\big\}.
\end{equation}

\begin{figure}[ht!]\centering

\includegraphics[width=90mm]{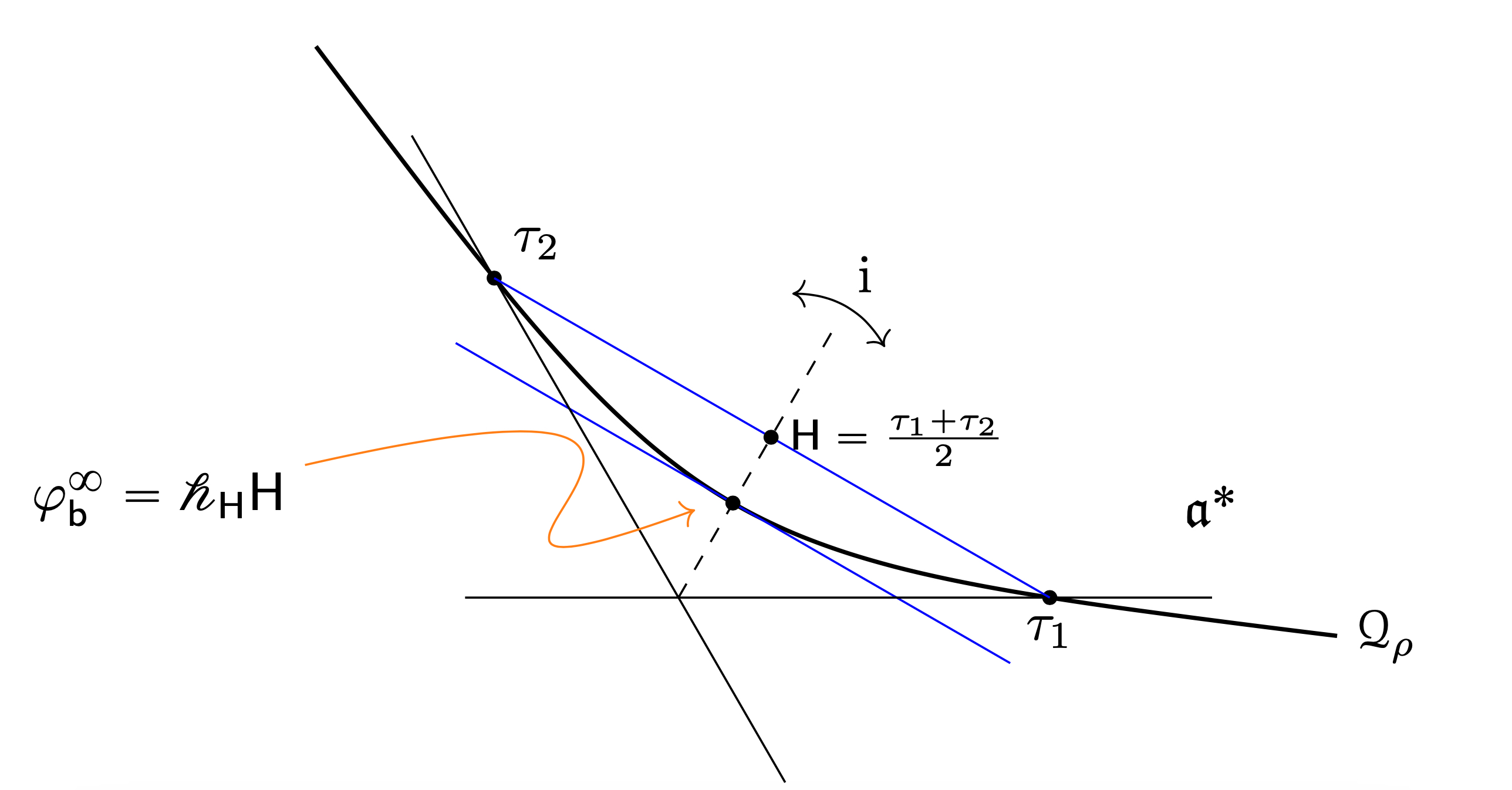}

  \caption{The critical hypersurface of a strictly convex projective structure on $S.$ Since $\sf H$ is a convex combination of $\{\slroot_1,\slroot_2\}$ one has $\|\sf H\|^1=1$ and thus $\|\varphi^\infty_\flat\|^1=\hC{\sf H}.$}\label{calQ}
\end{figure}

Again by \cite{exponentecritico} one has $\{\slroot_1,\slroot_2\}\subset\cal Q_\rho$. Since both $\cal Q_\rho$ and the norm $\|\,\|^1$ are invariant by the opposition involution $\ii$ (see again Figure \ref{calQ}) we deduce that, if we let $\sf H=(\slroot_1+\slroot_2)/2,$ then  \begin{equation}\label{>min}
	\varphi^\infty_\flat=\hC{\sf H}\cdot\sf H\geq \hC{\sf H}\min\{\slroot_1,\slroot_2\}.
\end{equation}
In particular, using Equation \eqref{vi=norm},  we obtain that $\hC{\max\{\slroot_1,\slroot_2\}}=\hC{\sf H}$. Moreover, since the geodesic flow is Anosov (by Benoist \cite{convexes1}) we can apply Bowen's characterization of entropy \cite{axiomA} (and Remark \ref{r.entropy=crit}), to obtain that the Hilbert entropy $\hJ{\sf H}=\hC{\sf H}$.

After this small parenthesis on the critical hypersurface, we come back to the lower bound on the Hausdorff dimension. 
Since $\varXi$ is a graph, $\varXi(\bord\piS)$ has the same intersection with the rectangle in Figure \ref{figg} than with the larger square of size 
$$e^{-\min\{\slroot_1(\cartan(\alpha_i)),\slroot_2(\cartan(\alpha_i))\}};$$  
this square is now a ball (for the $L^\infty$ metric) of radius $e^{-\min\{\slroot_1(\cartan(\alpha_i)),\slroot_2(\cartan(\alpha_i))\}}.$ Thus for all $i,$ $\varXi\big(\alpha_i\cone^{c}_\infty(\alpha_i)\big)$ is coarsely a ball of the latter radius and one has 
\begin{alignat*}{2}
	\ps^\infty\big(B(\varXi(x),e^{-\min\{\slroot_1(\cartan(\alpha_i)),\slroot_2(\cartan(\alpha_i))\}}\big) & \leq \ps^\infty\big(\varXi(\alpha_i\cone^{c}_\infty(\alpha_i)\big)\leq Ce^{-\varphi^\infty_\flat(\cartan(\alpha_i))}\\ 
	& \leq C\big(e^{-\min\{\slroot_1(\cartan(\alpha_i)),\slroot_2(\cartan(\alpha_i))\}}\big)^{\hC{\sf H}},
\end{alignat*}

\noindent
where the last inequalities follow from Equations \eqref{radi} and \eqref{>min}. This  gives a possibly bigger constant $C'$ such that, for all $r$, 
$$\ps^\infty\big(B(\varXi(x),r)\big)\leq C'r^{\hC{\sf H}}.$$ 
Again, classical Hausdorff dimension arguments (c.f. Corollary \ref{edgar} below) give that, for any measurable subset $E\subset\varXi(\bord\piS)$ with full $\ps^\infty$ mass, one has $\Hff(E)\geq \hC{\sf H}.$

Since $\PSL(3,\R)$ has rank smaller than 3 and $\rho$ is $\simple$-Anosov we can apply Burger-Landesberg-Lee-Oh \cite[Theorem 1.6]{BLLO} to obtain that $\ps^\infty(\{\flat\textrm{-conical}\})=1$ and thus we have the desired lower bound $$\Hff\big(\{\flat\textrm{-conical}\}\big)\geq \hC{\sf H},$$ 
which combined with the upper bound \eqref{u} and the equality $\hC{\max\{\slroot_1,\slroot_2\}}=\hC{\sf H},$ gives the proof of Corollary \ref{hilbert}.

In the general case \cite[Theorem 1.6]{BLLO} is not applicable and we replace it with Theorem \ref{Ahyper}.
\qed\medskip

\subsection*{Structure of the paper}
The preliminaries of the paper are standard facts about linear algebraic groups, recalled in \S\ref{Liegroups}, the work of S. \cite{dichotomy} about linear cocycles over the boundary of a hyperbolic group (in \S\ref{cocycles}), as well as basic facts about Anosov representations and their Patterson-Sullivan theory  recalled from \cite{GW, BPS, PSW1, dichotomy} in the first part of \S\ref{Anosov}.  In the rest of \S\ref{Anosov} we prove Theorem \ref{hyper} a more precise statement than Theorem \ref{Ahyper}, discussing the Patterson-Sullivan measure of $(\sf W,\varphi)$-conical points.  The heart of the proof is to construct and study a rank 2 flow whose recurrence set is related to $(\sf W,\varphi)$-conical points.

In \S\ref{s.Hffbcon} we consider two locally conformal representations. We prove Theorem \ref{Hffconical}, stating that for such a pair the Hausdorff dimension of the set of $\hol$-conical points belongs to $$[\hol\hC{\infty,\hol},\min\{\hC{\infty,\hol},\hol\hC{\infty,\hol}+1-\hol\}].$$ The lower bound is obtained by analyzing properties of the linear form $\vi$ whose associated growth direction is $(\hol,1)$; its Patterson-Sullivan measure $\mu^{\vi}$ gives full mass to the set of $\hol$-conical points thanks to Theorem \ref{hyper}. Using cone-types we can show that for a fine set of balls $\mu^{\vi}(B(x,r))\leq Cr^{-\hol\hC{\infty,\hol}}$. The upper bound uses results of \cite{PSW1} to construct a fine covering of the set of $\hol$-conical points with balls of radius $e^{-\max\{\hol\slroot,\ov\slroot\}}$. In \S\ref{teoLCdiff} we prove Theorem \ref{LCdiff}.

In \S\ref{s.6} we prove that if the graph map between $\R$-hyperconvex representations has an oblique derivative, then the  map is bi-Lipschitz (Proposition \ref{c.indepPer}). This only relies on basic properties of hyperconvex representations, and is crucial for the proof of Theorem \ref{tutti}, achieved in \S\ref{s.Hffproof}, as it allows the identification of $\flat$-conical points and points of non-differentiability.

\subsection*{Acknowledgements} We thank Katie Mann, Gabriele Viaggi, Anna Wienhard and Maxime Wolff  for insightful conversations and  Andr\'es Navas for pointing us to useful literature.
%%%%%%%%%%%

\section{Linear algebraic groups}\label{Liegroups}

Throughout the text $\sf G$ will denote a real-algebraic semi-simple Lie group of  non-compact type and $\ge$ its Lie algebra.

\subsection{Linear algebraic groups} Fix a Cartan involution $o:\ge\to\ge$ with associated Cartan decomposition $\ge=\k\oplus\p.$ Let $\a\subset\p$ be a maximal abelian subspace and let $\roots\subset\a^*$ be the set of restricted roots of $\a$ in $\ge.$ For $\sroot\in\roots$, we denote by $$\ge_\sroot=\{u\in\ge:[a,u]=\sroot(a)u\;\forall a\in\a\}$$ its associated root space. The (restricted) root space decomposition is $\ge=\ge_0\oplus\bigoplus_{\sroot\in\roots}\ge_\sroot,$ where $\ge_0$ is the centralizer of $\a.$ Fix a Weyl chamber $\a^+$ of $\a$ and let $\roots^+$ and $\simple$ be, respectively, the associated sets of positive and simple roots. Let $\Weyl$ be the Weyl group of $\roots$ and $\ii:\a\to\a$ be the opposition involution: if $u:\a\to\a$ is the unique element in $\Weyl$ with $u(\a^+)=-\a^+$ then $\ii=-u.$

We denote by $(\cdot,\cdot)$ both the Killing form of $\ge,$ its restriction to $\a,$ and its associated dual form on $\a^*$, the dual of $\a$. For $\chi,\psi\in\a^*$ let $$\<\chi,\psi\>=2\frac{(\chi,\psi)}{(\psi,\psi)}.$$ 

\noindent
The \emph{restricted weight lattice} is defined by 
$$\poids=\{\varphi\in\a^*:\<\varphi,\sroot\>\in\Z\;\forall\sroot\in\roots\}.$$ 
It is spanned by the \emph{fundamental weights} $\{\peso_\sroot:\sroot\in\simple\}$, defined by \begin{equation}\label{pesoFund}
	\<\peso_\sroot,\bb\>=d_\sroot\delta_{\sroot\bb}
\end{equation} 
for every $\sroot,\bb\in\simple,$ where $d_\sroot=1$ if $2\sroot\notin\roots^+$ and $d_\sroot=2$ otherwise.

A subset $\t\subset\simple$ determines a pair of opposite parabolic subgroups $\sf P_\t$ and $\check{\sf P}_\t$ whose Lie algebras are
\begin{alignat*}{2}\frak p_\t & =\bigoplus_{\sroot\in\roots^+\cup\{0\}}\frak g_{\sroot} \oplus\bigoplus_{\sroot\in\<\simple-\t\>}\frak g_{-\sroot},\\ \check{\frak p}_\t & =\bigoplus_{\sroot\in\roots^+\cup\{0\}}\frak g_{-\sroot} \oplus\bigoplus_{\sroot\in\<\simple-\t\>}\frak g_{\sroot}.
\end{alignat*}

\noindent
The group $\check{\sf P}_\t$ is conjugated to the parabolic group $\sf P_{\ii\t}.$  We denote the \emph{flag space} associated to $\t$ by $\cal F_\t=\sf G/\sf P_\t.$ The $\sf G$ orbit of the pair $([\sf P_{\t}],[\check{\sf P}_{\t}])$ is the unique open orbit for the action of $\sf G$ in the product $\cal F_\t\times\cal F_{\ii\t}$ and is denoted by $\posgen_\t.$

\subsection{Cartan and Jordan projection}

Denote by $\sf K=\exp\k$ and $\sf A=\exp\a.$ The \emph{Cartan decomposition} asserts the existence of a continuous map $\cartan:\sf G \to \a^+,$ called the \emph{Cartan projection}, such that every $g\in\sf G$ can be written as $g=ke^{a(g)}l$ for some $k,l\in\sf K.$

We will need the following uniform continuity of the Cartan projection:

\begin{prop}[{Benoist \cite[Proposition 5.1]{Benoist-HomRed}}]\label{p.Ben}
	For any compact $ L\subset \sf G$ there exists a compact set $H\subset \frak a$ such that, for every $g\in \sf G$, one has
	$$\cartan(LgL)\subset \cartan(g)+H.$$ 	
\end{prop}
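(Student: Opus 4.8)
The plan is to prove the uniform continuity of the Cartan projection, Proposition~\ref{p.Ben}, which asserts that for any compact $L\subset\sf G$ there is a compact $H\subset\frak a$ with $\cartan(LgL)\subset\cartan(g)+H$ for all $g\in\sf G$. First I would reduce to a statement about singular values of a faithful finite-dimensional representation: fix an embedding $\iota:\sf G\hookrightarrow\GL(V)$ with $\iota(\sf K)$ contained in the orthogonal group of a suitable inner product on $V$, so that the Cartan projection $\cartan(g)$ is, up to a fixed linear isomorphism, recorded by the logarithms of the singular values $\sigma_i(\iota(g))$. Since $\frak a$ injects into the diagonal and the map $g\mapsto(\log\sigma_1(\iota(g)),\ldots)$ is $\sf K$-bi-invariant, it suffices to control how the singular values of $\iota(g)$ change under left and right multiplication by elements of the compact set $\iota(L)$.

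The key estimate is the standard submultiplicativity/Lipschitz behaviour of singular values: for matrices $A,B$ one has $\sigma_{\max}(B)^{-1}\sigma_i(A)\le \sigma_i(BA)\le \sigma_{\max}(B)\,\sigma_i(A)$ for every index $i$, and similarly on the right, because $\sigma_i(BA)$ is the $i$-th largest value of $\|BA v\|$ over orthonormal frames, and $\|Bw\|$ is squeezed between $\sigma_{\min}(B)\|w\|$ and $\sigma_{\max}(B)\|w\|$. Applying this with $A=\iota(g)$ and $B$ ranging over $\iota(L)$ (and a further multiplication on the right), and taking logarithms, gives $|\log\sigma_i(\iota(\ell_1 g\ell_2))-\log\sigma_i(\iota(g))|\le 2\log M$ where $M=\sup_{\ell\in L}\|\iota(\ell)\|_{\mathrm{op}}+\sup_{\ell\in L}\|\iota(\ell)^{-1}\|_{\mathrm{op}}$, which is finite by compactness of $L$ and continuity of $\iota$ and inversion. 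Translating back through the fixed identification between the tuple of log–singular–values and $\cartan(g)\in\frak a$, this shows $\cartan(\ell_1 g\ell_2)$ lies within a bounded (independent of $g$) Euclidean ball of $\cartan(g)$; intersecting all such perturbations with the relevant affine subspace and noting the difference vector lies in a fixed compact set $H$ (one may take $H$ to be a closed ball in $\frak a$ of the appropriate radius, or its intersection with $\frak a$ of the ambient estimate) finishes the proof.

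The main obstacle is bookkeeping rather than conceptual: one must make sure the identification of $\cartan(g)$ with the singular values is equivariant in the right way and that the bound genuinely lands in $\frak a$ (not merely in the ambient diagonal of $\GL(V)$), which is automatic since both $\cartan(\ell_1 g\ell_2)$ and $\cartan(g)$ lie in $\frak a$ so their difference does too, and then it lies in the bounded set $H:=\big(\cartan(LgL)-\cartan(g)\big)$-type ball. Alternatively, and perhaps more cleanly, one can avoid coordinates entirely: the fibers of $\cartan:\sf G\to\frak a^+$ are the double cosets $\sf K e^{a}\sf K$, the set $L\cdot\sf K e^a\sf K\cdot L$ is a finite union of such double cosets whose $\frak a^+$-values form a compact set depending continuously on $a$ in a way that, by the KAK structure and properness, is bounded uniformly in $a$ — but spelling this out still ultimately reduces to the singular value inequality above, so I would present the representation-theoretic argument as the cleanest route.
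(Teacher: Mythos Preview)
The paper does not give its own proof of this proposition; it is simply quoted from Benoist \cite[Proposition~5.1]{Benoist-HomRed}. Your argument via singular values of a faithful linear embedding is a standard and correct route to the result. The only step that deserves slightly more care is the ``translating back'' from bounds on the ordered log-singular-values of $\iota(g)$ to bounds on $\cartan(g)\in\frak a$: the map $\frak a^+\ni a\mapsto(\text{sorted values of }\chi_j(a))_j\in\R^{\dim V}$ is only piecewise linear, so one should observe that it is bi-Lipschitz onto its image (on each region where the ordering of the weights is fixed it is a permutation of a single injective linear map, and there are finitely many such regions). A variant that sidesteps this bookkeeping, and fits naturally with the paper's setup, is to use the Tits representations $\Fund_\sroot$: by Equation~\eqref{eq:normayrep} one has $l_\sroot\peso_\sroot(\cartan(g))=\log\|\Fund_\sroot(g)\|_{\Fund_\sroot}$, and submultiplicativity of the operator norm gives $|\peso_\sroot(\cartan(\ell_1 g\ell_2)-\cartan(g))|\le C_{L,\sroot}$ for each $\sroot\in\simple$; since the fundamental weights $\{\peso_\sroot\}_{\sroot\in\simple}$ form a basis of $\frak a^*$, this bounds $\cartan(\ell_1 g\ell_2)-\cartan(g)$ in $\frak a$ directly.
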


By the Jordan's decomposition,  every element $g\in\sf G$ can be uniquely written as a commuting product $g=g_eg_{ss}g_u$ where $g_e$ is conjugate to an element in $\sf K,$ $g_{ss}$ is conjugate to an element in $\exp(\a^+)$ and $g_u$ is unipotent. The \emph{Jordan projection} $\lambda=\lambda_{\sf G}:\sf G\to\a^+$ is the unique map  such that $g_{ss}$ is conjugated to $\exp\big(\lambda(g)\big).$

\begin{defi} Let $\Gamma\subset\sf G$ be a discrete subgroup, then its \emph{limit cone} $\cal L_\Gamma$ is the smallest closed cone of the closed Weyl chamber $\a^+$ that contains $\{\lambda(g):g\in\Gamma\}$.
\end{defi}

We will need the following results by Benoist.

\begin{thm}[{Benoist \cite{limite,benoist2}}]\label{densidad} Let $\Gamma\subset\sf G$ be a Zariski-dense sub-semigroup, then its limit cone $\cal L_\Gamma$ has non-empty interior. Moreover, the group generated by the Jordan projections $\{\lambda(g):g\in\Gamma\}$ is dense in $\frak a.$
\end{thm}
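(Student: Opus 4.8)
This is Benoist's theorem, and the plan is to reconstruct his argument. Its two engines are: (a) a Zariski-dense subsemigroup contains many \emph{loxodromic} elements, i.e.\ elements $\g$ with $\lambda(\g)$ in the open Weyl chamber $\inte\a^+$ (equivalently, $\R$-regular and proximal in every proximal representation of $\sf G$); and (b) an asymptotic additivity of the Jordan projection along products of loxodromic elements whose attracting/repelling data are in ``general position''.

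First I would recall, from Benoist (see also Goldsheid--Margulis, Prasad, Abels--Margulis--Soifer), that $\Gamma$ contains a loxodromic element; such a $\g$ has a well-defined attracting flag $\g^+\in\cal F_\simple$ and repelling flag $\g^-\in\cal F_{\ii\simple}$ with $(\g^+,\g^-)\in\posgen_\simple$. For any $\eta\in\Gamma$ and $n$ large, $\eta\g^n$ is again loxodromic with attracting flag close to $\eta\cdot\g^+$; hence the Zariski closure of the set of attracting flags of loxodromic elements of $\Gamma$ is $\Gamma$-invariant, thus $\sf G$-invariant by Zariski-density, thus all of $\cal F_\simple$ by transitivity, and likewise for the repelling flags in $\cal F_{\ii\simple}$. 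Consequently, inside $\Gamma$ one can always find loxodromic elements that ``separate'' a prescribed finite list of flags, making the relevant pairs (attracting flag of one, repelling flag of another) $\delta$-transverse for a fixed $\delta>0$; I call such configurations \emph{in $\delta$-general position}.

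The technical heart — and the step I expect to be the main obstacle — is the estimate: if $\g_1,\dots,\g_k\in\Gamma$ are loxodromic and cyclically in $\delta$-general position, then $\g_1\cdots\g_k$ is loxodromic and
$$\Bigl\|\lambda(\g_1\cdots\g_k)-\sum_{i=1}^{k}\lambda(\g_i)\Bigr\|\le k\,C(\delta),$$
with $C(\delta)$ independent of $k$ and of the chosen elements. I would prove this by fixing proximal representations $\rho_\sroot\colon\sf G\to\PGL(V_\sroot)$ with highest restricted weight $\peso_\sroot$, for $\sroot\in\simple$, so that $\peso_\sroot(\lambda(g))$ is the logarithm of the spectral radius of $\rho_\sroot(g)$ and the $\peso_\sroot$ span $\a^*$; the statement then reduces, one representation at a time, to the classical ``ping-pong'' fact that a composition of uniformly very contracting projective transformations with pairwise transverse attracting points and repelling hyperplanes is again very contracting, its log-spectral-radius being the sum of the individual ones up to an error controlled by $\delta$. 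One passes from ``$\R$-regular'' to ``uniformly very contracting'' by replacing each $\g_i$ by a fixed high power, harmless after renormalising.

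Granting the estimate I would finish as follows. \emph{Convexity}: given loxodromic $\g,\eta\in\Gamma$ and a loxodromic $\zeta\in\Gamma$ whose flags are transverse to those of $\g$ and of $\eta$, the words $\g^{a}\zeta^{m}\eta^{b}\zeta^{m}$ are loxodromic for $a,b,m$ large and, by the estimate, have Jordan projection $a\lambda(\g)+b\lambda(\eta)+O_\delta(1)$; dividing by $a+b$ and letting $a/(a+b)\to t$ shows $t\lambda(\g)+(1-t)\lambda(\eta)\in\cal L_\Gamma$, so the closed cone generated by the loxodromic Jordan projections is convex. \emph{Non-empty interior}: it suffices that the loxodromic Jordan projections not lie in a hyperplane; if they did, say in $\ker\psi$ with $0\neq\psi\in\a^*$, then applying $\psi$ to the additivity relation makes $\psi$ annihilate every bounded ``defect'' $\lambda(\g\eta)-\lambda(\g)-\lambda(\eta)$, and letting the attracting/repelling data of $\eta$ run over the Zariski-dense limit set — these defects being governed by the Iwasawa cocycle on $\posgen_\simple$ — one derives that they cannot all lie in a fixed hyperplane, a contradiction; this is where Benoist's structure theory genuinely enters. \emph{Density of the group $\Delta=\langle\lambda(g):g\in\Gamma\rangle$}: the defects lie in $\Delta$ (they are integral combinations of Jordan projections of elements of $\Gamma$), and a refinement of the previous variation argument, tracking the actual translation vectors and not merely their directions, shows their closure contains a neighbourhood of $0$ in $\a$, whence $\overline\Delta=\a$. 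Besides the uniformity in the additivity estimate, the delicate points are exactly these last passages from Zariski-density of the dynamical data to the non-degeneracy and density statements.
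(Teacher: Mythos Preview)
The paper does not prove this theorem: it is stated with attribution to Benoist \cite{limite,benoist2} and used throughout as a black box, so there is no ``paper's own proof'' to compare against. Your outline is a faithful high-level reconstruction of Benoist's actual strategy in those references --- existence and Zariski-density of loxodromic data, the additivity estimate $\|\lambda(\g_1\cdots\g_k)-\sum\lambda(\g_i)\|\le kC(\delta)$ via Tits representations, convexity of $\cal L_\Gamma$ from products $\g^a\zeta^m\eta^b\zeta^m$, and then non-degeneracy and density from varying the ``defects''. The two places you flag as delicate are exactly where the real content lies: for non-empty interior Benoist does not quite argue by contradiction on a hyperplane of defects but rather exhibits, via his notion of $(\epsilon,r)$-Schottky semigroups, enough loxodromic elements whose Jordan projections span $\a$; for density the argument in \cite{benoist2} is more involved than a mere ``refinement'' and relies on a careful analysis of how the cross-ratio (equivalently the Gromov product) varies as the flags move over the Zariski-dense limit set, together with the fact that this variation is non-constant on any factor of $\sf G$. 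Your sketch is correct in spirit but these last two steps would need substantial expansion to stand as a proof.
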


\subsection{Representations of $\sf G$}\label{representaciones}
The standard references for the following are Fulton-Harris \cite{FultonHarris}, Humphreys  \cite{james} and Tits \cite{Tits}.

Let $\Fund:\sf G\to\PGL(V)$ be a finite dimensional rational\footnote{Namely a rational map between algebraic varieties.} irreducible representation and denote by $\phi_\Fund:\frak g\to\frak{sl}(V)$ the Lie algebra homomorphism associated to $\Fund.$ The \emph{weight space} associated to $\chi\in\a^*$ is the vector space 
$$V_\chi=\{v\in V:\phi_\Fund(a) v=\chi(a) v\ \forall a\in\sf A \}.$$ 

\noindent
We say that $\chi\in\a^*$ is a \emph{restricted weight} of $\Fund$  if $V_\chi\neq0$.  Tits \cite[Theorem 7.2]{Tits} states that the set of weights has a unique maximal element with respect to the partial order $\chi\succ\psi$ if $\chi-\psi$ is a $\N$-linear combination of positive roots. This is called \emph{the highest weight} of $\Fund$ and denoted by $\chi_\Fund.$ By definition, for every $g\in\sf G$ one has 
\begin{equation}\label{eq:spectralrep}
\lambda_1\big(\Fund(g)\big)=\chi_\Fund(\lambda(g)\big),
\end{equation} 
where $\lambda_1$ is the  logarithm of the spectral radius of $\Fund(g)$.

We denote by $\poids(\phi)$ the set of restricted weights of the representation $\phi_\Fund$ $$\poids(\phi)=\big\{\chi\in\a^*:V_\chi\neq\{0\}\big\},$$ these are all bounded above by $\chi_\Fund$ (see for example Humphreys \cite[\S13.4 Lemma B]{james}),
namely every weight $\chi\in\poids(\phi)$ has the form $$\chi_\Fund-\sum_{\sroot\in\simple} n_\sroot\sroot\textrm{ for }n_\sroot\in\N.$$ The \emph{level} of a weight $\chi$ is the integer $\sum_\sroot n_\sroot$, the highest weight is thus the only weight of level zero.
Additionally, if $\chi\in\poids(\phi_\Fund)$ and $\sroot\in\roots^+$ then the elements of $\poids(\phi_\Fund)$ of the form $\chi+j\sroot,\, j\in\Z$ form an unbroken string $$\chi+j\sroot,\, j\in\lb-r,q\rb$$ and $r-q=\<\chi,\sroot\>.$ One can then recover algorithmically the set $\poids(\phi_\Fund)$ level by level starting from $\chi_\Fund$, as follows: 

\begin{itemize} 
	\item[-] Assume the set of weights of level at most $ k$ is known and consider a weight $\chi$ of level $k$. 
	\item[-] For each $\sroot\in\simple$ compute $\<\chi,\sroot\>$, this gives the length $r-q$ of the $\sroot$-string through $\chi$. The weights of the form $\chi +j\sroot$, for positive $j$, have level smaller than $k$ and are thus known, thus we can decide whether $\chi-\sroot$ is a weight or not, determining the set of weights of level $k+1.$
\end{itemize} 

The following lemma  follows at once from the algorithmic description above.
Let $\ge=\bigoplus_i\ge_i$ be the decomposition in simple factors of a semi-simple real Lie algebra of non-compact type. Recall that if $\a_i \subset\ge_i$ is a Cartan subspace, then $\a=\bigoplus_i\a_i$ is a Cartan subspace of $\ge$. Any $\varphi\in(\a_i)^*$ extends to a functional on $\a$, still denoted $\varphi$, by vanishing on the remaining factors. The restricted root system of $\ge$ is then $\simple_\ge=\bigcup \simple_{\ge_i}.$ The \emph{associated simple factor} to  $\sroot\in\simple_\ge$ is  $\ge_i$ such that $\sroot\in\simple_i.$

\begin{lemma}\label{simple} Let $\ge$ be a semi-simple real Lie algebra of  non-compact type and $\phi$ be an irreducible representation  of $\ge$ whose highest restricted weight is a multiple of a fundamental weight, $\chi_\phi=k\peso_\sroot$ for some $\sroot\in\simple$. Then $\phi$ factors as a representation of the simple factor associated to $\sroot$.
\end{lemma}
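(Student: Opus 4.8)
The plan is to use the algorithmic description of the weight system $\poids(\phi)$ given just above the statement. Write $\ge=\bigoplus_i\ge_i$ for the decomposition into simple factors, with Cartan subspaces $\a_i\subset\ge_i$ and $\a=\bigoplus_i\a_i$, and let $\ge_{i_0}$ be the simple factor associated to $\sroot$, i.e.\ $\sroot\in\simple_{i_0}$. Since $\chi_\phi=k\peso_\sroot$, the defining property \eqref{pesoFund} gives $\<\chi_\phi,\bb\>=0$ for every $\bb\in\simple$ with $\bb\neq\sroot$; in particular $\<\chi_\phi,\bb\>=0$ for every simple root $\bb$ not lying in $\simple_{i_0}$.

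First I would show, by induction on the level, that \emph{every} weight $\chi\in\poids(\phi)$ satisfies $\chi|_{\a_j}=0$ for all $j\neq i_0$. The base case is the highest weight: for $j\neq i_0$ and any $\bb\in\simple_j$ one has $\<\chi_\phi,\bb\>=0$, and since the $\bb$-strings through $\chi_\phi$ reconstruct $\a_j^*$ (the fundamental weights of $\ge_j$ span $\a_j^*$), vanishing of all these pairings forces $\chi_\phi|_{\a_j}=0$. For the inductive step, suppose $\chi$ is a weight of level $\ell$ with $\chi|_{\a_j}=0$ for all $j\neq i_0$, and suppose $\chi-\bb$ is a weight (of level $\ell+1$) for some $\bb\in\simple$. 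If $\bb\in\simple_{i_0}$ then $\bb|_{\a_j}=0$ for $j\neq i_0$, so $(\chi-\bb)|_{\a_j}=\chi|_{\a_j}=0$ and we are done. If instead $\bb\in\simple_j$ for some $j\neq i_0$, then $\bb$ is orthogonal (for the Killing form, which is block-diagonal with respect to $\a=\bigoplus\a_i$) to every root in $\simple_{i_0}$, hence $\<\chi,\bb\>=\<\chi|_{\a_j},\bb\>=0$ by the inductive hypothesis; therefore the $\bb$-string through $\chi$ has $r=q$, and since the weights $\chi+j'\bb$ with $j'>0$ have smaller level — and by induction their restriction to $\a_j$ is $\chi|_{\a_j}=0$ shifted by $j'\bb$, which cannot be $0$ unless the string has length one — a short bookkeeping argument with the string structure shows $\chi-\bb$ is not a weight, a contradiction. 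So the induction goes through.

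Consequently $\phi_\Fund(a)=0$ for every $a\in\a_j$ with $j\neq i_0$, i.e.\ the kernel of $\phi_\Fund$ contains $\a_j$. Since $\phi_\Fund$ is a Lie algebra homomorphism and $\ker\phi_\Fund$ is an ideal, and each $\ge_j$ is simple, $\ker\phi_\Fund\cap\ge_j$ is either $0$ or all of $\ge_j$; as it contains $\a_j\neq 0$ it must equal $\ge_j$. Hence $\ge_j\subset\ker\phi_\Fund$ for all $j\neq i_0$, so $\phi_\Fund$ factors through the projection $\ge\to\ge_{i_0}$, i.e.\ $\phi$ factors as a representation of the simple factor associated to $\sroot$. (That the factored representation is again irreducible is automatic since its image equals $\phi_\Fund(\ge)$.)

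The only delicate point — and the step I would expect to cost the most care — is the inductive step in the case $\bb\notin\simple_{i_0}$: one must argue cleanly from the $\bb$-string structure ``$\chi+j'\bb$, $j'\in\lb -r,q\rb$ with $r-q=\<\chi,\bb\>$'' together with $\<\chi,\bb\>=0$ that $\chi-\bb\notin\poids(\phi)$, using that all weights of strictly smaller level already vanish on $\a_j$. Equivalently, and perhaps more transparently, one can bypass the string bookkeeping entirely: it is a standard fact that a restricted-weight representation of $\ge=\bigoplus\ge_i$ is an (outer) tensor product $\phi=\boxtimes_i\phi^{(i)}$ of irreducible representations of the factors, with $\chi_\phi=\sum_i\chi_{\phi^{(i)}}$ and $\chi_{\phi^{(i)}}\in\a_i^*$; since $\chi_\phi=k\peso_\sroot$ has $\chi_\phi|_{\a_j}=0$ for $j\neq i_0$, each $\chi_{\phi^{(j)}}=0$, forcing $\phi^{(j)}$ trivial, and $\phi=\phi^{(i_0)}$ is a representation of $\ge_{i_0}$. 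Either route works; I would present whichever the paper's conventions make shortest, most likely the tensor-product one since it makes the conclusion immediate once one has identified $\chi_\phi|_{\a_j}=0$.
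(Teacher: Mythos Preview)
Your proposal is correct and follows essentially the same route as the paper: both argue by induction on the level that the weights are ``supported'' only on the factor $\ge_{i_0}$. The paper phrases the inductive claim as $\<\chi,\bb\>=0$ for all $\bb\in\simple_j$, $j\neq i_0$, which is equivalent to your $\chi|_{\a_j}=0$, and the paper leaves the induction as ``one readily sees'' whereas you spell out the $\bb$-string bookkeeping (correctly: if $\chi+j'\bb$ were a weight for $j'>0$ it would, by induction, vanish on $\a_j$, but it equals $j'\bb|_{\a_j}\neq0$; so $q=0$, and $r=q$ forces $r=0$). The only real difference is the final step: the paper concludes that each root space $(\ge_j)_{-\bb}$ sends $V_\chi$ into $V_{\chi-\bb}=0$ and hence $\ge_j$ (generated by its root spaces) acts trivially, while you instead observe $\a_j\subset\ker\phi_\Fund$ and invoke simplicity of $\ge_j$ to get $\ge_j\subset\ker\phi_\Fund$. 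Both are fine; the tensor-product shortcut you mention at the end is also valid and would be the quickest write-up.
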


\begin{proof} Proceeding by induction on the levels of $\phi$, one readily sees that for every $\slroot\in\simple_j$ for $j\neq i$ and all $\chi\in\poids(\phi)$ one has $\<\chi,\slroot\>=0$. Thus the associated root space $(\ge_j)_{-\slroot}$ acts trivially on every weight space of $\phi$ and so the whole factor $\ge_j$ acts trivially.\end{proof}

The following set of simple roots plays a special role in  representation theory.

\begin{defi}\label{trep}
	Let $\Fund:\sf G\to\PGL(V)$ be a representation. We denote by $\t_\Fund$ the set of simple roots $\sroot\in\simple$ such that $\chi_\Fund-\sroot$ is still a weight of $\Fund$. Equivalently 
	 \begin{equation}\label{peso<}
		\t_\Fund=\big\{\sroot\in\simple:\<\chi_\Fund,\sroot\>\neq0\big\}.
	\end{equation}
\end{defi}

The following lemma will be needed in the proof of Theorem \ref{t.Zcl}.

\begin{lemma}\label{not} Let $\ge$ be semi-simple of non-compact type and $\phi:\ge\to\frak{gl}(V)$ an irreducible representation. For $\sroot\in\t_\phi$ and $v\in V_{\chi_\phi}-\{0\}$, the map $n\mapsto \phi(n)v$ is injective when defined on $\ge_{-\sroot}.$
\end{lemma}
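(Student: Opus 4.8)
The plan is to work inside the root string structure recalled just above the statement, combining the representation of $\frak{sl}(2)$ generated by the root $\sroot$ with the highest-weight condition. First I would fix $\sroot\in\t_\phi$ and a nonzero highest-weight vector $v\in V_{\chi_\phi}$. Choose standard generators $e_\sroot\in\ge_\sroot$, $f_\sroot\in\ge_{-\sroot}$, $h_\sroot\in\a$ of the $\frak{sl}(2)$-triple attached to $\sroot$. Since $\chi_\phi$ is the highest weight, $\phi(e_\sroot)v=0$, and the $\frak{sl}(2)_\sroot$-submodule of $V$ generated by $v$ is a highest-weight module of weight $\langle\chi_\phi,\sroot\rangle=m$, where $m\ge 1$ because $\sroot\in\t_\phi$ (using \eqref{peso<}). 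In particular $\phi(f_\sroot)v\neq 0$, and more precisely the vectors $\phi(f_\sroot)^j v$ for $j\in\lb 0,m\rb$ are nonzero and linearly independent, lying in distinct weight spaces $V_{\chi_\phi-j\sroot}$.

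The key point is that $\ge_{-\sroot}$ is an irreducible module for the $\frak{sl}(2)_\sroot$-action by the adjoint representation, isomorphic as such to either the $2$-dimensional or (when $2\sroot$ is also a root) the $3$-dimensional irreducible representation — in all cases $\ge_{-\sroot}\oplus\ge_{-2\sroot}$, and more to the point $[e_\sroot,\ge_{-\sroot}]\subset \ge_0$ with $\ad(e_\sroot)$ injective on the lowest weight line but its kernel possibly larger. Rather than analyzing this, I would argue directly on the target. Suppose $n_1,n_2\in\ge_{-\sroot}$ with $\phi(n_1)v=\phi(n_2)v$; set $n=n_1-n_2\in\ge_{-\sroot}$, so $\phi(n)v=0$. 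The element $\phi(n)v$ lies in $V_{\chi_\phi-\sroot}$. I want to show $n=0$. Apply $\phi(e_\sroot)$: $0=\phi(e_\sroot)\phi(n)v=\phi([e_\sroot,n])v+\phi(n)\phi(e_\sroot)v=\phi([e_\sroot,n])v$, since $\phi(e_\sroot)v=0$. Now $[e_\sroot,n]\in[\ge_\sroot,\ge_{-\sroot}]\subset\ge_0$ centralizes $\a$ up to the fact that $\ge_0$ need not act by scalars; but on the weight space $V_{\chi_\phi}$, which is one-dimensional whenever $\chi_\phi$ is a multiple of a fundamental weight... — here I would instead use that $[e_\sroot,f_\sroot]=h_\sroot$ and that the component of $[e_\sroot,n]$ along $h_\sroot$ acts on $v$ by $\chi_\phi(h_\sroot)=m\neq 0$. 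Writing $n$ in a root-adapted basis of $\ge_{-\sroot}$, each basis vector $n_\beta$ satisfies $[e_\sroot,n_\beta]\in\ge_{0}$ or $\ge_{\sroot}$ depending on whether we are in the $2$- or $3$-dimensional string; tracking the $\frak{sl}(2)_\sroot$-weight forces any element of $\ge_{-\sroot}$ killing $v$ after applying $\phi$ to be killed by $\ad(e_\sroot)$, hence (by irreducibility of $\ge_{-\sroot}\oplus\ge_{-2\sroot}$ as an $\frak{sl}(2)_\sroot$-module and the fact that $\ge_{-\sroot}$ is the top of that module, where $\ad(e_\sroot)$ is injective) to be zero. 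Thus $n=0$ and the map is injective.

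The cleanest route, and the one I would actually write, avoids case analysis: note $\phi(n)v=0$ implies $\phi(f_\sroot)\phi(n)v=0$; but also for any $m\in\ge_{-\sroot}$, $\phi(m)v$ and $\phi(f_\sroot)v$ span a plane inside the $\frak{sl}(2)_\sroot$-module generated by $v$ only through their $\ge_{-\sroot}$-part. The actual argument: the map $\ge_{-\sroot}\to V_{\chi_\phi-\sroot}$, $m\mapsto\phi(m)v$, is $\frak g_0$-equivariant for the natural actions, and its kernel is a $\ge_0$-submodule of $\ge_{-\sroot}$. Since $\ge_{-\sroot}$ is an irreducible $\ge_0$-module (standard: $\ge_0$ acts irreducibly on each root space), the kernel is either $0$ or all of $\ge_{-\sroot}$; it is not all of $\ge_{-\sroot}$ precisely because $\sroot\in\t_\phi$ guarantees $\phi(f_\sroot)v\neq0$. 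Hence the kernel is $0$ and the map is injective, as claimed. The main obstacle is justifying the irreducibility of $\ge_{-\sroot}$ as a $\ge_0$-module and the $\ge_0$-equivariance of $m\mapsto\phi(m)v$ cleanly; both are standard structure theory but need to be invoked with the right reference (e.g. Tits \cite{Tits} or Humphreys \cite{james}), and care is needed in the case $2\sroot\in\roots^+$ where "root space" means $\ge_{-\sroot}$ alone and $d_\sroot=2$.
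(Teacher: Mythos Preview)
Your ``cleanest route'' has a genuine gap: the map $m\mapsto\phi(m)v$ from $\ge_{-\sroot}$ to $V_{\chi_\phi-\sroot}$ is \emph{not} $\ge_0$-equivariant unless $\dim V_{\chi_\phi}=1$. For $z\in\ge_0$ one has
\[
\phi([z,m])v=\phi(z)\phi(m)v-\phi(m)\phi(z)v,
\]
and the second term $\phi(m)\phi(z)v$ need not vanish nor be a scalar multiple of $\phi(m)v$, since $\ge_0$ acts on $V_{\chi_\phi}$ as a (possibly non-trivial) irreducible module, not by a character. So the kernel of $m\mapsto\phi(m)v$ is not a $\ge_0$-submodule in general, and the irreducibility of $\ge_{-\sroot}$ under $\ge_0$ gives you nothing. (The argument does go through when $V_{\chi_\phi}$ is a line, which is the case in the application to Theorem~\ref{t.Zcl}, but the lemma is stated without that hypothesis.) Your first attempt has a related issue: from $\phi([e_\sroot,n])v=0$ with $[e_\sroot,n]\in\ge_0$ you cannot conclude $[e_\sroot,n]=0$, since elements of $\ge_0$ outside $\a$ may well annihilate $v$.

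The fix the paper uses is short: do not fix $e_\sroot$ once and for all, but rather for each nonzero $y\in\ge_{-\sroot}$ choose $x\in\ge_\sroot$ so that $\{x,y,h_\sroot\}$ is an $\sl_2(\R)$-triple with $[x,y]=h_\sroot$ (take $x$ a suitable multiple of $\theta y$, where $\theta$ is the Cartan involution; then $[x,y]\in\p\cap\ge_0=\a$ is a nonzero multiple of $h_\sroot$). Now if $\phi(y)v=0$, then since $\phi(x)v=0$ (highest weight) one gets $\phi(h_\sroot)v=\phi([x,y])v=0$, whence $\<\chi_\phi,\sroot\>=\chi_\phi(h_\sroot)=0$, contradicting $\sroot\in\t_\phi$. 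The point you were missing is that by letting $x$ depend on $y$ you can force the bracket to land in $\a$ rather than merely in $\ge_0$.
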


\begin{proof} By definition of $\chi_\phi$ every $n\in\ge_{\sroot}$ acts trivially on $V_{\chi_\phi}$. For $y\in\ge_{-\sroot}-\{0\}$, there exists  $x\in\ge_\sroot$ such that $\{x,y,h_\sroot\}$ spans a Lie algebra isomorphic to $\sl_2(\R)$, where $h_\sroot$ is defined by $\varphi(h_\sroot)=\<\varphi,\sroot\>$ for all $\varphi\in\a^*$. If $\phi(y)v=0$ then, since $\phi(x)V_{\chi_\phi}=0$ one concludes $\phi(h_\sroot)v=0$ and since $V_{\chi_\phi}$ is a weight-space one has $\phi(h_\sroot)V_{\chi_\phi}=0$. This in turn implies that 
	$$\<\chi_\phi,\sroot\>=\chi_\phi(h_\sroot)=0,$$ 
	contradicting that $\sroot\in\t_\phi.$
\end{proof}

We denote by $\|\,\|_\Fund$ an Euclidean norm on $V$ invariant under $\Fund \sf K$ and such that $\Fund\sf A $ is self-adjoint, see for example Benoist-Quint's book \cite[Lemma 6.33]{BQLibro}. By definition of $\chi_\Fund$ and $\|\,\|_\Fund$, and Equation \eqref{eq:spectralrep} one has, for every $g\in \sf G, $ that  
\begin{equation}\label{eq:normayrep}
\log\|\Fund g\|_\Fund=\chi_\Fund(\cartan(g)).
\end{equation} 
Here, with a slight abuse of notation, we denote by $\|\,\|_\Fund$ also the induced operator norm, which doesn't depend on the scale of $\|\,\|_\Fund$.

Denote by $W_{\chi_\Fund}$ the $\Fund\sf A $-invariant complement of $V_{\chi_\Fund}.$ The stabilizer in $\sf G $ of $W_{\chi_\Fund}$ is $\wk{\sf P}_{\t_\Fund},$ and thus one has a map of flag spaces 
\begin{equation}\label{maps}
	(\zeta_\Fund,\zeta^*_\Fund):\cal F_{\t_\Fund}^{(2)}(\sf G )\to \mathrm{Gr}_{\dim V_{\chi_\Fund}}^{(2)}(V).
\end{equation} This is a proper embedding which is an homeomorphism onto its image. Here, as above, $\mathrm{Gr}_{\dim V_{\chi_\Fund}}^{(2)}(V)$ is the open $\PGL (V )$-orbit in the product of the Grassmannian of $(\dim V_{\chi_\Fund})$-dimensional subspaces and the Grassmannian of $(\dim V-\dim V_{\chi_\Fund})$-dimensional subspaces. One has  the following proposition (see also Humphreys \cite[Chapter XI]{LAG}).

\begin{prop}[Tits \cite{Tits}]\label{prop:titss} 
For each $\sroot\in\simple$ there exists a finite dimensional rational irreducible representation $\Fund_\sroot:\sf G\to\PSL(V_\sroot),$ such that $\chi_{\Fund_\sroot}$ is an integer multiple $l_\sroot\peso_\sroot$ of the fundamental weight and $\dim V_{\chi_{\Fund_\sroot}}=1.$ \end{prop}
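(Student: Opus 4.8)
The statement is classical and due to Tits; I would organize a proof as follows, reproving the two asserted properties from weight theory and invoking \cite{Tits} only for the passage from $\C$ to $\R$. Set $\ge_\C=\ge\otimes_\R\C$, fix a Cartan subalgebra $\frak t\supset\a$ of $\ge_\C$ stable under complex conjugation $\sigma$, and let $\widehat\roots\subset\frak t^*$ be the absolute root system with a set of simple roots $\widehat\simple$ chosen compatibly with $\simple$, meaning every positive absolute root restricts to $\a$ as a nonnegative combination of $\simple$ (possibly $0$). Write $\Delta_0\subset\widehat\simple$ for the simple roots vanishing on $\a$ (those of the anisotropic kernel) and recall that restriction induces a surjection $\widehat\simple\setminus\Delta_0\twoheadrightarrow\simple$ whose fibres are the orbits of the $*$-action of $\sigma$. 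For the given $\sroot\in\simple$ pick $\widehat\sroot\in\widehat\simple\setminus\Delta_0$ lying over $\sroot$, let $O$ be its $*$-orbit, and put $\widehat\peso=\sum_{\widehat\bb\in O}\widehat\peso_{\widehat\bb}$, the sum of the corresponding absolute fundamental weights.

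First I would establish the two properties over $\C$ for the irreducible highest-weight module $V(\widehat\peso)$. Since $\widehat\peso$ pairs to $0$ with the coroot of every simple root outside $O$ — in particular with every coroot of $\Delta_0$ — and since for $\bb\in\simple$ the coroot $\bb^\vee$ is supported on a $*$-orbit that meets $O$ only when $\bb=\sroot$, the restriction $\widehat\peso|_\a$ pairs to $0$ with $\bb^\vee$ for all $\bb\neq\sroot$ and to a positive integer with $\sroot^\vee$; hence $\widehat\peso|_\a$ is a positive rational multiple of $\peso_\sroot$. For one-dimensionality: if $\mu=\widehat\peso-\sum_{\widehat\gamma\in\widehat\simple}n_{\widehat\gamma}\widehat\gamma$ with $n_{\widehat\gamma}\in\N$ is a weight of $V(\widehat\peso)$ with $\mu|_\a=\widehat\peso|_\a$, then $\sum_{\widehat\gamma\notin\Delta_0}n_{\widehat\gamma}\,(\widehat\gamma|_\a)=0$; the vectors $\widehat\gamma|_\a$ for $\widehat\gamma\in\widehat\simple\setminus\Delta_0$ are nonzero and lie in the cone spanned by $\simple$, so all these $n_{\widehat\gamma}$ vanish and $\mu$ differs from $\widehat\peso$ by a nonnegative combination of $\Delta_0$ only. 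But $\widehat\peso$ kills every coroot of $\Delta_0$, so the highest-weight line of $V(\widehat\peso)$ is the trivial module for the derived group of the Levi $L_{\Delta_0}$; hence $\mu=\widehat\peso$. Consequently $\widehat\peso|_\a$ is the highest restricted weight and the only absolute weight space restricting to it is $V(\widehat\peso)_{\widehat\peso}$, which is one-dimensional.

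Next I would descend to $\R$. Since $\widehat\peso$ is $*$-invariant, $V(\widehat\peso)$ carries a conjugate-linear $\ge$-equivariant map $\tau$, unique up to scalar, with $\tau(Xv)=\sigma(X)\tau(v)$ and $\tau^2=\pm\id$. Replacing $\widehat\peso$ by a suitable multiple $l_\sroot\widehat\peso$ — even, so that the module embeds into the relevant tensor square where $\tau\otimes\tau$ squares to $+\id$ and a genuine real structure appears; and divisible enough that $l_\sroot\widehat\peso$ is a character of $\sf G$ itself (not merely of its simply connected cover) and that $l_\sroot\widehat\peso|_\a$ is an integer multiple of $\peso_\sroot$ — the real form $V_\sroot:=V(l_\sroot\widehat\peso)^\tau$ integrates to a rational representation $\sf G\to\GL(V_\sroot)$. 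Post-composing with $\GL(V_\sroot)\to\PGL(V_\sroot)$ and using that $\det$ is trivial on the semisimple group $\sf G$ produces $\Fund_\sroot:\sf G\to\PSL(V_\sroot)$, with highest restricted weight $l_\sroot\widehat\peso|_\a$, the asserted multiple of $\peso_\sroot$. Rescaling the highest weight changes neither the action of $L_{\Delta_0}$ on the highest line nor which absolute weights restrict to the top, so the count above still gives a one-dimensional complex highest restricted weight space; it carries the $\tau$-real structure, whence $\dim_\R(V_\sroot)_{\chi_{\Fund_\sroot}}=1$.

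The only input I would not reprove is the descent itself, namely that a $*$-invariant dominant weight becomes $\R$-rational after an even multiple; this is the heart of \cite{Tits} and I would cite it directly, together with the identification of the $*$-action and the anisotropic kernel. Everything else is the elementary weight combinatorics above; the single bookkeeping subtlety — harmless since only the existence of some $l_\sroot\in\N$ is claimed — is matching $\widehat\peso|_\a$ against the specific normalization of $\peso_\sroot$ fixed in \eqref{pesoFund} (the factor $d_\sroot$).
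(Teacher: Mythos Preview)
The paper does not give its own proof of this proposition: it is stated with attribution to Tits \cite{Tits} (with a pointer to Humphreys \cite[Chapter XI]{LAG}) and then used as a black box. Your proposal is a correct reconstruction of the standard Satake--Tits argument, and there is nothing to compare it against in the paper itself.

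A couple of minor remarks on the write-up, none of which affect correctness. First, the sentence ``the coroot $\bb^\vee$ is supported on a $*$-orbit'' is a bit loose: what you mean (and use) is that the restricted pairing $\langle\widehat\peso|_\a,\bb\rangle$ is computed via the absolute pairings of $\widehat\peso$ against the absolute coroots lying over $\bb$, and these vanish unless $\bb=\sroot$. Second, in the one-dimensionality step, the clause ``lie in the cone spanned by $\simple$'' is slightly weaker than what you actually need and have: the restrictions $\widehat\gamma|_\a$ for $\widehat\gamma\in\widehat\simple\setminus\Delta_0$ are exactly the elements of $\simple$ (with multiplicity), so grouping by restricted root and using linear independence of $\simple$ forces all the $n_{\widehat\gamma}$ to vanish. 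Finally, the target $\PSL(V_\sroot)$ versus $\PGL(V_\sroot)$ is a harmless discrepancy here since the paper only uses the induced action on $\P(V_\sroot)$ and the equivariant maps \eqref{maps}.
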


We will fix from now on such a set of representations and call them, for each $\sroot\in\simple,$ the \emph{Tits representation associated to $\sroot$}.

\subsection{The center of the Levi group $\sf P_{\t}\cap\check{\sf P}_{\t}$}\label{s.Levi}

We now consider the vector subspace 
$$\a_\t=\bigcap_{\sroot\in\simple-\t}\ker\sroot.$$ 
Denoting by $\Weyl_\t=\{w\in \Weyl:w(v)=v\quad \forall v\in\a_\t\}$ the subgroup of the Weyl group generated by reflections associated to roots in $\simple-\t$, there is a unique projection $\pi_\t:\a\to\a_\t$ invariant under  $\Weyl_\t$.

The dual $(\a_\t)^*$ is canonically identified with the subspace of $\a^*$ of $\pi_\t$-invariant linear forms. Such space is spanned by the fundamental weights of roots in $\t$,
$$(\a_\t)^*=\{\varphi\in\a^*:\varphi\circ\pi_\t=\varphi\}=\<\peso_\sroot|\a_\t:\sroot\in\t\>.$$ 
We will  denote, respectively, by \begin{alignat*}{2}\cartan_\t& =\pi_\t\circ \cartan:\sf G\to \a_\t\ \\ \lambda_\t & =\pi_\t\circ\lambda:\sf G\to \a_\t,\end{alignat*} the compositions of the Cartan and Jordan projections with $\pi_\t$.

\subsection{The Buseman-Iwasawa cocycle}

 The \emph{Iwasawa decomposition} of $\sf G$ states that every $g\in\sf G$ can be written uniquely as a product $lzu$ with $l\in\sf K,$ $z\in\sf A$ and $u\in\sf U_\simple,$ where $\sf U_\simple$ is the unipotent radical of $\sf P_{\simple}.$

The \emph{Buseman-Iwasawa cocycle} of $\sf G$ is the map $\bus:\sf G\times\cal F\to\frak a$ such that, for all $g\in\sf G$ and $k[\sf P_\simple]\in\cal F,$
$$\bus(g,k[\sf P_\simple])=\log(z)$$
 where $\log:\sf A\to \frak a$ denotes the inverse of the exponential map, and $gk=lzu$ is the Iwasawa decomposition of $gk$. Quint \cite[Lemmes 6.1 and 6.2]{Quint-PS} proved that the function $\bus_\t=\pi_\t\circ\bus$ factors as a cocycle $\bus_\t:\sf G\times \cal F_\t\to\frak a_\t.$

The Buseman-Iwasawa cocycle can also be read from the representations of $\sf G.$ Indeed, Quint \cite[Lemme 6.4]{Quint-PS} shows that for every $g\in\sf G$ and $x\in\cal F_\t$ one has 
\begin{equation}\label{busnorma}
	l_\sroot\peso_\sroot(\bus(g,x))=\log\frac{\|\Fund_\sroot(g)v\|_\Fund}{\|v\|_\Fund},
\end{equation} where $v\in\zeta_{\Fund_\sroot}(x)\in\P(\sf V_\sroot)$ is non-zero, and $l_\sroot$ is as in Proposition \ref{prop:titss}.

\subsection{Gromov product and Cartan attractors}\label{GryBus} Let $\K$ be either $\C$ or $\R$. For a decomposition $\K^d=\ell\oplus V$ into a line $\ell$ and a hyperplane $V$ together with an inner (Hermitian) product $o$ on $\K^d$, one defines the \emph{Gromov product} by $$\Gr(V,\ell)=\Gr^o(V,\ell):=\log\frac{|\varphi(v)|}{\|\varphi\|\|v\|}=\log\sin\angle_o(\ell,V),$$ for any non-vanishing $v\in\ell$ and $\varphi\in (\K^d)^*$ with $\ker\varphi=V.$

This induces, for any semisimple Lie group $\sf G$ and subset $\t<\Delta$, a  \emph{Gromov product} $\Gr_\t:\posgen_\t\to\frak a_\t$ defined, for every $(x,y)\in\posgen_\t$ and $\sroot\in\t,$ by 
$$l_\sroot\peso_\sroot\big(\Gr_\t(x,y)\big)= \Gr^{\Fund_\sroot}(\zeta_{\Fund_\sroot}^*x,\zeta_{\Fund_\sroot} y)=\log\sin\angle_o\big(\zeta_{\Fund_\sroot} y,\zeta_{\Fund_\sroot}^*x),$$ where $\zeta^*_{\Fund_\sroot}$ and $\zeta_{\Fund_\sroot}$ are the equivariant maps from Equation (\ref{maps}), and the Hermitian product $o$ is induced by an Euclidean norm $\|\,\|_{\Fund_\sroot}$ invariant under $\Fund_\sroot\sf K$.

 From S. \cite[Lemma 4.12]{orbitalcounting} one has, for all $g\in\sf G$ and $(x,y)\in\posgen_\t,$ 
\begin{equation}\label{forGr}
\Gr_\t(gx,gy)-\Gr_\t(x,y)=-\big(\ii\bus_{\ii\t}(g,x)+\bus_\t(g,y)\big).
\end{equation}

If $g=k\exp(\cartan(g))l$ is a Cartan decomposition of $g\in\sf G$ we define its $\t$-\emph{Cartan attractor} (resp. \emph{repeller}) by $$U_\t(g)=k[\sf P_\t]\in\cal F_\t\quad\textrm{ and }\quad U_{\ii\t}(g^{-1})=l^{-1}[\check{\sf P}_\t]\in\cal F_{\ii\t}.$$

\noindent The \emph{Cartan basin of $g$} is defined, for $\alpha>0,$ by 
\begin{equation}\label{e.CartBasin}
	B_{\t,\alpha}(g)= \big\{x\in\cal F_\t:\peso_\sroot \Gr_\t\left(U_{\ii\t}(g^{-1}),x\right)>-\alpha,\; \forall \sroot\in\theta\big\}.
	\end{equation} 
\begin{obs}\label{fty}
	Observe that a statement of the form $\peso_\sroot\Gr_\t(x,y) \geq-\kappa$ for all $\sroot\in\t$ is a quantitative version (depending on the choice of $\sf K$) of the transversality between $x$ and $y;$  in particular it implies that $x$ and $y$ are transverse.
\end{obs}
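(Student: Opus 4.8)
Since the Gromov product $\Gr_\t$ is a priori only defined on the open orbit $\posgen_\t$, I would first read the statement with $\Gr_\t$ replaced by its natural extension to all of $\cal F_\t\times\cal F_{\ii\t}$, defined for $\sroot\in\t$ by
\[
l_\sroot\,\peso_\sroot\big(\Gr_\t(x,y)\big)=\log\sin\angle_o\big(\zeta_{\Fund_\sroot}(y),\zeta^*_{\Fund_\sroot}(x)\big).
\]
Here the right-hand side makes sense for every pair $(x,y)$, taking the value $-\infty$ exactly when the line $\zeta_{\Fund_\sroot}(y)$ is contained in the hyperplane $\zeta^*_{\Fund_\sroot}(x)$ (recall $\dim V_{\chi_{\Fund_\sroot}}=1$, so $\zeta_{\Fund_\sroot}$ is line-valued and $\zeta^*_{\Fund_\sroot}$ hyperplane-valued). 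The plan is then to run this identity backwards, one simple root at a time.

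Fix $\sroot\in\t$. Multiplying the hypothesis $\peso_\sroot\Gr_\t(x,y)\geq-\kappa$ by the positive integer $l_\sroot$ of Proposition \ref{prop:titss} yields
\[
\sin\angle_o\big(\zeta_{\Fund_\sroot}(y),\zeta^*_{\Fund_\sroot}(x)\big)\;\geq\;e^{-l_\sroot\kappa}\;>\;0,
\]
an explicit positive lower bound depending on $l_\sroot$, on $\kappa$ and, through the invariant norm $\|\,\|_{\Fund_\sroot}$, on the choice of $\sf K$; this is the quantitative content the statement alludes to. By the defining formula $\sin\angle_o(\ell,V)=|\varphi(v)|/(\|\varphi\|\,\|v\|)$ with $\ker\varphi=V$, such positivity is equivalent to $\zeta_{\Fund_\sroot}(y)\not\subset\zeta^*_{\Fund_\sroot}(x)$, hence, since the former is a line and the latter a hyperplane, to $\zeta_{\Fund_\sroot}(y)\oplus\zeta^*_{\Fund_\sroot}(x)=V_\sroot$ by a dimension count.

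Having obtained this for every $\sroot\in\t$, I would conclude by invoking the standard description --- underlying the embeddings \eqref{maps} and going back to Tits \cite{Tits} --- of the unique open orbit $\posgen_\t\subset\cal F_\t\times\cal F_{\ii\t}$ as precisely the set of pairs $(x,y)$ for which $\zeta_{\Fund_\sroot}(y)$ is transverse to $\zeta^*_{\Fund_\sroot}(x)$ for all $\sroot\in\t$. Equivariance of the maps $\zeta_{\Fund_\sroot},\zeta^*_{\Fund_\sroot}$ under $\sf G$ reduces one inclusion to the standard pair $([\sf P_\t],[\check{\sf P}_\t])$, where these maps return the highest-weight line $V_{\chi_{\Fund_\sroot}}$ and its $\Fund_\sroot\sf A$-invariant complement $W_{\chi_{\Fund_\sroot}}$, manifestly complementary; the reverse inclusion follows since any pair off $\posgen_\t$ sits on a $\sf G$-orbit of strictly smaller dimension, on which transversality in some Tits factor necessarily fails. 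This gives $(x,y)\in\posgen_\t$, that is, $x$ and $y$ are transverse.

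The argument is entirely formal once the extended Gromov product is at hand; the only substantive ingredient --- and hence the main, if standard, obstacle --- is that last step, the identification of abstract transversality of $\t$-flags with root-by-root transversality of the associated Tits lines and hyperplanes, which is part of the structure theory recalled in \S\ref{Liegroups}.
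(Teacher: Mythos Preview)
Your proposal is correct and fills in exactly the justification the paper leaves implicit: Remark~\ref{fty} is stated as an observation with no proof in the paper, so there is nothing to compare against beyond intent. Your key move---extending $\Gr_\t$ to take the value $-\infty$ off $\posgen_\t$, so that the inequality is not vacuous---is precisely the right reading, and the reduction to transversality of lines and hyperplanes under each Tits representation $\Fund_\sroot$ is the standard unpacking of the definition in \S\ref{GryBus}.

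The only place your argument is slightly soft is the reverse inclusion in the final step: the claim that a pair $(x,y)\notin\posgen_\t$ must fail transversality in \emph{some} $\Fund_\sroot$ does not follow from a dimension count alone, but rather from the fact that the product of the Tits embeddings $\prod_{\sroot\in\t}(\zeta_{\Fund_\sroot},\zeta^*_{\Fund_\sroot})$ is a proper embedding of $\cal F_\t\times\cal F_{\ii\t}$ which, by equivariance and the Bruhat decomposition, sends the closed complement of $\posgen_\t$ into the closed complement of the product open orbit. This is indeed standard structure theory (and is implicit in the statement that \eqref{maps} is a homeomorphism onto its image in $\mathrm{Gr}^{(2)}$), so your citation of \S\ref{Liegroups} and Tits \cite{Tits} is appropriate; just be aware that the orbit-dimension heuristic you give is not quite a proof on its own.
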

Neither the Cartan attractor nor its basin are uniquely defined unless for all $\sroot\in\t$ one has $\sroot\big(\cartan(g)\big)>0,$ regardless one has the following:

\begin{obs} Given $\alpha>0$ there exists a constant $K_\alpha$ such that if $y\in\cal F_{\t}$ belongs to $B_{\t,\alpha}(g)$ then one has
\begin{equation}\label{comparision-shadow}
\big\|\cartan_\t(g)-\bus_\t(g,y)\big\|\leq K_\alpha.
\end{equation} Indeed, using Tits's representations of $\sf G$ and Equations \eqref{eq:normayrep} and \eqref{busnorma} this boils down to the elementary fact that if $A\in\GL_d(\R)$ verifies\footnote{Recall from Equation \eqref{e.slroot} that we denote by $\slroot_i$ the simple roots of $\GL_d(\RR)$} $\slroot_1(\cartan(A))>0$ then for every $v\in\R^d$ one has $$\log\frac{\|Av\|}{\|v\|}\geq \log\|A\|+\log\sin\angle\big(\R\cdot v,U_{d-1}(A^{-1})\big)$$ (see for example \cite[Lemma A.3]{BPS}).\end{obs}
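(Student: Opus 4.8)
The plan is to follow the reduction sketched immediately after the displayed inequality: test the identity against the Tits representations $\Fund_\sroot$, $\sroot\in\t$, and reduce to the elementary singular‑value estimate quoted in the remark (\cite[Lemma A.3]{BPS}). Since $\{\peso_\sroot|\a_\t:\sroot\in\t\}$ is a basis of $(\a_\t)^*$, one can fix a constant $C=C_\t>0$, depending only on $\sf G$, $\t$ and the chosen inner product on $\a$, with $\|w\|\le C\max_{\sroot\in\t}|\peso_\sroot(w)|$ for all $w\in\a_\t$; as $\cartan_\t(g)-\bus_\t(g,y)\in\a_\t$, it then suffices to establish
\[
0\le\peso_\sroot\big(\cartan_\t(g)-\bus_\t(g,y)\big)<\alpha\qquad\text{for every }\sroot\in\t,
\]
and to set $K_\alpha=C\alpha$.

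So I would fix $\sroot\in\t$, write $A=\Fund_\sroot(g)$ (a lift to $\GL(V_\sroot)$; everything below is insensitive to the projective ambiguity), and choose $v$ spanning the line $\zeta_{\Fund_\sroot}(y)$ — recall $\dim V_{\chi_{\Fund_\sroot}}=1$ by Proposition \ref{prop:titss}. Working with the norm $\|\,\|_{\Fund_\sroot}$, which is $\Fund_\sroot\sf K$-invariant and makes $\Fund_\sroot\sf A$ self-adjoint, and using the \emph{same} Cartan decomposition $g=k\,e^{\cartan(g)}\,l$ that is used to define $U_{\ii\t}(g^{-1})=l^{-1}[\check{\sf P}_\t]$, the factorization $A=\Fund_\sroot(k)\,\Fund_\sroot(e^{\cartan(g)})\,\Fund_\sroot(l)$ is a Cartan decomposition of $A$: the outer terms are orthogonal and the middle one is self-adjoint positive with largest eigenvalue $e^{\chi_{\Fund_\sroot}(\cartan(g))}$ (as $\chi_{\Fund_\sroot}$ dominates every weight of $\Fund_\sroot$ on $\a^+$) and associated eigenline $V_{\chi_{\Fund_\sroot}}$. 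Choosing the top singular vector of $A$ inside $V_{\chi_{\Fund_\sroot}}$, its orthogonal complement in $V_\sroot$ is the $\Fund_\sroot\sf A$-invariant complement $W_{\chi_{\Fund_\sroot}}$, so the repelling hyperplane is $U_{\dim V_\sroot-1}(A^{-1})=\Fund_\sroot(l)^{-1}W_{\chi_{\Fund_\sroot}}$, which by the maps \eqref{maps} equals $\zeta^*_{\Fund_\sroot}\big(U_{\ii\t}(g^{-1})\big)$.

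Then \eqref{eq:normayrep}, \eqref{busnorma} and the $\pi_\t$-invariance of $\peso_\sroot$ give
\[
l_\sroot\,\peso_\sroot\big(\cartan_\t(g)-\bus_\t(g,y)\big)=\log\|A\|_{\Fund_\sroot}+\log\|v\|_{\Fund_\sroot}-\log\|Av\|_{\Fund_\sroot}\ \ge\ 0,
\]
the lower bound being $\|Av\|\le\|A\|\,\|v\|$. For the upper bound I would invoke $\|Av\|\ge\|A\|\,\|v\|\,\sin\angle_o\big(\R v,\,U_{\dim V_\sroot-1}(A^{-1})\big)$ — the elementary fact quoted in the remark, valid for the chosen Cartan decomposition even when the top singular value of $A$ is not simple, since $\|Av\|^2\ge e^{2\chi_{\Fund_\sroot}(\cartan(g))}|\langle v,w\rangle_{\Fund_\sroot}|^2$ for $w$ the chosen unit top singular vector — whence
\[
l_\sroot\,\peso_\sroot\big(\cartan_\t(g)-\bus_\t(g,y)\big)\le-\log\sin\angle_o\big(\R v,\,U_{\dim V_\sroot-1}(A^{-1})\big)=-l_\sroot\,\peso_\sroot\big(\Gr_\t(U_{\ii\t}(g^{-1}),y)\big),
\]
the last equality being the definition of $\Gr_\t$ together with the identification above. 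Since $y\in B_{\t,\alpha}(g)$ means $\peso_\sroot\Gr_\t(U_{\ii\t}(g^{-1}),y)>-\alpha$ for all $\sroot\in\t$ by \eqref{e.CartBasin}, dividing by $l_\sroot>0$ gives $\peso_\sroot\big(\cartan_\t(g)-\bus_\t(g,y)\big)<\alpha$, completing the argument. The step I expect to demand the most care is the identification $\zeta^*_{\Fund_\sroot}(U_{\ii\t}(g^{-1}))=U_{\dim V_\sroot-1}(A^{-1})$ and the need for both it and the elementary estimate to survive the non-uniqueness of $U_{\ii\t}(g^{-1})$ flagged just before the remark; the resolution is simply that one fixes a single Cartan decomposition of $g$ throughout, from which all the relevant choices for $A=\Fund_\sroot(g)$ are induced.
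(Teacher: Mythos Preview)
Your proposal is correct and is exactly the argument the paper has in mind: the remark itself only sketches the proof (``using Tits's representations of $\sf G$ and Equations \eqref{eq:normayrep} and \eqref{busnorma} this boils down to the elementary fact\ldots''), and you have faithfully expanded that sketch, including the identification $\zeta^*_{\Fund_\sroot}(U_{\ii\t}(g^{-1}))=U_{\dim V_\sroot-1}(\Fund_\sroot g^{-1})$ and the care about fixing a single Cartan decomposition. There is nothing to add.
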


\section{H\"older cocycles on $\bord\G$}\label{cocycles} 
Let $\G$ be a finitely generated group, and fix a finite generating set $S$. A group $\G$ is \emph{Gromov hyperbolic} if its Cayley graph $\sf{Cay}(\G, S)$ is a Gromov hyperbolic geodesic metric space. In this case we denote by $\bord\G$ its Gromov boundary, namely the equivalence classes of (quasi)-geodesic rays. It is well known that, up to Hölder homeomorphism, $\bord\G$  doesn't depend on the choice of the generating set $S$. We will  denote by $\bord^2\G$ the set of distinct pairs in $\bord\G$:
$$\bord^2\G:=\{(x,y)\in\bord\G\times\bord\G|\, x\neq y\}.$$

For a finitely generated, non-elementary, word-hyperbolic group $\G$ we denote by $\sf g=\big(\sf g_t:\sf U\G\to\sf U\G\big)_{t\in\R}$ the \emph{Gromov-Mineyev geodesic} flow of $\G$ (see Gromov \cite{gromov} and Mineyev \cite{mineyev}). Throughout this section we will have the same assumptions as in S. \cite[\S\,3]{dichotomy}, namely that $\sf g$ is metric-Anosov  and that the lamination induced on the quotient by $\widetilde{\cal W}^{cu}=\{(x,\cdot,\cdot)\in\widetilde{\sf U\G}\}$ is the central-unstable lamination of $\sf g.$ 

Since we will mostly recall needed results from S. \cite[\S\,3]{dichotomy} we do not overcharge the paper with the definitions of metric-Anosov and central-unstable lamination:  
by Bridgeman-Canary-Labourie-S. \cite{pressure}, word-hyperbolic groups admitting an Anosov representation verify the required assumptions.

\begin{defi}Let $V$ be a finite dimensional real vector space. A \emph{H\"older cocycle} is a function $c:\G\times\bord\G\to V$ such that: 
\begin{itemize}
	\item[-] for all $\g,h\in \G$ one has $c\big(\g h,x\big)=c\big(h,x\big)+c\big(\g,h(x)\big),$ 
	\item[-]there exists $\alpha\in(0,1]$ such that for every $\g\in\G$ the map $c(\g,\cdot)$ is $\alpha$-H\"older continuous.
\end{itemize}
\end{defi}

Recall that every \emph{hyperbolic element}\footnote{Namely an infinite order element} $\g\in\G$ has two fixed points on $\bord\G,$ the attracting $\g_+$ and the repelling $\g_-.$ If $x\in\bord\G-\{\g_-\}$ then $\g^nx\to\g_+$ as $n\to\infty.$ The \emph{period} of a H\"older cocycle for a hyperbolic $\g\in\G$ is $\ell_c(\g):=c\big(\g,\g^+\big).$ A cocycle $c^*:\G\times\bord\G\to\R$ is \emph{dual to $c$} if for every hyperbolic $\g\in\G$ one has $$\ell_{c^*}(\g)=\ell_c\big(\g^{-1}\big).$$

\subsection{Real-valued coycles}\label{Sreal}

Assume now $V=\R$ and consider a cocycle $\kappa$ with non-negative (and not all vanishing) periods. For $t>0$ we let $$\sf R_t(\kappa)=\big\{[\g]\in[\G]\textrm{ hyperbolic}:\ell_\kappa(\g)\leq t\big\}$$ and define the \emph{entropy} of $\kappa$ by 
$$\hJ\kappa=\limsup_{t\to\infty}\frac1t\log\#\sf R_t(\kappa)\in(0,\infty].$$ 
For such a cocycle consider the action of $\G$ on $\bord^2\G\times\R$ via $\kappa$: 
\begin{equation}\label{gaction}
	\g\cdot(x,y,t)=\left(\g x,\g y,t-\kappa\left(\g,y\right)\right).
\end{equation}

The following is a straightforward consequence of S. \cite[Theorem 3.2.2]{dichotomy}.

\begin{prop}\label{-infty} 
	Let $\kappa$ be a H\"older cocycle with non-negative periods and finite entropy. Then, the above action of $\G$ on $\bord^2\G\times\R$ is properly-discontinuous and co-compact. If moreover $c$ is another H\"older cocycle with non-negative periods and finite entropy then there exists a $\G$-equivariant bi-H\"older-continuous homeomorphism $E:\bord^2\G\times\R\to\bord^2\G\times\R$ which is an orbit equivalence between the $\R$-translation actions.
\end{prop}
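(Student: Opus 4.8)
The plan is to deduce both assertions from the cited result \cite[Theorem 3.2.2]{dichotomy}, which is precisely the engine behind this statement: for a metric-Anosov flow, a H\"older cocycle with nonnegative periods and finite entropy defines, via the formula \eqref{gaction}, a $\G$-action on $\bord^2\G\times\R$ that is a \emph{reparametrization} of the Gromov--Mineyev geodesic flow on $\UG$ in the following sense — there is a positive H\"older function $f_\kappa$ on $\UG$ whose associated reparametrized flow is conjugate, $\G$-equivariantly, to the $\R$-action by translation in the last coordinate of $\bord^2\G\times\R$. Concretely, one first lifts $\kappa$ to a H\"older function on $\widetilde{\UG}$ using the identification $\widetilde{\UG}\simeq\bord^2\G\times\R$ coming from the metric-Anosov structure and the hypothesis that $\widetilde{\cal W}^{cu}$ is the central-unstable lamination; here the standing assumptions on $\sf g$ recalled at the start of the section are used, together with Livsic-type cohomology arguments to pass from the cocycle $\kappa$ (defined on $\G\times\bord\G$) to an actual H\"older potential on the compact space $\UG=\G\backslash\widetilde{\UG}$.

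First I would record proper discontinuity and cocompactness. Since $\sf g$ is the Gromov--Mineyev flow, the $\G$-action on $\widetilde{\UG}\simeq\bord^2\G\times\R$ is already properly discontinuous and cocompact (this is the content of the geodesic flow construction). The action \eqref{gaction} differs from the ``geometric'' action only by the cocycle $\kappa$ replacing the Gromov cocycle in the $\R$-coordinate; because $\kappa$ has finite entropy its associated potential $f_\kappa$ is bounded above and, after the usual normalization, bounded below away from zero on $\UG$ (nonnegativity of periods and a compactness/ergodic argument give a uniform positive lower bound on averages, hence on the reparametrizing function up to a coboundary). A positive H\"older reparametrization of a properly discontinuous cocompact flow action is again properly discontinuous and cocompact — the time-change map $\widetilde\UG\to\widetilde\UG$ it induces is a $\G$-equivariant homeomorphism — so the first assertion follows.

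For the orbit equivalence, given a second such cocycle $c$, both $\kappa$ and $c$ produce positive H\"older potentials $f_\kappa,f_c$ on $\UG$, and both reparametrized flows are orbit equivalent to the base flow $\sf g$ via the explicit time-changes $\tau_\kappa(u,t)=\big(u,\int_0^t f_\kappa(\sf g_s u)\,ds\big)$ and similarly for $c$; composing these gives a $\G$-equivariant homeomorphism $E:\bord^2\G\times\R\to\bord^2\G\times\R$ sending $\R$-orbits to $\R$-orbits and restricting to a monotone increasing homeomorphism on each orbit. Bi-H\"older continuity of $E$ is inherited from bi-H\"older continuity of the individual time-changes, which in turn comes from H\"older continuity of $f_\kappa$, $f_c$ and their reciprocals together with the H\"older structure on $\widetilde\UG$; this is exactly where one invokes \cite[Theorem 3.2.2]{dichotomy} in its full strength rather than just its qualitative corollaries.

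The main obstacle I expect is the passage from the abstract cocycle $\kappa:\G\times\bord\G\to\R$ to a genuine H\"older potential on the \emph{compact} quotient $\UG$ with good regularity — i.e. verifying that the reparametrizing function is H\"older with H\"older inverse, uniformly, so that $E$ is bi-H\"older and not merely a homeomorphism. This requires carefully combining the metric-Anosov regularity of $\sf g$ (H\"older local product structure, H\"older holonomies) with the cocycle identity and the finite-entropy bound; once that regularity is in hand, properness, cocompactness, and the orbit equivalence are formal. Since all of this is carried out in \cite[\S\,3]{dichotomy} under precisely the present hypotheses, the proposition is a direct application, and the write-up should simply cite \cite[Theorem 3.2.2]{dichotomy} and indicate the two bullet points above.
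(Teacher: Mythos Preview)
Your proposal is correct and takes essentially the same approach as the paper: the paper states the proposition as ``a straightforward consequence of S. \cite[Theorem 3.2.2]{dichotomy}'' and gives no further argument, while you have unpacked why that citation suffices (Ledrappier potential, positive H\"older reparametrization of the Gromov--Mineyev flow, composing time-changes). Your elaboration is accurate and more detailed than what the paper provides.
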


We recall the notion of \emph{dynamical intersection}, a concept from Bridgeman-Canary-Labourie-S. \cite{pressure} for H\"older functions over a metric-Anosov flow, that can be pulled back to this setting via the existence of the \emph{Ledrappier potential} of $\kappa$ from S. \cite[\S\,3.1]{dichotomy}.

 The \emph{dynamical intersection} of two real valued cocycles $\kappa, c$ is
 \begin{equation}\label{defiII}\II(\kappa,c)=\lim_{t\to\infty}\frac1{\sf R_t(\kappa)}\sum_{\g\in\sf R_t(\kappa)}\frac{\ell_c(\g)}{\ell_\kappa(\g)}.
\end{equation} 
We record in the following proposition various needed facts about $\II$:

\begin{prop}[{\cite[\S\,3]{pressure}}]\label{ineq} 
	The dynamical intersection defined above is well defined, linear in the second variable and for all positive $s$ satisfies $\II(s\kappa,c)=\II(\kappa,c)/s.$ If also $c$ has non-negative periods and finite entropy then  $\II(\kappa,c)\geq \hJ\kappa/\hJ c.$  Moreover, if $\II(\kappa,c)= \hJ\kappa/\hJ c$ then for every $\g\in\G$ one has $\hJ\kappa\ell_\kappa(\g)=\hJ c\ell_c(\g)$.
\end{prop}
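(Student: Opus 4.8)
The plan is to translate every assertion into a statement about H\"older functions over the metric-Anosov flow $\sf g$ on $\UG$, for which the thermodynamic formalism of \cite[\S\,3]{pressure} applies, and then to invoke the corresponding results there. To a H\"older cocycle $\kappa$ with non-negative periods and finite entropy one associates, via its Ledrappier potential (S. \cite[\S\,3.1]{dichotomy}), a H\"older function $f_\kappa\colon\UG\to\R$ whose integral over the periodic orbit of $\sf g$ determined by a hyperbolic conjugacy class $[\g]$ equals the period $\ell_\kappa(\g)$; then $\hJ\kappa$ is the unique $h>0$ with $P(-hf_\kappa)=0$, where $P$ denotes the topological pressure of $\sf g$. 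Carrying out the same construction for $c$ produces $f_c$, and by definition the quantity in \eqref{defiII} is the dynamical intersection number $\II(f_\kappa,f_c)$ of \cite{pressure}; so it suffices to establish the four claims at the level of H\"older potentials.

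The first point would be to identify $\II(f_\kappa,f_c)$ with an integral. The $f_\kappa$-reparametrization $\sf g^{f_\kappa}$ of $\sf g$ is again a topologically transitive metric-Anosov flow; write $m$ for its measure of maximal entropy and $\ov m$ for the $\sf g$-invariant probability measure it corresponds to under reparametrization, so that $m=\big(\int f_\kappa\,d\ov m\big)^{-1}f_\kappa\,\ov m$. A change of variables gives, for every periodic orbit $\gamma$ through a point $x$,
\[\int_\gamma f_c=\int_0^{\int_\gamma f_\kappa}\tfrac{f_c}{f_\kappa}\big(\sf g^{f_\kappa}_s x\big)\,ds,\]
so that $\int_\gamma f_c\big/\int_\gamma f_\kappa$ is the $(f_c/f_\kappa)$-average of the normalized orbit measure of $\gamma$ for $\sf g^{f_\kappa}$. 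Since the hyperbolic conjugacy classes in $\sf R_t(\kappa)$ are in bijection with the periodic orbits of $\sf g^{f_\kappa}$ of period at most $t$, Bowen's equidistribution of periodic orbits towards the measure of maximal entropy of $\sf g^{f_\kappa}$ shows that the limit in \eqref{defiII} exists and that
\[\II(\kappa,c)=\int\frac{f_c}{f_\kappa}\,dm=\frac{\int f_c\,d\ov m}{\int f_\kappa\,d\ov m}.\]
Well-definedness and linearity of $\II(\kappa,\cdot)$ in the second variable are then immediate, and $\II(s\kappa,c)=\II(\kappa,c)/s$ follows because reparametrizing by $sf_\kappa$ multiplies every period by $s$ while leaving the probability measure of maximal entropy unchanged.

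For the inequality I would combine Abramov's formula $h(\sf g,\ov m)=\hJ\kappa\int f_\kappa\,d\ov m$ (valid since $m$ has entropy $\hJ\kappa$) with the variational principle: as $P(-\hJ c\,f_c)=0$ one has $0\geq h(\sf g,\ov m)-\hJ c\int f_c\,d\ov m$, that is $\hJ c\int f_c\,d\ov m\geq\hJ\kappa\int f_\kappa\,d\ov m$, which is exactly $\II(\kappa,c)\geq\hJ\kappa/\hJ c$. In case of equality $\ov m$ realizes the supremum in the variational principle for $-\hJ c\,f_c$, hence is the equilibrium state of $-\hJ c\,f_c$; but $\ov m$ is also the equilibrium state of $-\hJ\kappa\,f_\kappa$, being up to normalization the measure of maximal entropy of $\sf g^{f_\kappa}$. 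Uniqueness of equilibrium states for H\"older potentials over a transitive metric-Anosov flow then forces $-\hJ\kappa\,f_\kappa$ and $-\hJ c\,f_c$ to be cohomologous (they have the same pressure $0$, so no additive constant is needed), and the Livsic theorem implies that cohomologous H\"older functions have equal integrals over every periodic orbit. Reading this back through the Ledrappier correspondence yields $\hJ\kappa\,\ell_\kappa(\g)=\hJ c\,\ell_c(\g)$ for every hyperbolic $\g\in\G$, as claimed.

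The step I expect to be the main obstacle is the very first one: the Ledrappier potential of a cocycle with merely non-negative periods need not be a strictly positive function, so before reparametrizing one must know that it is positive on average over $\sf g$ — equivalently, that the periods are not all zero and $\hJ\kappa<\infty$ — and that it may then be chosen within its cohomology class so that $\sf g^{f_\kappa}$ is a genuine topologically transitive metric-Anosov flow with the classical thermodynamic machinery at its disposal. This is precisely what is arranged in S. \cite[\S\,3]{dichotomy} and \cite[\S\,3]{pressure}, and the proof would simply invoke those constructions.
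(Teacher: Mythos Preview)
The paper does not give its own proof of this proposition; it simply records it as a citation from \cite[\S\,3]{pressure}. Your proposal is a faithful reconstruction of exactly that argument: pass to Ledrappier potentials over the metric-Anosov flow, identify $\II(\kappa,c)$ with $\int f_c\,d\ov m/\int f_\kappa\,d\ov m$ via Bowen equidistribution, derive the inequality from Abramov plus the variational principle for $-\hJ c f_c$, and handle the equality case through uniqueness of equilibrium states and Liv\v{s}ic. One small phrasing issue: what you invoke in the equality case is not ``uniqueness of equilibrium states'' per se but rather the injectivity of the map from cohomology classes of H\"older potentials (mod constants) to equilibrium states, which is the standard consequence of strict convexity of pressure; and the direction of Liv\v{s}ic you actually use (cohomologous $\Rightarrow$ equal periods) is the trivial one. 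Your caveat about needing $f_\kappa$ to be cohomologous to a strictly positive function before reparametrizing is well placed and is indeed what \cite{dichotomy,pressure} arrange.
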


We will also need the following definitions. 

\begin{defi}
	\item\begin{itemize}
		\item[-]A \emph{Patterson-Sullivan measure for $\kappa$ of exponent} $\delta\in\R_+$ is a probability measure $\ps$ on $\bord\G$ such that for every $\g\in\G$ one has 
		\begin{equation}\label{defP}
			\frac{d \g_*\ps}{d\ps}(\cdot)=e^{-\delta\cdot \kappa\left(\g^{-1},\,\cdot\,\right)}.
		\end{equation} 
	\item[-]Let $\kappa^*$ be a cocycle dual to $\kappa$, then a \emph{Gromov product} for the ordered pair $(\kappa^*, \kappa)$ is a function $[\cdot,\cdot]:\bord^2\G\to\R$ such that for all $\g\in\G$ and $(x,y)\in\bord^2\G$ one has 
	$$[\g x,\g y]-[x,y]=-\big(\kappa^*(\g,x)+\kappa(\g,y)\big).$$
\end{itemize}
\end{defi}

\subsection{The critical hypersurface and intersection}\label{QQ}

Let now $c:\G\times\bord\G\to V$ be a H\"older cocycle. Its \emph{limit cone} is denoted by $$\calL_c=\overline{\bigcup_{\g\in\G}\R_+\cdot\ell_c(\g)}$$ 
and its \emph{dual cone} by $\cdc{c}=\{\psi\in V^*:\psi|_{\calL_c}\geq0\}.$ 
Observe that for every $\varphi\in\inte\cdc{c}$, $\varphi\circ c$ is a real-valued cocycle, so the concepts from Section \ref{Sreal} apply. We denote by 
\begin{alignat}{2}\label{qh1}
	\cal Q_c  & =\Big\{\varphi\in\inte\cdc{c}:\hJ{\varphi\circ c}=1\Big\},\\ 
	\cal D_c & = \Big\{\varphi\in\inte\cdc{c}:\hJ{\varphi\circ c}\in(0,1)\Big\},\nonumber \end{alignat}
respectively the \emph{critical hypersurface} and the \emph{convergence domain} of $c.$ 

For $\varphi\in\inte\cdc c$ we consider the linear map $\II_{\varphi}=\II_{\varphi}^c:V^*\to\R$ defined by $$\II_{\varphi}^c(\psi):= \II(\varphi\circ c,\psi\circ c),$$ as in Equation \eqref{defiII}. The natural identification between the set of hyperplanes in $V^*$ and $\P(V)$ is used in the next proposition.

\begin{cor}[{S. \cite[Cor. 3.4.3]{dichotomy}}]\label{strictly}
	Assume $\calL_c$ has non-empty interior and that there exists $\psi\in\cdc{c}$ such that $\hJ{\psi}<\infty.$ Then $\cal D_c$ is a strictly convex set with boundary $\cal Q_c.$  The latter is an analytic co-dimension one sub-manifold of $V.$ The map $\sf u^c:\cal Q_c\to\P(V)$ defined by $$\varphi\mapsto\sf u^c_\varphi:=\sf T_\varphi\cal Q_c=\ker\II_\varphi$$ is an analytic diffeomorphism between $\cal Q_c$ and $\inte\big(\P(\calL_c)\big).$\end{cor}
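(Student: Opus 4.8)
The plan is to obtain convexity and the supporting hyperplanes of $\cal D_c$ by hand from the intersection inequalities of Proposition \ref{ineq}, and then to upgrade the regularity (analyticity, and the diffeomorphism property of $\sf u^c$) by passing to the thermodynamic formalism of the Gromov--Mineyev flow.

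\emph{Convexity and supporting hyperplanes.} Under the standing hypothesis one has $\inte\cdc{c}\neq\emptyset$ and $\cal D_c\neq\emptyset$. For $\varphi,\psi\in\cal Q_c$ the cocycles $\varphi\circ c$ and $\psi\circ c$ have non-negative periods and finite entropy equal to $1$, so Proposition \ref{ineq} gives $\II_\varphi(\psi)=\II(\varphi\circ c,\psi\circ c)\ge 1$, with equality exactly when $\ell_{\varphi\circ c}(\g)=\ell_{\psi\circ c}(\g)$ for all $\g$; as $\calL_c$ has non-empty interior the periods $\ell_c(\g)$ span $V$, so equality forces $\varphi=\psi$. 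Moreover $\II_\varphi$ is linear in its argument, $\II_\varphi(\varphi)=\II(\varphi\circ c,\varphi\circ c)=1$, and $\hJ{s\kappa}=\hJ\kappa/s$ for $s>0$ (immediate from the definitions). Combining these one checks
\[\cal D_c=\big\{x\in\inte\cdc{c}:\II_\psi(x)>1\ \text{for every}\ \psi\in\cal Q_c\big\},\]
the inclusion $\subseteq$ by writing $x=\big(\hJ{x\circ c}\big)^{-1}\cdot\big(\hJ{x\circ c}\,x\big)$ with $\hJ{x\circ c}\,x\in\cal Q_c$, and $\supseteq$ by evaluating the defining condition at $\psi=\hJ{x\circ c}\,x$. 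Hence $\cal D_c$ is open and convex with relative boundary $\cal Q_c$, and $\{\psi:\II_\varphi(\psi)=1\}$ is a supporting hyperplane of $\cal D_c$ at $\varphi$, meeting $\cal Q_c$ only at $\varphi$.

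\emph{Analyticity.} To see that $\cal Q_c$ is analytic I would use the Ledrappier potential: by \cite{dichotomy,pressure} the cocycle $c$ admits a vector-valued Hölder potential $f_c\colon\UG\to V$ with $\int_{[\g]}f_c=\ell_c(\g)$, and for $\varphi\in\inte\cdc{c}$ one has $\hJ{\varphi\circ c}=1$ precisely when the topological pressure of the flow satisfies $\mathrm{Pr}(-\varphi\circ f_c)=0$. Thus $\cal Q_c$ is a level set of the real-analytic function $\varphi\mapsto\mathrm{Pr}(-\varphi\circ f_c)$, whose differential at $\varphi\in\cal Q_c$ in the direction $\psi$ is $-\int\psi\circ f_c\,dm_\varphi$, where $m_\varphi$ is the equilibrium state of $-\varphi\circ f_c$; by the intersection formula of \cite{pressure} this equals $-\big(\int\varphi\circ f_c\,dm_\varphi\big)\II_\varphi(\psi)$, a non-zero multiple of $\II_\varphi$ (the constant $\int\varphi\circ f_c\,dm_\varphi$ being positive as $\varphi\circ f_c$ has positive periods). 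By the real-analytic implicit function theorem $\cal Q_c$ is an analytic codimension-one submanifold of $V^*$ with $\sf T_\varphi\cal Q_c=\ker\II_\varphi$; combined with the previous paragraph and a midpoint argument (a non-trivial segment in $\bord\cal D_c$ through $m$ would lie in the unique supporting hyperplane at $m$, namely $\{\II_m=1\}$, contradicting $\II_m(\psi)>1$ for $\psi\in\cal Q_c\setminus\{m\}$), this shows $\cal D_c$ is strictly convex.

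\emph{The map $\sf u^c$.} Writing $\II_\varphi=\langle v_\varphi,\cdot\,\rangle$ with $v_\varphi\in V$, one has $\sf u^c_\varphi=\ker\II_\varphi=[v_\varphi]\in\P(V)$. Since $\II_\varphi(\psi)\ge 0$ for all $\psi\in\cdc{c}$, $v_\varphi$ lies in the bidual cone $\calL_c$; and since $v_\varphi$ is, up to a positive scalar, the barycenter $\int f_c\,dm_\varphi$ of the fully supported equilibrium state $m_\varphi$ (whose orbit-averages of $f_c$ fill $\P(\calL_c)$), in fact $v_\varphi\in\inte\calL_c$, so $\sf u^c$ maps $\cal Q_c$ into $\inte\P(\calL_c)$. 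Analytic dependence of $m_\varphi$ on $\varphi$ makes $\sf u^c$ analytic; strict convexity makes it injective (distinct boundary points of $\cal D_c$ carry distinct supporting hyperplanes); and positivity of the variance of $\psi\circ f_c$ for $\psi\notin\R\varphi$ — again because $\{\ell_c(\g)\}$ spans $V$ — makes the Hessian of $\mathrm{Pr}$ non-degenerate transversally to $\R\varphi$, so the differential of $\sf u^c$ is invertible and $\sf u^c$ is a local analytic diffeomorphism. Injectivity together with local invertibility gives a diffeomorphism onto an open subset of $\inte\P(\calL_c)$, and a properness argument — a sequence in $\cal Q_c$ tending to $\bord\cdc{c}$ has supporting-hyperplane normals tending to $\bord\P(\calL_c)$ — identifies the image with all of $\inte\P(\calL_c)$.

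\emph{Main obstacle.} The two non-formal inputs are the real-analyticity of the topological pressure of the metric-Anosov flow $\sf g$ together with the analytic dependence of its equilibrium states (supplied by the thermodynamic formalism of \cite{pressure}; without it the elementary part only yields that $\cal Q_c$ is $C^1$), and the surjectivity/properness of $\sf u^c$ onto the \emph{full} interior of $\P(\calL_c)$, which forces one to understand the asymptotics of the unbounded strictly convex set $\cal D_c$ near $\bord\cdc{c}$ and to match its recession directions with the extreme rays of the limit cone $\calL_c$.
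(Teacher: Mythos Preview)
The paper does not prove this corollary; it is quoted as a black box from S.~\cite[Cor.~3.4.3]{dichotomy}. Your sketch --- convexity of $\cal D_c$ from the intersection inequality of Proposition~\ref{ineq}, analyticity of $\cal Q_c$ from the pressure function of the metric-Anosov Gromov--Mineyev flow, and the diffeomorphism property of $\sf u^c$ from non-degeneracy of the Hessian of pressure --- is essentially the argument carried out in that reference, and the outline is correct.
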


\subsection{Ergodicity of directional flows}\label{directional}

It follows from Proposition \ref{-infty} that if there exists $\psi\in\cdc{c}$ with $\hJ\psi<\infty$ then the $\G$-action $\bord^2\G\times V$ $$\g(x,y,v)=\big(\g x,\g y,v-c(\g,y)\big)$$ is properly discontinuous.

\begin{defi}A Hölder cocycle $c$ is \emph{non-arithmetic} if the periods of $c$ generate a dense subgroup in $V.$
\end{defi}

We fix $\varphi\in\cal Q_c$ and denote by $u_\varphi\in\sf u_\varphi$ the unique vector in $\cal L_c\cap\sf u_\varphi$ with $\varphi(u_\varphi)=1.$ We define then the \emph{directional flow} $\df^\varphi=\big(\df^\varphi_t:\G\/\big(\bord^2\G\times V\big)\to\G\/\big(\bord^2\G\times V\big)\big)_{t\in\R}$ by $$t\cdot(x,y,v)=(x,y,v-tu_\varphi).$$

\begin{assu}\label{Pat-SulEx}
	We assume there exists:
	\begin{itemize}
		\item[-] a dual cocycle $(\varphi\circ c)^*,$ 
		\item[-] a Gromov product $[\,,\,]_\varphi$ for such a pair,
		\item[-] Patterson-Sullivan measures, $\ps^\varphi$ and ${\ov\ps}^\varphi,$ respectively for each of the cocycles; (the exponent of both measures is then necessarily $\hJ\varphi=1$ S. \cite[Proposition 3.3.2]{dichotomy}).
	\end{itemize}
\end{assu}

Consider then the $\varphi$-\emph{Bowen-Margulis} measure $\BM^\varphi$ on $\G\/\big(\bord^2\G\times V\big)$ defined as the measure  induced on the quotient by the measure
\begin{equation}\label{forBM}
	e^{-[\cdot,\cdot]_\varphi}{\ov\ps}^\varphi\otimes\ps^\varphi\otimes \Fundeb_{V},
\end{equation} for a $V$-invariant Lebesgue measure on $V.$ 
We denote by $\cal K(\df^\varphi)$ the \emph{recurrence set} of the directional flow $\df^\varphi$:
$$\cal K(\df^\varphi):=\{\left.p\in\G\/\big(\bord^2\G\times V\big)\right|\; \exists B \text{ open bounded}, t_n\to \infty \text{ with } \df^\varphi_{t_n}(p)\in B \}.$$

\begin{cor}[{S. \cite[Cor. 3.6.1]{dichotomy}}]\label{d=2} 
	Assume that $c$ is non-arithmetic, and that there exists $\varphi\in\cal Q_c$ satisfying Assumptions  \ref{Pat-SulEx}. If  $\dim V\leq2$ then the directional flow $\df^\varphi$ is $\BM^\varphi$-ergodic, and $\cal K(\df^\varphi)$ has total mass. If $\dim V\geq4$ then $\cal K(\df^\varphi)$ has measure zero.
\end{cor}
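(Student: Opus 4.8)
The plan is to realise $\df^\varphi$ as the suspension-type flow of a flat Euclidean vector bundle over a compact mixing metric-Anosov flow, and then to read off ergodicity and recurrence from the classical dichotomy for zero-drift additive functionals. Fix $\varphi\in\cal Q_c$, set $W=\ker\varphi$, let $u_\varphi$ span $\sf u^c_\varphi$ normalised by $\varphi(u_\varphi)=1$, and decompose $c=(\varphi\circ c)\,u_\varphi+c_W$, so that $c_W:\G\times\bord\G\to W$ is again a H\"older cocycle. Since $\varphi\in\cal Q_c\subset\inte\cdc c$ the cocycle $\varphi\circ c$ has non-negative periods and $\hJ{\varphi\circ c}=1<\infty$, so Proposition \ref{-infty} makes $M:=\G\/(\bord^2\G\times\R)$ (with $\R$ acting via $\varphi\circ c$) compact and identifies its $\R$-translation flow, up to orbit equivalence, with the Gromov--Mineyev geodesic flow; in particular this flow is topologically mixing and carries its Bowen--Margulis measure. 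Then $\G\/(\bord^2\G\times V)$ is the total space of the flat $W$-bundle over $M$ with holonomy $c_W$, the flow $\df^\varphi$ lifts the translation flow of $M$ through this holonomy, and $\BM^\varphi$ is the lift of the Bowen--Margulis measure of $M$ using $\Fundeb_W$.

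The next step is to verify that the $W$-holonomy has zero drift against $\BM^\varphi$. The fibrewise displacement rate of $\df^\varphi$ equals $\pi_W(D)$, where $\pi_W:V\to W$ is the projection along $u_\varphi$ and $D$ is the vector determined by $\psi(D)=\II_\varphi^c(\psi)$ for all $\psi\in V^*$. By Corollary \ref{strictly} the hyperplane $\ker\II_\varphi^c\subset V^*$ corresponds to the line $\sf u^c_\varphi=\R u_\varphi$, hence $D\in\R u_\varphi$; since moreover $\varphi(D)=\II(\varphi\circ c,\varphi\circ c)=1=\varphi(u_\varphi)$ we get $D=u_\varphi$ and therefore $\pi_W(D)=0$. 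So on closed orbits of the base flow the $W$-holonomy averages to zero.

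With this reduction, the statement is the recurrence/ergodicity dichotomy for a flat $\R^{\dim V-1}$-bundle over a compact mixing metric-Anosov flow equipped with a zero-drift H\"older holonomy cocycle. I would code the base flow as a H\"older suspension of a subshift of finite type (available from the metric-Anosov/Bowen--Margulis structure posited in \S\ref{cocycles}) and run the transfer-operator analysis to obtain a local central limit theorem for the $W$-displacement: normalised by $\sqrt t$ it converges to a Gaussian on $\R^{\dim V-1}$, which is \emph{non-degenerate} precisely because $c$ is non-arithmetic --- density of the periods of $c$ in $V$ forbids $c_W$ from being cohomologous to a cocycle valued in a proper closed subgroup of $W$ --- and consequently the return probability of $\df^\varphi$ to a fixed bounded subset of $\G\/(\bord^2\G\times V)$ is comparable to $t^{-(\dim V-1)/2}$. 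If $\dim V\geq4$ this is summable, and a Borel--Cantelli argument along the flow shows that $\BM^\varphi$-a.e.\ orbit eventually leaves every bounded set, so $\BM^\varphi(\cal K(\df^\varphi))=0$. If $\dim V\leq2$ it is non-summable, and a Chung--Fuchs-type argument yields conservativity, hence $\BM^\varphi(\cal K(\df^\varphi))=1$; ergodicity of $\df^\varphi$ then follows by a Hopf argument using conservativity, ergodicity of the base Bowen--Margulis flow, and once more non-arithmeticity of $c$ (to exclude nontrivial $\df^\varphi$-invariant measurable sub-bundles). The borderline $\dim V=3$ is exactly the case left out.

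I expect the principal difficulty to be the local central limit theorem and especially the non-degeneracy of its Gaussian limit: this is where the non-arithmeticity hypothesis has to be genuinely used, through an aperiodicity/Livsic-type argument for the $W$-valued cocycle over the coded flow. A secondary but nontrivial point is to transfer the symbolic return-probability estimates into honest $\BM^\varphi$-almost-everywhere statements about $\cal K(\df^\varphi)$, and to upgrade recurrence to ergodicity, directly for the continuous-time flow $\df^\varphi$.
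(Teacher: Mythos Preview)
The paper does not prove this corollary: it is imported verbatim from \cite[Cor.~3.6.1]{dichotomy} and used as a black box, so there is no in-paper argument to compare against. Your sketch is the correct strategy and is essentially what \cite{dichotomy} does: one realises $\G\backslash(\bord^2\G\times V)$ as a flat $\ker\varphi$-bundle over the compact base $\G\backslash(\bord^2\G\times\R)$ (Proposition~\ref{-infty}), checks via Corollary~\ref{strictly} that the fibrewise drift vanishes because $\sf u^c_\varphi=\sf T_\varphi\cal Q_c$, and then applies the local CLT/transfer-operator dichotomy for zero-drift $\R^{\dim V-1}$-valued H\"older cocycles over a metric-Anosov flow, with non-arithmeticity providing non-degeneracy of the limiting Gaussian. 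Your identification of the two genuine technical points --- the aperiodicity input to the LCLT and the passage from symbolic estimates to honest $\BM^\varphi$-a.e.\ statements about $\cal K(\df^\varphi)$ --- is accurate; these are exactly the places where \cite{dichotomy} does the work.
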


\section{Subspace conicality for Anosov representations: Theorem \ref{Ahyper}}\label{Anosov}

\subsection{Gromov hyperbolic groups and cone types}\label{cont}
Let $\G=\langle S\rangle$ be a finitely generated non-elementary Gromov hyperbolic group, and recall from \S\ref{cocycles} that we denote by $\bord^2\G$ the set of distinct pairs in its Gromov boundary $\bord\G$.

\begin{defi}\label{conicalsequence}
	A divergent sequence $\{\g_n\}_{n\in\N}\subset\G$ converges to a point $x\in\bord\G$ \emph{conically} if for every $y\in\bord\G-\{x\}$ the sequence $(\g_n^{-1}y,\g_n^{-1}x)$ remains on a compact set of $\bord^2\G.$
\end{defi}
\begin{remark}\label{r.conicalgeodesic}
	It is easy to verify that a sequence $\{\g_n\}_{n\in\N}$ converges conically to $x\in\bord\G$ if and only if it lies in an uniform neighborhood of any geodesic ray $(\alpha_n)_0^\infty$ converging to $x$, namely there exists $K>0$ and a subsequence $\{\alpha_{n_k}\}$ such that for all $k$ one has $d_\G(\alpha_{n_k},\g_{k})<K.$
\end{remark}

Given $\g\in\G$ we denote by $\cone(\g)$ the \emph{cone type} of $\g\in\G$, namely
$$\cone(\g):=\{h\in\G|\; d(e,\g h)=d(e,\g)+d(e,h)\}.$$
Cannon showed \cite{CannonCones} the set of cone types of a Gromov hyperbolic group is finite, see for example Bridson-Haefliger's book \cite[P. 455]{BH}.
 We denote by $\cone_\infty(\g)\subset{\bord\G}$ the set of points $x$ that can be represented by a geodesic ray contained in $\cone(\g)$.

 We will also need a coarse version of these. Recall that a sequence $(\alpha_j)_0^\infty$ is a $(c,C)$-quasigeodesic if for every pair $j,l$ it holds
$$\frac1{c}|j-l|-C\leq d_\G(\alpha_j,\alpha_l)\leq c|j-l|+C.$$
The \emph{coarse cone type at infinity} of an element $\g$ is the set of endpoints at infinity of quasi-geodesic rays based at $\g^{-1}$ passing through the identity:   
$$
	\cone^{c}_\infty(\g) =   \Big\{[(\alpha_j)_0^\infty]\in\bord\G|\,  (\alpha_i)_0^\infty \text{ is a $(c,c)$-quasi-geodesic, } \alpha_0=\g^{-1}, e\in\{\alpha_j\}\Big\}.
$$
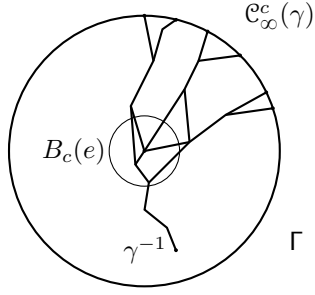
\begin{figure}[hbt]
	\centering
	\begin{tikzpicture}[scale = 0.6]
	\draw [thick] circle [radius = 3];\draw [fill] circle [radius = 0.03];	
	\draw  circle [radius = 0.78];
	\node [left] at (-0.65,0) {$B_{c}(e)$};
	\draw [thick] (0.7,-2.2) -- (0.5,-1.7) -- (0.004,-1.3) -- (0.1,-0.7) -- (-0.2,-0.3) -- (0,0); %camino a g^{-1}
	\node [left] at (0.7,-2.2) {$\g^{-1}$}; \draw [fill] (0.7,-2.2) circle [radius = 0.03]; %g^{-1}
	\draw [thick] (1,0.2) -- (0.1,-0.7);
	\draw [thick] (-0.2,-0.3) -- (-0.3,1);
	%\draw [thick] (-0.1,-0.3) -- (0.97,0.45);
	\draw [thick] (0,0) -- (-0.3,1) -- (0.2,2) -- (0,3); %camino mas a la izquierda
	\draw [thick] (0.2,2) -- (0.4, 2.7) -- (0.7,2.91); %rama de camino a la izquierda
	\draw [thick] (1,0.2) -- (0.89,1.37);
	%\draw [thick] (0.2,2) -- (0.4, 2.7);
	\draw [thick] (1.8,0.8) -- (2.7,1.3);
	\draw [thick] (0,0) -- (1,0.2) -- (1.8,0.8) -- (2.846,0.948);
	
	\draw [fill] (0.7,2.91) circle [radius = 0.03];
	\draw [fill] (2.7,1.3) circle [radius = 0.03];
	\draw [fill] (2.12,2.12) circle [radius = 0.03];
	%\node [above] at (0,3) {math here};
	\draw [fill] (0,3) circle [radius = 0.03];
	%\node [right] at (2.846,0.948) {math here};	
	\draw [fill] (2.846,0.948) circle [radius = 0.03];	
	%\draw [thick] (2.846,0.948) to [out = 197,in = 270] (0,3);
	\draw [thick] (0,0) -- (0.89,1.37) -- (1.2,2) -- (2.12,2.12);
	\draw [thick] (1.2,2) -- (1.4,2.65);
	\draw [fill] (1.4,2.65) circle [radius = 0.03];
	%\draw [fill] (0.882,1.35) circle [radius = 0.03];
	\node [right] at (3,-2) {$\G$};
	%\draw [->] (3,-2) to [out = 100,in = -10] (0.45,0.60);
	%\draw [->] (3.5,3) to [out = 180,in = 90] (1.5,1.2);
	\node [right] at (2,3) {$\cone^{c}_\infty(\g)$};
	\end{tikzpicture}
	\caption{The coarse cone type at infinity, picture  from P.-S.-Wienhard \cite{PSW2}.}\label{fig:1}
\end{figure}

\subsection{Anosov representations}\label{s.Anosov}
Fix a subset $\t\subset\simple.$ Let $\G$ be a finitely generated group and denote by $|\,|$  the  word-length associated to a finite generating set $S$.  

\begin{defi}\label{AnosovDefi}
	Following\footnote{See also Bochi-Potrie-S. \cite{BPS} and Guéritaud-Guichard-Kassel-Wienhard \cite{GGKW}.} Kapovich-Leeb-Porti \cite{KLP}, a representation $\rho:\G\to\sf G$ is \emph{$\t$-Anosov} if there exist positive  constants $C$ and $\mu$ such that for all $\g\in\G$ and $\sroot\in\t$ one has 
	$$\sroot(\cartan(\rho\g))\geq \mu|\g|-C.$$ The constants $\mu$ and $C$ are usually referred to as \emph{the domination constants} of $\rho$.
If $\sf G=\PGL(d,\R)$ and $\t=\{\slroot_1\}$ we say that $\rho$ is \emph{projective Anosov}. In order to easy the notation we ill identify in what follows $\g$ with $\rho(\g)$. 
\end{defi}

Anosov representations were introduced by Labourie \cite{labourie} and further developed by Guichard-Wienhard \cite{GW}. They have played a central role in understanding the Hitchin component of split groups (see below) and are considered nowadays as the higher-rank generalization of convex co-compact groups. We refer the reader to the surveys by Kassel \cite{FannyICM18} and Wienhard \cite{Anna-ICM} for further information.

\begin{obs}\label{wallsAnosov} A Zariski-dense representation $\rho:\G\to\sf G$ is $\t$-Anosov if and only if $\rho$ is a quasi-isometric embedding and its limit cone $\calL_\rho$ does not meet any wall $\ker\sroot$ for $\sroot\in\t:$ this follows from the definition since by Benoist \cite{limite}, if $\rho(\G)$ is Zariski-dense then the limit cone $\calL_\rho$ equals the asymptotic cone. 
\end{obs}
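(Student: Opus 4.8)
The plan is to prove the two implications separately, both resting on the same two ingredients. The first ingredient is that $\rho$ is automatically coarsely Lipschitz: since $\|\cartan(g)\|$ is comparable to the displacement $d_{\sf G}(e,g)$ in the symmetric space, the function $\g\mapsto\|\cartan(\rho(\g))\|$ is sub-additive up to a bounded error, so there is $L>0$ with $\|\cartan(\rho(\g))\|\leq L|\g|+L$ for all $\g\in\G$. The second ingredient, which is the only non-elementary input, is Benoist's theorem \cite{limite} quoted in the statement: because $\rho(\G)$ is Zariski-dense, $\calL_\rho$ coincides with the \emph{asymptotic cone} of $\rho$, i.e. with the closed cone generated by all accumulation points of the normalized Cartan projections $\cartan(\rho(\g_n))/\|\cartan(\rho(\g_n))\|$ taken over sequences with $\|\cartan(\rho(\g_n))\|\to\infty$. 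I will write $\bS\subset\a$ for the unit sphere.

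For the forward implication, assume $\rho$ is $\t$-Anosov with domination constants $\mu,C$. I would first note that the quasi-isometric embedding property is immediate from $\|\sroot\|\cdot\|\cartan(\rho(\g))\|\geq\sroot(\cartan(\rho(\g)))\geq\mu|\g|-C$, which gives a coarse lower bound on $\|\cartan(\rho(\g))\|$ matching the coarse Lipschitz upper bound (and forces $\rho$ to be proper). Next, given any accumulation direction $v\in\calL_\rho\cap\bS$ of the normalized Cartan projections along a sequence $\g_n$ with $|\g_n|\to\infty$, passing to the limit in $\sroot(\cartan(\rho(\g_n)))/\|\cartan(\rho(\g_n))\|\geq(\mu|\g_n|-C)/(L|\g_n|+L)$ yields $\sroot(v)\geq\mu/L>0$; hence no unit vector of the asymptotic cone lies on $\ker\sroot$, and by Benoist's theorem $\calL_\rho\cap\ker\sroot=\{0\}$ for every $\sroot\in\t$.

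For the converse, assume $\rho$ is a quasi-isometric embedding and $\calL_\rho\cap\ker\sroot=\{0\}$ for all $\sroot\in\t$. Since $\t$ is finite and the compact set $\calL_\rho\cap\bS$ is disjoint from each $\ker\sroot$, there is $\delta>0$ with $\sroot\geq\delta$ on $\calL_\rho\cap\bS$ for all $\sroot\in\t$. I would argue by contradiction: if the domination inequality fails then, applying the negation with $\mu=1/n$ and $C=n$ and using finiteness of $\t$, one extracts a fixed $\sroot\in\t$ and elements $\g_n$ with $\sroot(\cartan(\rho(\g_n)))<|\g_n|/n-n$; since $\cartan(\rho(\g_n))\in\a^+$ the left side is non-negative, which forces $|\g_n|\to\infty$ and $\sroot(\cartan(\rho(\g_n)))/|\g_n|\to0$. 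The quasi-isometric embedding hypothesis gives $\|\cartan(\rho(\g_n))\|\asymp|\g_n|\to\infty$, so after passing to a subsequence the normalized Cartan projections converge to some $v\in\calL_\rho\cap\bS$ by Benoist, while $\sroot(v)=0$, contradicting $\sroot(v)\geq\delta$. This establishes that $\rho$ is $\t$-Anosov. The only delicate point in the whole argument is invoking \cite{limite} in exactly the form recalled above, namely the equality of the limit cone with the asymptotic cone of the Cartan projections in the Zariski-dense case; everything else is the elementary interplay between the linear functionals $\sroot$, the norm on $\a$, and the coarse geometry of $\rho$, so I do not expect a genuine obstacle.
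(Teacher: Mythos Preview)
Your proof is correct and follows exactly the approach the paper indicates: the remark in the paper is not accompanied by a separate proof but simply states that the equivalence follows from the definition together with Benoist's theorem identifying the limit cone with the asymptotic cone in the Zariski-dense case, and your argument is a faithful and complete unpacking of precisely that reasoning.
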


A useful property of $\t$-Anosov representations is that their limit set $\Lambda_\G\subset \calF_{\theta}$, namely the minimal $\G$-invariant subset in $\calF_{\theta}$, is parametrized by the Gromov boundary of the group $\G,$ see Kapovich-Leeb-Porti \cite{KLP},  Guéritaud-Guichard-Kassel-Wienhard \cite{GGKW}. We will need the following precise statement.

\begin{prop}[{Bochi-Potrie-S. \cite[Proposition 4.9  ]{BPS}}]\label{conical}
If $\rho:\G\to\sf G$ is $\t$-Anosov, then for any geodesic ray $(\alpha_n)_0^\infty$ with endpoint $x$, the limits
$$\xi^\t_\rho(x):=\lim_{n\to\infty}U_\t(\alpha_n)\quad \xi^{\ii\t}_\rho(x):=\lim_{n\to\infty}U_{\ii\t}(\alpha_n)$$
exist and do not depend on the ray; they define continuous $\rho$-equivariant transverse maps $\xi^{\t}:\bord\G\to\cal F_\t$, $\xi^{\ii\t}:\bord\G\to\cal F_{\ii\t}$.  If $\g\in\G$ is hyperbolic, then $\g$ is $\t$-proximal with attracting point $\xi^\t(\g^+)=(\g)_\t^+.$
\end{prop}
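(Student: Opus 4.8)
The plan is to reduce the statement to a single \emph{contraction estimate} for Cartan attractors and then feed the geodesics of $\G$ into it via the domination inequality of Definition~\ref{AnosovDefi}; the delicate point will be a uniform transversality estimate. \textbf{Step~1 (contraction lemma).} Via the Tits representations $\Fund_\sroot$, $\sroot\in\t$, and \eqref{eq:normayrep}, the gap $\log(\sigma_1/\sigma_2)$ between the two largest singular values of $\Fund_\sroot(\rho\g)$ is bounded below by a positive multiple of $\sroot(\cartan(\rho\g))$, because $\dim V_{\chi_{\Fund_\sroot}}=1$ and $\chi_{\Fund_\sroot}-\sroot$ is the next weight down. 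A direct linear-algebra estimate in each $\PGL(V_\sroot)$, in the spirit of \cite[Lemma~A.3]{BPS}, then gives: for a fixed compact $L\subset\sf G$ there are $C_0,\mu_0>0$ such that whenever $\min_{\sroot\in\t}\sroot(\cartan(g))\geq C_0$ and $h\in L$,
$$d_{\calF_\t}\big(U_\t(gh),U_\t(g)\big)+d_{\calF_\t}\big(U_\t(hg),h\cdot U_\t(g)\big)\leq C_0\, e^{-\mu_0\min_{\sroot\in\t}\sroot(\cartan(g))},$$
with the symmetric statements for $U_{\ii\t}$; heuristically, once a vector has been stretched by an enormous factor in the direction $U_\t(g)$, no bounded pre- or post-composition can move the dominant output flag by more than the reciprocal of the gap. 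Recall that for $\sroot(\cartan(g))>0$ ($\sroot\in\t$) the flag $U_\t(g)$ is genuinely well defined.

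\textbf{Step~2 (limit maps, continuity, equivariance, proximality).} Let $(\alpha_n)_0^\infty$ be a geodesic ray based at $e$ with endpoint $x$; Definition~\ref{AnosovDefi} gives $\min_{\sroot\in\t}\sroot(\cartan(\rho\alpha_n))\geq\mu n-C$. Writing $\alpha_{n+1}=\alpha_ns$ with $s\in S$ and applying Step~1, $d_{\calF_\t}(U_\t(\rho\alpha_{n+1}),U_\t(\rho\alpha_n))\leq C_0e^{-\mu_0(\mu n-C)}$, a summable bound; hence $U_\t(\rho\alpha_n)$ converges, at a rate independent of $x$, to a flag $\xi^\t(x)\in\calF_\t$, and likewise $U_{\ii\t}(\rho\alpha_n)\to\xi^{\ii\t}(x)\in\calF_{\ii\t}$ (note $\check{\sf P}_\t$ is conjugate to $\sf P_{\ii\t}$). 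Two geodesic rays with the same endpoint eventually fellow-travel at bounded distance, so Step~1 forces their attractors to share the limit, and $\xi^\t,\xi^{\ii\t}$ are thus well defined on $\bord\G$; the same local-constancy observation together with the uniform convergence rate gives (H\"older) continuity. Equivariance $\xi^\t(\g x)=\rho(\g)\,\xi^\t(x)$ follows by comparing the geodesic ray $(\rho(\g\alpha_n))$ with $\rho(\g)\,U_\t(\rho\alpha_n)$ through the second inequality of Step~1 (fixed $h=\rho(\g)$) and continuity of the $\sf G$-action on $\calF_\t$. Finally, if $\g\in\G$ is hyperbolic then $(\g^n)_{n\geq0}$ is a quasigeodesic ray to $\g^+$ and $\cartan(\rho\g^n)/n\to\lambda(\rho\g)$, so $\sroot(\lambda(\rho\g))=\lim_n\tfrac1n\sroot(\cartan(\rho\g^n))>0$ for all $\sroot\in\t$: thus $\rho(\g)$ is $\t$-proximal with attracting flag $(\rho\g)^+_\t=\lim_nU_\t(\rho\g^n)$ (Step~1, as $\rho\g^{n+1}=\rho\g^n\cdot\rho\g$ and the gap diverges), which coincides with $\xi^\t(\g^+)$ by the above.

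\textbf{Step~3 (transversality, the main obstacle).} Given $x\neq y$, after translating by a group element (harmless by Step~2's equivariance and the $\sf G$-invariance of transversality) take a bi-infinite geodesic $(\alpha_j)_{j\in\Z}$ passing within bounded distance of $e$ with $\alpha_{+\infty}=x$, $\alpha_{-\infty}=y$, so $\xi^\t(x)=\lim_nU_\t(\rho\alpha_n)$ and $\xi^{\ii\t}(y)=\lim_nU_{\ii\t}(\rho\alpha_{-n})$. The crux is a \emph{uniform} transversality: there is $\alpha>0$, independent of the geodesic, with $\peso_\sroot\Gr_\t\big(U_{\ii\t}(\rho\alpha_{-m}),U_\t(\rho\alpha_n)\big)\geq-\alpha$ for all $\sroot\in\t$ and all large $n,m$. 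Here the Anosov hypothesis is used in full: $\rho$ is a quasi-isometric embedding, so $(\rho\alpha_j)_{j\in\Z}$ is a quasigeodesic of $\sf G$ along which every $\sroot$-component of the Cartan projection of arbitrarily long subwords grows linearly, and one runs the local-to-global/``Morse'' argument of \cite{KLP,BPS}, or concretely combines Step~1 with \eqref{comparision-shadow} to see that $U_\t(\rho\alpha_n)$ lies in a fixed Cartan basin $B_{\t,\alpha}(\rho\alpha_{-m})$. Since $\{\peso_\sroot\Gr_\t(\cdot,\cdot)\geq-\alpha\ \forall\,\sroot\in\t\}$ is a closed condition it passes to the limit (Remark~\ref{fty}), yielding transversality of $\xi^\t(x)\in\calF_\t$ and $\xi^{\ii\t}(y)\in\calF_{\ii\t}$. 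The two genuinely delicate ingredients are the uniform constants of Step~1 and this uniform transversality of Step~3; the remainder is bookkeeping around the domination inequality and the summation of a geometric series.
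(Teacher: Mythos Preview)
The paper does not prove this proposition; it is quoted verbatim from \cite[Proposition~4.9]{BPS} and stated without argument. Your sketch is a faithful outline of the proof given there: the contraction estimate of Step~1 is \cite[Lemma~A.5]{BPS}, the Cauchy-sequence argument of Step~2 is \cite[\S4.3]{BPS}, and the uniform transversality of Step~3 is exactly \cite[Lemma~2.5]{BPS} (restated in the present paper as Proposition~\ref{fundamentalConstant}). So there is nothing to compare against in this paper, and your approach matches the cited source.
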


	We conclude the section with a number of quantitative results that will be needed in the paper. For an Anosov representation $\rho$ there exists a constant $\delta_\rho$ quantifying transversality of Cartan-attractors along (quasi)-geodesic rays:
	\begin{prop}[{Bochi-Potrie-S. \cite[Lemma 2.5]{BPS}}]\label{fundamentalConstant}
		If $\rho:\G\to\sf G$ is $\t$-Anosov and $c>0$ is given, then there exist $\ov L\in\N$ and $\delta_{\rho,c}>0$, depending only $c$ and the domination constants of $\rho$, such that for every $(c,c)$-quasi-geodesic segment through the identity $\{\alpha_i\}_{-m}^k$ with $k,m\geq L$ one has, for all $\sroot\in\t, $
		that 
		$$\peso_\sroot\Gr_\t\big(U_{\ii\t}(\alpha_{-m}),U_\t(\rho\alpha_k)\big)\geq\log\delta_{\rho,c}.$$
		\end{prop}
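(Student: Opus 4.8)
The plan is to derive the estimate from the convergence and transversality of the limit maps of Proposition \ref{conical}, rendered effective by a \emph{uniform} exponential rate of stabilisation of Cartan attractors along quasi-geodesics, and then to conclude by compactness. After reindexing assume $\alpha_0=e$, so that $d(e,\alpha_k)$, $d(e,\alpha_{-m})$ and $d(\alpha_{-m},\alpha_k)$ grow linearly in $k$, $m$ and $m+k$ with constants depending only on $c$. I would then extend $\{\alpha_i\}_{-m}^k$ to a bi-infinite $(c',c')$-quasi-geodesic through $e$ with $c'=c'(c)$ (local-to-global in the hyperbolic group $\G$), and let $x=\lim_{j\to\infty}\alpha_j$ and $y=\lim_{j\to\infty}\alpha_{-j}$ be its endpoints; these are distinct, and by the Morse lemma the Gromov product satisfies $(x|y)_e\le\kappa(c)$.

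The first step is to establish the uniform stabilisation: there is $\epsilon(j)\to0$, depending only on $c$ and the domination constants $\mu,C$, with $d\big(U_\t(\rho\alpha_j),\xi^\t_\rho(x)\big)\le\epsilon(j)$ and $d\big(U_{\ii\t}(\rho\alpha_{-j}),\xi^{\ii\t}_\rho(y)\big)\le\epsilon(j)$ for all $j$. Since every sub-segment of a $(c',c')$-quasi-geodesic is again one, the $\t$-Anosov inequality applied to $\alpha_j^{-1}\alpha_0$, $\alpha_j^{-1}\alpha_{j+1}$, etc.\ forces $\sroot(\cartan(\rho\alpha_j))\ge\mu'|j|-C'$ for all $\sroot\in\t$ and $j$, with $\mu',C'$ depending only on $c,\mu,C$; here $\sroot(\cartan(\rho\g))$ is exactly the first singular-value gap of $\Fund_\sroot(\rho\g)$, the only weight of $\Fund_\sroot$ at level one being $\chi_{\Fund_\sroot}-\sroot$. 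As the $\alpha_j^{-1}\alpha_{j+1}$ have bounded word-length, the standard Cartan-attractor stability estimate — which, via the Tits representations $\Fund_\sroot$ and \eqref{eq:normayrep}, \eqref{busnorma}, reduces to the elementary fact that $U_1(AB)$ is $O\!\big(e^{-\slroot_1(\cartan A)}\big)$-close to $U_1(A)$ for bounded $B$ — yields $d\big(U_\t(\rho\alpha_{j+1}),U_\t(\rho\alpha_j)\big)\le C''e^{-(\mu'j-C')}$; summing the geometric tail, $\big(U_\t(\rho\alpha_j)\big)_j$ is Cauchy with rate $O(e^{-\mu'j})$, and comparing the quasi-geodesic ray with a fellow-travelling geodesic ray and invoking Proposition \ref{conical} identifies the limit with $\xi^\t_\rho(x)$. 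The argument for the repellers $U_{\ii\t}(\rho\alpha_{-j})$ is symmetric.

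The second step is the compactness input. The pairs $(x,y)$ produced this way range over $\cal C:=\{(x,y)\in\bord^2\G:(x|y)_e\le\kappa(c)\}$, a compact subset of $\bord^2\G$ on which always $x\neq y$; since $\xi^\t_\rho,\xi^{\ii\t}_\rho$ are continuous and transverse (Proposition \ref{conical}), the map $(x,y)\mapsto\big(\xi^{\ii\t}_\rho(y),\xi^\t_\rho(x)\big)$ sends $\cal C$ into a compact subset of the open set $\posgen_\t$, on which $\Gr_\t$ is continuous and finite (cf.\ Remark \ref{fty}); hence $\peso_\sroot\Gr_\t\big(\xi^{\ii\t}_\rho(y),\xi^\t_\rho(x)\big)\ge-\kappa_0$ for $(x,y)\in\cal C$, and $\Gr_\t$ is uniformly continuous on a compact neighbourhood of the image. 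Choosing $\ov L$ so that $\epsilon(\ov L)$ is small enough that the associated error is $\le\kappa_0$ whenever $k,m\ge\ov L$, one obtains
\[
\peso_\sroot\Gr_\t\big(U_{\ii\t}(\rho\alpha_{-m}),U_\t(\rho\alpha_k)\big)\ \ge\ \peso_\sroot\Gr_\t\big(\xi^{\ii\t}_\rho(y),\xi^\t_\rho(x)\big)-\kappa_0\ \ge\ -2\kappa_0
\]
for every $\sroot\in\t$, which is the assertion with $\delta_{\rho,c}:=e^{-2\kappa_0}$.

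The main obstacle is precisely the \emph{uniformity} of the rate $\epsilon(j)$ — that it depends only on $c$ and the domination constants, not on the particular quasi-geodesic. Everything hinges on sub-segments of $(c,c)$-quasi-geodesics remaining $(c,c)$-quasi-geodesic, which is what forces the singular-value gaps $\sroot(\cartan(\rho\alpha_j))$ to grow linearly with a controlled slope; combined with the compactness of the space of pointed bi-infinite $(c,c)$-quasi-geodesics (equivalently, of $\cal C$), this is what upgrades the pointwise transversality of the limit maps into the uniform bound.
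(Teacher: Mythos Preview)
The paper does not prove this proposition; it is quoted verbatim from \cite[Lemma~2.5]{BPS} and used as a black box. Your argument is correct given the paper's ordering (Proposition~\ref{conical} and the transversality of the limit maps are stated before and independently of this proposition), and the uniformity you worry about is indeed delivered by the fact that sub-segments of $(c,c)$-quasi-geodesics are again $(c,c)$-quasi-geodesics, so the singular-value gaps grow linearly with slope depending only on $c,\mu,C$.

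Your route is, however, somewhat more indirect than the original. In \cite{BPS} the estimate is proved by pure linear algebra on a single finite dominated product, with no extension to a bi-infinite quasi-geodesic, no limit maps, and no appeal to $\bord\G$: one shows directly that for a dominated sequence $A_1,\dots,A_n$ the Cartan attractor of $A_n\cdots A_1$ and the Cartan repeller of $(A_0\cdots A_{-m})^{-1}$ are uniformly transverse, essentially because the former lies in a cone that the latter's repelling hyperplane avoids. Your approach trades this hands-on computation for a cleaner conceptual picture---the bound is ``transversality of $\xi^{\ii\t}(y),\xi^\t(x)$, seen through an exponentially accurate telescope''---at the price of importing the existence and transversality of the limit maps. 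Both are legitimate; yours is arguably more readable in the context of this paper, while the original is logically prior and more elementary.
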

Combining Proposition \ref{conical} and Proposition \ref{fundamentalConstant} we obtain:
\begin{cor}\label{c.fundamentalConstant}
	Up to decreasing $\delta_{\rho,c}$, for every $\g\in\G$ and every $x\in\cone_\infty^c(\g)$ one has  $$\peso_\sroot\Gr_\t\big(U_{\ii\t}(\g^{-1}),\xi^\t_\rho(x)\big)\geq\log\delta_{\rho,c}.$$ In particular, if we let $\alpha=-\log\delta_{\rho,c}$ then (recall Equation \eqref{e.CartBasin}) 
		\begin{equation}\label{e.conesinCartan}
			\xi^\theta_\rho(\cone_{\infty}^{c}(\g))\subset B_{\theta,\alpha}(\g).
		\end{equation}

\end{cor}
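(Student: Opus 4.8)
The plan is to combine Proposition \ref{conical} with Proposition \ref{fundamentalConstant} by passing to the limit along a quasi-geodesic ray. First I would fix $\g\in\G$ and a point $x\in\cone_\infty^c(\g)$. By definition of the coarse cone type at infinity, $x$ is represented by a $(c,c)$-quasi-geodesic ray $(\alpha_j)_0^\infty$ with $\alpha_0=\g^{-1}$ and $e\in\{\alpha_j\}$, say $e=\alpha_{m}$. Re-indexing by $i\mapsto i-m$, we obtain a $(c,c)$-quasi-geodesic segment (in fact ray) $\{\beta_i\}_{-m}^\infty$ through the identity, with $\beta_{-m}=\g^{-1}$ and $\beta_i\to x$ as $i\to\infty$. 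For every $k\geq \ov L$ (the constant from Proposition \ref{fundamentalConstant}), and provided $m\geq\ov L$ as well, Proposition \ref{fundamentalConstant} gives, for all $\sroot\in\t$,
\[
\peso_\sroot\Gr_\t\big(U_{\ii\t}(\beta_{-m}),U_\t(\rho\beta_k)\big)\geq\log\delta_{\rho,c}.
\]
Since $\beta_{-m}=\g^{-1}$, the first entry is $U_{\ii\t}(\g^{-1})$ for all such $k$. Now I let $k\to\infty$: by Proposition \ref{conical}, $U_\t(\rho\beta_k)\to\xi^\t_\rho(x)$, and since $\Gr_\t$ and $\peso_\sroot$ are continuous on $\posgen_\t$ (and the pairs stay in the open orbit by Remark \ref{fty}, as the left-hand side is bounded below), the inequality passes to the limit:
\[
\peso_\sroot\Gr_\t\big(U_{\ii\t}(\g^{-1}),\xi^\t_\rho(x)\big)\geq\log\delta_{\rho,c}.
\]
This is the desired estimate, modulo the case $m<\ov L$, which concerns only finitely many group elements $\g$ (those of word-length less than $\ov L$); for these one simply decreases $\delta_{\rho,c}$ further so that the inequality holds trivially, which is why the statement says "up to decreasing $\delta_{\rho,c}$". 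The inclusion \eqref{e.conesinCartan} is then immediate from the definition \eqref{e.CartBasin} of the Cartan basin $B_{\t,\alpha}(\g)$ with $\alpha=-\log\delta_{\rho,c}$, reading off that $U_{\ii\t}(\g^{-1})$ is the repeller $U_{\ii\t}((\rho\g)^{-1})$ used there.

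The only genuine subtlety is the continuity/limit argument: one must know that $U_\t(\rho\beta_k)$ actually converges to $\xi^\t_\rho(x)$ when $(\beta_k)$ is only a quasi-geodesic (not a geodesic) and that $U_{\ii\t}(\g^{-1})$ is a legitimate, fixed choice of Cartan repeller independent of $k$. The first point is covered by Proposition \ref{conical} together with Remark \ref{r.conicalgeodesic}, which tells us that a quasi-geodesic ray converging to $x$ stays within bounded distance of an honest geodesic ray to $x$, so the two Cartan-attractor limits agree; the second point is just the observation that $U_{\ii\t}(g^{-1})$ depends only on a choice of Cartan decomposition of $g$, which we fix once and for all. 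I expect this continuity bookkeeping — rather than any estimate — to be the main (and only mild) obstacle.
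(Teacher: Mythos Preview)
Your proposal is correct and follows exactly the approach the paper indicates: the paper's own proof consists solely of the sentence ``Combining Proposition \ref{conical} and Proposition \ref{fundamentalConstant} we obtain:'' with no further argument, and you have correctly spelled out the passage-to-the-limit that this combination entails, including the handling of the finitely many short $\g$ via decreasing $\delta_{\rho,c}$.
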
	

\begin{defi}\label{FConstant} Let $\rho:\G\to\sf G$ be $\t$-Anosov and $c>0$, then the constant $\delta_{\rho,c}$ verifying both Proposition \ref{fundamentalConstant} and Corollary \ref{c.fundamentalConstant} will be called \emph{the fundamental constant} of $\rho$ and $c$. If we consider geodesics instead of quasi-geodesics (i.e. $(c,C)=(1,0)$) we let $\delta_\rho$ be the \emph{fundamental constant} associated to $\rho$.
\end{defi}

The following two results will be needed in Section \ref{s.6.1}.

\begin{prop}[cfr. P.-S.-Wienhard {\cite[\S 5.1]{PSW1}}]\label{p.coneinBall}\label{Lipschitz-compatibleA}
	Let $\rho:\G\to\SL(d,\K)$ be projective Anosov and consider  $c>0$. Then there exists a constant $K$, depending on  $c$ and on $\rho$ such that for every large enough $\g\in\G$ one has  
	$$\xi^1_\rho\big(\g \cone_\infty^{c}(\g)\big)\subset B\big(U_1(\g), Ke^{-\slroot_1(\cartan(\g))}\big).$$ 
	\end{prop}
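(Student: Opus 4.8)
The plan is to estimate, for a large element $\g\in\G$ and a point $x\in\cone^c_\infty(\g)$, the projective distance $d_\P\big(U_1(\g),\xi^1_\rho(x)\big)$ directly from the Cartan decomposition of $\rho(\g)$. Writing $\rho(\g)=k e^{\cartan(\g)}l$ with $k,l\in\sf K$, by definition $U_1(\g)=k\cdot[e_1]$, where $e_1$ is the first vector of the basis adapted to the Cartan decomposition. The key geometric input is that the quasi-geodesic ray realizing $x$ passes through the identity and is based at $\g^{-1}$, so $\xi^1_\rho(x)$ lies in the Cartan basin $B_{\{\slroot_1\},\alpha}(\g)$ by Corollary \ref{c.fundamentalConstant}, with $\alpha=-\log\delta_{\rho,c}$ depending only on $c$ and the domination constants. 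This uniform transversality of $\xi^1_\rho(x)$ with the repelling hyperplane $U_{d-1}(\g^{-1})=l^{-1}\cdot\langle e_2,\dots,e_d\rangle$ is what makes the argument work.

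First I would reduce to a linear-algebra statement by pulling everything back through $k^{-1}$: since $k\in\sf K$ acts by isometries on $\P(\K^d)$, it suffices to bound $d_\P\big([e_1], k^{-1}\xi^1_\rho(x)\big)$. Now $k^{-1}\xi^1_\rho(x)$ is a line in $\K^d$; decompose a unit spanning vector $v$ as $v = v_1 e_1 + v'$ with $v'\in\langle e_2,\dots,e_d\rangle$. The projective distance from $[e_1]$ is comparable to $\|v'\|/\|v\|$. To bound this I would use that $\rho(\g)^{-1}$ does not contract $v$ too much in the $e_1$-direction: applying $e^{-\cartan(\g)}$ to $v$ multiplies the $e_1$-component by $e^{-\cartan_1(\g)}$ and the $j$-th component ($j\geq2$) by $e^{-\cartan_j(\g)}$, which is \emph{larger}. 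The transversality estimate from being in $B_{\{\slroot_1\},\alpha}(\g)$ translates, via Equation \eqref{comparision-shadow} or directly via the Gromov-product bound $\peso_{\slroot_1}\Gr\big(U_{d-1}(\g^{-1}),\xi^1_\rho(x)\big)\geq\log\delta_{\rho,c}$ of Corollary \ref{c.fundamentalConstant}, into a uniform lower bound $|v_1|\geq \delta_{\rho,c}\|v\|$ (after normalizing), so the $e_1$-component of $v$ is not negligible. Combining, $\|v'\| \leq e^{\cartan_2(\g)-\cartan_1(\g)}\cdot(\text{bounded})\cdot|v_1| = e^{-\slroot_1(\cartan(\g))}\cdot(\text{bounded})$, which is exactly the desired bound $d_\P\big(U_1(\g),\xi^1_\rho(x)\big)\leq K e^{-\slroot_1(\cartan(\g))}$ once $\slroot_1(\cartan(\g))$ is large, i.e. once $\g$ is large enough (using the Anosov domination inequality $\slroot_1(\cartan(\g))\geq\mu|\g|-C$).

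Alternatively, and perhaps more cleanly, I would deduce the statement from the quantitative contraction property of the Cartan decomposition already packaged in the literature: there is a standard estimate (see \cite[Lemma A.4 or A.7]{BPS}) asserting that if $g=k e^{\cartan(g)}l$ and $W$ is a hyperplane making angle at least $\delta$ with $U_{d-1}(g^{-1})$ (equivalently, $\langle e_2,\dots,e_d\rangle$ makes angle at least $\delta$ with $l W$... ) then $g$ maps the complement of a $\delta$-neighborhood of $U_{d-1}(g^{-1})$ into the ball $B\big(U_1(g),C_\delta e^{-\slroot_1(\cartan(g))}\big)$; applying this with the point $\xi^1_\rho(\g^{-1}x)$ — which is uniformly transverse to $U_{d-1}(\g^{-1})$ because $\g^{-1}x$ is bounded away from the limit point $(\g^{-1})^-$ along the quasi-geodesic, again by the fundamental constant — and using $\xi^1_\rho(x)=\rho(\g)\xi^1_\rho(\g^{-1}x)$ gives the claim at once. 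I would structure the proof around this second route, citing the relevant estimate from \cite{BPS} and merely checking the transversality hypothesis via Corollary \ref{c.fundamentalConstant}.

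The main obstacle is bookkeeping the two distinct sources of transversality correctly: one must ensure that the relevant bound is the angle between $\xi^1_\rho(x)$ (or $\xi^1_\rho(\g^{-1}x)$) and the Cartan repelling \emph{hyperplane} $U_{d-1}(\g^{-1})$, rather than some attractor, and that the constant $\delta_{\rho,c}$ from Corollary \ref{c.fundamentalConstant} is indeed uniform over all $\g$ and all $x\in\cone^c_\infty(\g)$ — which it is, since it depends only on $c$ and the domination constants of $\rho$. The "large enough $\g$" hypothesis then simply absorbs the finitely many $\g$ for which $\slroot_1(\cartan(\g))$ fails to be large, and also guarantees $\cone^c_\infty(\g)$ is non-empty and that the quasi-geodesic segments in Proposition \ref{fundamentalConstant} have length at least $\ov L$ on both sides of the identity. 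Everything else is the routine Euclidean computation sketched above, which I would not write out in full.
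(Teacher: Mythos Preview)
Your second route is correct and is exactly what the paper does: its proof is the single sentence ``Using Corollary \ref{c.fundamentalConstant} for $\theta=\{\slroot_1\}$, the result follows as in P.-S.-Wienhard \cite[\S 5.1]{PSW1}'', i.e.\ precisely the combination of the uniform transversality from Corollary \ref{c.fundamentalConstant} with the standard contraction estimate you cite from \cite{BPS}.

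Your first route, however, is tangled and you should not present it as written. For $x\in\cone^c_\infty(\g)$ the quantity to bound is $d_\P\big(U_1(\g),\rho(\g)\xi^1_\rho(x)\big)$, not $d_\P\big(U_1(\g),\xi^1_\rho(x)\big)$; the latter has no reason to be small. Correspondingly, the vector to decompose is one spanning $k^{-1}\rho(\g)\xi^1_\rho(x)=e^{\cartan(\g)}l\,\xi^1_\rho(x)$, and the transversality estimate (angle between $\xi^1_\rho(x)$ and $U_{d-1}(\g^{-1})=l^{-1}\langle e_2,\dots,e_d\rangle$) bounds the $e_1$-component of $l\,\xi^1_\rho(x)$ from below, not that of $k^{-1}\xi^1_\rho(x)$. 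Once this is straightened out the computation does go through --- applying $e^{\cartan(\g)}$ to a unit vector $w$ spanning $l\,\xi^1_\rho(x)$ with $|w_1|\geq\delta_{\rho,c}$ yields $\|v'\|/|v_1|\leq e^{-\slroot_1(\cartan(\g))}/\delta_{\rho,c}$ --- but as written your first paragraph bounds the wrong distance and conflates the roles of $k$ and $l$.
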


\begin{proof} Using Corollary \ref{c.fundamentalConstant} for $\theta=\{\slroot_1\}$, the result follows as in P.-S.-Wienhard \cite[\S 5.1]{PSW1}.\end{proof}

\begin{prop}\label{Lipschitz-compatibleB}
	 	Let $\rho:\G\to\SL(d,\K)$ be projective Anosov. For every $\alpha>0$ there exist $C$ and $\mu>0$ such that for every $\ell_1,\ell_2\in\P(\K^d)$ with $$\Gr\big(\ell_i,U_{d-1}(\g^{-1})\big) \geq-\alpha, \quad i=1,2$$ it holds $d_\P(\rho(\g)\ell_1,\rho(\g)\ell_2)\leq Ce^{-\mu|\g|}d(\ell_1,\ell_2).$
\end{prop}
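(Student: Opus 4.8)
The statement to prove is Proposition \ref{Lipschitz-compatibleB}: for a projective Anosov representation $\rho:\G\to\SL(d,\K)$ and any $\alpha>0$, there exist $C,\mu>0$ such that whenever $\ell_1,\ell_2\in\P(\K^d)$ both satisfy $\Gr(\ell_i,U_{d-1}(\g^{-1}))\geq-\alpha$, one has $d_\P(\rho(\g)\ell_1,\rho(\g)\ell_2)\leq Ce^{-\mu|\g|}d(\ell_1,\ell_2)$.

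The plan is to estimate the operator norm of the derivative of the projective action of $\rho(\g)$ restricted to the region of $\P(\K^d)$ that is uniformly transverse to the repelling hyperplane $U_{d-1}(\g^{-1})$. Writing $g=\rho(\g)$ with Cartan decomposition $g=k\exp(\cartan(g))l$, the point $U_{d-1}(g^{-1})=l^{-1}[\langle e_2,\dots,e_d\rangle]$ and the attractor $U_1(g)=k[e_1]$. The key elementary fact is that for a diagonal matrix $D=\exp(\cartan(g))$ and a line $\ell$ making angle at least $\delta=\delta(\alpha)>0$ with the hyperplane $\langle e_2,\dots,e_d\rangle$, the differential of the map $[v]\mapsto[Dv]$ on $\P(\K^d)$ has norm controlled by $e^{-\slroot_1(\cartan(g))}/\delta^2$ up to a universal constant; this is the standard contraction estimate near a proximal fixed point. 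First I would reduce to this diagonal statement: the factors $k$ and $l$ act by isometries of $d_\P$ (for the chosen Hermitian metric), so $d_\P(g\ell_1,g\ell_2)=d_\P(D(l\ell_1),D(l\ell_2))$, and the hypothesis $\Gr(\ell_i,U_{d-1}(g^{-1}))\geq-\alpha$ translates, via \eqref{forGr}-type invariance of the Gromov product and the definition of $U_{d-1}(g^{-1})$, into $l\ell_i$ making angle bounded below by $\sin^{-1}(e^{-\alpha})$ with $\langle e_2,\dots,e_d\rangle$.

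The second ingredient is the Anosov domination: $\slroot_1(\cartan(\rho\g))\geq\mu_0|\g|-C_0$ for the domination constants $(\mu_0,C_0)$ of $\rho$. Combining with the differential bound and the mean value theorem along the geodesic segment in $\P(\K^d)$ joining $\ell_1$ to $\ell_2$ — which one should check stays in the region of angle bounded below by a slightly worse constant, shrinking $\alpha$ to $\alpha+1$ say, so that the derivative bound applies along the whole segment — yields $d_\P(g\ell_1,g\ell_2)\leq C'e^{-\slroot_1(\cartan(g))}d_\P(\ell_1,\ell_2)\leq Ce^{-\mu|\g|}d_\P(\ell_1,\ell_2)$ with $\mu=\mu_0$ and $C$ depending on $\alpha$ and the constants of $\rho$. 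This is essentially the content of \cite[Lemma A.3 and its neighborhood]{BPS}, or the Lipschitz estimates in \cite[\S 5.1]{PSW1}, adapted to a pair of points rather than a point and the attractor.

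The main obstacle is the geometric bookkeeping to ensure the segment $[\ell_1,\ell_2]$ stays in a cone of uniformly transverse lines so that the pointwise differential estimate can be integrated: a priori the minimizing geodesic between two lines each at angle $\geq\delta$ from a hyperplane could dip closer to that hyperplane, but in fact in $\P(\K^d)$ with the round metric the distance to a fixed projective hyperplane is a well-behaved (concave along geodesics, or at least quasi-convex) function, so the segment stays at angle $\geq\delta'$ for a controlled $\delta'>0$; making this precise, or alternatively replacing the mean value argument by the direct cross-ratio/matrix-coefficient computation $d_\P(Dv,Dw)\asymp |Dv\wedge Dw|/(\|Dv\|\|Dw\|)$ and bounding $\|De_1\|/\|Dv\|$ from below by $\sin\angle(\ell,\langle e_2,\dots,e_d\rangle)$, is the only non-formal point; everything else is the routine reduction via the $K$-invariance of $d_\P$ and the Anosov inequality.
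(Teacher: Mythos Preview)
Your proposal is correct and follows essentially the same route as the paper: reduce via the Cartan decomposition to a diagonal contraction estimate $d_\P(h\ell_1,h\ell_2)\leq C e^{-\slroot_1(\cartan(h))}d_\P(\ell_1,\ell_2)$ for lines transverse to $U_{d-1}(h^{-1})$, then invoke the Anosov inequality $\slroot_1(\cartan(\rho\g))\geq\mu|\g|-C_0$. The paper bypasses your geodesic-segment concern entirely by citing \cite[Lemma 2.8]{PSW1} (applied to $g=h^{-1}$, $P=U_1(h)$, $Q=hU_{d-1}(h)$), which is precisely the direct $|Dv\wedge Dw|/(\|Dv\|\|Dw\|)$-type computation you propose as an alternative; so there is no need to control the behaviour of the Fubini--Study geodesic near the hyperplane.
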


\begin{proof} For an Hermitian product on $\C^d,$ and every $\alpha>0$ there exists $C>0$ such that if $h\in\GL(d,\C)$ is such that $\slroot_1(\cartan(h))>0,$ then for all $\ell_1,\ell_2\in\P(\C^d)$ with {$\angle(\ell_i,U_{d-1}(h^{-1}))>\alpha$} one has $$d_\P(h\ell_1,h\ell_2)\leq Ce^{-\slroot_1(\cartan(h))}d_\P(\ell_1,\ell_2),$$ (a proof follows, for instance, by applying \cite[Lemma 2.8]{PSW1} to $g=h^{-1},$ $P=U_1(h)$ and $Q=hU_{d-1}(h)$). The result then follows by applying Definition \ref{AnosovDefi}.\end{proof}

The following technical result  will be  useful in the proof of Proposition \ref{conK}. Given an Anosov representation, we can use the Gromov product to determine the endpoint of a conical sequence (recall Definition \ref{conicalsequence}):

\begin{lemma}\label{UdU1}
	Let $\rho:\G\to\sf G$ be $\t$-Anosov. If $\{\g_n\}\subset\G$ is a conical sequence, $x\in\bord\G,$ and there exists $\sroot\in\t$ such that $\peso_\sroot\Gr_\t\big(U_{\ii\t}(\g_n),\xi^\t(x)\big)\to-\infty,$ then $\g_n\to x$.
\end{lemma}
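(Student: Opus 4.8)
The goal is to show that if $\{\g_n\}\subset\G$ is conical and there is some $\sroot\in\t$ with $\peso_\sroot\Gr_\t\big(U_{\ii\t}(\g_n),\xi^\t(x)\big)\to-\infty$, then $\g_n\to x$ in $\bord\G$. First I would pass to a subsequence: since $\{\g_n\}$ is conical and divergent, up to a subsequence we may assume $\g_n\to y$ for some $y\in\bord\G$ (using compactness of $\bord\G$; the conicality will be used to control $y$). The strategy is to argue that $y=x$ and then, since every subsequential limit must equal $x$, conclude that the whole sequence converges to $x$.

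So suppose for contradiction $y\neq x$. The idea is that conicality forces a uniform \emph{transversality} estimate that contradicts the hypothesis $\peso_\sroot\Gr_\t\big(U_{\ii\t}(\g_n),\xi^\t(x)\big)\to-\infty$. Concretely: because $\{\g_n\}$ converges conically to $y$, it lies within bounded distance of a geodesic ray $(\alpha_j)$ to $y$ (Remark \ref{r.conicalgeodesic}); choosing a geodesic through the identity asymptotic to $x$ in one direction and to $y\neq x$ in the other, and looking at long $(1,0)$-quasigeodesic segments through $e$, Proposition \ref{fundamentalConstant} gives a uniform lower bound $\peso_\sroot\Gr_\t\big(U_{\ii\t}(\g_n),U_\t(\rho\beta_k)\big)\geq\log\delta_\rho$ for suitable points $\beta_k$ on a geodesic ray to $x$. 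Letting $k\to\infty$ and using Proposition \ref{conical} (continuity of $\xi^\t$ and the fact that $U_\t(\rho\beta_k)\to\xi^\t(x)$), together with the continuity of $\Gr_\t$, one gets $\peso_\sroot\Gr_\t\big(U_{\ii\t}(\g_n),\xi^\t(x)\big)\geq\log\delta_\rho$ for all large $n$. This contradicts the hypothesis that this quantity tends to $-\infty$. Hence $y=x$.

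The point I expect to require the most care is the interplay between the coarse geometry (conical sequences, quasigeodesics, bounded-distance approximation) and the cocycle/Gromov-product estimates, since the natural output of Proposition \ref{fundamentalConstant} is phrased for quasigeodesic segments through the identity, whereas $\g_n$ is only within bounded distance of such a geodesic, not on it. To bridge this I would use Proposition \ref{p.Ben} (uniform continuity of the Cartan projection under left/right multiplication by a compact set) to replace $\g_n$ by a nearby group element $\g_n'$ lying exactly on a geodesic through $e$ to $y$; then $U_{\ii\t}(\g_n)$ and $U_{\ii\t}(\g_n')$ differ by a bounded amount in the relevant flag-space sense (so the Gromov-product estimates transfer with a worse but still uniform constant), and the Cartan projections stay comparable. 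The remaining ingredient is that, because $x\neq y$, a geodesic from $y$ through a uniformly bounded neighborhood of $e$ can be prolonged to a biinfinite geodesic (or a long quasigeodesic) one of whose rays converges to $x$; this is where hyperbolicity of $\G$ and the fact that $x\neq y$ are both used, and where one must be a little careful that the quasigeodesic constants stay bounded independently of $n$. Once these coarse-to-fine translations are set up, the contradiction is immediate from Proposition \ref{fundamentalConstant} and continuity of the limit map.

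A slicker alternative, if one wants to avoid the explicit geodesic manipulations, is to argue directly with the $\G$-action on $\bord^2\G$: conicality of $\{\g_n\}$ means that for $y\neq x$ fixed, $(\g_n^{-1}y,\g_n^{-1}x)$ stays in a compact subset of $\bord^2\G$ (by definition, if $\g_n\to x$; but here we want the converse). I would instead use that a divergent sequence is conical with limit $y$ iff $\g_n^{-1}$ takes all of $\bord\G\setminus\{y\}$ uniformly into compact sets away from the $\g_n^{-1}$-image of $y$; combined with equivariance of $\xi^\t$ and $\Gr_\t$ (Equation \eqref{forGr}) one can transfer the hypothesis $\peso_\sroot\Gr_\t\big(U_{\ii\t}(\g_n),\xi^\t(x)\big)\to-\infty$ to a statement about $\g_n^{-1}$ moving $\xi^\t(x)$ toward the repelling flag, which by the dynamics of proximal-type elements (Proposition \ref{conical}) pins down $\g_n^{-1}x\to$ (repeller) and hence $\g_n\to x$. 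Either route works; I would present the first one as it only uses the quantitative results already recalled in \S\ref{s.Anosov}.
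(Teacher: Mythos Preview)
Your argument is correct, but the paper's proof is considerably shorter and avoids the delicate quasigeodesic bookkeeping you flag as the main difficulty. Rather than manufacturing a biinfinite quasigeodesic through $e$ with endpoints $x$ and $y$ and invoking the quantitative Proposition \ref{fundamentalConstant}, the paper simply lets $y$ be the endpoint of the conical sequence and observes that Proposition \ref{conical} together with Remark \ref{r.conicalgeodesic} already give $U_{\ii\t}(\g_n)\to\xi^{\ii\t}(y)$. Since the Gromov product is continuous on $\cal F_{\ii\t}\times\cal F_\t$ and tends to $-\infty$ only as transversality fails (Remark \ref{fty}), the hypothesis forces $\xi^{\ii\t}(y)$ and $\xi^\t(x)$ to be non-transverse; transversality of the limit maps then gives $x=y$ in one line.

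Your route and the paper's are the same idea contraposed---``$y\neq x$ implies the Gromov product is bounded below''---but the paper gets there qualitatively (convergence of Cartan attractors plus continuity) whereas you go quantitatively through $\delta_{\rho,c}$. What your approach buys is a more explicit constant; what it costs is the need to control the quasigeodesic constants of the concatenated ray, to transfer the estimate from $\alpha_{m(n)}$ to the nearby $\g_n$ via Proposition \ref{p.Ben}, and to pass to a subsequence at the start (unnecessary here, since by Definition \ref{conicalsequence} a conical sequence already has a definite endpoint $y$). None of this is wrong, but it can all be replaced by the single observation $U_{\ii\t}(\g_n)\to\xi^{\ii\t}(y)$.
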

\begin{proof}
	We denote by $y$ the endpoint of the conical sequence $\g_n$. Proposition \ref{conical} and Remark \ref{r.conicalgeodesic} imply that $U_{\ii\t}(\g_n)\to\xi^{\ii\t}(y)$. Since, however, $\peso_\sroot\Gr_\t\big(U_{\ii\t}(\g_n),\xi^\t(x)\big)\to-\infty$, we deduce that $\xi^{\ii\t}(y)$ is not transverse to $\xi^\t(x)$ (recall Remark \ref{fty}). Since $\xi^\t$ is transverse, we deduce   that $x=y$.
\end{proof}	
It will be useful in the proof of Proposition \ref{conK} to know that the endpoints of conical sequences belong to pushed Cartan basins:
\begin{lemma}\label{l.conicbasin} Let $\rho:\G\to G$ be $\theta$-Anosov, $x\in\bord\G$. If $\g_n\to x$ conically, then there exists $\alpha$ only depending on the sequence and the representation $\rho$ such that for every $n$, $\xi^\theta(x)\in \g_nB_{\t,\alpha}(\g_n)$.
\end{lemma}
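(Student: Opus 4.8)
The plan is to deduce the statement from Corollary \ref{c.fundamentalConstant}, by realizing, for each $n$, the boundary point $\g_n^{-1}x$ as an element of the coarse cone type at infinity $\cone^{c}_\infty(\g_n)$ for a single constant $c>0$ depending only on the sequence $(\g_n)$ (hence on $x$) and on $\G$. Granting this, Corollary \ref{c.fundamentalConstant} yields $\xi^\t_\rho\big(\cone^{c}_\infty(\g_n)\big)\subset B_{\t,\alpha}(\g_n)$ with $\alpha=-\log\delta_{\rho,c}$ the fundamental constant of $\rho$ and $c$ (Definition \ref{FConstant}), so in particular $\xi^\t_\rho(\g_n^{-1}x)\in B_{\t,\alpha}(\g_n)$. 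By $\rho$-equivariance of $\xi^\t_\rho$ (Proposition \ref{conical}) and the convention $\g_n=\rho(\g_n)$ the left-hand side equals $\g_n^{-1}\xi^\t_\rho(x)$, so $\xi^\t_\rho(x)\in\g_n B_{\t,\alpha}(\g_n)$. Since $\alpha$ depends only on $\rho$ and $c$, and $c$ only on the sequence and on $\G$, this is exactly the assertion.

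It remains to exhibit the quasi-geodesic. First I would invoke Remark \ref{r.conicalgeodesic}: conicality of $\g_n\to x$ furnishes a geodesic ray $(\alpha_j)_{j\geq0}$ from $e$ to $x$ and a constant $K>0$ such that every $\g_n$ lies within distance $K$ of $\{\alpha_j\}_{j\geq0}$. Fixing $n$, I form the ray $P_n$ obtained by concatenating a geodesic segment from $e$ to $\g_n$ with a geodesic ray from $\g_n$ to $x$; thus $P_n$ starts at $e$, passes through $\g_n$ (at index $|\g_n|$), and converges to $x$. Because $\g_n$ is $K$-close to a geodesic joining $e$ and $x$, the two pieces of $P_n$ together fellow-travel that geodesic apart from a detour near $\g_n$ whose length is controlled by $K$ and the hyperbolicity constant of $\G$; the standard argument in hyperbolic spaces then gives that $P_n$ is a $(c,c)$-quasi-geodesic ray with $c$ depending only on $K$ and on $\G$. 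Left-translating $P_n$ by $\g_n^{-1}$, an isometry of the Cayley graph, produces a $(c,c)$-quasi-geodesic ray based at $\g_n^{-1}$, passing through $e$, and converging to $\g_n^{-1}x$; hence $\g_n^{-1}x\in\cone^{c}_\infty(\g_n)$, as required.

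The one point demanding care — an entirely routine one — is the uniformity of $c$ over all $n$, in particular that the estimate survives when the segment from $e$ to $\g_n$ is short; this happens for only finitely many $n$ since $|\g_n|\to\infty$, so taking the largest of the finitely many resulting constants keeps $c$ dependent only on $K$, hence on the sequence, and on $\G$. Apart from this, the argument is a direct chaining of Remark \ref{r.conicalgeodesic}, the quasi-geodesic construction, Corollary \ref{c.fundamentalConstant}, and the equivariance of $\xi^\t_\rho$.
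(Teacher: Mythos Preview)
Your proof is correct and follows essentially the same approach as the paper: invoke Remark \ref{r.conicalgeodesic} to obtain $\g_n^{-1}x\in\cone^{c}_\infty(\g_n)$ for a uniform $c$, then apply Equation \eqref{e.conesinCartan} from Corollary \ref{c.fundamentalConstant} and equivariance. The paper's proof is terser, asserting the membership $\g_n^{-1}x\in\cone^{c}_\infty(\g_n)$ directly as an equivalent reformulation of Remark \ref{r.conicalgeodesic}, whereas you spell out the quasi-geodesic concatenation; but the logical content is the same.
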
	
\begin{proof}
	We know from Remark \ref{r.conicalgeodesic} that $\g_n$ is contained in a neighbourhood of a geodesic ray to $x$, or equivalently there exist a constant $c$ such that $\gamma^{-1}x\in\cone^{c}_\infty(\gamma_n)$. The result is then a consequence of Equation \eqref{e.conesinCartan}. 
\end{proof}	

\subsection{Patterson-Sullivan theory of Anosov representations}\label{rfr}
If $\rho$ is a $\t$-Anosov representation, then we can pullback the Buseman-Iwasawa cocycle of $\sf G$ using the equivariant maps: the \emph{refraction cocycle} associated to a $\t$-Anosov representation  $\rho:\G\to\sf G$  is $\rfr:\G\times\bord\G\to\frak a_\t$ given by 
$$\rfr(\g,x)=\rfr_{\t,\rho}(\g,x)=\bus_\t\big(\rho(\g) ,\xi^\t_\rho(x)\big).$$ 

\noindent  Bridgeman-Canary-Labourie-S. \cite[Theorem 1.10]{pressure} show that the Mineyev geodesic flow of a group $\G$ admitting an Anosov representations is metric-Anosov, and thus \S\,\ref{cocycles} applies to $\rfr.$ Moreover, the following fact places $\rfr$ in the assumptions required in \S\,\ref{Sreal} and \S\,\ref{QQ}, see S. \cite{dichotomy} for details.

\begin{fact}
	The periods of the refraction cocycle equal the $\theta$-Jordan projection: $\rfr(\g,\g^+)=\lambda_\t(\g).$ For any $\sroot\in\t$ the real valued cocycle $\peso_\sroot\rfr$ has finite entropy.
\end{fact}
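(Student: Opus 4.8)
The final statement to prove is the \textbf{Fact} immediately following the definition of the refraction cocycle: that the periods of $\rfr$ equal the $\t$-Jordan projection, i.e. $\rfr(\g,\g^+)=\lambda_\t(\g)$ for hyperbolic $\g$, and that for each $\sroot\in\t$ the real-valued cocycle $\peso_\sroot\rfr$ has finite entropy. Here is how I would proceed.

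\textbf{Step 1: Identifying the period.} By definition $\rfr(\g,\g^+)=\bus_\t(\rho(\g),\xi^\t_\rho(\g^+))$. Proposition \ref{conical} tells us that for a hyperbolic $\g\in\G$, the point $\xi^\t_\rho(\g^+)=(\rho\g)^+_\t$ is the attracting fixed point of the $\t$-proximal element $\rho(\g)$ in $\cal F_\t$; in particular $\rho(\g)$ fixes $\xi^\t_\rho(\g^+)$. The plan is to reduce the cocycle identity to the Tits representations $\Fund_\sroot$ for $\sroot\in\t$ using Equation \eqref{busnorma}: writing $v\in\zeta_{\Fund_\sroot}(\xi^\t_\rho(\g^+))$ non-zero, we have $l_\sroot\peso_\sroot(\bus(\rho\g,\xi^\t_\rho(\g^+)))=\log(\|\Fund_\sroot(\rho\g)v\|_\Fund/\|v\|_\Fund)$. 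Since $\zeta_{\Fund_\sroot}$ is equivariant and $\rho(\g)$ fixes $\xi^\t_\rho(\g^+)$, the line $\zeta_{\Fund_\sroot}(\xi^\t_\rho(\g^+))$ is $\Fund_\sroot(\rho\g)$-invariant; because it is exactly the attracting line (image of the highest-weight line under the boundary map), $v$ is an eigenvector for the eigenvalue of maximal modulus, which is $e^{\lambda_1(\Fund_\sroot(\rho\g))}$. Hence the log-ratio equals $\lambda_1(\Fund_\sroot(\rho\g))=\chi_{\Fund_\sroot}(\lambda(\rho\g))=l_\sroot\peso_\sroot(\lambda(\rho\g))$ by Equation \eqref{eq:spectralrep} and Proposition \ref{prop:titss}. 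Dividing by $l_\sroot$ gives $\peso_\sroot(\rfr(\g,\g^+))=\peso_\sroot(\lambda(\rho\g))=\peso_\sroot(\lambda_\t(\rho\g))$ for all $\sroot\in\t$; since $\{\peso_\sroot|_{\a_\t}:\sroot\in\t\}$ is a basis of $(\a_\t)^*$ and both sides lie in $\a_\t$, this forces $\rfr(\g,\g^+)=\lambda_\t(\rho\g)$.

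\textbf{Step 2: Finite entropy.} Fix $\sroot\in\t$. The cocycle $\peso_\sroot\rfr$ is a real-valued Hölder cocycle; by Step 1 its period at a hyperbolic $\g$ is $\peso_\sroot(\lambda_\t(\rho\g))=\peso_\sroot(\lambda(\rho\g))\geq 0$ (the fundamental weight is nonnegative on $\a^+$, and $\lambda(\rho\g)\in\a^+$), so the entropy $\hJ{\peso_\sroot\rfr}$ is defined as in \S\ref{Sreal}. To bound it, I would compare $\ell_{\peso_\sroot\rfr}(\g)=\peso_\sroot(\lambda(\rho\g))$ from below by a positive multiple of the word length. Since $\peso_\sroot=\sum_{\bb\in\t}c_\bb\slroot_\bb$ with $c_\bb>0$ plus possibly further nonnegative terms — more precisely $\peso_\sroot$ is a nonnegative combination of simple roots with the coefficient of $\sroot$ strictly positive — and since $\rho$ is $\t$-Anosov, we have $\slroot_\sroot(\lambda(\rho\g))\geq \mu|\g|_\infty - C'$ for the translation length, hence $\peso_\sroot(\lambda(\rho\g))\geq c\,|\g|_\infty - C''$ for constants $c>0$, $C''$. (Here one uses the standard fact, e.g. from \cite{BPS}, that the Jordan projection dominates the stable word length along conjugacy classes for Anosov representations; alternatively one reduces to the Cartan projection via $\lambda = \lim \cartan(\g^n)/n$ and uses Definition \ref{AnosovDefi} together with Proposition \ref{p.Ben}.) Therefore $\sf R_t(\peso_\sroot\rfr)\subset\{[\g]\in[\G]:|\g|_\infty\leq (t+C'')/c\}$, and since the number of conjugacy classes of translation length at most $R$ in a word-hyperbolic group grows at most exponentially in $R$, we get $\#\sf R_t(\peso_\sroot\rfr)\leq e^{\kappa t}$ for some $\kappa$, i.e. $\hJ{\peso_\sroot\rfr}\leq\kappa<\infty$.

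\textbf{Main obstacle.} The conceptual content is entirely in Step 1 — correctly identifying which line of $V_\sroot$ the boundary map $\zeta_{\Fund_\sroot}$ sends $\xi^\t_\rho(\g^+)$ to, and that this is the top eigenline of $\Fund_\sroot(\rho\g)$. This requires tracking the compatibility between the abstract limit maps $\xi^\t_\rho$ coming from Cartan attractors (Proposition \ref{conical}) and the Grassmannian embedding \eqref{maps}: one must check that $\zeta_{\Fund_\sroot}\circ\xi^\t_\rho$ agrees with the limit map of the $\{\slroot_1\}$-Anosov representation $\Fund_\sroot\circ\rho$, so that at $\g^+$ it is the attracting fixed line of the proximal matrix $\Fund_\sroot(\rho(\g))$. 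This is where I would be most careful; the rest (the weight-basis argument closing Step 1, and all of Step 2) is routine once this identification is in place. In fact, since the paper only needs \emph{finite} entropy and the period formula, one may also simply cite S.\ \cite{dichotomy}, where these are established — the Fact is stated with that reference.
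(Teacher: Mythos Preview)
Your proposal is correct. The paper does not actually prove this Fact; it simply states it and refers to S.\ \cite{dichotomy} for details, so your argument is more than the paper provides. Your Step~1 via the Tits representations and Equation~\eqref{busnorma} is exactly the standard computation (and is essentially what underlies the cited reference), and your Step~2 bounding $\peso_\sroot(\lambda(\rho\g))$ below by a multiple of the stable word length via the Anosov gap is the expected route to finite entropy. The point you flag as the main obstacle --- that $\zeta_{\Fund_\sroot}\circ\xi^\t_\rho(\g^+)$ is the attracting eigenline of $\Fund_\sroot(\rho\g)$ --- is indeed the only thing requiring care, and it follows from equivariance together with Proposition~\ref{conical}; you might also note in Step~2 that it suffices to use the single coefficient of $\sroot$ in the expansion of $\peso_\sroot$ in simple roots (which is strictly positive), since all other simple roots are nonnegative on $\a^+$.
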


We import the following concepts of cocycles to the setting of Anosov representations: 

\begin{itemize}
	\item[-] The limit cone of $\rfr$ will be denoted by $\calL_{\t,\rho}$ and referred to as \emph{the $\t$-limit cone of $\rho$}; it is the smallest closed cone that contains the projected Jordan projections $\{\lambda_\t(\g):\g\in\G\}.$ 
	\item[-] The \emph{interior of the dual cone} $\inte\cdc{\t,\rho}\subset\frak a_\t^*$ consists of linear forms whose \emph{entropy} $$\hJ\varphi= \lim_{t\to\infty}\frac1t\log\#\big\{[\g]\in[\G]:\varphi(\lambda_\t(\g))\leq t\big\}$$ is finite. 
	\item[-] The $\t$-\emph{critical hypersurface}, resp. $\t$-\emph{convergence domain}, of $\rfr$  will be denoted by 
	\begin{alignat*}{2}\label{qh1}
		\cal Q_{\t,\rho}  & =\Big\{\varphi\in\inte\cdc{\t,\rho}:\hJ\varphi=1\Big\},\\ 
		\cal D_{\t,\rho} & = \Big\{\varphi\in\inte\cdc{\t,\rho}:\hJ\varphi\in(0,1)\Big\}.	\end{alignat*} 	\item[-] If $\calL_{\t,\rho}$ has non-empty interior, then we have a \emph{duality} diffeomorphism between $\cal Q_{\t,\rho}$ and $\inte\P(\calL_{\t,\rho})$ given by $$\varphi\mapsto\sf u_\varphi=\sf T_\varphi\cal Q_\rho.$$
\end{itemize}

More information on these objets can be found on S. \cite[\S\,5.9]{dichotomy}.

\begin{remark}\label{r.entropy=crit}It it proven in Glorieux-Monclair-Tholozan \cite[Theorem 2.31 (2)]{GMT} (see also S. \cite[Corollary 5.5.3]{dichotomy}) that if $\rho$ is $\t$-Anosov then for every $\varphi\in\inte\cdc{\t,\rho}$ the entropy $\hJ\varphi$ equals the critical exponent
	$$\hC\varphi:= \lim_{t\to\infty}\frac1t\log\#\big\{\g\in\G:\varphi(\cartan(\g))\leq t\big\}.$$
	In particular the $\t$-convergence domain is also given by 
	$$	\cal D_{\t,\rho} =\Big\{\varphi\in (\a_\t)^*:\sum_{\g\in\G}e^{-\varphi(\cartan(\g))}<\infty\Big\},$$
	see S. \cite[\S\,5.7.2]{dichotomy}.
\end{remark}

We observe that for  $\varphi\in\inte\cdc{\t,\rho}$ Assumptions \ref{Pat-SulEx} are  guaranteed for $\rfr_\varphi:=\varphi\circ\rfr.$  Indeed the cocycle $$\ov\rfr{}(\g,x)=\ii\bus_{\ii\t}\big(\g ,\xi^{\ii\t}(x)\big)$$ is dual to $\rfr,$  from Equation (\ref{forGr}) the function $[\cdot,\cdot]_\varphi:\bord^2\G\to\R$ 
$$	[x,y]_\varphi=\varphi\Big(\Gr_\t\big(\xi^{\ii\t}(x),\xi^\t(y)\big)\Big)$$
is a  Gromov product for the pair $(\ov\rfr_{\varphi},\rfr_{\varphi}),$ and we have the following result guaranteeing existence of Patterson-Sullivan measures $\ps^\varphi$ and $\ov\ps^\varphi$, as well as their values on Cartan basins defined in Equation \eqref{e.CartBasin}.

\begin{cor}[{S. \cite[Cor. 5.5.3+Lemma 5.7.1]{dichotomy}}]\label{existe} For every $\varphi\in\inte\cdc{\t,\rho}$ there exists a $\rfr_\varphi$-Patterson-Sullivan measure $\ps^\varphi$ of exponent $\hJ\varphi,$ moreover for every $\alpha$ there exists a constant $C$ such that for every $\g\in\G$ one has 
$$	\ps^\varphi\big((\xi^\theta)^{-1}(\g B_{\theta,\alpha}(\g))\leq Ce^{-\hJ\varphi\varphi\big(\cartan(\g)\big)}.
$$
\end{cor}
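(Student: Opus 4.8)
The plan is to derive both assertions by applying the Patterson--Sullivan machinery of \S\ref{cocycles} to the real-valued cocycle $\rfr_\varphi:=\varphi\circ\rfr:\G\times\bord\G\to\R$. First I would verify that $\rfr_\varphi$ satisfies the hypotheses of \S\ref{Sreal}: it is a H\"older cocycle, being the composition of the H\"older cocycle $\rfr$ with a linear functional; its periods are $\ell_{\rfr_\varphi}(\g)=\varphi\big(\lambda_\t(\g)\big)$, which are non-negative since $\lambda_\t(\g)\in\calL_{\t,\rho}$ and $\varphi\in\inte\cdc{\t,\rho}$; and its entropy $\hJ{\rfr_\varphi}$ is, by the very definition of the growth rate involved, equal to $\hJ\varphi$, which is finite because $\varphi\in\inte\cdc{\t,\rho}$. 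Since $\G$ carries an Anosov representation its Mineyev geodesic flow is metric-Anosov (Bridgeman-Canary-Labourie-S. \cite[Theorem 1.10]{pressure}), so the construction of the Ledrappier potential of $\rfr_\varphi$ from S. \cite[\S\,3.1]{dichotomy} applies; the existence of a $\rfr_\varphi$-Patterson--Sullivan probability measure $\ps^\varphi$ is then S. \cite[Corollary 5.5.3]{dichotomy}, and its exponent is necessarily $\hJ{\rfr_\varphi}=\hJ\varphi$ by S. \cite[Proposition 3.3.2]{dichotomy}.

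It then remains to establish the shadow-type bound, which is the only step requiring an actual computation. By equivariance of $\xi^\t$ one has $(\xi^\t)^{-1}\big(\g B_{\t,\alpha}(\g)\big)=\g B$, where $B:=(\xi^\t)^{-1}\big(B_{\t,\alpha}(\g)\big)\subset\bord\G$. Rewriting the defining relation \eqref{defP} for $\ps^\varphi$ (of exponent $\hJ\varphi$) as an identity of measures and replacing $\g$ by $\g^{-1}$ gives, after integrating over $B$,
\[
\ps^\varphi(\g B)=\int_B e^{-\hJ\varphi\,\rfr_\varphi(\g,w)}\,d\ps^\varphi(w).
\]
For $w\in B$ the point $\xi^\t(w)$ lies in $B_{\t,\alpha}(\g)$, so $\rfr_\varphi(\g,w)=\varphi\big(\bus_\t(\g,\xi^\t(w))\big)$ and the comparison \eqref{comparision-shadow} yields $\big\|\cartan_\t(\g)-\bus_\t(\g,\xi^\t(w))\big\|\leq K_\alpha$. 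As $\varphi\in(\a_\t)^*$ is $\pi_\t$-invariant one has $\varphi(\cartan(\g))=\varphi(\cartan_\t(\g))$, hence $\rfr_\varphi(\g,w)\geq\varphi(\cartan(\g))-\|\varphi\|\,K_\alpha$. Substituting this lower bound into the integral and using $\ps^\varphi(B)\leq1$ gives
\[
\ps^\varphi(\g B)\leq e^{\hJ\varphi\,\|\varphi\|\,K_\alpha}\,e^{-\hJ\varphi\,\varphi(\cartan(\g))},
\]
so that $C:=e^{\hJ\varphi\|\varphi\|K_\alpha}$ works, uniformly in $\g$.

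I expect the only delicate point to be bookkeeping: keeping track of the direction of the Radon--Nikodym relation \eqref{defP} and of the sign in \eqref{comparision-shadow}, so that the estimate comes out as a \emph{lower} bound on $\rfr_\varphi(\g,w)$ and therefore an \emph{upper} bound on $\ps^\varphi(\g B)$; and using consistently the identification of $(\a_\t)^*$ with the space of $\pi_\t$-invariant functionals on $\a$, so that $\varphi(\cartan(\g))$ and $\varphi(\cartan_\t(\g))$ may be interchanged. The genuinely substantial ingredients --- existence of $\ps^\varphi$ and finiteness of its exponent --- are imported wholesale from \cite{pressure,dichotomy} and are not reproved here.
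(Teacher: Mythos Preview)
Your argument is correct and follows the standard approach: the existence of $\ps^\varphi$ is imported from \cite{dichotomy}, and the shadow estimate is the routine computation combining the Radon--Nikodym relation \eqref{defP} with the Buseman--Cartan comparison \eqref{comparision-shadow} on the basin $B_{\t,\alpha}(\g)$. This is exactly the content of the cited Lemma 5.7.1 in \cite{dichotomy}; the paper itself does not reprove the statement but simply records it as a quotation, so there is nothing further to compare.
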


\subsection{Subspace-conicality}\label{sandwich} 

In this section we are interested in a notion of conicality along higher dimensional subspaces of the ambient Levi space. 

\begin{defi}\label{W-conical}Let $\rho:\G\to\sf G$ be $\t$-Anosov and consider a subspace $\sf W\subset\a_\t$. A point $x\in\bord\G$ is \emph{$\sf W$-conical} if there exists a conical sequence $\{\g_n\}_0^\infty\subset\G$ converging to $x,$ a constant $K$ and $\{w_n\}_0^\infty\subset\sf  W$ such that for all $n$ one has $$\big\|\cartan_\t\big(\g_n)- w_n\big\|\leq K.$$ The set of such points will be denoted by $\bord_{\sf W,\rho}\G=\bord_{\sf W}\G.$ \end{defi}

Assume from now on that $\sf W$ intersects the relative interior of $\calL_{\t,\rho},$
and consider $\varphi\in\inte\cdc{\t,\rho}$ with $\sf u_\varphi\subset\sf W.$ The intersection $\sf W_\varphi=\sf W\cap\ker\varphi$ has co-dimension $1$ in $\sf W$ and has trivial intersection with the limit cone $\calL_{\t,\rho}.$ Consider the quotient space 
$$ V=\a_\t/\sf W_\varphi$$
 equipped with the quotient projection $\Pi:\frak a_\t\to V.$  We say that $\rho$ is $(\sf W,\varphi)$-\emph{non-arithmetic} if the group spanned by $\big\{\Pi(\lambda_\t(\g)):\g\in\G\big\}$ is dense in $V.$ In this section we prove the following.

\begin{thm}\label{hyper}\label{ps-sandwich} Let $\rho:\G\to\sf G$ be $\t$-Anosov, $\sf W$ be a subspace of $\a_\t$ intersecting non-trivially the relative interior of $\calL_{\t,\rho},$ and  $\varphi\in(\a_\t)^*$ with $\sf u_\varphi\subset \sf W.$ Assume $\rho$ is $(\sf W,\varphi)$-non-arithmetic, then:
	\begin{itemize}
		\item if $\sf W$ has codimension $1$ then $\mu^\varphi(\bord_\sf W\G)=1;$ 
		\item if $\codim\sf W\geq3$ then $\mu^\varphi(\bord_{\sf W}\G)=0.$
		\end{itemize}
\end{thm}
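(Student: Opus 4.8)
The plan is to reduce the statement to an ergodic-theoretic dichotomy for a rank-$2$ (respectively higher rank) ``directional-type'' flow, in the spirit of Corollary \ref{d=2}, by constructing an auxiliary flow whose recurrence set captures precisely the $\sf W$-conicality condition. First I would normalize: pick $\varphi\in\cal Q_{\t,\rho}$ (after rescaling, $\hJ\varphi=1$) with $\sf u_\varphi\subset\sf W$, and set $\sf W_\varphi=\sf W\cap\ker\varphi$, $V=\a_\t/\sf W_\varphi$, $\Pi\colon\a_\t\to V$ the quotient. Note $\dim V=\codim_{\a_\t}\sf W_\varphi=\codim_{\a_\t}\sf W+1$, so the two cases $\codim\sf W=1$ and $\codim\sf W\geq 3$ correspond exactly to $\dim V=2$ and $\dim V\geq 4$, which is what makes Corollary \ref{d=2} the right tool. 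The key translation to establish is that, along a conical sequence $\g_n\to x$, the quantity $\cartan_\t(\g_n)$ stays within bounded distance of $\sf W$ if and only if the $V$-valued refraction cocycle $\Pi\circ\rfr$ evaluated along $\g_n$ stays bounded in the $\sf u_\varphi$-transverse directions; precisely, $x\in\bord_\sf W\G$ iff the $\df^{\Pi\circ\varphi}$-orbit of (a point above) $x$ on $\G\backslash(\bord^2\G\times V)$ is recurrent. The comparison estimate \eqref{comparision-shadow}, together with Corollary \ref{c.fundamentalConstant} / Lemma \ref{l.conicbasin} (which place $\xi^\t(x)$ in a pushed Cartan basin $\g_nB_{\t,\alpha}(\g_n)$ for $\alpha$ depending only on $\rho$ and the sequence), and the cocycle identity $\rfr(\g_n,x)=\bus_\t(\g_n,\xi^\t(x))$, let me pass between $\cartan_\t(\g_n)$ and $\rfr(\g_n^{-1},x)$ up to uniformly bounded error. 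This is the heart of the argument and where most of the work lies.

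\textbf{Main steps.} (1) Set up the projected cocycle $c_V=\Pi\circ\rfr\colon\G\times\bord\G\to V$ and check it is a H\"older cocycle with limit cone $\Pi(\calL_{\t,\rho})$; since $\sf W_\varphi\cap\calL_{\t,\rho}=\{0\}$ by hypothesis, $\Pi(\calL_{\t,\rho})$ has nonempty interior in $V$ and $\Pi(\sf u_\varphi)$ is an interior direction, so $\overline\varphi:=$ the functional on $V$ induced by $\varphi$ lies in $\cal Q_{c_V}$. Verify Assumptions \ref{Pat-SulEx} for $\overline\varphi\circ c_V$: the dual cocycle is $\Pi\circ\ov\rfr$, the Gromov product is $\Pi\circ\big(\Gr_\t\circ(\xi^{\ii\t},\xi^\t)\big)$, and the Patterson–Sullivan measures $\ps^\varphi,\ov\ps^\varphi$ from Corollary \ref{existe} descend (their exponent is $\hJ\varphi=1$). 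The $(\sf W,\varphi)$-non-arithmeticity hypothesis is exactly non-arithmeticity of $c_V$. (2) Build the directional flow $\df^{\overline\varphi}$ on $\G\backslash(\bord^2\G\times V)$ and its Bowen–Margulis measure $\BM^{\overline\varphi}$ as in \eqref{forBM}; by construction the image in $\bord\G$ of the recurrence set $\cal K(\df^{\overline\varphi})$ has $\mu^\varphi$-mass equal to $\BM^{\overline\varphi}$-mass of $\cal K(\df^{\overline\varphi})$ (projecting along the $V$-fibres and using that $\BM^{\overline\varphi}$ disintegrates with $\ps^\varphi$ in the relevant factor). (3) Prove the identification $\pi_{\bord\G}\big(\cal K(\df^{\overline\varphi})\big)=\bord_\sf W\G$ up to null sets: given $x\in\bord_\sf W\G$ witnessed by a conical $\g_n\to x$, use $\big\|\cartan_\t(\g_n)-w_n\big\|\le K$ and the comparison to get $\big\|\rfr(\g_n^{-1},x)-w_n'\big\|\le K'$ with $w_n'\in\sf W$, hence $\Pi\rfr(\g_n^{-1},x)$ stays bounded, giving recurrence of $\df^{\overline\varphi}$; conversely recurrence forces $\Pi\rfr$ to return to a compact set along some sequence, and then one reconstructs, via conicality (Remark \ref{r.conicalgeodesic}) and Lemma \ref{UdU1}/\ref{l.conicbasin}, a conical sequence $\g_n\to x$ with $\cartan_\t(\g_n)$ boundedly close to $\sf W$. (4) Invoke Corollary \ref{d=2}: if $\dim V\le 2$ (i.e.\ $\codim\sf W=1$) then $\cal K(\df^{\overline\varphi})$ has full $\BM^{\overline\varphi}$-mass, so $\mu^\varphi(\bord_\sf W\G)=1$; if $\dim V\ge 4$ (i.e.\ $\codim\sf W\ge 3$) then $\cal K(\df^{\overline\varphi})$ is $\BM^{\overline\varphi}$-null, so $\mu^\varphi(\bord_\sf W\G)=0$.

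\textbf{Main obstacle.} I expect step (3), the precise identification of the recurrence set with the conical set, to be the technical crux. One subtlety is that the $\sf W$-conicality condition is phrased with an \emph{arbitrary} conical sequence $\g_n$ and auxiliary points $w_n\in\sf W$, whereas recurrence of $\df^{\overline\varphi}$ is a statement about a continuous-time orbit; passing between the discrete group-element data and the flow data requires the standard but careful translation (conical sequences vs.\ geodesic rays vs.\ flow recurrence, cf.\ Remark \ref{r.conicalgeodesic}), keeping track of which constants depend only on $\rho$ and which on the sequence. A second subtlety is that $\cartan_\t$ is not a cocycle, so one must route everything through $\rfr$ and control the discrepancy $\|\cartan_\t(\g)-\bus_\t(\g,\xi^\t(x))\|$ for $x$ in a pushed Cartan basin uniformly — this is where Remark on \eqref{comparision-shadow} and Lemma \ref{l.conicbasin} are essential, and one must check the relevant $\alpha$ can be taken uniform over the conical sequences that arise. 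Finally, one should make sure the null-set ambiguities in step (3) are harmless: since in both cases of Corollary \ref{d=2} the conclusion about $\cal K(\df^{\overline\varphi})$ is a full-measure or null-measure statement, an almost-everywhere identification of $\pi_{\bord\G}(\cal K)$ with $\bord_\sf W\G$ suffices to conclude.
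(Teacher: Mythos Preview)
Your proposal is correct and follows essentially the same route as the paper: project the refraction cocycle to $V=\a_\t/\sf W_\varphi$, verify Assumptions~\ref{Pat-SulEx} for the projected cocycle, identify $\sf W$-conicality with recurrence of the directional flow $\df^{\overline\varphi}$ (the paper does this via an intermediate notion of $\sf u_\varphi^\cc$-conicality and an exact identification in Proposition~\ref{conK} rather than up to null sets, but your weaker version suffices), and then apply Corollary~\ref{d=2}. Two minor remarks: the nonempty interior of $\Pi(\calL_{\t,\rho})$ comes from the non-arithmeticity hypothesis rather than from $\sf W_\varphi\cap\calL_{\t,\rho}=\{0\}$; and the passage from $\BM^{\overline\varphi}$-mass of $\cal K$ to $\mu^\varphi$-mass of $\bord_\sf W\G$ is done in the paper by working locally on a box where the quotient projection is injective, since $\BM^{\overline\varphi}$ is infinite.
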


\begin{obs} If $\rho$ is Zariski-dense then Theorem \ref{densidad}  (Benoist \cite{benoist2}) guarantees $(\sf W,\varphi)$-non-arithmeticity for every $\varphi\in(\a_\t)^*$ with $\sf u_\varphi\in\P(\sf W),$ thus Theorem \ref{hyper} readily implies Theorem \ref{Ahyper}. 
	\end{obs}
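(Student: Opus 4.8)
The statement to establish is that Zariski-density of $\rho(\G)$ forces the $(\sf W,\varphi)$-non-arithmeticity hypothesis, so that Theorem \ref{hyper} applies without that standing assumption and hence yields Theorem \ref{Ahyper}. The plan is to trace through the definitions: by definition $\rho$ is $(\sf W,\varphi)$-non-arithmetic exactly when the subgroup of $V=\a_\t/\sf W_\varphi$ generated by $\big\{\Pi(\lambda_\t(\g)):\g\in\G\big\}$ is dense, where $\Pi:\a_\t\to V$ is the quotient projection. So the task reduces to showing that, for a Zariski-dense $\rho$, this image generates a dense subgroup of $V$.

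First I would invoke the second assertion of Theorem \ref{densidad} (Benoist \cite{benoist2}): if $\rho(\G)$ is Zariski-dense in $\sf G$, then the subgroup of $\a$ generated by the full Jordan projections $\{\lambda(\g):\g\in\G\}$ is dense in $\a$. Next I would push this forward along the linear projection $\pi_\t:\a\to\a_\t$ (a continuous group homomorphism): the image of a dense subgroup under a continuous surjective homomorphism is dense, so $\{\lambda_\t(\g)=\pi_\t\lambda(\g):\g\in\G\}$ generates a dense subgroup of $\a_\t$. Finally I would compose with the further quotient projection $\Pi:\a_\t\to V=\a_\t/\sf W_\varphi$, again a continuous surjective homomorphism, to conclude that $\big\{\Pi(\lambda_\t(\g)):\g\in\G\big\}$ generates a dense subgroup of $V$; this is precisely $(\sf W,\varphi)$-non-arithmeticity. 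With that hypothesis now automatic, Theorem \ref{hyper} in the codimension-one case gives $\mu^\varphi(\bord_\sf W\G)=1$, which is the content of Theorem \ref{Ahyper}.

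There is essentially no obstacle here: the only mild point to note is that the relevant statement of Theorem \ref{densidad} is about density in $\a$ of the \emph{group generated by} the Jordan projections (not that the set of Jordan projections is itself dense), and likewise our conclusion is phrased in terms of the generated subgroup — but $(\sf W,\varphi)$-non-arithmeticity is defined the same way, so this matches. One should also remark that Zariski-density of $\rho(\G)$ implies in particular that $\calL_{\t,\rho}$ has non-empty interior in $\a_\t$ (Theorem \ref{densidad}, first assertion, combined with Remark \ref{wallsAnosov}), so the standing hypothesis in Theorem \ref{hyper} that $\sf W$ meets the relative interior of $\calL_{\t,\rho}$ is the only remaining requirement, and it is explicitly assumed in the statement of Theorem \ref{Ahyper}. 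Hence no further argument is needed.
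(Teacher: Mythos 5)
Your proof is correct and follows the same approach the paper intends: the remark simply cites Theorem \ref{densidad}, and you have correctly filled in the short chain of reductions (dense subgroup of $\a$ generated by Jordan projections, pushed forward under the surjective linear maps $\pi_\t$ and then $\Pi$, remains dense). The observation about the standing hypothesis on $\sf W$ is also apt, though the non-empty interior of $\calL_{\t,\rho}$ follows directly from the first assertion of Theorem \ref{densidad} and the surjectivity of $\pi_\t$, without needing Remark \ref{wallsAnosov}.
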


The remainder of the section is devoted to the proof of Theorem \ref{hyper}. Let $$V^*=\Ann(\sf W_\varphi)=\{\psi\in(\a_\t)^*:\psi|\sf W_\varphi\equiv0\},$$ with a slight abuse of notation we will identify the dual of $V$ with $V^*\subset(\a_\t)^*\subset \a^*$ (recall from Section \ref{s.Levi} that we are identifying $(\a_\t)^*$ with the  subspace of $\a^*$ consisting of $\pi_\t$-invariant linear forms).

The composition of the refraction cocycle of $\rho$ with $\Pi$ is a $V$-valued H\"older cocycle $\cc:\G\times\bord\G\to V,$ $$\cc(\g,x)=\Pi\big(\rfr(\g,x)\big).$$ Its periods are $\cc(\g,\g_+)=\Pi\big(\lambda_\t(\g)\big),$ and thus its limit cone is $\calL_\cc=\Pi(\calL_{\t,\rho}).$ By $(\sf W,\varphi)$-non-arithmeticity, $\cal L_{\cc}\subset V$ has non-empty interior.

The heart of the proof of Theorem \ref{hyper} consits on relating $(\sf W,\varphi)$-conical points with elements of $\widetilde{\cal K}\big(\df^{\varphi}\big),$ where $\df^{\varphi}$ is the directional flow on $\G\/\bord^2\G\times V$ associated to the cocycle $\cc$ as in \S\,\ref{directional}. The first step is thus to observe that we can apply Corollary \ref{d=2} to $\cc,$ a task we enter at this point.

Since $\varphi\in\cal Q_{\t,\rho},$ it has in particular finite entropy. Moreover, by definition of $V^*$ one has $\varphi\in V^*.$ Consequently, the cocycle $\cc$ verifies assumptions in Corollary \ref{strictly}. One can moreover transfer existence properties from $\rfr$ to $\cc,$ indeed one has the following.

\begin{prop} The cocycle $\ov\cc=\Pi\circ\ov\rfr$ is a dual cocycle for $\cc.$ For each $\psi\in\cal Q_\cc$ there exist Paterson-Sullivan measures for $\cc$ and $\ov \cc$ and the projection $\psi\big(\Pi\big([\cdot,\cdot]\big)\big)$ is a Gromov product for the pair $\psi\circ\cc,\psi\circ\ov\cc.$
\end{prop}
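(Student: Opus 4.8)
The plan is to transport every clause of the statement along the surjection $\Pi$ from the corresponding facts about the refraction cocycle $\rfr$ collected in the paragraph preceding Corollary~\ref{existe}. The only new object needed is the linear form $\hat\psi:=\psi\circ\Pi\in(\a_\t)^*$. Since $\ker\Pi=\sf W_\varphi$, this form vanishes on $\sf W_\varphi$, i.e. $\hat\psi\in V^*=\Ann(\sf W_\varphi)$, and unravelling the definitions gives the three identities $\psi\circ\cc=\rfr_{\hat\psi}$, $\psi\circ\ov\cc=\ov\rfr_{\hat\psi}$, and $\psi\big(\Pi([x,y])\big)=\hat\psi\big(\Gr_\t(\xi^{\ii\t}(x),\xi^\t(y))\big)=[x,y]_{\hat\psi}$, where $[x,y]:=\Gr_\t(\xi^{\ii\t}(x),\xi^\t(y))$ is the $\a_\t$-valued Gromov product.

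First I would verify that $\hat\psi\in\inte\cdc{\t,\rho}$. The periods of $\psi\circ\cc$ at a hyperbolic $\g$ equal $\psi\big(\Pi(\lambda_\t(\g))\big)=\hat\psi(\lambda_\t(\g))$, so $\psi\circ\cc$ and $\rfr_{\hat\psi}$ have the same periods and hence the same entropy; as $\psi\in\cal Q_\cc$ this entropy is $1<\infty$, so $\hat\psi$ has finite entropy. Positivity of $\hat\psi$ on $\calL_{\t,\rho}\setminus\{0\}$ follows from positivity of $\psi$ on $\Pi(\calL_{\t,\rho})\setminus\{0\}$ (recall $\psi\in\inte\cdc\cc$) together with the fact, recorded in \S\ref{sandwich}, that $\sf W_\varphi\cap\calL_{\t,\rho}=\{0\}$: a nonzero $v\in\calL_{\t,\rho}$ has $\Pi(v)\neq 0$, hence $\hat\psi(v)=\psi(\Pi(v))>0$. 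Thus $\hat\psi$ lies in $\inte\cdc{\t,\rho}$.

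With this in hand, the dual-cocycle assertion is the computation that $\ov\cc=\Pi\circ\ov\rfr$ is H\"older (composition of the H\"older cocycle $\ov\rfr$ with a linear map) and has period $\Pi\big(\ell_{\ov\rfr}(\g)\big)=\Pi(\lambda_\t(\g^{-1}))=\ell_\cc(\g^{-1})$ at each hyperbolic $\g$; in particular $\psi\circ\ov\cc$ is dual to $\psi\circ\cc$. Feeding the parameter $\hat\psi$ into the statements recalled before Corollary~\ref{existe} — that $\ov\rfr_{\hat\psi}$ is a dual cocycle, that $[\cdot,\cdot]_{\hat\psi}$ is a Gromov product for the ordered pair $(\ov\rfr_{\hat\psi},\rfr_{\hat\psi})$, and that Patterson--Sullivan measures $\ps^{\hat\psi},\ov\ps^{\hat\psi}$ of exponent $\hJ{\hat\psi}=1$ exist — and rewriting through the three identities above yields Patterson--Sullivan measures for $\psi\circ\cc$ and $\psi\circ\ov\cc$ and the Gromov-product property of $\psi(\Pi([\cdot,\cdot]))$ for the pair $(\psi\circ\ov\cc,\psi\circ\cc)$, which is the content of the proposition; this is precisely the verification of Assumption~\ref{Pat-SulEx} for $\cc$ required to invoke Corollary~\ref{d=2}. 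The only step needing a little care is the membership $\hat\psi\in\inte\cdc{\t,\rho}$ — specifically its positivity on the limit cone, which is where $\sf W_\varphi\cap\calL_{\t,\rho}=\{0\}$ enters — but there is no genuine obstacle.
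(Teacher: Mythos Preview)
Your proof is correct and follows essentially the same approach as the paper. The paper's argument is the one-line observation that $\cal Q_\cc=\cal Q_{\t,\rho}\cap V^*$ (under the identification $V^*=\Ann(\sf W_\varphi)\subset(\a_\t)^*$), so $\psi$ already lies in $\cal Q_{\t,\rho}$ and Corollary~\ref{existe} applies directly; your $\hat\psi=\psi\circ\Pi$ is exactly this identification, and your verification that $\hat\psi\in\inte\cdc{\t,\rho}$ with entropy $1$ is a careful unpacking of the inclusion $\cal Q_\cc\subset\cal Q_{\t,\rho}$ that the paper takes as understood.
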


\begin{proof} Since $\psi\in\cal Q_\cc=\cal Q_{\t,\rho}\cap V^*$ we can apply Corollary \ref{existe} to $\psi$ to obtain the desired Patterson-Sullivan measure, the remaining statements follow trivially as the equalities are linear.
\end{proof}

Since we are assuming $(\sf W,\varphi)$-non-arithmeticity, the cocycle $\cc$ is non-arithmetic and thus  Corollary \ref{d=2} gives the following dynamical information, observe that $\dim V=\codim \sf W+1.$

\begin{cor}\label{dicoCC} If $\codim \sf W\leq1$ then the directional flow $\df^\varphi$ is $\BM^\varphi$-ergodic, in particular $\cal K(\df^\varphi)$ has total mass. If $\codim \sf W\geq3$ then $\cal K(\df^\varphi)$ has measure zero.
\end{cor}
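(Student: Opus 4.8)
The plan is to obtain Corollary~\ref{dicoCC} as a direct application of Corollary~\ref{d=2} to the $V$-valued Hölder cocycle $\cc=\Pi\circ\rfr$; the only work is to verify that the hypotheses of Corollary~\ref{d=2} hold for $\cc$ and to record the value of $\dim V$.

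First I would note that $\cc$ satisfies the standing assumptions of \S\ref{cocycles}, since $\G$ carries the Anosov representation $\rho$ and so, by \cite{pressure}, its Mineyev geodesic flow is metric-Anosov. Next, $\cc$ is non-arithmetic and has limit cone of nonempty interior: its periods are $\cc(\g,\g_+)=\Pi(\lambda_\t(\g))$, and these span a dense subgroup of $V$ precisely because $\rho$ is assumed $(\sf W,\varphi)$-non-arithmetic, whence also $\calL_\cc=\Pi(\calL_{\t,\rho})$ has nonempty interior in $V$. Then I would exhibit $\varphi$ itself as an element of $\cal Q_\cc$ satisfying Assumptions~\ref{Pat-SulEx} relative to $\cc$: since $\varphi\in\cal Q_{\t,\rho}$ one has $\hJ\varphi=1$, and since $\varphi\in V^*=\Ann(\sf W_\varphi)$ the identity $\varphi\circ\Pi=\varphi$ gives $\hJ{\varphi\circ\cc}=\hJ{\varphi\circ\rfr}=\hJ\varphi=1$, so $\varphi\in\cal Q_\cc=\cal Q_{\t,\rho}\cap V^*$; in particular $\cal Q_\cc\neq\emptyset$ and Corollary~\ref{strictly} applies to $\cc$. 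The Proposition preceding Corollary~\ref{dicoCC} then supplies, for this $\varphi$, a dual cocycle $\ov\cc$, Patterson--Sullivan measures $\ps^\varphi$ and $\ov\ps^\varphi$ for $\cc$ and $\ov\cc$, and the function $\varphi\big(\Pi([\cdot,\cdot])\big)$ as a Gromov product for the pair $(\varphi\circ\ov\cc,\varphi\circ\cc)$ — that is, Assumptions~\ref{Pat-SulEx}.

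With the hypotheses in place, Corollary~\ref{d=2} applies to $\cc$ and its directional flow $\df^\varphi$ on $\G\backslash(\bord^2\G\times V)$, and it remains only to count dimensions. Since $\sf u_\varphi\subset\sf W$ and $\varphi(\sf u_\varphi)=1$, the restriction $\varphi|_{\sf W}$ is nonzero, so $\sf W_\varphi=\sf W\cap\ker\varphi$ has codimension $1$ in $\sf W$ and therefore $\dim V=\dim\a_\t-\dim\sf W_\varphi=\codim\sf W+1$. Hence $\codim\sf W\leq1$ forces $\dim V\leq2$, and Corollary~\ref{d=2} yields $\BM^\varphi$-ergodicity of $\df^\varphi$ together with full mass of $\cal K(\df^\varphi)$; while $\codim\sf W\geq3$ forces $\dim V\geq4$, and Corollary~\ref{d=2} yields zero mass of $\cal K(\df^\varphi)$. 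This is exactly the assertion of the corollary. I do not anticipate any genuine obstacle here: the entire substance — producing, out of the non-arithmeticity, a rank $\leq2$ (resp.\ rank $\geq4$) dynamical system whose recurrence set has full (resp.\ zero) mass — is contained in Corollary~\ref{d=2}, i.e.\ in \cite[Cor.~3.6.1]{dichotomy}; here one only matches the data and keeps the identification $V^*=\Ann(\sf W_\varphi)\subset(\a_\t)^*$ straight.
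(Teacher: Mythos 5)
Your proof is correct and follows essentially the same route as the paper: verify that $\cc=\Pi\circ\rfr$ is non-arithmetic with $\calL_\cc$ of nonempty interior, check $\varphi\in\cal Q_\cc=\cal Q_{\t,\rho}\cap V^*$ and that Assumptions~\ref{Pat-SulEx} hold for $\cc$ via the preceding proposition, count $\dim V=\codim\sf W+1$, and invoke Corollary~\ref{d=2}. The only (small and harmless) extra you add is the explicit justification that $\varphi|_{\sf W}\neq 0$, which the paper takes for granted when stating that $\sf W_\varphi$ has codimension one in $\sf W$.
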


Observe  that modulo the understood identifications $\cal Q_\cc=\cal Q_{\t,\rho}\cap V^*,$ hence 
$$\sf T_\varphi\cal Q_\cc=(  \sf T_\varphi\cal Q_{\t,\rho})\cap V^*$$ and thus the map  $\sf u^\cc:\cal Q_\cc\to\inte\P(\cal L_\cc)$  from Corollary \ref{strictly} verifies $\sf u^\cc_\varphi=\Pi(\sf u_\varphi).$ So measuring $\sf W$-conicality with respect to $\mu^\varphi$ translates to directional conicality along the direction $\sf u^\cc_\varphi,$ which we now recall. We fix an arbitrary norm $\|\,\|$ on $V$ and define, for $\ell\in\P(V)$ and $r>0,$ the $r$-\emph{tube about $\ell$} by 
$$\T_r(\ell):=\{v\in V|\, \exists w\in\ell, \|v-w\|<r\}.$$ 

\begin{defi}\label{defCon}  A point $y\in\bord\G$ is \emph{$\sf u_\varphi^\cc$-conical} if there exists $r>0$ and a conical sequence $\{\g_n\}_0^\infty\subset\G$ with $\g_n\to y$ such that for all $n$ one has $\Pi\big(\cartan_\theta(\rho(\g_n))\big)\in\T_r(\sf u_\varphi^\cc).$
\end{defi}

The next statement follows from the definitions.

\begin{lemma} A point $y\in\bord\G$ is $\sf W$-conical if and only if it is $\sf u^\cc_\varphi$-conical.
\end{lemma}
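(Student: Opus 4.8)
Both notions concern the \emph{same} kind of conical sequence $\{\g_n\}\subset\G$ converging to $y$ and differ only in the asymptotic condition imposed on $\cartan_\t(\g_n)$, so the statement reduces to a linear-algebraic fact about the projection $\Pi\colon\a_\t\to V=\a_\t/\sf W_\varphi$: for $v\in\a_\t$, staying within bounded distance of $\sf W$ should be equivalent to $\Pi(v)$ staying within bounded distance of the line $\sf u_\varphi^\cc$. The plan is to first note that $\Pi(\sf W)=\sf u_\varphi^\cc$. Indeed, $\ker\Pi=\sf W_\varphi$ has codimension $1$ in $\sf W$, so $\Pi(\sf W)$ is a line; and $\sf u_\varphi\subset\sf W$ with $\varphi(u_\varphi)=1\neq0$ forces $\sf u_\varphi\cap\sf W_\varphi=\{0\}$, so $\Pi(\sf u_\varphi)$ is a line contained in $\Pi(\sf W)$, hence equal to it. Since $\sf u_\varphi^\cc=\Pi(\sf u_\varphi)$, the claim follows.

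Because $\ker\Pi=\sf W_\varphi$ is contained in $\sf W$, the map $\Pi$ descends to a linear map $\ov\Pi\colon\a_\t/\sf W\to V/\Pi(\sf W)$; it is surjective since $\Pi$ is, and injective because $\Pi(v)\in\Pi(\sf W)$ implies $\Pi(v)=\Pi(w)$ for some $w\in\sf W$, whence $v-w\in\ker\Pi=\sf W_\varphi\subset\sf W$ and $v\in\sf W$. Thus $\ov\Pi$ is an isomorphism of finite-dimensional vector spaces. Endowing both quotients with their quotient norms — for which the norm of a class is exactly the distance from a representative to the subspace quotiented out — and using that an isomorphism of finite-dimensional normed spaces distorts norms by at most a fixed multiplicative constant, I would obtain $C\geq1$ with
\[
C^{-1}\operatorname{dist}(v,\sf W)\ \leq\ \operatorname{dist}\!\big(\Pi(v),\,\sf u_\varphi^\cc\big)\ \leq\ C\,\operatorname{dist}(v,\sf W)\qquad\text{for all }v\in\a_\t .
\]

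Applying this to $v=\cartan_\t(\g_n)$ along a conical sequence $\g_n\to y$, the defining requirement of $\sf W$-conicality (Definition~\ref{W-conical}), namely $\sup_n\operatorname{dist}\big(\cartan_\t(\g_n),\sf W\big)<\infty$, becomes equivalent to $\sup_n\operatorname{dist}\big(\Pi(\cartan_\t(\g_n)),\sf u_\varphi^\cc\big)<\infty$, which is the defining requirement of $\sf u_\varphi^\cc$-conicality (Definition~\ref{defCon}), and the \emph{same} sequence $\{\g_n\}$ witnesses both. This gives the equivalence. I do not expect a genuine obstacle here; the essential — and only — point that must be used is the inclusion $\ker\Pi=\sf W_\varphi\subset\sf W$, which is precisely what makes the two distances comparable in \emph{both} directions (without it, $\Pi(v)$ could stay close to $\Pi(\sf W)$ while $v$ escapes $\sf W$ along $\ker\Pi$), so the only care needed is the bookkeeping in the two preceding steps.
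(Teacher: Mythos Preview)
Your proof is correct and is precisely the unpacking of what the paper means by ``follows from the definitions'': since $\ker\Pi=\sf W_\varphi\subset\sf W$, the projection $\Pi$ sends $\sf W$ onto the line $\sf u_\varphi^\cc=\Pi(\sf u_\varphi)$ and induces an isomorphism $\a_\t/\sf W\to V/\sf u_\varphi^\cc$, making the two distance conditions equivalent for the same conical sequence. The paper gives no further argument, so your writeup simply makes explicit the linear algebra implicit in the definitions.
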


If we are allowed to worsen the constants, we can replace, in Definition \ref{defCon}, the conical sequence $(\g_n)$ with an infinite subset of a geodesic ray:
\begin{lemma}\label{conGeo} 
	A point $y\in\bord\G$ is $\sf u_\varphi^\cc$-conical if and only if there exists $r>0,$ a geodesic ray $(\alpha_i)_0^\infty$ converging to $y$ and an infinite set of indices $\I\subset\N$ such that for all $k\in\I$ one has$$ \Pi\big(\cartan_\t(\alpha_k)\big)\in\T_r(\sf u_\varphi^\cc).$$ \end{lemma}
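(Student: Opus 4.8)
The plan is to prove the two directions separately, with the interesting work all in showing that a $\sf u_\varphi^\cc$-conical point admits a \emph{geodesic} ray witnessing the tube condition along an infinite set of indices. First I would dispense with the trivial implication: if such a geodesic ray $(\alpha_i)_{i=0}^\infty$ and infinite index set $\I$ exist, then $(\alpha_k)_{k\in\I}$ is itself a conical sequence converging to $y$ (a subsequence of a geodesic ray to $y$ is conical), and the hypothesis $\Pi(\cartan_\t(\alpha_k))\in\T_r(\sf u_\varphi^\cc)$ is exactly the requirement in Definition \ref{defCon}, so $y$ is $\sf u_\varphi^\cc$-conical with the same $r$.

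For the converse, suppose $y$ is $\sf u_\varphi^\cc$-conical, witnessed by $r>0$ and a conical sequence $(\g_n)_{n=0}^\infty$ with $\g_n\to y$ and $\Pi(\cartan_\t(\g_n))\in\T_r(\sf u_\varphi^\cc)$. By Remark \ref{r.conicalgeodesic} there is a constant $K>0$, a geodesic ray $(\alpha_i)_{i=0}^\infty$ converging to $y$, and a subsequence $(\alpha_{i_k})$ with $d_\G(\alpha_{i_k},\g_k)<K$ for all $k$. Set $\I=\{i_k:k\in\N\}$, which is infinite since $\g_n\to y$ forces $|\g_n|\to\infty$ and hence $i_k\to\infty$. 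The point is that moving from $\g_k$ to $\alpha_{i_k}$ changes the Cartan projection only by a bounded amount: writing $\alpha_{i_k}=\g_k\cdot(\g_k^{-1}\alpha_{i_k})$ with $|\g_k^{-1}\alpha_{i_k}|<K$, the element $\rho(\g_k^{-1}\alpha_{i_k})$ lies in a fixed compact subset $L\subset\sf G$ (there are finitely many group elements of word-length $<K$, and $\rho$ is a fixed representation), so Proposition \ref{p.Ben} gives a compact $H\subset\a$ with $\cartan(\rho(\alpha_{i_k}))\in\cartan(\rho(\g_k))+H$. Applying the linear projection $\pi_\t$ we get $\cartan_\t(\alpha_{i_k})\in\cartan_\t(\g_k)+\pi_\t(H)$, and since $\pi_\t(H)$ is compact it is contained in a ball of some radius $\rho_0$ (measured with our fixed norm $\|\cdot\|$ on $V$, after applying $\Pi$, using that $\Pi$ is linear hence Lipschitz). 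Therefore $\Pi(\cartan_\t(\alpha_{i_k}))\in\T_{r+\rho_0}(\sf u_\varphi^\cc)$, which is the desired tube condition with the worsened constant $r':=r+\rho_0$.

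Assembling these: the geodesic ray $(\alpha_i)$, the infinite index set $\I$, and the radius $r'$ witness exactly the right-hand condition of the lemma. I do not expect a genuine obstacle here — the only mild subtlety is making sure the index set $\I$ is genuinely infinite (handled by $|\g_n|\to\infty$) and that the compact set in Proposition \ref{p.Ben} can be taken uniformly over $k$ (handled by the fact that $\{\delta\in\G:|\delta|<K\}$ is finite and $\rho$ is fixed). If one wanted to be even more economical one could invoke Lemma \ref{l.conicbasin} and Equation \eqref{comparision-shadow} to control $\cartan_\t$ along the conical sequence, but the direct argument via Proposition \ref{p.Ben} is cleaner and self-contained.
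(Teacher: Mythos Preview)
Your proof is correct and follows essentially the same approach as the paper: the paper only writes out the non-trivial direction, using Remark \ref{r.conicalgeodesic} to find a geodesic ray close to the conical sequence and then Proposition \ref{p.Ben} to conclude that the Cartan projections differ by a uniformly bounded amount. Your version is slightly more detailed (you make explicit the trivial direction and the passage through $\pi_\t$ and $\Pi$), but the argument is the same.
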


\begin{proof}
	Assume $y$ is $\sf u_\varphi^\cc$-conical, then since $\{\g_n\}_0^\infty$ is conical, for any geodesic ray $(\alpha_n)_0^\infty$ converging to $y$ there exists $K>0$ and a subsequence $\{\alpha_{n_k}\}$ such that for all $k$ one has $d_\G(\alpha_{n_k},\g_{k})<K$ (Remark \ref{r.conicalgeodesic}). Proposition \ref{p.Ben} implies then that for all $k$ one has $$\|\cartan(\alpha_{n_k})-\cartan(\g_k)\|$$ is bounded independently of $k.$ This implies the result.\end{proof}

We now relate $\sf u^\cc_\varphi$-conicality with the recurrence set $\cal K(\df^\varphi).$ By definition of $\cal K(\df^\varphi),$ a point $(x,y,v)\in\bord^2\G\times V$  projects to $\cal K(\df^\varphi)$ if and only if there exist divergent sequences $(\g_n)\subset\G$ and $t_n\to+\infty$ in $\R$ such that 
\begin{equation}\label{sequence}
	\df^{\varphi}_{t_n}\g_n^{-1}(x,y,v)=\big(\g_n^{-1}x,\g_n^{-1}y,v-\cc(\g_n^{-1},y)-t_nu_{\varphi}\big)
\end{equation} 
is contained in a subset of the form $\{(z,w)\in\bord^2\G:d(z,w)\geq\kappa\}\times B(v,K)$ for some distance $d$ on $\bord\G$. One has the following

\begin{prop}\label{conK}
	A point $y\in\bord\G$ is $\sf u_\varphi^\cc$-conical if and only if for every $x\in\bord\G-\{y\}$ and $v\in V$ the point $(x,y,v)$ projects to $\cal K(\df^\varphi).$ 
\end{prop}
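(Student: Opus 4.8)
The plan is to unwind both sides of the claimed equivalence into the same statement about a divergent sequence $(\g_n)$ converging conically to $y$, and to bridge the two using the Gromov product / Cartan basin estimates already assembled in this section. Recall that $(x,y,v)$ projecting to $\cal K(\df^\varphi)$ means there exist divergent $(\g_n)\subset\G$ and $t_n\to+\infty$ such that the sequence in Equation \eqref{sequence} stays in a set of the form $\{(z,w):d(z,w)\geq\kappa\}\times B(v,K)$. The condition that $(\g_n^{-1}x,\g_n^{-1}y)$ stays $\kappa$-separated for every $x\neq y$ is, by Definition \ref{conicalsequence}, exactly the statement that $(\g_n)$ converges conically to $y$. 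So the content of the proposition is: for a conical sequence $\g_n\to y$, the $V$-component $v-\cc(\g_n^{-1},y)-t_nu_\varphi$ stays bounded for some $t_n\to+\infty$ \emph{if and only if} $\Pi(\cartan_\t(\g_n))$ stays within a bounded tube $\T_r(\sf u_\varphi^\cc)$ (after passing to subsequences and using Lemma \ref{conGeo} to work along a geodesic ray).

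First I would reduce the $\cc$-cocycle term to a Cartan projection term. By the cocycle identity, $\cc(\g_n^{-1},y)=-\cc(\g_n,\g_n^{-1}y)$, and $\cc=\Pi\circ\rfr$ with $\rfr(\g_n,\g_n^{-1}y)=\bus_\t(\rho(\g_n),\xi^\t_\rho(\g_n^{-1}y))$. The key point is that $\xi^\t_\rho(\g_n^{-1}y)$ lies in a \emph{fixed} Cartan basin of $\g_n$: indeed $\g_n\to y$ conically means (Remark \ref{r.conicalgeodesic}) that $\g_n$ tracks a geodesic ray to $y$, so $\g_n^{-1}y\in\cone^c_\infty(\g_n)$ for a uniform $c$, whence by Corollary \ref{c.fundamentalConstant} (Equation \eqref{e.conesinCartan}) $\xi^\t_\rho(\g_n^{-1}y)\in B_{\t,\alpha}(\g_n)$ with $\alpha$ uniform. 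Then the comparison estimate in Equation \eqref{comparision-shadow} gives $\|\cartan_\t(\g_n)-\bus_\t(\g_n,\xi^\t_\rho(\g_n^{-1}y))\|\leq K_\alpha$ uniformly in $n$. Applying $\Pi$ (which is bounded) yields $\|\Pi(\cartan_\t(\g_n))+\cc(\g_n^{-1},y)\|\leq K'$ for all $n$. Thus the orbit point $v-\cc(\g_n^{-1},y)-t_nu_\varphi$ is within a bounded distance of $v+\Pi(\cartan_\t(\g_n))-t_nu_\varphi$.

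It then remains to see that this last quantity stays bounded for a suitable $t_n\to+\infty$ precisely when $\Pi(\cartan_\t(\g_n))\in\T_r(\sf u_\varphi^\cc)$ for all $n$ (for some $r$), where $\sf u_\varphi^\cc=\Pi(\sf u_\varphi)$ is the line spanned by $u_\varphi$. This is elementary linear algebra in the finite-dimensional space $V$: a point $p_n\in V$ satisfies $p_n\in\T_r(\R u_\varphi)$ for a fixed $r$ iff the component of $p_n$ transverse to $\R u_\varphi$ is bounded, i.e. iff $p_n=t_n u_\varphi+(\text{bounded})$ for some scalars $t_n$; and since $\|\cartan_\t(\g_n)\|\to\infty$ along a divergent conical sequence while $\sf u_\varphi$ lies in the interior of $\calL_{\t,\rho}$ and all $\cartan_\t(\g_n)$ lie in $\a_\t^+$, the scalars $t_n$ are forced to be eventually positive and to tend to $+\infty$. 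Conversely, if $t_n\to+\infty$ makes $v+\Pi(\cartan_\t(\g_n))-t_nu_\varphi$ bounded, then $\Pi(\cartan_\t(\g_n))-t_nu_\varphi$ is bounded, so $\Pi(\cartan_\t(\g_n))\in\T_r(\R u_\varphi)$ with $r$ the bound. Running this argument through Lemma \ref{conGeo}, which lets us replace the conical sequence by an infinite subset of a geodesic ray at the cost of worsening constants, matches exactly Definition \ref{defCon} of $\sf u_\varphi^\cc$-conicality, closing the equivalence.

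The main obstacle I anticipate is bookkeeping the directions of the inequalities and the subsequences: one must be careful that ``for every $x\neq y$ the triple $(x,y,v)$ lies in $\cal K(\df^\varphi)$'' genuinely forces the \emph{same} sequence $(\g_n)$ (independent of $x$) to be conical with limit $y$ — this uses Lemma \ref{UdU1} to pin the endpoint of $(\g_n)$ to $y$ once one knows $\peso_\sroot\Gr_\t(U_{\ii\t}(\g_n),\xi^\t(x))$ stays controlled — and, conversely, that a $\sf u_\varphi^\cc$-conical sequence does produce, for \emph{every} $x\neq y$ and every $v$, a valid recurrence sequence, which again invokes Lemma \ref{l.conicbasin} to keep $\g_n^{-1}x$ and $\g_n^{-1}y$ separated. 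Everything else is a routine combination of the uniform-continuity statements (Proposition \ref{p.Ben}), the basin/Gromov-product comparison \eqref{comparision-shadow}, and finite-dimensional geometry.
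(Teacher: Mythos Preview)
Your plan for the forward direction $(\Rightarrow)$ is correct and matches the paper's approach (which it attributes to \cite[Proposition 5.13.4]{dichotomy}): once $\g_n\to y$ conically with $\Pi(\cartan_\t(\g_n))\in\T_r(\sf u_\varphi^\cc)$, Lemma~\ref{l.conicbasin} and Equation~\eqref{comparision-shadow} give the comparison $\Pi(\cartan_\t(\g_n))\approx-\cc(\g_n^{-1},y)$, and the linear-algebra step produces the required $t_n\to+\infty$.

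There is, however, a genuine gap in your $(\Leftarrow)$ direction. You invoke Lemma~\ref{l.conicbasin} and Equation~\eqref{comparision-shadow} to compare $\Pi(\cartan_\t(\g_n))$ with $-\cc(\g_n^{-1},y)$, but both of these require knowing \emph{first} that $\g_n\to y$ conically. From the recurrence data for a single pair $(x,y)$ you only know that $(\g_n^{-1}x,\g_n^{-1}y)$ stays in a compact subset of $\bord^2\G$; this forces $\g_n$ to track the geodesic $[x,y]$, but it could a priori accumulate on $x$ rather than $y$. Your claim that separation ``for every $x\neq y$'' pins the limit to $y$ does not help, since different $x$'s produce different sequences $(\g_n)$. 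And your proposed use of Lemma~\ref{UdU1} is backwards: that lemma says $\peso_\sroot\Gr_\t(U_{\ii\t}(\g_n),\xi^\t(z))\to-\infty$ \emph{implies} $\g_n\to z$, so to conclude $\g_n\to y$ you need the Gromov product with $y$ to diverge, not the one with $x$ to stay bounded.

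The paper breaks this circularity with an ingredient you do not mention: Proposition~\ref{-infty}. From $t_n\to+\infty$ and bounded $V$-coordinate one gets $\cc(\g_n^{-1},y)\to-\infty$; but $\cc=\Pi\circ\rfr$ has a large kernel, so this does not directly control any single $\peso_\sroot\circ\rfr$. Proposition~\ref{-infty} (the orbit equivalence between the $\R$-actions associated to different finite-entropy real cocycles) transfers this divergence to $\rfr_{\peso_\sroot}(\g_n^{-1},y)\to-\infty$ for any $\sroot\in\t$. A norm estimate via Equation~\eqref{busnorma} then forces the relevant Gromov product with $\xi^\t(y)$ to $-\infty$, and \emph{now} Lemma~\ref{UdU1} legitimately gives $\g_n\to y$. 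Only after this step can Lemma~\ref{l.conicbasin} and \eqref{comparision-shadow} be applied, and your endgame then goes through exactly as you describe.
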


\begin{proof}
The implication ($\Rightarrow$) follows exactly as in the proof of S. \cite[Proposition 5.13.4]{dichotomy}. The other implication also follows similarly but with a minor difference we now explain. 

Assume that $(x,y,v)$ projects to $\cal K(\df^\varphi)$ and consider sequences $\{\g_n\}$ and $t_n$ as in Equation (\ref{sequence}). 
Since $\big(\g_n^{-1}x,\g_n^{-1}y\big)$ remains in a compact subset of $\bord^2\G$, the sequence $\{\g_n\}$ is conical, we will show now that $\g_n\to y.$ Indeed, since $t_n\to+\infty$  necessarily $\cc(\g_n^{-1},y)\to-\infty.$ 

Consider now any root $\sroot\in\t,$ with associated fundamental weight $\peso_{\sroot}\in\cdc{\t,\rho}$, and Tits representation $\Fund_{\sroot}:\sf G\to V$. Since $\rho$ is $\theta$-Anosov,  the H\"older cocycle $\rfr_{\peso_{\sroot}}$ has positive periods and finite entropy. Since $\cc(\g_n^{-1},y)\to-\infty$ Proposition \ref{-infty}  implies that 
 $$\rfr_{\peso_{\sroot}}(\g_n^{-1},y\big)\to-\infty.$$ 
 By definition of the cocycle $\rfr_{\peso_{\sroot}}$ and Equation (\ref{busnorma}) we have
 \begin{equation}\label{normto0} 
 	\frac{\|\Fund_{\sroot}(\g_n^{-1})v\|}{\|v\|}\to0 
 \end{equation}
 for a non-zero $v\in\zeta_\sroot(\xi(y)),$ (recall that the map $	\zeta_\sroot:\cal F_{\sroot}(\sf G )\to \P(V)$ was defined in Equation \eqref{maps}). Setting  $\dim V=d$, a standard linear algebra computation (for example in Bochi-Potrie-S. \cite[Lemma A.3]{BPS}) gives \begin{alignat*}{2}
 \frac{\|\Fund_{\sroot}(\g_n^{-1})v\|}{\|v\|} & \geq\big\|\Fund_{\sroot}(\g_n^{-1})\big\|\sin\angle\big(\zeta_\sroot\xi(y),U_{d-1}(\Fund_{\sroot}\g_n)\big)\\ & \geq e^{l_\sroot\peso_\sroot\Gr_\t\big(U_\t(\g_n),y\big)}\end{alignat*} and thus, by Equation (\ref{normto0}) and Lemma \ref{UdU1} one has $\g_n\to y,$ as desired. 

The point $\xi(y)$ lies then in the pushed Cartan basin $\g_nB_{\t,\alpha}(\g_n)$ for an $\alpha$ independent of $n$ (Lemma \ref{l.conicbasin}),
and thus Equation (\ref{comparision-shadow}) gives a constant $K$ such that for all $n$ one has 
$$K\geq\big\|\cartan_\t(\g_n)-\rfr\big(\g_n,\g_n^{-1}y\big)\big\|=\big\|\cartan_\t(\g_n)+\rfr(\g_n^{-1},y)\big\|$$ implying, by Equation \eqref{sequence}, that $y$ is $\sf u_\varphi^\cc$-conical, as desired.\end{proof}

The proof of Theorem \ref{hyper} follows now along the same lines as in S. \cite[Theorem 5.13.3]{dichotomy}. We include the arguments here for completeness.

For $y\in\bord_{\sf W,\rho}\G, x\in\bord\G-\{y\}$ we fix neighbourhoods $A^-$ and $A^+$ of $x$ and $y$ respectively and $T>0$ small enough so that the quotient projection $\sf p:\bord^2\G\times V\to\G\backslash\bord^2\G\times V$ is injective on $\tilde{\mathrm{B}}=A^-\times A^+\times B(0,T).$ We can thus use Equation \eqref{forBM}  to compute the measure of $\mathrm{B}=\sf p(\tilde{\mathrm{B}})$. 

For $\tilde{\cal K}({\df}^\varphi)=\sf p^{-1}\big(\cal K({\df}^\varphi)\big),$  Proposition \ref{conK} asserts  $$A^-\times(A^+\cap\bord_{\sf W,\rho}\G)\times B(0,T)=\tilde{\cal K}({\df}^\varphi)\cap\tilde{\mathrm{B}}.$$

\noindent 
If $\codim \sf W=1$ by Corollary \ref{dicoCC}  $\BM^\varphi(\tilde{\mathrm{B}})=\BM^\varphi\big(\tilde{\cal K}({\df}^\varphi)\cap\tilde{\mathrm{B}}\big),$ which implies that  $\mu^\varphi(A^+\setminus\bord_{\sf W,\rho}\G)=0$ and thus  $\mu^\varphi(\bord_{\sf W,\rho}\G)=1.$ On the other hand, if $\codim\sf W\geq3$, then we have $\BM^\varphi\big(\tilde{\cal K}({\df}^\varphi)\big)=0$ so $\mu^\varphi(A^+\cap\bord_{\sf W,\rho}\G)=0$ and the theorem is proved.

\section{Locally conformal representations: Hausdorff dimension of $\hol$-conical points}\label{s.Hffbcon}
In this section we let $\K=\R, \C$ or $\H$, the non-commutative field of Hamilton's quaternions. A Cartan subspace $\a$ of $\SL(d,\K)$ is the subspace of $\R^d$ consisting of vectors whose coordinates sum 0.  For $g\in\SL(d,\K)$ we denote by $$\cartan(g)=\big(\cartan_1(g),\cdots,\cartan_d(g)\big)\in\a^+$$ the coordinates of the Cartan projection. We recall Definition \ref{d.hyp}.

\begin{defi}\label{locConf} Let $p\in\lb2,d-1\rb.$ A $\{\slroot_1,\slroot_{d-p}\}$-Anosov representation $\rho:\G\to\SL(d,\K)$ is \emph{$(1,1,p)$-hyperconvex} if, for every pairwise distinct triple $(x,y,z)\in\bord\G^{(3)}$, one has $$\big(\xi^1(x)+\xi^1(y)\big)\cap\xi^{d-p}(z)=\{0\}.$$ If in addition one has $\cartan_2(\rho(\g))=\cartan_p(\rho(\g))$ $\forall\g$, we say that $\rho$ is \emph{locally conformal}. As before, we identify from now on $\g$ and $\rho(\g)$.
\end{defi}

The terminology is justified by Proposition \ref{conetypesBalls} below stating that for such representations pushed coarse cone types are coarsely balls, a small refinement of an analogous result from P.-S.-Wienhard \cite{PSW1}.

In this section we will study conicality from \S\,\ref{sandwich} on a specific situation that we now explain. Later, in \S\,\ref{teoLCdiff}, we will relate this section to the notion of $\hol$-concavity and in \S\,\ref{s.Hffproof} to differentiability properties of the map $\ov\xi\circ\xi^{-1}$.

Consider $\ov\K\in\{\R,\C,\H\}$ and two locally conformal representations  $\rho:\G\to\SL(d,\K)$ and $\ov\rho:\G\to\SL(\ov d,\ov\K)$, with projective equivariant maps \begin{alignat*}{2}\xi:&\bord\G\to\P(\K^d)\\ \ov\xi:&\bord\G\to\P(\ov\K{}^{\ov d} ).\end{alignat*} The product representation $(\rho,\ovrho):\G\to\SL(d,\K)\times\SL(\ov d,\ov\K)$ is $\t$-Anosov for $\t=\{\slroot_1,\slroot_p,\ov{\slroot}_1,\ov{\slroot}_p\}$ with $\{\slroot_1,\ov{\slroot_1}\}$-limit map the "graph map" $$\varXi=\big(\xi,\ov{\xi}\big):\bord\G\to\P(\K^{d})\times\P(\ov\K^{\ov d}).$$ We consider a Cartan subspace of the product group $\SL(d,\K)\times\SL(\ov d,\ov\K)$ and let $\frak a_\t$ be the associated Levi space. Its dual $(\a_\t)^*$ is spanned by the fundamental weights of roots in $\t.$ We let 
\begin{alignat*}{2}
	\slroot:=\frac{ p\peso_{\slroot_1}-\peso_{\slroot_p}}{p-1},\\ 
	\ov\slroot:=\frac {p\peso_{\ov\slroot_1}-\peso_{\ov\slroot_p}}{p-1}.
\end{alignat*} 
Both $\slroot,\ov\slroot\in(\a_\t)^*$ and under the assumption $\cartan_2(\g)=\cartan_p(\g)$ for all $\g$ of Definition \ref{locConf}, it holds on $\calL_{\rho} $ that $\slroot_1=\slroot$ and $\ov\slroot=\ov{\slroot}_1$ (if $p=2$ the equality holds on $\frak a$).

\begin{defi}\label{flat} Fix $\hol\in(0,1]$. A point $x\in\bord\G$ is \emph{$\hol$-conical} if it is conical as in Definition \ref{W-conical} for the product representation $(\rho,\ov\rho)$ with respect to the hyperplane $$\{v\in\a_\t:\hol\slroot(v)=\ov\slroot(v)\}=\ker(\hol\slroot-\ov\slroot).$$ Equivalently, there exist $R,$ a geodesic ray $(\alpha_n)_0^\infty\subset\G$ with $\alpha_n\to x,$ and a subsequence $\{n_k\}$ such that for all $k$ one has $$\big|\hol\slroot\big(\cartan(\alpha_{n_k})\big)-\ov{\slroot}\big(\cartan(\ov\alpha_{n_k})\big)\big|\leq R.$$\end{defi}

Consider also the critical exponent
$$\hC{\infty,\hol}=\lim_{t\to\infty}\frac 1t\log\#\big\{\g\in\G:\max\big\{\hol\slroot(\cartan(\g)),\ov\slroot(\cartan(\ov\g))\big\}\leq t\big\},$$ and recall from Equation \eqref{defiII} the dynamical intersection defined by 
\begin{equation}\label{IIreps}\II_{\slroot}(\ov\slroot)=\lim_{t\to\infty}\frac1{\#\sf R_t(\slroot)}\sum_{\g\in \sf R_t(\slroot)}\frac{\ov\slroot(\lambda(\ov\g))}{\slroot(\lambda(\g))},
\end{equation} where $\mathsf R_t(\slroot)=\big\{[\g]\in[\G]:\slroot\big(\lambda(\g)\big)\leq t\big\}.$

In this section we compute the Hausdorff dimension of the image under the graph map $\varXi$ of the set of $\hol$-conical points with respect to a Riemannian metric:

\begin{thm}\label{Hffconical} Let $\rho,\ov\rho$ be  locally conformal representations over $\K$ and $\ov\K$ respectively. Assume the group generated by $\{(\slroot(\lambda(\g)),\ov\slroot(\lambda(\ov\g))):\g\in\G\}$ is dense in $\R^2$. Then, for every $\hol\in(0,1]$ with 
		$$\II_{\slroot}(\ov\slroot)>\hol>1/\II_{\ov\slroot}(\slroot),$$ 
		one has 
	\begin{alignat*}{2}\hol\hC{\infty,\hol}\leq\Hff\varXi\big(\{\hol\mathrm{-conical\ points}\}\big)&\leq\min\{\hC{\infty,\hol},\hol\hC{\infty,\hol}+(1-\hol) \}\\&<\min\{\hJ{\ov\slroot},\hJ{\slroot}/\hol\}\\&\leq\Hff(\varXi(\bord\G))\\ & =\max\{\hJ{\slroot},\hJ{\ov\slroot}\}.
	\end{alignat*}
\end{thm}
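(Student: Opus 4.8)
The plan is to exhibit the set of $\hol$-conical points as a $\sf W$-conical set for the product representation $(\rho,\ov\rho)$ and then read off its Hausdorff dimension by finding the right Patterson–Sullivan measure, sandwiching it between two exponents coming from the linear form whose associated growth direction is $(\hol,1)$. First I would fix the hyperplane $\sf W = \ker(\hol\slroot - \ov\slroot)\subset\a_\t$ and observe, using the density hypothesis and Theorem~\ref{hyper}, that for the linear form $\vi = \varphi^\infty_\hol\in\cal Q_{\t,(\rho,\ov\rho)}$ whose duality direction $\sf u_\vi$ lies in $\P(\sf W)$, one has $\mu^{\vi}(\bord_{\sf W}\G)=1$; this identifies the $\hol$-conical points (in the sense of Definition~\ref{flat}) as a full-measure set for $\mu^{\vi}$. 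The interval condition $\II_{\slroot}(\ov\slroot)>\hol>1/\II_{\ov\slroot}(\slroot)$ is exactly what guarantees, via Proposition~\ref{ineq}, that such a $\vi$ exists with finite entropy and with $\sf u_\vi$ meeting the relative interior of $\calL_{\t,(\rho,\ov\rho)}$ nontrivially — i.e. that $\sf W$ is a \emph{genuine} hyperplane in the limit-cone sense, not a wall.

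Next I would pass to the two-sided estimate for $\mu^{\vi}$ on the images of coarse cone types. Using Proposition~\ref{conetypesBalls} (local conformality) together with Corollary~\ref{c.fundamentalConstant} and Corollary~\ref{existe}, for an $\hol$-conical point $x$ with geodesic ray $\alpha_n\to x$ and recurrence indices $k\in\I$, the set $\varXi(\alpha_k\cone^c_\infty(\alpha_k))$ is coarsely a ball; crucially, along $\I$ the two scales $e^{-\slroot(\cartan\alpha_k)}$ and $e^{-\ov\slroot(\cartan\ov\alpha_k)/\hol}$ are comparable. One then checks that the smaller of these two side-lengths realizes a ball in the product (i.e. Riemannian) metric — here one uses that $\varXi$ is a graph of a Hölder map, so the image meets the large rectangle in the same set as the large square. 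Plugging into Corollary~\ref{existe} gives $\mu^{\vi}\big(B(\varXi(x),r)\big)\le C r^{\hJ{\vi}}$ for a cofinal sequence of radii $r$, where $\hJ{\vi}=1$ and $\vi(\cartan)$ unwinds to $\hol\hC{\infty,\hol}$ applied to the square's radius; the standard mass-distribution (Frostman) principle, Corollary~\ref{edgar}, then yields the lower bound $\Hff\varXi(\{\hol\textrm{-conical}\})\ge \hol\hC{\infty,\hol}$. For the upper bound one covers the $\hol$-conical set by the images $\varXi(\alpha_k\cone^c_\infty(\alpha_k))$, $k\in\I$, which are coarsely squares of radius $e^{-\max\{\hol\slroot(\cartan\alpha_k),\ov\slroot(\cartan\ov\alpha_k)\}}$; counting these via the definition of $\hC{\infty,\hol}$ gives $\Hff\le\hC{\infty,\hol}$, and covering instead by the Lipschitz graph (using that $\varXi(\bord\G)$ sits in a $\class^{1+\nu}$ torus) and projecting to one factor gives the alternative bound $\Hff\le\hol\hC{\infty,\hol}+(1-\hol)$, hence the $\min$.

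Finally I would establish the remaining strict and non-strict inequalities, which are essentially separate from the conical analysis. The strict inequality $\min\{\hC{\infty,\hol},\hol\hC{\infty,\hol}+1-\hol\}<\min\{\hJ{\ov\slroot},\hJ{\slroot}/\hol\}$ follows from strict convexity of the critical hypersurface $\cal Q_{\t,(\rho,\ov\rho)}$ (Corollary~\ref{strictly}) combined with Corollary~\ref{corraiz} and Proposition~\ref{nonA}: the hypotheses $\II_{\slroot}(\ov\slroot)>\hol>1/\II_{\ov\slroot}(\slroot)$ force $\rho,\ov\rho$ to not be gap-isospectral, which by strict convexity places the point of $\cal Q$ in direction $(\hol,1)$ strictly inside the coordinate "cross" determined by the axis directions, giving strictness. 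The last line $\Hff(\varXi(\bord\G))=\max\{\hJ{\slroot},\hJ{\ov\slroot}\}$ is the ambient statement: $\varXi(\bord\G)$ is (coarsely) covered by cone-type images of side $\min\{\ldots\}$ giving $\Hff\le$ the larger exponent (by Quint's computation $\|\cdot\|^1$, as in \S\,\ref{corAProof}), while the lower bound comes from the Patterson–Sullivan measure of the larger of $\rho,\ov\rho$ giving full mass on $\bord\G$ and a Frostman estimate on balls of the $\min$-radius.

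\textbf{Main obstacle.} I expect the delicate point to be the comparability of the two metric scales and the passage "rectangle $\leftrightarrow$ square" in the product metric for a \emph{general} locally conformal pair: one must ensure that Proposition~\ref{conetypesBalls}'s coarse-ball statement is quantitatively uniform in the point $x$ (constants depending only on the representations), and that when $\hol<1$ the two factors' cone-type images, though metrically distorted relative to each other, still combine into a set that is trapped between two concentric balls of comparable radii in the product metric — this is where the graph structure and the Hölder bound on $\varXi$ must be used carefully. The bookkeeping of which of $\hol\slroot(\cartan\alpha_k)$ versus $\ov\slroot(\cartan\ov\alpha_k)$ is larger, and showing its sign is eventually constant along a geodesic ray (so that the longer side is well-defined), is the analogue of the $\flat$-conical dichotomy from \S\,\ref{corAProof} and requires the uniform bound on increments $\slroot(\cartan\alpha_i)-\slroot(\cartan\alpha_{i+1})$ from Proposition~\ref{p.Ben}.
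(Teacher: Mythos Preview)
Your overall architecture matches the paper's: the lower bound via Theorem~\ref{hyper} (full $\mu^{\vi}$-mass on $\hol$-conical points), Corollary~\ref{existe} (shadow estimate), Proposition~\ref{conetypesBalls} (cone-types are coarse balls) and Corollary~\ref{edgar} (Frostman) is exactly \S\ref{s.lower}; the first upper bound $\hC{\infty,\hol}$ via covers by balls of radius $e^{-\max\{\hol\slroot,\ov\slroot\}}$ is Proposition~\ref{upper}; the last equality is Proposition~\ref{p.uppergeneral}. So the plan is sound.

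Two points need correction. First, your argument for the second upper bound $\hol\hC{\infty,\hol}+(1-\hol)$ invokes ``$\varXi(\bord\G)$ sits in a $\class^{1+\nu}$ torus'' and a Lipschitz-graph projection. That is false at this level of generality: $\bord\G$ need not be a circle, $\K$ and $\ov\K$ may be $\C$ or $\H$, and the limit sets $\xi(\bord\G),\ov\xi(\bord\G)$ are typically fractal, not $\class^1$ curves. The paper's argument (second half of Proposition~\ref{upper}) is purely combinatorial: for $\alpha$ satisfying the $R$-conicality inequality, the rectangle $\varXi(\alpha\cone^c_\infty(\alpha))$ has sides $\asymp e^{-\slroot(\cartan\alpha)}$ and $\asymp e^{-\ov\slroot(\cartan\ov\alpha)}\asymp e^{-\hol\slroot(\cartan\alpha)}$, so one subdivides it into $e^{(1-\hol)\slroot(\cartan\alpha)}$ squares of side $e^{-\slroot(\cartan\alpha)}$ and sums. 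No regularity of the ambient is used.

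Second, for the strict inequality and the last two lines you bring in Corollary~\ref{corraiz} and Proposition~\ref{nonA}; these are not needed here. The density hypothesis is already assumed in the statement, and the strict inequality $\hC{\infty,\hol}<\min\{\hJ{\ov\slroot},\hJ{\slroot}/\hol\}$ is part of Lemma~\ref{phi>min}, coming from Lemma~\ref{paraphi}: the condition $\II_{\slroot}(\ov\slroot)>\hol>1/\II_{\ov\slroot}(\slroot)$ forces $\vi/\|\vi\|^{1,\hol}$ to be a \emph{strict} convex combination $s\hol\slroot+(1-s)\ov\slroot$ with $s\in(0,1)$, whence $\|\vi\|^{1,\hol}$ is strictly below both $\hJ{\ov\slroot}$ and $\hJ{\slroot}/\hol$ by strict convexity of $\cal Q_\cc$. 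Finally, your ``main obstacle'' about the sign of $\hol\slroot(\cartan\alpha_i)-\ov\slroot(\cartan\ov\alpha_i)$ being eventually constant is misplaced: that dichotomy is used only in \S\ref{s.Hffproof} to identify $1$-conical points with non-differentiability points, not in the present theorem, where one works directly with the recurrence set $\I$ and never needs the complement to have a definite sign.
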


The proof of the above result is completed in \S\,\ref{proofHffconical}.

Recall that if $\hC{\slroot_1}=\hC{\ov\slroot_1}$ and the representations are not gap-isospectral, then Proposition \ref{ineq} gives $\II_{\ov\slroot_1}(\slroot_1)>1$. Theorem \ref{Hffconical} studies then $\hol$-conical points for any $\hol$ with $\II_{\ov\slroot_1}(\slroot_1)>1/\hol\geq1$. As the following result shows, the equality between entropies is rather natural for $\K=\R$. 

\begin{thm}[{P.-S.-Wienhard \cite{PSW1}}]\label{hyperh=1} Let $\rho:\G\to\SL(d,\K)$ be locally conformal, then $$\hJ{\slroot}=\Hff\big(\xi(\bord\G)\big).$$ Moreover, when $\K=\R$ and $\bord\G$ is homeomorphic to a $p-1$-dimensional sphere, $\scr h_{\slroot}=p-1.$
\end{thm}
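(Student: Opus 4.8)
We outline the strategy of \cite{PSW1}. The plan is to combine a covering of $\xi(\bord\G)$ by pushed coarse cone types with the shadow estimates for a suitable Patterson--Sullivan measure, and, for the second assertion, with the $\class^1$-regularity of the limit set. Note first that local conformality forces $\cartan_2(\g)=\cdots=\cartan_p(\g)$ for every $\g$, so $\slroot(\cartan\g)=\cartan_1(\g)-\cartan_2(\g)=\slroot_1(\cartan\g)$ identically; we write $\slroot_1$ for $\slroot$ below, and recall from Remark \ref{r.entropy=crit} that $\hJ{\slroot}$ coincides both with the critical exponent $\hC{\slroot_1}$ and with the entropy of the real cocycle $\slroot\circ\rfr$.

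The first step is a two-sided comparison between pushed coarse cone types and metric balls. For any projective Anosov representation, Proposition \ref{p.coneinBall} gives the inclusion $\xi\big(\g\cone^{c}_\infty(\g)\big)\subset B\big(U_1(\g),C_0e^{-\slroot_1(\cartan\g)}\big)$ for a uniform $C_0$; the content of Proposition \ref{conetypesBalls} is the matching reverse inclusion $B\big(U_1(\g),c_0e^{-\slroot_1(\cartan\g)}\big)\cap\xi(\bord\G)\subset\xi\big(\g\cone^{c}_\infty(\g)\big)$ for a uniform $c_0>0$ and $|\g|$ large, and it is precisely here that $(1,1,p)$-hyperconvexity is used: the non-degeneracy $\big(\xi^1(x)+\xi^1(y)\big)\cap\xi^{d-p}(z)=\{0\}$ is what prevents a pushed cone type from collapsing transversally to $\xi^p$. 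Since $\slroot_1(\cartan\alpha_n)$ grows with bounded increments along any geodesic ray $(\alpha_n)$ to a point $x$, with $x\in\alpha_n\cone^{c}_\infty(\alpha_n)$ for every $n$, these two inclusions together say that, around every point of $\xi(\bord\G)$ and at every scale, the intersection of a metric ball with $\xi(\bord\G)$ is — up to fixed multiplicative constants — a pushed coarse cone type at the corresponding level of $\slroot_1\circ\cartan$.

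Granting this, the upper bound $\Hff(\xi(\bord\G))\le\hJ{\slroot}$ follows by covering $\xi(\bord\G)$, for each large $t$, by the pushed cone types $\xi(\g\cone^{c}_\infty(\g))$ of a suitable cross-section of $\{\g:\slroot_1(\cartan\g)\le t\}$: these have diameter $\lesssim e^{-t}$ and there are at most $e^{(\hC{\slroot_1}+\varepsilon)t}=e^{(\hJ{\slroot}+\varepsilon)t}$ of them by the definition of the critical exponent, so $\mathcal H^{s}(\xi(\bord\G))=0$ for $s>\hJ{\slroot}$ — note this direction does not use hyperconvexity. For the reverse bound, $\{\slroot_1\}$-Anosov-ness puts $\slroot$ in $\inte\cdc{\t,\rho}$, hence $\varphi:=\hJ{\slroot}\cdot\slroot\in\cal Q_{\t,\rho}$ by the scaling property of the entropy; let $\mu^{\varphi}$ be its Patterson--Sullivan measure, of exponent $\hJ\varphi=1$. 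Then Equation \eqref{e.conesinCartan} and Corollary \ref{existe} give $\mu^{\varphi}\big(\g\cone^{c}_\infty(\g)\big)\le Ce^{-\varphi(\cartan\g)}=Ce^{-\hJ{\slroot}\slroot_1(\cartan\g)}$, and feeding in the comparison above (together with the bounded overlap of such covers, a consequence of Cannon's finiteness of cone types) yields $\xi_*\mu^{\varphi}\big(B(z,r)\big)\le C'r^{\hJ{\slroot}}$ for all $z\in\xi(\bord\G)$ and small $r$; the mass distribution principle (Corollary \ref{edgar}) then gives $\Hff(\xi(\bord\G))\ge\hJ{\slroot}$, completing the first assertion.

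For the second assertion, with $\K=\R$ and $\bord\G$ homeomorphic to a $(p-1)$-sphere, one invokes the $\class^1$-regularity of the limit set: by \cite{PSW1} (for $p=2$ this is Theorem \ref{t.C1}) the set $\xi^1(\bord\G)\subset\P(\R^{d})$ is a $\class^1$ submanifold of dimension $p-1$, hence locally bi-Lipschitz to $\R^{p-1}$, so $\Hff(\xi(\bord\G))=p-1$ and the first assertion gives $\hJ{\slroot}=p-1$. The step I expect to be the main obstacle is the reverse inclusion in the cone-type/ball comparison of the second paragraph — equivalently, a uniform lower bound on the ``transverse size'' of pushed coarse cone types: without $(1,1,p)$-hyperconvexity the limit set may be genuinely fractal in directions transverse to $\xi^p$, with Hausdorff dimension strictly larger than $\hJ{\slroot}$, so the hyperconvexity hypothesis must be exploited in a quantitative way; and for the second assertion the analogous delicate ingredient is the $\class^1$-regularity statement itself.
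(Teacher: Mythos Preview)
The paper does not give its own proof of this statement: Theorem~\ref{hyperh=1} is quoted directly from \cite{PSW1} and is used as a black box, so there is nothing in the present paper to compare your sketch against line by line. That said, your outline is a faithful reconstruction of the strategy in \cite{PSW1}: the two-sided comparison ``pushed coarse cone types $\leftrightarrow$ metric balls of radius $e^{-\slroot_1(\cartan\g)}$'' is exactly Proposition~\ref{conetypesBalls} here (imported from \cite{PSW1}), the upper bound is the standard covering argument you describe, and the lower bound is obtained via the Patterson--Sullivan shadow estimate of Corollary~\ref{existe} together with the mass distribution principle (Corollary~\ref{edgar}). Your identification of the reverse inclusion in the ball/cone-type comparison as the place where $(1,1,p)$-hyperconvexity enters, and of the $\class^1$-regularity as the extra input for the second assertion, is also correct.
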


When $\G$ is a surface group we can also weaken the assumption  on the density of periods:

\begin{cor}\label{flatsurface}Assume $\bord\G$ is homeomorphic to a circle and let $\rho$ and $\ov\rho$ be non-gap-isospectral real $(1,1,2)$-hyperconvex representations of $\G$. Then $$\Hff\varXi\big(\{1\mathrm{-conical\ points}\}\big)=\hC{\infty}<1.$$
\end{cor}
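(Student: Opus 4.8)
The plan is to deduce the corollary from Theorem~\ref{Hffconical} applied with $\hol=1$; the only hypothesis of that theorem not immediately at hand is the density of the period group in $\R^2$, which I will derive from Zariski density of the diagonal representation together with the non-gap-isospectrality assumption.

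First I would gather the numerical inputs. Since $\rho$ and $\ov\rho$ are $(1,1,2)$-hyperconvex over $\R$, they are locally conformal with $p=2$, so $\slroot=\slroot_1$ and $\ov\slroot=\ov\slroot_1$ (on all of $\a$ when $p=2$), and Theorem~\ref{hyperh=1}, applied with $\bord\G$ homeomorphic to $S^1=S^{p-1}$, gives $\hJ{\slroot}=\hJ{\ov\slroot}=1$; by Remark~\ref{r.entropy=crit} the critical exponents $\hC{\slroot_1}=\hC{\ov\slroot_1}$ also equal $1$. As the representations are not gap-isospectral and have equal critical exponents, the equality case of Proposition~\ref{ineq} (the remark preceding the statement, together with its symmetric version) yields $\II_{\slroot_1}(\ov\slroot_1)>1$ and $\II_{\ov\slroot_1}(\slroot_1)>1$, so $\hol=1$ lies strictly in the interval $\big(1/\II_{\ov\slroot}(\slroot),\II_{\slroot}(\ov\slroot)\big)$ required in Theorem~\ref{Hffconical}.

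Next comes the main point: the subgroup of $\R^2$ generated by $\{(\slroot_1(\lambda\g),\ov\slroot_1(\lambda\ov\g)):\g\in\G\}$ is dense. Let $\sf H\subseteq\SL(d,\R)\times\SL(\ov d,\R)$ be the Zariski closure of the diagonal image $(\rho,\ov\rho)(\G)$, a reductive group since the representations are Anosov; applying Benoist's Theorem~\ref{densidad} (in its reductive form) to $\sf H$, the group $\Lambda\subset\a_{\sf H}$ generated by the Jordan projections $\lambda_{\sf H}(\rho\g,\ov\rho\g)$ is dense in a Cartan subspace $\a_{\sf H}$. The surjections $\sf H\to\sf G_1$, $\sf H\to\sf G_2$ onto the Zariski closures of $\rho(\G)$, $\ov\rho(\G)$ intertwine Jordan projections, so post-composing with $\slroot_1$ and $\ov\slroot_1$ defines a linear map $T\colon\a_{\sf H}\to\R^2$ with $T\big(\lambda_{\sf H}(\rho\g,\ov\rho\g)\big)=(\slroot_1(\lambda\g),\ov\slroot_1(\lambda\ov\g))$. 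Hence the period group is $T(\Lambda)$, whose closure is the linear subspace $T(\a_{\sf H})\subseteq\R^2$; this is $\{0\}$, a line through the origin, or all of $\R^2$. It is not $\{0\}$, since $\slroot_1(\lambda\g)>0$ for every hyperbolic $\g$ by proximality (Proposition~\ref{conical}). If it were a line, there would be $(a,b)\neq0$ with $a\,\slroot_1(\lambda\g)+b\,\ov\slroot_1(\lambda\ov\g)=0$ for all $\g$; positivity of both summands on hyperbolic elements forces $a,b$ to have opposite signs, hence $\ov\slroot_1(\lambda\ov\g)=c\,\slroot_1(\lambda\g)$ for a fixed $c>0$ and all $\g$. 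Counting conjugacy classes with $\slroot_1(\lambda\,\cdot\,)\le t$ then gives $\hJ{\ov\slroot_1}=\hJ{\slroot_1}/c$, and since both entropies equal $1$ we obtain $c=1$, i.e.\ $\rho$ and $\ov\rho$ are gap-isospectral, a contradiction. Therefore $T(\a_{\sf H})=\R^2$ and the density hypothesis of Theorem~\ref{Hffconical} holds.

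Finally, Theorem~\ref{Hffconical} applies with $\hol=1$. There $\hol\hC{\infty,\hol}=\hC{\infty,1}$ and $\min\{\hC{\infty,\hol},\hol\hC{\infty,\hol}+(1-\hol)\}=\hC{\infty,1}$, so the displayed chain of inequalities collapses to $\Hff\varXi\big(\{1\text{-conical points}\}\big)=\hC{\infty,1}=\hC{\infty}$, while the strict inequality in the theorem reads $\hC{\infty,1}<\min\{\hJ{\ov\slroot},\hJ{\slroot}\}=1$; this is exactly the assertion of the corollary. I expect the only genuine obstacle to be the density step, specifically the bookkeeping needed to match Jordan projections under $\sf H\to\sf G_i$ and to ensure Benoist's theorem is invoked for a group of the correct type, after which the entropy comparison closes it cleanly.
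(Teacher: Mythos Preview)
Your overall strategy matches the paper's: verify the hypotheses of Theorem~\ref{Hffconical} with $\hol=1$, using Theorem~\ref{hyperh=1} together with Proposition~\ref{ineq} to place $1$ strictly in the required interval. The difference lies entirely in the density step, and there your argument has a genuine gap that is more than bookkeeping.

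The problem is the claimed linearity of $T:\a_{\sf H}\to\R^2$. The surjections $\sf H\to\sf G_i$ do induce linear maps between Cartan subspaces, but $\slroot_1$ is a simple root of $\SL(d,\R)$, not of $\sf G_1$. Pulled back along the inclusion $\sf G_1\hookrightarrow\SL(d,\R)$ one obtains on $\a_{\sf G_1}^+$ the function $a\mapsto\min_{\sroot\in\t_\phi}\sroot(a)$ (see Equation~\eqref{piecewise}), which is only \emph{piecewise} linear unless $\t_\phi$ is a single root. Hence $T$ is merely piecewise linear, the closure of $T(\Lambda)$ need not be a linear subspace, and your trichotomy argument breaks down. (That $\t_\phi$ is a singleton for irreducible real $(1,1,2)$-hyperconvex representations is exactly the content of Theorem~\ref{t.Zcl}, a separate nontrivial result; and the corollary carries no irreducibility assumption anyway.) A related issue: the claim that $\sf H$ is reductive ``since the representations are Anosov'' is unjustified and, in the absence of irreducibility, not obvious.

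The paper handles both points through Proposition~\ref{nonA}. Its proof first reduces to semisimple Zariski closures without compact factors via Lemma~\ref{semisimple} (this needs irreducibility, supplied for hyperconvex surface groups by Corollary~\ref{l.weaklyirred}), and then confronts the piecewise-linearity head-on: one restricts to a closed sub-cone $\scr C'\subset\calL$ on which both minima are realized by fixed roots, and invokes Benoist's construction of a Zariski-dense Schottky sub-semigroup with limit cone in $\scr C'$ (Lemma~\ref{semi}) so that Theorem~\ref{densidad} applies on a region where the map \emph{is} linear. Your instinct that the density step is the crux is correct, but the piecewise-linear obstruction is the substantive content you are missing.
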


\begin{proof}Proposition \ref{nonA} below states that under our assumptions the group generated by $\{(\slroot(\lambda(\g)),\ov\slroot(\lambda(\ov\g))):\g\in\G\}$ is dense in $\R^2$. Theorem \ref{hyperh=1} guarantees that  $\II_\slroot(\ov\slroot)\geq1$. The equality then follows from Theorem \ref{Hffconical}.  \end{proof}

Kim-Minsky-Oh \cite{KMO-HffDir} have established realted Hausdorff dimension computations when $\rho$ and $\ov\rho$ are convex-co-compact representations in $\SO(n,1)$ without any assumption on $\II$.

\subsection{Cone types are coarsely balls}
In \cite{PSW1} P.-S.-Wienhard gave a concrete description of the images under the boundary map of the cone types at infinity. We discuss here a slight extension of that result adapted to our needs. We denote by $d_\P$ the distance on $\P(\K^d)$ induced by the choice of an inner (Hermitian) product on $\K^d$ and by $B(\ell,r)\subset\P(\K^d)$ the associated ball of radius $r$ about $\ell.$

\begin{prop}\label{conetypesBalls}
	Let $\rho:\G\to\SL(d,\K)$ be locally conformal. Then  there exist positive constants $ c, k_1,k_2$ and $L\in\N$ such that for every $x\in\bord\G$, every geodesic ray $(\alpha_n)_0^\infty$ with endpoint $x$ and every $n>L$ one has 
	$$B\Big(\xi(x),k_1e^{-\slroot_1(\cartan(\alpha_n))}\Big)\cap\xi(\bord\G)\subset\xi\Big(\alpha_n\cone_\infty^{c}(\alpha_n)\Big) \subset B\Big(\xi(x),k_2e^{-\slroot_1(\cartan(\alpha_n))}\Big).$$
\end{prop}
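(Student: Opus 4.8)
The plan is to combine the Anosov contraction estimates already recorded in the excerpt with the local-conformality hypothesis, which forces the relevant Cartan-projection coordinates to move together. First I would fix $c>0$ and let $L$ and $\delta_{\rho,c}$ be the associated data from Proposition \ref{fundamentalConstant} and Corollary \ref{c.fundamentalConstant}, setting $\alpha=-\log\delta_{\rho,c}$; then Corollary \ref{c.fundamentalConstant} gives $\xi^{\{\slroot_1\}}(\alpha_n\cone_\infty^c(\alpha_n))\subset \alpha_n B_{\{\slroot_1\},\alpha}(\alpha_n)$ for $n>L$. For the \textbf{upper inclusion} I would invoke Proposition \ref{p.coneinBall} verbatim (it is literally the statement $\xi^1(\alpha_n\cone_\infty^c(\alpha_n))\subset B(U_1(\alpha_n),Ke^{-\slroot_1(\cartan\alpha_n)})$), together with the standard fact that $U_1(\alpha_n)\to\xi(x)$ (Proposition \ref{conical}) and that $d_\P(U_1(\alpha_n),\xi(x))$ is itself $O(e^{-\slroot_1(\cartan\alpha_n)})$; the latter follows because $\xi(x)$ lies in the Cartan basin $\alpha_nB_{\{\slroot_1\},\alpha}(\alpha_n)$ and a contraction estimate (Proposition \ref{Lipschitz-compatibleB} applied to the tail of the geodesic, or directly \cite[Lemma 2.8]{PSW1}) bounds the displacement. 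Absorbing constants yields the right-hand inclusion with $k_2$ depending only on $\rho$ and $c$.

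The \textbf{lower inclusion} is the real content and is where I expect the main obstacle. I would argue by contraposition in the source: a boundary point $y\in\bord\G$ with $\xi(y)$ close to $\xi(x)$ should be forced into $\cone_\infty^c(\alpha_n)$ after translation by $\alpha_n^{-1}$. Concretely, take $y$ with $d_\P(\xi(x),\xi(y))\le k_1 e^{-\slroot_1(\cartan\alpha_n)}$; I want to show the geodesic from $\alpha_n^{-1}$ to $y$ (through a uniformly bounded neighbourhood of $e$) stays close enough to a quasigeodesic in $\cone^c(\alpha_n)$. The mechanism is that $\alpha_n^{-1}$ \emph{expands} on the complement of the repelling hyperplane $U_{d-1}(\alpha_n)$: if $\xi(y)$ were \emph{not} in the pushed basin, i.e. if the angle $\angle(\xi(y),U_{d-1}(\alpha_n))$ were too small, then $d_\P(\alpha_n^{-1}\xi(y),\text{repeller})$ would be large, but a too-close pair $\xi(x),\xi(y)$ both at bounded angle from $U_{d-1}(\alpha_n)$ get their distance expanded by exactly $e^{\slroot_1(\cartan\alpha_n)}$, so $d_\P(\alpha_n^{-1}\xi(x),\alpha_n^{-1}\xi(y))\le k_1 C'$ stays uniformly bounded; choosing $k_1$ small makes this bound smaller than the fundamental transversality constant, which (via the quantitative transversality in Remark \ref{fty} and the characterization of cone types at infinity through Cartan attractors/repellers as in \cite{PSW1,PSW2}) places $\alpha_n^{-1}y$ in $\cone^c_\infty(\alpha_n)$. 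Here local conformality enters: because $\cartan_2(\g)=\cartan_p(\g)$ one has $\slroot_1=\slroot$ on the limit cone, so the single gap $\slroot_1(\cartan\alpha_n)$ simultaneously controls both the $\P(\K^d)$-contraction and the basin width, making the two radii in the statement genuinely comparable rather than requiring separate scales — this is exactly the point where the naive projective-Anosov argument would only give a weaker two-scale statement.

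Finally I would reconcile the choice of $c$: the quasigeodesic constant $c$ produced by the basin/angle argument in the lower inclusion may differ from the one used in the upper inclusion, but by enlarging $c$ (the family $\cone^c_\infty(\alpha_n)$ is monotone in $c$) and correspondingly adjusting $k_1,k_2,L$, one obtains a single triple $(c,k_1,k_2,L)$ depending only on the domination constants of $\rho$ and on the fixed inner product, which is what the statement asserts. The main obstacle, as indicated, is the careful bookkeeping in the lower inclusion to ensure the angle condition $\peso_{\slroot_1}\Gr(U_{d-1}(\alpha_n^{-1}),\xi(y))\ge-\alpha'$ is genuinely implied by $\xi(y)$ being $k_1 e^{-\slroot_1(\cartan\alpha_n)}$-close to $\xi(x)$, i.e. transferring a metric smallness on $\bord\Omega$ (ambient) into a uniform transversality/cone-membership statement upstairs — this is where I would follow \cite[\S5.1]{PSW1} most closely while inserting the local-conformality simplification.
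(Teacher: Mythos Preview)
Your upper inclusion matches the paper's exactly (Proposition \ref{p.coneinBall} plus recentering from $U_1(\alpha_n)$ to $\xi(x)$). The divergence is in the lower inclusion, and there your sketch has a real gap.

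You assert that a pair $\xi(x),\xi(y)$ at bounded angle from $U_{d-1}(\alpha_n)$ ``get their distance expanded by exactly $e^{\slroot_1(\cartan\alpha_n)}$'' under $\alpha_n^{-1}$, and use this as an \emph{upper} bound to conclude $d_\P(\alpha_n^{-1}\xi(x),\alpha_n^{-1}\xi(y))\le k_1C'$. But for a generic projective-Anosov element the expansion of $\alpha_n^{-1}$ on the Cartan basin ranges between $e^{\slroot_1(\cartan\alpha_n)}$ and $e^{\cartan_1(\alpha_n)-\cartan_d(\alpha_n)}$; only the \emph{lower} bound is $e^{\slroot_1}$. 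Your justification (``$\cartan_2=\cartan_p$ so $\slroot_1=\slroot$ on the limit cone'') invokes only the second half of Definition \ref{locConf} and does not by itself force the expansion along the limit set to be the minimal one. What is actually needed is the $(1,1,p)$-hyperconvexity transversality \eqref{hypdefi}: it guarantees that the limit curve approaches $\xi(x)$ tangentially to the ``slow'' directions (those associated to $\xi^2,\dots,\xi^p$), and only \emph{then} does $\cartan_2=\cartan_p$ convert this tangency into a uniform expansion bound $e^{\slroot_1}$ along $\xi(\bord\G)$. You gesture at \cite[\S5.1]{PSW1}, which is indeed where this is done, but your own mechanism as written would fail without that geometric input.

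The paper avoids reproving this by introducing the intermediate object $X_\infty(\g)$, the $\delta_\rho/2$-tubular neighbourhood of $\xi(\cone_\infty(\g))$ in $\P(\K^d)$ (the ``thickened cone type''). It then chains two citations: \cite[Corollary 5.10]{PSW1} gives $B(\xi(x),c_1e^{-\slroot_1(\cartan\alpha_i)})\cap\xi(\bord\G)\subset\alpha_iX_\infty(\alpha_i)$ (this is where hyperconvexity is consumed), and \cite[Proposition 3.3]{PSW2} gives $X_\infty(\g)\cap\xi(\bord\G)\subset\xi(\cone_\infty^c(\g))$ for a suitable $c$ (a general Anosov fact, independent of hyperconvexity). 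This modular split is cleaner than your direct route and makes transparent exactly which step eats the hyperconvexity hypothesis.
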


\begin{proof}The desired inclusions are proven in \cite{PSW1} for thickened cone types at infinity. We briefly explain here how to deduce from it the result we need.

Following \cite{PSW1} we denote by $X_\infty(\g)$,  for $\g\in\G$, the \emph{thickened cone type at infinity}, namely the tubular neighborhood in $\P(\K^d)$ of $\xi\big(\cone_\infty(\g)\big)$ of radius $\delta_\rho/2,$ where $\delta_\rho$ is the fundamental constant from Definition \ref{FConstant}. In \cite[Corollary 5.10]{PSW1} it is established that there exists $c_1>0$ and $L_0>0$ only depending on the domination constants of $\rho$ such that for all $i\geq L_0$ one has $$ B\Big(\xi(x),c_1e^{-\slroot_1(\cartan(\alpha_i))}\Big)\cap\xi(\bord\G)\subset \alpha_iX_\infty(\alpha_i).$$

By definition the thickened cone type $X_\infty(\g)$ is contained in the Cartan basin $B_{\{\slroot_1\},\alpha}(\g)$ for  $\alpha=-2\log\delta_\rho$. So P.-S.-Wienhard \cite[Proposition 3.3]{PSW2} provides the existence of   $c$ and $L_0$ such that for $\g\in\G$ with $|\g|>L_0$, one has $$X_\infty(\g)\cap\xi(\bord\G)\subset\xi\big(\cone_\infty^{c}(\g)\big).$$ Combining both equations one has, for all $i\geq L_0$ that 

\begin{equation}\label{bola}
	B\Big(\xi(x),c_1e^{-\slroot_1(\cartan(\alpha_i))}\Big)\cap\xi(\bord\G)\subset\xi\Big(\alpha_i\cone_\infty^{c}(\alpha_i)\Big) \subset B\Big(\xi(x), Ke^{-\slroot_1(\cartan(\alpha_i))}\Big),
\end{equation} 
the second inclusion following from Proposition \ref{p.coneinBall}. This concludes the proof. \end{proof}

\subsection{Hausdorff dimension and related concepts}
Recall that, given a metric space $(X,d)$ and a real number $s>0$, the \emph{$s$-capacity} of $X$ is
$$\cal H^s(X,d)=\lim_{\epsilon\to 0}\inf\left\{\sum_{U\in\cal U} \diam U^s\left|\, \cal U \text{  open covering of $\Lambda$,  } \sup_{U\in\cal U}\diam U<\epsilon\right.\right\}$$
and that 
\begin{equation}\label{HffDef}\Hff(X)=\inf \{s|\, \cal H^s(X)=0\}=\sup \{s|\, \cal H^s(X)=\infty\}.\end{equation}
The following can be verified directly from the definition:
\begin{lemma}\label{l.Hffunion}
	If $X=\bigcup_{n\in\N} X_n$ then
	$$\Hff(X)=\sup \Hff(X_n).$$
\end{lemma}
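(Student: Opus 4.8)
The statement is the standard countable stability of Hausdorff dimension, and the plan is to reduce it to countable subadditivity of the $s$-capacity $\cal H^s$.

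First I would record the easy inequality $\Hff(X)\geq\sup_n\Hff(X_n)$: since $X_n\subseteq X$, every open cover of $X$ is also an open cover of $X_n$, so $\cal H^s(X_n)\leq\cal H^s(X)$ for every $s>0$; hence $\{s:\cal H^s(X)=0\}\subseteq\{s:\cal H^s(X_n)=0\}$, which by \eqref{HffDef} gives $\Hff(X_n)\leq\Hff(X)$ for all $n$, and therefore $\sup_n\Hff(X_n)\leq\Hff(X)$.

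For the reverse inequality the key point is that $\cal H^s$ is countably subadditive, $\cal H^s(X)\leq\sum_n\cal H^s(X_n)$. I would establish this at the level of the $\epsilon$-truncated quantities $\cal H^s_\epsilon(X):=\inf\{\sum_{U\in\cal U}\diam U^s : \cal U\text{ open cover of }X,\ \sup_{U\in\cal U}\diam U<\epsilon\}$, whose increasing limit as $\epsilon\to0$ is $\cal H^s$. Given $\epsilon>0$ and $\eta>0$, choose for each $n$ an open cover $\cal U_n$ of $X_n$ with mesh $<\epsilon$ and $\sum_{U\in\cal U_n}\diam U^s\leq\cal H^s_\epsilon(X_n)+\eta\,2^{-n}$; then $\bigcup_n\cal U_n$ is an open cover of $X$ of mesh $<\epsilon$, so $\cal H^s_\epsilon(X)\leq\sum_n\cal H^s_\epsilon(X_n)+\eta$, and letting $\eta\to0$ gives $\cal H^s_\epsilon(X)\leq\sum_n\cal H^s_\epsilon(X_n)$. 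Since $\epsilon\mapsto\cal H^s_\epsilon(X_n)$ is non-increasing, the monotone convergence theorem for series lets me interchange the supremum over $\epsilon$ with the sum, yielding $\cal H^s(X)=\sup_\epsilon\cal H^s_\epsilon(X)\leq\sup_\epsilon\sum_n\cal H^s_\epsilon(X_n)=\sum_n\sup_\epsilon\cal H^s_\epsilon(X_n)=\sum_n\cal H^s(X_n)$.

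With subadditivity in hand the lemma follows: fix any $s>\sup_n\Hff(X_n)$. Then $s>\Hff(X_n)$ for every $n$, so $\cal H^s(X_n)=0$ by \eqref{HffDef}, whence $\cal H^s(X)\leq\sum_n\cal H^s(X_n)=0$ and therefore $\Hff(X)\leq s$. Letting $s$ decrease to $\sup_n\Hff(X_n)$ gives $\Hff(X)\leq\sup_n\Hff(X_n)$, which combined with the first step yields the claimed equality. The only mildly delicate point is the interchange of $\sup_\epsilon$ with the countable sum, which is precisely why I work with the truncated capacities $\cal H^s_\epsilon$ rather than with $\cal H^s$ directly; the rest is bookkeeping.
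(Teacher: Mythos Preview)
Your proof is correct; the paper itself does not actually prove this lemma, stating only that it ``can be verified directly from the definition.'' Your argument supplies precisely the standard verification. One minor simplification: once you have $\cal H^s_\epsilon(X)\leq\sum_n\cal H^s_\epsilon(X_n)$, you can bound each summand directly by $\cal H^s(X_n)$ (since $\cal H^s_\epsilon\leq\cal H^s$) and then take $\sup_\epsilon$ on the left, avoiding the monotone-convergence interchange altogether; but what you wrote is also fine.
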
	

We will use the following consequence of Theorem 1.5.14 from Edgar's book \cite{Edgar}:

\begin{cor}\label{edgar}Let $E\subset\R^d$ be a measurable subset equipped with a probability measure $\nu.$ If the upper density $$\ov{D}^\alpha(x)=\limsup_{r\to0}\frac{\nu\big(B(x,r)\cap E\big)}{r^\alpha}$$ is  $\nu$-essentially bounded above, then $\Hff(E)\geq\alpha.$
\end{cor}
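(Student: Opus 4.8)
The statement to prove is Corollary~\ref{edgar}: if $E\subset\R^d$ carries a probability measure $\nu$ whose upper $\alpha$-density $\ov D^\alpha(x)=\limsup_{r\to0}\nu(B(x,r)\cap E)/r^\alpha$ is $\nu$-essentially bounded, then $\Hff(E)\ge\alpha$.

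\textbf{Plan.} The plan is to deduce this from the mass distribution principle, which is essentially the content of Edgar's Theorem~1.5.14 (the Billingsley/Frostman-type lemma). First I would reduce to the case where the upper density is \emph{everywhere} bounded rather than just essentially bounded: set $M>0$ so that $A=\{x\in E:\ov D^\alpha(x)\le M\}$ has $\nu(A)=1$ (possible since $\ov D^\alpha$ is $\nu$-essentially bounded, exhausting by the sets $\{\ov D^\alpha\le M\}$ and taking $M$ large). Since $\Hff(E)\ge\Hff(A)$, it suffices to bound $\Hff(A)$ from below. Next I would further stratify: for $\eps>0$ let $A_\eps=\{x\in A:\nu(B(x,r)\cap E)\le 2Mr^\alpha\text{ for all }0<r<\eps\}$. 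Then $A=\bigcup_{n}A_{1/n}$, so by Lemma~\ref{l.Hffunion} it is enough to show $\Hff(A_\eps)\ge\alpha$ for each fixed $\eps>0$; fix such an $\eps$.

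\textbf{Main estimate.} Let $\cal U$ be any open cover of $A_\eps$ by sets of diameter $<\eps$. I would discard the $U\in\cal U$ that miss $A_\eps$, and for each remaining $U$ pick a point $x_U\in U\cap A_\eps$; then $U\subset B(x_U,\diam U)$ with $\diam U<\eps$, so the defining property of $A_\eps$ gives $\nu(U\cap E)\le\nu(B(x_U,\diam U)\cap E)\le 2M(\diam U)^\alpha$. Summing over the cover and using that $\cal U$ covers $A_\eps\subset E$ with $\nu(A_\eps)>0$ (for $\eps=1/n$ large enough; here one uses $\nu(A)=1=\lim_n\nu(A_{1/n})$, so some $A_{1/n}$ has positive mass, and it suffices to treat those $n$), one obtains
\[
\sum_{U\in\cal U}(\diam U)^\alpha\ \ge\ \frac1{2M}\sum_{U\in\cal U}\nu(U\cap E)\ \ge\ \frac{\nu(A_\eps)}{2M}\ >\ 0.
\]
Taking the infimum over all such covers and then $\eps\to0$ gives $\cal H^\alpha(A_\eps)\ge\nu(A_\eps)/(2M)>0$, hence $\Hff(A_\eps)\ge\alpha$ by the definition \eqref{HffDef}. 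Finally Lemma~\ref{l.Hffunion} yields $\Hff(A)=\sup_n\Hff(A_{1/n})\ge\alpha$, and thus $\Hff(E)\ge\Hff(A)\ge\alpha$.

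\textbf{Main obstacle.} There is no serious obstacle; the only point requiring care is the bookkeeping around ``$\nu$-essentially bounded'': one must pass from an a.e.\ bound to an everywhere bound on a full-measure subset, and then ensure the stratified pieces $A_{1/n}$ that one actually works with carry positive $\nu$-mass, which follows from continuity of measure since $\bigcup_n A_{1/n}$ has full mass. Everything else is the standard mass distribution principle, which is exactly why the statement is phrased as a corollary of Edgar~\cite[Theorem 1.5.14]{Edgar}.
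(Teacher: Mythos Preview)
Your argument is correct and is precisely the standard mass distribution principle derivation; the paper itself gives no proof beyond citing Edgar~\cite[Theorem~1.5.14]{Edgar}, so your write-up simply fills in the details that the citation leaves implicit. The bookkeeping you flag (passing from a $\nu$-essential bound to a stratification $A_{1/n}$ with positive mass) is the only subtlety, and you handle it correctly.
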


\subsection{The lower bound $\Hff(\varXi\{\hol\mathrm{-conical\ points}\})\geq\scr{b}\hC{\infty,\hol}$}\label{s.lower}

We import some tools from the proof of Theorem \ref{hyper}. Consider the vector space $$V^*:=\spa\{\slroot,\ov{\slroot}\}$$ together with its radical $\ann(V^*)=\ker\slroot\cap\ker\ov\slroot$ and the quotient vector space $V=\a_\t/\ann(V^*).$ Any element of $V^*$ vanishes on $\ann(V^*)$ and thus $V^*$ is naturally identified with the dual space of $V.$ Using the preferred basis $\{\slroot,\ov\slroot\}$ of $V^*$ we  identify $V$ and $\R^2$ via the isomorphism $v\mapsto \big(\slroot(v),\ov\slroot(v)\big)$ and we let $$\Pi:\a_\t\to\R^2$$ be the quotient projection (composed with the above isomorphism). The image of the hyperplane $\ker\hol\slroot-\ov\slroot$ under the composition of $\Pi$ and the  identification of $V$ with $\R^2$ is the line passing through $(1,\hol)$,  $$\Pi\big(\ker(\hol\slroot-\ov\slroot)\big)=\big\{v\in V:\hol\slroot(v)=\ov\slroot(v)\big\}.$$ We  consider the quadrant $$V^+=\{\slroot\geq0\}\cap\{\ov{\slroot}\geq0\}.$$ Let  $\cc=\cc_{(\rho,\ovrho)}:\G\times\bord\G\to V$ be the composition of the refraction cocycle $\rfr_{(\rho,\ov\rho)}$ of the pair  with $\Pi.$ Its periods are $$\cc(\g,\g_+)=\Big(\slroot\big(\lambda(\g)\big),\ov\slroot\big(\lambda(\ov\g)\big)\Big),$$ 
so by assumption $\cc$ is non-arithmetic. As in \S\,\ref{sandwich} one has $\cal Q_\cc=V^*\cap\cal Q_{\t,\rho}$; by non-arithmeticity, the cone $\calL_\cc$ has non-empty interior and thus Corollary \ref{strictly} gives that $\cal Q_\cc$ is a strictly convex curve. We consider the max norm $\|v\|_{\infty,\hol}=\max\{\hol|\slroot(v)|,|\ov \slroot(v)|\}$ on $V$, and its dual (operator) norm on $V^*$ denoted by $\|\,\|^{1,\hol}$. Let $\vi\in\cal Q_\cc$ be the unique form such that 
$$\|\vi\|^{1,\hol}=\inf\{\|\varphi\|^{1,\hol}:\varphi\in\cal Q_\cc\}.$$

In the following lemma the role of the assumptions on dynamical intersection in Theorem \ref{Hffconical} becomes clear:
 \begin{lemma}\label{paraphi} 
	The functional $\vi/\|\vi\|^{1,\hol}$ is a convex combination $s\hol\slroot+(1-s)\ov\tau$ with $s\in(0,1)$ if and only if 
		\begin{equation}\label{e.Iassumption}
			\II_\slroot(\ov\slroot)> \hol>1/\II_{\ov\slroot}(\slroot).
		\end{equation}
In this case one has  $\sf T_{\vi}\cal Q_\cc=\spa\{ \hol\slroot-\ov{\slroot}\}$.
\end{lemma}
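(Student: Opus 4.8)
The plan is to turn the statement into planar convex geometry. Identify $V^*$ with $\R^2$ via $a\slroot+b\ov\slroot\leftrightarrow(a,b)$, describe the unit ball of $\|\,\|^{1,\hol}$ explicitly, and locate the minimiser $\vi$ along the strictly convex curve $\cal Q_\cc$. First I would compute the dual norm: one readily checks $\|a\slroot+b\ov\slroot\|^{1,\hol}=|a|/\hol+|b|$, so its unit ball $B$ is the quadrilateral with vertices $\pm\hol\slroot,\pm\ov\slroot$, and the unique edge of $B$ contained in the quadrant $V^+=\{\slroot\ge0\}\cap\{\ov\slroot\ge0\}$ is the segment $\{s\hol\slroot+(1-s)\ov\slroot:s\in[0,1]\}$, of slope $-1/\hol$, whose relative interior is exactly the set of $\|\,\|^{1,\hol}$-unit forms with $a>0$ and $b>0$. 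Hence the first assertion of the lemma is equivalent to: \emph{$\vi$ lies in the open quadrant $\{a>0,b>0\}$}, i.e. in the open cone spanned by $\slroot$ and $\ov\slroot$.

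Next I would set up the curve. By Corollary \ref{strictly}, $\cal D_\cc$ is open, unbounded and strictly convex with analytic boundary $\cal Q_\cc$; since $\hJ{t\varphi}=t^{-1}\hJ\varphi$ one gets $0\notin\overline{\cal D_\cc}$ and that the two ends of $\cal Q_\cc$ escape to infinity, so $\|\,\|^{1,\hol}$ attains its minimum on $\cal Q_\cc$ at the unique point $\vi$. Two computations feed the argument. On the one hand, $\hJ{t\varphi}=t^{-1}\hJ\varphi$ and $0<\hJ\slroot,\hJ{\ov\slroot}<\infty$ give that $P_1:=\hJ\slroot\cdot\slroot=(\hJ\slroot,0)$ and $P_2:=\hJ{\ov\slroot}\cdot\ov\slroot=(0,\hJ{\ov\slroot})$ lie on $\cal Q_\cc$. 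On the other hand, $\II$ scales as $\II_{t\varphi}=t^{-1}\II_\varphi$ (Proposition \ref{ineq}) and $\II_\slroot(\slroot)=\II_{\ov\slroot}(\ov\slroot)=1$, so Corollary \ref{strictly} identifies the tangent lines $\sf T_{P_1}\cal Q_\cc=\ker\II_\slroot$ and $\sf T_{P_2}\cal Q_\cc=\ker\II_{\ov\slroot}$, i.e. the lines of slopes $-1/\II_\slroot(\ov\slroot)$ and $-\II_{\ov\slroot}(\slroot)$ respectively — both negative, as the intersection numbers are positive. Finally, since $\{(\slroot(\lambda(\g)),\ov\slroot(\lambda(\ov\g))):\g\in\G\}$ spans a dense subgroup of $\R^2$, the periods of $\slroot\circ\cc$ and $\ov\slroot\circ\cc$ are not proportional, so the bound $\II(\kappa,c)\ge\hJ\kappa/\hJ c$ of Proposition \ref{ineq} is strict in both orders and $\II_\slroot(\ov\slroot)\,\II_{\ov\slroot}(\slroot)>1$; equivalently $-\II_{\ov\slroot}(\slroot)<-1/\II_\slroot(\ov\slroot)$, i.e. the tangent slope of $\cal Q_\cc$ is strictly smaller at $P_2$ than at $P_1$.

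The main step is then to locate $\vi$. Since $t\mapsto\hJ{t\slroot}=\hJ\slroot/t$ is finite and strictly decreasing from $+\infty$ to $0$, the curve $\cal Q_\cc$ crosses each of the rays $\R_{>0}\slroot,\R_{>0}\ov\slroot$ exactly once, at $P_1$ resp. $P_2$; and it meets neither $\R_{<0}\slroot$ nor $\R_{<0}\ov\slroot$ (there $\slroot$, resp. $\ov\slroot$, is negative on $\calL_\cc\setminus\{0\}$, so these rays miss $\inte\cdc\cc\supseteq\cal Q_\cc$). Hence $\cal Q_\cc\cap\overline{V^+}$ is a single sub-arc $\mathscr A$ joining $P_1$ to $P_2$, whose relative interior lies in the open quadrant $\{a>0,b>0\}$; having all its tangent slopes in the negative range $[-\II_{\ov\slroot}(\slroot),-1/\II_\slroot(\ov\slroot)]$, $\mathscr A$ is the graph $b=f(a)$ of a strictly convex decreasing function on $[0,\hJ\slroot]$. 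I would then check $\vi\in\mathscr A$: if, say, $\slroot(\vi)<0$, then $\ov\slroot(\vi)>0$ (as $\vi\in\inte\cdc\cc$), so $\vi$ lies in the relative interior of the edge of $\|\vi\|^{1,\hol}B$ in $\{\slroot\le0,\ov\slroot\ge0\}$ and $\sf T_\vi\cal Q_\cc$ has positive slope $1/\hol$ — but a line of positive slope through $\vi$ cannot have $\|\vi\|^{1,\hol}B$ (hence the origin) on one side and $\cal D_\cc$ on the other, as $\cal D_\cc$ contains $t\slroot$ for all large $t$; symmetrically $\ov\slroot(\vi)\ge0$. Now writing $\|\varphi\|^{1,\hol}=a/\hol+b$ on $\overline{V^+}$, the function $a\mapsto a/\hol+f(a)$ is strictly convex on $[0,\hJ\slroot]$ with derivative $1/\hol+f'(a)$; since $f'$ increases from $f'(0)=-\II_{\ov\slroot}(\slroot)$ to $f'(\hJ\slroot)=-1/\II_\slroot(\ov\slroot)$, its minimum is at $a=\hJ\slroot$ (so $\vi=P_1$) precisely when $\II_\slroot(\ov\slroot)\le\hol$, at $a=0$ (so $\vi=P_2$) precisely when $\II_{\ov\slroot}(\slroot)\le1/\hol$ — incompatible conditions, since $\II_\slroot(\ov\slroot)\II_{\ov\slroot}(\slroot)>1$ — and otherwise at the unique interior $a_*$ with $f'(a_*)=-1/\hol$, where strict convexity (which forbids $f$ vanishing on a sub-interval) gives $0<a_*<\hJ\slroot$ and $f(a_*)>0$. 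The last case is exactly \eqref{e.Iassumption}; there $\vi=(a_*,f(a_*))$ lies in the open quadrant $\{a>0,b>0\}$, i.e. $\vi/\|\vi\|^{1,\hol}=s\hol\slroot+(1-s)\ov\slroot$ with $s\in(0,1)$, and $\sf T_{\vi}\cal Q_\cc$, being the line of slope $-1/\hol$ through $\vi$, equals $\spa\{\hol\slroot-\ov\slroot\}$. This proves the lemma.

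I expect the delicate point to be this last paragraph — not any single computation, but the bookkeeping of the planar picture: ruling out that $\vi$ leaves $\overline{V^+}$, correctly treating the two non-smooth vertices of $B$, and using strict convexity to ensure the interior critical point genuinely lies in the open quadrant. Drawing $\cal Q_\cc$ together with a few homothets of $B$ makes the trichotomy, and the threshold on $\hol$ in \eqref{e.Iassumption}, transparent.
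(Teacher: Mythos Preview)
Your proof is correct and follows essentially the same approach as the paper: identify the unit ball of $\|\,\|^{1,\hol}$ as a rhombus, compute the tangent slopes of $\cal Q_\cc$ at the endpoints $P_1=\hJ\slroot\slroot$ and $P_2=\hJ{\ov\slroot}\ov\slroot$ via $\ker\II_\varphi$, and use strict convexity to conclude that the minimiser lies strictly between $P_1$ and $P_2$ exactly when $-1/\hol$ falls strictly between these slopes. The paper's argument is the terse version of yours, relying on Figure~\ref{phinfty}; you spell out the graph parametrisation $b=f(a)$, the exclusion of the other quadrants, and the one-variable optimisation, but the underlying idea is identical.
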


\begin{proof} Recall from Corollary \ref{strictly} that $\sf T_{\hJ\slroot\slroot}\cal Q_\cc=\ker\II_{\hJ\slroot\slroot}$ and $\cal Q_\cc$ is strictly convex. 
Furthermore, by definition the functional $\vi$ is the point of $\cal Q_\cc$, that minimizes the norm $\|\; \|^{1,\hol}$. The level set $\{\|\varphi \|^{1,\hol}=1\}$ is a rhombus with vertices $(\hol\slroot,\ov\slroot)$ (in blue in Figure \ref{phinfty}), the tangent to $\cal Q_\cc$ at $\hJ\slroot\slroot,$ in red in Figure \ref{phinfty}, is the level set $\II_{\hJ\slroot\slroot}(\cdot)=1$,  whence its intersection with the $\ov\slroot$-axis is $\ov\slroot/\II_{\hJ\slroot\slroot}(\ov\slroot)$, and the the tangent to to $\cal Q_\cc$ at $\hJ{\ov\slroot}\ov\slroot$ is the level set $\II_{\hJ{\ov\slroot}\ov\slroot}(\cdot)=1$,  and it intersects the $\slroot$-axis is $\slroot/\II_{\hJ{\ov\slroot}\ov\slroot}(\slroot)$.
	
Equation \eqref{e.Iassumption} is thus equivalent to the fact that the slope of the side of the rhombus, equal to $-1/\hol$, is between the slope of the tangent at $\hJ\slroot\slroot$, which is equal to $-\hJ\slroot/\II_{\hJ\slroot\slroot}(\ov\slroot)=-1/\II_{\slroot}(\ov\slroot)$, and the slope of the tangent at $\hJ{\ov\slroot}\ov\slroot$, which is equal to $-\II_{\hJ{\ov\slroot}\ov\slroot}(\slroot)/\hJ{\ov\slroot}=-\II_{\ov\slroot}(\slroot).$ 

Strict convexity of $\cal Q_\cc$ ensures that this is equivalent to having a unique point in $\cal Q_\cc\cap\{t\ov\slroot:t>0\}\times\{s\slroot:s>0\}$ tangent to the side of the rhombus, which is the desired functional $\vi$. \end{proof}

 \begin{figure}[h]\centering
\includegraphics[width=90mm]{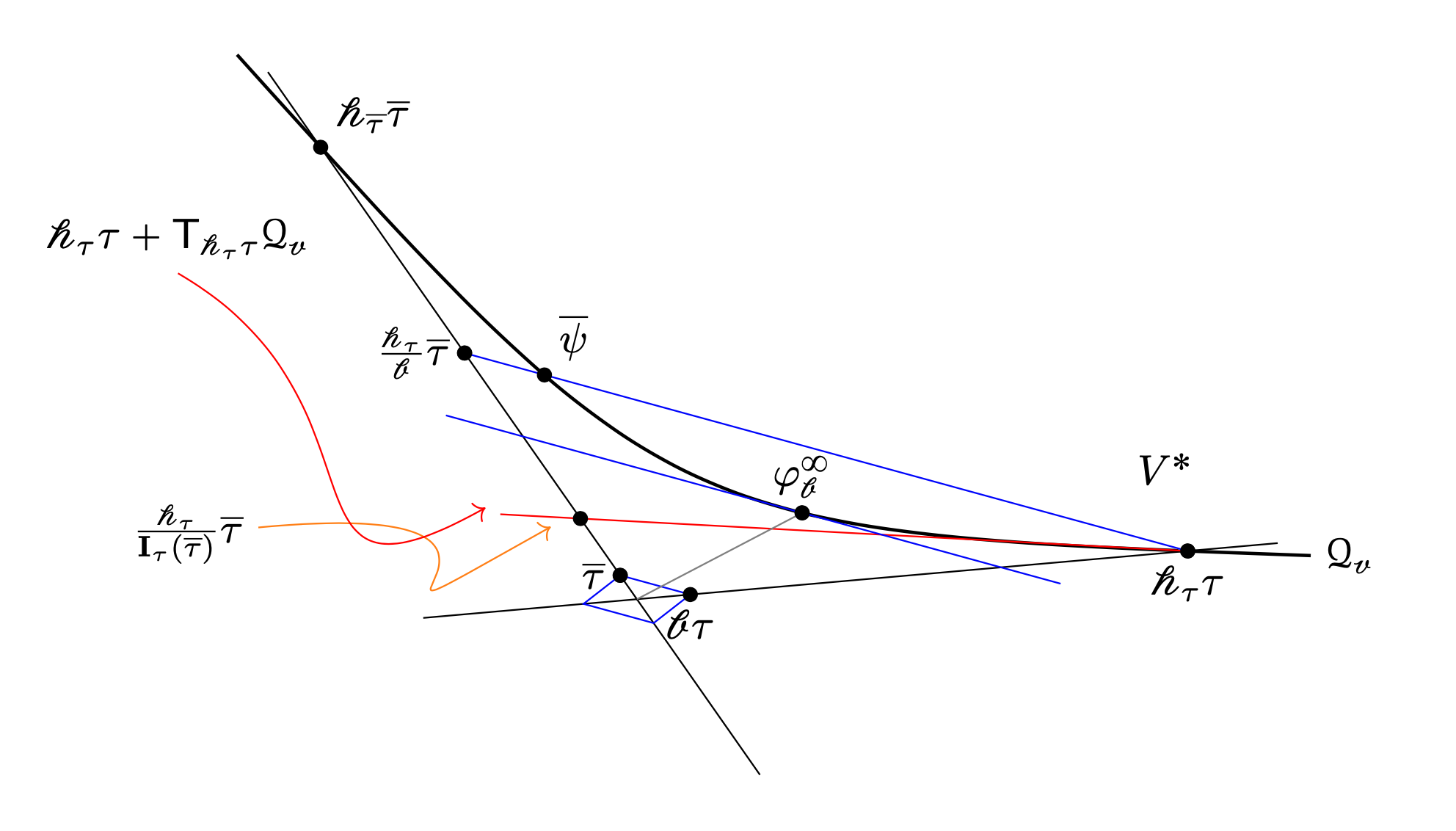}
 \caption{The situation of Lemma \ref{paraphi}.} \label{phinfty}
 \end{figure}

We thus obtain the following key properties of $\vi$:
\begin{lemma}\label{phi>min} Under the assumptions of Theorem \ref{Hffconical} one has  		\begin{enumerate}
	\item $\sf u_\vi^\cc=\Pi(\ker(\hol\slroot-\ov\slroot))$;
	\item  for any $v\in V^+$ one has $$ \vi(v)\geq \hC{\infty,\hol}\hol\min\{\slroot(v),\ov{\slroot}(v)\}.$$
	\end{enumerate}
 Moreover one has $\hC{\infty,\hol}<\min\{\hJ{\ov\slroot},\hJ{\slroot}/\hol\}$.
\end{lemma}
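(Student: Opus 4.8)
The plan is to reduce the three assertions to Lemma \ref{paraphi} and Corollary \ref{strictly}, the only new ingredient being the comparison $\hC{\infty,\hol}\le\|\varphi\|^{1,\hol}$, valid for every $\varphi\in\cal Q_\cc$, and in particular $\hC{\infty,\hol}\le\|\vi\|^{1,\hol}$. To prove it, I would first note that in the coordinates $(\slroot,\ov\slroot)$ on $V$ the unit ball of $\|\,\cdot\,\|_{\infty,\hol}$ is the rectangle $[-1/\hol,1/\hol]\times[-1,1]$, so $\|a\slroot+b\ov\slroot\|^{1,\hol}=|a|/\hol+|b|$. Since $\rho$ and $\ov\rho$ are locally conformal, $\slroot=\slroot_1$ on the whole image of the Cartan projection of $\rho$ and $\ov\slroot=\ov\slroot_1$ on that of $\ov\rho$, hence $\slroot(\cartan(\g))\ge0$ and $\ov\slroot(\cartan(\ov\g))\ge0$; and for the product representation, if $\varphi=a\slroot+b\ov\slroot$ then $\varphi(\cartan_\t(\g))=a\,\slroot(\cartan(\g))+b\,\ov\slroot(\cartan(\ov\g))$, so with $M(\g):=\max\{\hol\slroot(\cartan(\g)),\ov\slroot(\cartan(\ov\g))\}$ (the function appearing in the definition of $\hC{\infty,\hol}$),
\[ \varphi(\cartan_\t(\g))\le\big(|a|/\hol+|b|\big)M(\g)=\|\varphi\|^{1,\hol}M(\g). \]
As $\varphi\in\cal Q_\cc\subset\cal Q_{\t,\rho}$ has entropy $\hJ\varphi=1$, for each $\epsilon>0$ the form $(1+\epsilon)\varphi$ has entropy $<1$, so Remark \ref{r.entropy=crit} gives $\sum_\g e^{-(1+\epsilon)\varphi(\cartan_\t(\g))}<\infty$; together with the displayed bound this forces $\sum_\g e^{-(1+\epsilon)\|\varphi\|^{1,\hol}M(\g)}<\infty$, i.e. $\hC{\infty,\hol}\le(1+\epsilon)\|\varphi\|^{1,\hol}$, and letting $\epsilon\to0$ finishes the comparison.

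For item (1) I would invoke Corollary \ref{strictly}, which identifies $\sf u_\vi^\cc$ with the tangent hyperplane $\sf T_\vi\cal Q_\cc\subset V^*$ read inside $\P(V)$ through the duality between hyperplanes of $V^*$ and points of $\P(V)$, and Lemma \ref{paraphi}, which computes $\sf T_\vi\cal Q_\cc=\spa\{\hol\slroot-\ov\slroot\}$; the corresponding point of $\P(V)$ is the annihilator $\{v:\hol\slroot(v)=\ov\slroot(v)\}$, which equals $\Pi(\ker(\hol\slroot-\ov\slroot))$ because $\ann(V^*)=\ker\slroot\cap\ker\ov\slroot\subset\ker(\hol\slroot-\ov\slroot)\subset\a_\t$. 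For item (2), Lemma \ref{paraphi} lets me write $\vi=\|\vi\|^{1,\hol}\big(s\hol\slroot+(1-s)\ov\slroot\big)$ with $s\in(0,1)$; when $v\in V^+$ both $\slroot(v)$ and $\ov\slroot(v)$ are non-negative, so, using $\hol\le1$,
\[ s\hol\slroot(v)+(1-s)\ov\slroot(v)\ge\hol\big(s\,\slroot(v)+(1-s)\ov\slroot(v)\big)\ge\hol\min\{\slroot(v),\ov\slroot(v)\}, \]
and multiplying by $\|\vi\|^{1,\hol}\ge\hC{\infty,\hol}$ gives $\vi(v)\ge\hC{\infty,\hol}\hol\min\{\slroot(v),\ov\slroot(v)\}$.

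For the final inequality, recall from the proof of Lemma \ref{paraphi} that $\hJ\slroot\slroot$ and $\hJ{\ov\slroot}\ov\slroot$ lie on $\cal Q_\cc$; neither equals $\vi$, since $\vi$ is proportional to $s\hol\slroot+(1-s)\ov\slroot$ with $s\in(0,1)$ and therefore has strictly positive coordinates in the basis $\{\slroot,\ov\slroot\}$. Because $\vi$ is the \emph{unique} minimizer of $\|\,\cdot\,\|^{1,\hol}$ on $\cal Q_\cc$, one has both $\|\vi\|^{1,\hol}<\|\hJ{\ov\slroot}\ov\slroot\|^{1,\hol}$ and $\|\vi\|^{1,\hol}<\|\hJ\slroot\slroot\|^{1,\hol}$; since $\|a\slroot+b\ov\slroot\|^{1,\hol}=|a|/\hol+|b|$ gives $\|\hJ{\ov\slroot}\ov\slroot\|^{1,\hol}=\hJ{\ov\slroot}$ and $\|\hJ\slroot\slroot\|^{1,\hol}=\hJ\slroot/\hol$, combining with $\hC{\infty,\hol}\le\|\vi\|^{1,\hol}$ yields $\hC{\infty,\hol}<\min\{\hJ{\ov\slroot},\hJ\slroot/\hol\}$. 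The step I expect to be the main obstacle is the comparison in the first paragraph: one must make sure that $\Pi\cartan_\t$ really lands in the quadrant $V^+$ — which relies on the identities $\slroot=\slroot_1$, $\ov\slroot=\ov\slroot_1$ on the full image of the Cartan projections, not merely on the limit cones — and carefully compute $\|a\slroot+b\ov\slroot\|^{1,\hol}=|a|/\hol+|b|$; everything else is bookkeeping with the $V\leftrightarrow V^*$ duality and with Lemma \ref{paraphi} and Corollary \ref{strictly}.
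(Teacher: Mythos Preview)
Your proof is correct and follows essentially the same approach as the paper: items (i) and (ii) are deduced from Lemma~\ref{paraphi} and Corollary~\ref{strictly} exactly as in the paper, and your comparison $\hC{\infty,\hol}\le\|\vi\|^{1,\hol}$ via the operator-norm inequality $\vi(\cartan_\t(\g))\le\|\vi\|^{1,\hol}\|\Pi\cartan_\t(\g)\|_{\infty,\hol}$ is the paper's argument as well. The one place where you differ is the \emph{strict} inequality $\hC{\infty,\hol}<\min\{\hJ{\ov\slroot},\hJ\slroot/\hol\}$: the paper obtains it by the direct set-inclusion estimate $\{\max\{\hol\slroot,\ov\slroot\}\le t\}\subset\{\hol\slroot\le t\}$, which as written only yields $\le$, whereas your route through the uniqueness of the minimizer $\vi$ on the strictly convex curve $\cal Q_\cc$ (so that $\|\vi\|^{1,\hol}<\|\hJ\slroot\slroot\|^{1,\hol}=\hJ\slroot/\hol$ and $\|\vi\|^{1,\hol}<\|\hJ{\ov\slroot}\ov\slroot\|^{1,\hol}=\hJ{\ov\slroot}$) cleanly produces the strict inequality.
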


\begin{proof} Lemma \ref{paraphi}  implies that
	\begin{enumerate}
		\item $\sf T_{\vi}\cal Q_\cc=\spa\{\hol\slroot-\ov{\slroot}\}$ and thus $$\sf u_\vi^\cc=\Ann\big(\R\cdot(\hol\slroot-\ov{\slroot})\big)=\Pi(\ker(\hol\slroot-\ov\slroot)).$$
		\item  $\vi/\|\vi\|^{1,\hol}=s\hol\slroot+(1-s)\ov{\slroot}$ for some $s\in(0,1)$ and hence\footnote{ Indeed, if $x,y\geq0,$ $s\in(0,1)$ and $\hol\in(0,1]$ one has: $s\hol x+(1-s)y\geq\hol\min(x,y)$: Assume for example that $y\geq x$ (the other case follows smilarly), then $$s\hol x+(1-s)y-\hol x\geq (1-s)(1-\hol)x\geq0.$$}, since $\hol\in(0,1]$, $$\vi\geq\|\vi\|^{1,\hol}\hol\min\{\slroot,\ov{\slroot}\}.$$
	\end{enumerate}
 
In order to prove item (ii), we need to show that $ \hC{\infty,\hol}\leq\|\vi\|^{1,\hol}$.  
Since  $\vi\big(\cartan_\t((\rho,\ov\rho)\g)\big)\leq \|\Pi(\cartan_\t((\rho,\ov\rho)\g))\|_{\infty,\hol}\|\vi\|^{1,\hol}$, we deduce,  for all $s>\|\vi\|^{1,\hol}$, 
$$\sum_{\g\in\G}e^{-s\|\Pi(\cartan_\t((\rho,\ov\rho)\g))\|_{\infty,\hol}}\leq \sum_{\g\in\G}e^{-(s/\|\vi\|^{1,\hol})\vi\big(\cartan_\t((\rho,\ov\rho)\g)\big)}<\infty$$ where  last inequality holds as $\hC\vi=1$ (by Equation \eqref{qh1} and Remark \ref{r.entropy=crit}).

The last assertion follows directly from the definitions:
\begin{alignat*}{2}
	\hC{\infty,\hol}&=\lim_{t\to\infty}\frac 1t\log\#\big\{\g\in\G:\max\big\{\hol\slroot(\cartan(\g)),\ov\slroot(\cartan(\ov\g))\big\}\leq t\big\}\\
	&\leq\lim_{t\to\infty}\frac 1t\log\#\big\{\g\in\G:\hol\slroot(\cartan(\g))\big\}\leq t\big\}=\hC{\slroot}/\hol=\hJ{\slroot}/\hol,
	\end{alignat*}
 where the last equality follows from Remark \ref{r.entropy=crit}. The inequality $	\hC{\infty,\hol}\leq \hJ{\ov\slroot}$ is analogous.
\end{proof}

Let $\ps^\vi$ be the Patterson-Sullivan measure associated to $\vi$ by Corollary \ref{existe}. Combining Equation \eqref{e.CartBasin}, Equation \eqref{e.conesinCartan} and Corollary \ref{existe} we deduce that, for every $\g\in\G$,  
\begin{equation}\label{shadow}
	\ps^\vi\big(\g\cone_\infty^{c}(\g)\big)\leq Ce^{-\vi\big(\cartan_\t((\rho,\ov\rho)\g)\big)}\leq Ce^{-\hC{\infty,\hol}\hol\min\big\{\slroot(\cartan(\g)),\ov{\slroot}(\cartan(\ov\g))\big\}},
\end{equation} 
where the last inequality comes from Lemma \ref{phi>min}.

By Proposition \ref{conetypesBalls} there exist constants $ c,k_1$ and $\ov k_1$ such that if $(\alpha_i)_0^\infty$ is a geodesic ray from $\id$ to $x$ then for all $i$ the subsets 
$$ \xi\big(\alpha_i\cone_\infty^{c}(\alpha_i)\big)\textrm{ and }\ov{\xi}\big(\alpha_i\cone_\infty^{c}(\alpha_i)\big)$$ contain balls on the corresponding projective spaces of radi $$ k_1e^{-\slroot(\cartan(\alpha_i))} \textrm{ and } \ov k_1e^{-\ov{\slroot}(\cartan(\ov\alpha_i))}$$
respectively where $k_1, \ov k_1$ depend on the representations but not on $i$. Since $\varXi(\bord\G)$ is a graph, the preceding radius computation implies that the image of the cone type $\varXi\big(\alpha_i\cone_\infty^{c}(\alpha_i)\big)$ contains the intersection of $\bord\G\times\bord\ov\G$ with a ball, for the product metric on $\P(\K^d)\times\P(\K^{\ov d})$, of radius
\begin{equation}\label{radiusgraph}
	ke^{-\min\big\{\slroot(\cartan(\alpha_i)),\ov{\slroot}(\cartan(\ov\alpha_i))\big\}},
\end{equation}

\noindent
for some uniform constant $k.$ This set of balls forms a fine set of neighbourhoods around any point $x\in\partial\G$. Combining this with Equation (\ref{shadow}) and the fact that $\mu^\vi$ is supported on $\bord\G$, one has, possibly enlarging the constant $C,$ that for all $r$ the measure of the ball of radius $r$ about $\varXi(x)$ is $$ \ps^\vi\big(B(x,r)\big)\leq Cr^{-\hC{\infty,\hol}\hol}.$$

Since $\dim V^*=2$ and $\cc_{(\rho,\ovrho)}$ is assumed non-arithmetic, Theorem \ref{ps-sandwich} states that the subset of $\hol$-conical points has full $\ps^\vi$ measure. Applying Corollary \ref{edgar} one concludes that $$\Hff\big(\varXi\{\hol-\textrm{conical points}\}\big)\geq\hol\hC{\infty,\hol}.$$

\subsection{The upper bound}
We now prove the second inequality. 
\begin{prop}\label{upper} Let $\rho,\ov\rho$ be locally conformal representations over $\K$ and $\ov\K$. For every $\hol\leq 1$,
	$$\Hff\big(\varXi\{\hol-\mathrm{conical\ points}\}\big)\leq\min\{\hC{\infty,\hol},\hol\hC{\infty,\hol}+(1-\hol) \}.$$
\end{prop}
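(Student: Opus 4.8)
The plan is to cover the set $A'$ of $\hol$-conical points, at every scale, by images under $\varXi$ of coarse cone types --- which by Proposition~\ref{conetypesBalls}, applied to both $\rho$ and $\ov\rho$, are coarsely boxes --- and then to run two covering estimates on $\sum(\diam)^s$, one producing the bound $\hC{\infty,\hol}$ and the other the bound $\hol\hC{\infty,\hol}+(1-\hol)$. First I would remove the non-uniform constant of Definition~\ref{flat}: let $A'_m\subset A'$ consist of the $x$ admitting a geodesic ray $(\alpha_n)_{0}^{\infty}\to x$ with $|\hol\slroot(\cartan(\alpha_n))-\ov\slroot(\cartan(\ov\alpha_n))|\leq m$ for infinitely many $n$. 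Then $A'=\bigcup_m A'_m$, so by Lemma~\ref{l.Hffunion} it suffices to bound $\Hff(\varXi(A'_m))$ for each fixed $m$, with the bound independent of $m$. Fix $m$ and $c\geq1$ as in Proposition~\ref{conetypesBalls}; for $T>0$ let $\cal N_T$ be the set of $\g\in\G$ with $\slroot(\cartan(\g))\geq T$ and $|\hol\slroot(\cartan(\g))-\ov\slroot(\cartan(\ov\g))|\leq m$, and set $M(\g):=\max\{\hol\slroot(\cartan(\g)),\ov\slroot(\cartan(\ov\g))\}$. Given $x\in A'_m$, picking on its ray an index $n$ realizing the displayed inequality with also $\slroot(\cartan(\alpha_n))\geq T$ --- possible since $\rho$ is Anosov, so $\slroot(\cartan(\alpha_n))\to\infty$ --- gives $\g=\alpha_n\in\cal N_T$ with $x\in\g\cone^{c}_\infty(\g)$ (a geodesic ray through $\g$ is a $(c,c)$-quasigeodesic); hence $A'_m\subset\bigcup_{\g\in\cal N_T}\g\cone^{c}_\infty(\g)$ for every $T$. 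By Propositions~\ref{conetypesBalls} and~\ref{p.coneinBall} applied to $\rho$ and $\ov\rho$, once $T$ is large every $\g\in\cal N_T$ satisfies: $\varXi(\g\cone^{c}_\infty(\g))$ lies in a box of the product metric whose $\P(\K^d)$-side is $\asymp e^{-\slroot(\cartan(\g))}$ and whose $\P(\ov\K{}^{\ov d})$-side is $\asymp_m e^{-\hol\slroot(\cartan(\g))}$, the latter being the longer side since $\hol\leq1$; in particular $M(\g)\leq\hol\slroot(\cartan(\g))+m$. Finally I record that, $\hC{\infty,\hol}$ being the exponential growth rate of $\#\{\g:M(\g)\leq t\}$, an elementary comparison yields $\sum_{\g\in\G}e^{-sM(\g)}<\infty$ for every $s>\hC{\infty,\hol}$, so the tail $\sum_{\g:M(\g)\geq U}e^{-sM(\g)}$ tends to $0$ as $U\to\infty$.

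For the bound $\Hff(\varXi(A'_m))\leq\hC{\infty,\hol}$, enclose each box above in a single product-metric ball of radius $C_m e^{-M(\g)}$. As $M(\g)\geq\hol T$ on $\cal N_T$, these balls cover $\varXi(A'_m)$ with diameters $\leq2C_m e^{-\hol T}\to0$, and for $s>\hC{\infty,\hol}$ one has $\sum_{\g\in\cal N_T}(\diam)^s\leq(2C_m)^s\sum_{\g:M(\g)\geq\hol T}e^{-sM(\g)}\to0$ as $T\to\infty$. Hence $\cal H^s(\varXi(A'_m))=0$, so $\Hff(\varXi(A'_m))\leq\hC{\infty,\hol}$.

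For the bound $\Hff(\varXi(A'_m))\leq\hol\hC{\infty,\hol}+(1-\hol)$ I would instead use that $\varXi(\g\cone^{c}_\infty(\g))$ is not the whole box but the graph of $\Xi=\ov\xi\circ\xi^{-1}$ over $\xi(\g\cone^{c}_\infty(\g))$, a set of diameter $\asymp e^{-\slroot(\cartan(\g))}$ in the $\P(\K^d)$-factor, while its image $\ov\xi(\g\cone^{c}_\infty(\g))$ is coarsely a ball of radius $\asymp_m e^{-\hol\slroot(\cartan(\g))}$ in $\ov\xi(\bord\G)$ (Proposition~\ref{conetypesBalls} for $\ov\rho$). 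Slicing this long $\ov\xi$-direction into pieces of radius $\asymp e^{-\slroot(\cartan(\g))}$ covers $\varXi(\g\cone^{c}_\infty(\g))$ by roughly $N_\g\asymp_m e^{(1-\hol)\slroot(\cartan(\g))}$ product-metric balls of radius $\asymp_m e^{-\slroot(\cartan(\g))}$. Running the previous scheme with these finer balls and using $\slroot(\cartan(\g))\geq\hol^{-1}(M(\g)-m)$ on $\cal N_T$, one gets for $s>\hol\hC{\infty,\hol}+(1-\hol)$ that
$$\sum_{\g\in\cal N_T}N_\g\,(\diam)^s\ \lesssim_m\ \sum_{\g:\,M(\g)\geq\hol T}\exp\Big(-\tfrac{s-(1-\hol)}{\hol}\,M(\g)\Big),$$
which is the tail of a convergent series because $\tfrac{s-(1-\hol)}{\hol}>\hC{\infty,\hol}$, hence tends to $0$; the diameters are $\lesssim_m e^{-T}$ and tend to $0$ as well. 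Thus $\cal H^s(\varXi(A'_m))=0$, and combining this with the first bound and Lemma~\ref{l.Hffunion} proves the Proposition.

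The hard part is the slicing estimate $N_\g\asymp_m e^{(1-\hol)\slroot(\cartan(\g))}$: one must cover the ball $\ov\xi(\g\cone^{c}_\infty(\g))\subset\ov\xi(\bord\G)$ of radius $\asymp_m e^{-\hol\slroot(\cartan(\g))}$ by that many sets of radius $\asymp e^{-\slroot(\cartan(\g))}$. I expect to obtain this from Proposition~\ref{conetypesBalls} by refining the cover into $\ov\xi$-images of finer coarse cone types and exploiting that $\varXi(\bord\G)$ is a graph over $\xi(\bord\G)$, so that the $\P(\K^d)$-coordinate of each resulting piece is automatically of size $\lesssim e^{-\slroot(\cartan(\g))}$ and contributes no extra factor to the count. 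The remaining points --- uniformity of all constants in $m$, and the elementary passage from the growth-rate definition of $\hC{\infty,\hol}$ to convergence of the associated Dirichlet-type series --- are routine.
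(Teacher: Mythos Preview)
Your proposal is correct and follows essentially the same route as the paper's proof: decompose the $\hol$-conical set according to the conicality constant (your $A'_m$ is the paper's $\sf C_\hol^R$), reduce via Lemma~\ref{l.Hffunion}, cover each stratum by pushed coarse cone types of elements satisfying the conicality inequality, and run the two covering estimates. For the first bound you enclose each $\varXi(\g\cone_\infty^{c}(\g))$ in a single ball of radius $\asymp e^{-M(\g)}$ and sum; for the second you slice it into $\asymp e^{(1-\hol)\slroot(\cartan(\g))}$ balls of radius $\asymp e^{-\slroot(\cartan(\g))}$ and use $\slroot(\cartan(\g))\geq\hol^{-1}(M(\g)-m)$ to compare with the Dirichlet series for $\hC{\infty,\hol}$. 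This is exactly the paper's argument, down to the chain of inequalities.

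The one difference is that you flag the slicing estimate $N_\g\asymp e^{(1-\hol)\slroot(\cartan(\g))}$ as the hard point and sketch a strategy via refinement into finer cone types, whereas the paper simply asserts the covering without further comment. Your instinct to single this step out is reasonable; note however that the paper's implicit argument is more direct than refining via group elements: since the first-factor projection already has diameter $\asymp e^{-\slroot(\cartan(\g))}$, it suffices to partition the second-factor image $\ov\xi(\g\cone_\infty^{c}(\g))$ into pieces of that diameter and take the product with the whole first-factor ball. In either reading, your proof and the paper's are the same argument at the same level of detail.
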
	

\begin{proof}We say that a point $x$ is \emph{$(R,\hol)$-conical} if there exists a geodesic ray $(\alpha_i)_{i\in\N}$ converging to $x$ and such that for an infinite subset $\I\subset \N$ of indices and for every $k\in \I$
\begin{equation}\label{e.5.11}\Big|\hol\slroot\big(\cartan(\alpha_k)\big)-\ov{\slroot}\big(\cartan(\ov\alpha_k)\big)\Big|\leq R.
\end{equation}
	We denote by $\sf C_\hol^R$ the set of $(R,\hol)$-conical points. By Lemma \ref{conGeo}	one has
		$${\displaystyle\bigcup_{R>0}\sf C_\hol^R=\{x\in\bord\G: x\textrm{ is }\hol-\textrm{conical}\}},$$ and thus by Lemma \ref{l.Hffunion} it suffices to show that for every $R$  one has
	$$\Hff\left(\sf C_\hol^{R}\right)\leq \hC\infty.$$
	
	For any constant $K>0$ and any $\g\in\G$ we denote by $B_{\g}^{\max, K}$ the open ball of $\P(\K^{d})\times \P(\K^{\ov d})$ given by:
	$$ B_{\g}^{\max, K}:= B\Big(\big(U_1(\g),U_1(\ov\g)\big), K e^{-\max{\{\hol\slroot(\cartan(\g)), \ov{\slroot}(\cartan(\ov\g))\}}}\Big),$$
	and denote by 
	$$\cal U_T^K:=\big\{B_{\g}^{\max, K}|\, |\g|\geq T\big\}.$$	
	Let $K$, resp. $\ov K$, be the constants given by Proposition \ref{p.coneinBall} for the representation $\rho$ (resp. $\ovrho$).
	
	We first observe that for $C=2e^R\max\{K,\ov K\}$ and every $T>0$, the set $\cal U_T^C$ covers $\varXi(\sf C_\hol ^R)$. Indeed, if $x\in\sf C_\hol^R$ consider the  geodesic ray $(\alpha_i)_{i\in\N}$ converging to $x$, and the set $\I$ of indices for which Equation \eqref{e.5.11} holds. Then for every $k\in\I$ one has,  since $\hol\leq1$, that \begin{alignat}{2} \label{e.5.12.a} 
	\slroot\big(\cartan(\rho\alpha_k))& \geq\hol\slroot\big(\cartan(\alpha_k))>\max\big\{\hol\slroot\big(\cartan(\alpha_k)\big), \ov\slroot\big(\cartan(\ov\alpha_k)\big)\big\}-R, \\ \label{e.5.12} \ov\slroot\big(\cartan(\ov\alpha_k)) & >\max\big\{\hol\slroot\big(\cartan(\alpha_k)\big), \ov\slroot\big(\cartan(\ov\alpha_k)\big)\big\}-R.
	\end{alignat} Let now $T$ be fixed and choose $k\in \I$, $k>T$. Since $x\in\alpha_k\cone_\infty^{c}(\alpha_k)$, Proposition \ref{p.coneinBall} together with Equation \eqref{e.5.12} give 
\begin{alignat*}{2} 
	d\big(\xi(x),U_1(\alpha_k)\big) & \leq Ce^{-\max\big\{\hol\slroot\big(\cartan(\alpha_k)\big), \ov\slroot\big(\cartan(\ov\alpha_k)\big)\big\}}\\ 
	d\big(\ov\xi(x),U_1(\ov\alpha_k)\big) & \leq Ce^{-\max\big\{\hol\slroot\big(\cartan(\alpha_k)\big), \ov\slroot\big(\cartan(\ov\alpha_k)\big)\big\}},
\end{alignat*} as desired.
	
Furthermore, by definition of $\hC{\infty,\hol}$, for every $s>\hC{\infty,\hol}$,  
	$$\sum_{U\in \cal U_T^C}\diam U^s\leq2^sC^s\sum_{|\g|\geq T}e^{-s\max\{\hol\slroot(\cartan(\g)),\ov\slroot(\cartan(\ov\g))\}}<+\infty,$$ 
	whence, Equation \eqref{HffDef} yields
	$\Hff(\sf C_\hol^R)\leq\hC{\infty,\hol}.$ 
In order to obtain the second upper bound we observe that, if $\alpha\in\G$ satisfies Equation \eqref{e.5.11},  the set $\varXi(\alpha\cone_\infty^{c}(\alpha))$ can be covered with $e^{(1-\hol)\slroot(\cartan(\alpha))}$ balls of radius $2Ce^{-\slroot(\cartan(\alpha))}$. We denote by $\cal U_T$ the collection of open balls, that only take into account elements $\alpha\in\G$ with $|\alpha|>T$ that verify \eqref{e.5.11}, which in particular covers the set $\sf C_\hol^R$. Using Equation \eqref{e.5.12.a} we obtain  
\begin{alignat*}{2} 
\sum_{U\in \cal U_T}\diam U^s
&\leq 2^sC^s\sum_{|\g|\geq T}e^{(1-\hol)\slroot(\cartan(\g))}e^{-s\slroot(\cartan(\g))}\\
&\leq 2^sC^s\sum_{|\g|\geq T}e^{-(s-(1-\hol))\slroot(\cartan(\g))}\\
&\leq 2^sC^se^{\frac{R(s-1+\hol)}\hol}\sum_{|\g|\geq T}e^{-\frac{(s-(1-\hol))}\hol\max\{\hol\slroot(\cartan(\g)),\ov\slroot(\cartan(\ov\g))\}}.
\end{alignat*}

Since the latter quantity is finite whenever $\frac{(s-(1-\hol))}\hol>\hC{\infty,\hol}$, we deduce $$\Hff(\sf C_\hol^R)<\hol\hC{\infty,\hol}+(1-\hol).$$
\end{proof}

We conclude this subsection computing the Hausdorff dimension of the image of the whole boundary through the graph map. See \cite{HffGraph} for examples of homeomorphisms between Cantor sets for which the Hausdorff dimension of the graph exceeds the maximal Hausdorff dimension of the factors.
\begin{prop}\label{p.uppergeneral}
	Let $\rho:\G\to\SL(d,\K)$, $\ov\rho:\G\to\SL(\ov d,\ov\K)$ be locally conformal. Then
	$$\Hff(\varXi(\partial\G))=\max\{\hC{\tau},\hC{\ov \tau}\}$$
\end{prop}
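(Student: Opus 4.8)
The plan is to prove the two matching inequalities, computing Hausdorff dimension for the $L^\infty$ product distance $d=\max\{d_\P,d_\P\}$ on $\P(\K^d)\times\P(\ov\K^{\ov d})$, which is bi-Lipschitz to the Riemannian metric and hence produces the same dimension. For the lower bound I would only observe that the two coordinate projections $\pi_1,\pi_2$ of the product are $1$-Lipschitz, hence do not increase Hausdorff dimension; since $\pi_1(\varXi(\bord\G))=\xi(\bord\G)$ and $\pi_2(\varXi(\bord\G))=\ov\xi(\bord\G)$, Theorem \ref{hyperh=1} gives
\[
\Hff\bigl(\varXi(\bord\G)\bigr)\ \geq\ \max\bigl\{\Hff(\xi(\bord\G)),\,\Hff(\ov\xi(\bord\G))\bigr\}\ =\ \max\{\hC\slroot,\hC{\ov\slroot}\}.
\]

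The work is the upper bound. First I would record that, $\rho$ and $\ov\rho$ being locally conformal, Definition \ref{locConf} forces $\cartan_2(\g)=\dots=\cartan_p(\g)$ and hence $\slroot(\cartan(\g))=\slroot_1(\cartan(\g))$, $\ov\slroot(\cartan(\ov\g))=\ov\slroot_1(\cartan(\ov\g))$, for every $\g\in\G$; in particular $\hC\slroot=\hC{\slroot_1}$ and $\hC{\ov\slroot}=\hC{\ov\slroot_1}$. Consequently, along any geodesic ray $(\alpha_i)_{i\geq0}$ from $e$, both sequences $i\mapsto\slroot(\cartan(\alpha_i))$ and $i\mapsto\ov\slroot(\cartan(\ov\alpha_i))$ tend to $+\infty$ (as $\rho,\ov\rho$ are $\{\slroot_1\}$-Anosov) and have increments bounded by a constant $D$ independent of the ray (Proposition \ref{p.Ben}). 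Fix $c\geq1$ for which Proposition \ref{p.coneinBall} applies to $\rho$ and to $\ov\rho$, with a common constant $K_0$. Given $x\in\bord\G$ I fix a geodesic ray $(\alpha_i)$ to $x$ and, for $r>0$, take the \emph{joint stopping time}
\[
n(x,r):=\min\bigl\{n\ :\ \slroot(\cartan(\alpha_n))\geq\log(1/r)\ \text{ and }\ \ov\slroot(\cartan(\ov\alpha_n))\geq\log(1/r)\bigr\},
\]
finite by the above, and set $\g_{x,r}:=\alpha_{n(x,r)}$. Translating the ray by $\g_{x,r}^{-1}$ shows $x\in\g_{x,r}\cone^{c}_\infty(\g_{x,r})$, so the finite set $\cal G_r:=\{\g_{x,r}:x\in\bord\G\}$ satisfies $\bord\G=\bigcup_{\g\in\cal G_r}\g\cone^{c}_\infty(\g)$. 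Since for $\g\in\cal G_r$ \emph{both} $\slroot(\cartan(\g))$ and $\ov\slroot(\cartan(\ov\g))$ are $\geq\log(1/r)$, applying Proposition \ref{p.coneinBall} to each representation shows that $\varXi\bigl(\g\cone^{c}_\infty(\g)\bigr)$ lies in a product of two projective balls of radii $\leq K_0r$, hence in a single $d$-ball of radius $K_0r$. Thus $\varXi(\bord\G)$ is covered by $\#\cal G_r$ balls of radius $K_0r$.

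It remains to bound $\#\cal G_r$. Minimality of $n(x,r)$ means that at the previous index one of the two inequalities fails, so by the increment bound every $\g\in\cal G_r$ satisfies $\slroot(\cartan(\g))<\log(1/r)+D$ or $\ov\slroot(\cartan(\ov\g))<\log(1/r)+D$; hence $\cal G_r$ is contained in the union of these two sets, whose cardinalities are, by the definition of the critical exponents (Remark \ref{r.entropy=crit}), at most $r^{-(\hC\slroot+\epsilon)}$ and $r^{-(\hC{\ov\slroot}+\epsilon)}$ respectively for $r$ small (any fixed $\epsilon>0$). Therefore $\#\cal G_r\lesssim_\epsilon r^{-(\max\{\hC\slroot,\hC{\ov\slroot}\}+\epsilon)}$, and the covering above gives $\cal H^s(\varXi(\bord\G))\leq\liminf_{r\to0}\#\cal G_r\,(2K_0r)^s=0$ whenever $s>\max\{\hC\slroot,\hC{\ov\slroot}\}+\epsilon$; letting $\epsilon\to0$ yields $\Hff(\varXi(\bord\G))\leq\max\{\hC\slroot,\hC{\ov\slroot}\}$, which together with the lower bound finishes the proof.

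I expect the only subtle point to be the design of the stopping time: stopping as soon as \emph{both} coordinate exponents have reached the scale $\log(1/r)$ is exactly what makes $\varXi(\g\cone^{c}_\infty(\g))$ a box that is small — of size $\lesssim r$ — in \emph{both} factors, so that it is absorbed by a single ball of radius $\asymp r$; a one-sided stopping time would instead leave a long thin box whose covering costs too many balls and degrades the bound below $\max$. The complementary estimate $\#\cal G_r\lesssim r^{-\max\{\hC\slroot,\hC{\ov\slroot}\}}$ then falls out by splitting $\cal G_r$ according to which of the two coordinates was the last to cross the threshold, each piece being controlled by the corresponding one-dimensional counting function.
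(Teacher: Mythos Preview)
Your proof is correct. The lower bound via the $1$-Lipschitz coordinate projections together with Theorem \ref{hyperh=1} is exactly what the paper uses (this is the ``fourth inequality'' in \S\ref{proofHffconical}).

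For the upper bound you take a genuinely different, though closely related, route. The paper covers $\varXi(\bord\G)$ by the balls $B_\g^{\min,C}$ of radius $Ce^{-\min\{\slroot(\cartan(\g)),\ov\slroot(\cartan(\ov\g))\}}$ indexed by \emph{all} $\g$ with $|\g|\geq T$, exactly as in Proposition \ref{upper} but with $\min$ replacing $\max$; the $s$-sum $\sum_\g e^{-s\min\{\slroot,\ov\slroot\}}$ converges for $s>\hC{\min\{\slroot,\ov\slroot\}}$, and the proof concludes by quoting the identity $\hC{\min\{\slroot,\ov\slroot\}}=\max\{\hC\slroot,\hC{\ov\slroot}\}$ from \cite[Lemma 5.1]{PSW2}. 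Your stopping-time construction instead selects, at each scale $r$, only those $\g$ for which \emph{both} exponents have just crossed $\log(1/r)$, and then bounds $\#\cal G_r$ by splitting according to which coordinate was the last to cross; this directly yields $\#\cal G_r\lesssim r^{-(\max\{\hC\slroot,\hC{\ov\slroot}\}+\epsilon)}$ without invoking the external lemma. In effect you are reproving that lemma inside the argument. The trade-off: your argument is self-contained and makes the geometric mechanism transparent (the ``last to cross'' dichotomy is exactly why the maximum of the two exponents appears), while the paper's version is shorter and parallels the proof of Proposition \ref{upper} verbatim.
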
	
\begin{proof}
	This follows as in the proof of Proposition \ref{upper} considering the covers of $\varXi(\partial\Gamma)$ given by $\cal U_T^C:=\big\{B_{\g}^{\min, C}|\, |\g|\geq T\big\}$ with	
		$$B_{\g}^{\min, K}:= B\Big(\big(U_1(\g),U_1(\ov\g)\big), K e^{-\min{\{\slroot(\cartan(\g)), \ov{\slroot}(\cartan(\ov\g))\}}}\Big),$$
\noindent	
and $C=2\max\{K, \ov K\}$ where $K$ (resp. $\ov K$) is the constant given by Proposition \ref{p.coneinBall} for the representations $\rho$ (resp. $\ov \rho$). To conclude it is enough to observe that
	$$\displaystyle\hC{\min\{\slroot, \ov\slroot\}}=\max\{\hC{\slroot}, \hC{\ov \slroot}\},$$ a fact proven for example in P.-S.-Wienhard \cite[Lemma 5.1]{PSW2}.
\end{proof}	
It is easy to generalize Proposition \ref{p.uppergeneral} to an arbitrary number of factors. as an application we get.
\begin{cor}
	Let $\rho:\G\to\SL(d,\K)$ and $\theta\subset\Delta$ be such that for all $\slroot_i\in\theta$, $\Phi_{\slroot_i}\circ\rho$ is $(1,1,2)$-hyperconvex. Then
	$$\Hff(\xi^\theta_\rho(\partial\G))=\max_{\slroot_i\in\theta}\hC{\slroot_i}.$$ 
\end{cor}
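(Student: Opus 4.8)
The plan is to reduce the statement to Proposition \ref{p.uppergeneral} applied to the family of representations $\{\Phi_{\slroot_i}\circ\rho : \slroot_i\in\t\}$, after two preliminary observations. First I would record the generalization of Proposition \ref{p.uppergeneral} to an arbitrary finite number of locally conformal factors $\rho_1,\dots,\rho_m$, with graph map $\varXi=(\xi_1,\dots,\xi_m)\colon\bord\G\to\prod_j\P(\K_j^{d_j})$: exactly as in the two-factor case one covers $\varXi(\bord\G)$ by the balls $B\big((U_1(\g_1),\dots,U_1(\g_m)),Ke^{-\min_j\slroot^{(j)}(\cartan(\g_j))}\big)$ for $|\g|\geq T$, using Proposition \ref{p.coneinBall} for each factor; the covering sum is controlled by $\sum_{|\g|\geq T}e^{-s\min_j\slroot^{(j)}(\cartan(\g_j))}$, which converges for $s>\hC{\min_j\slroot^{(j)}}$, and the iterated identity $\hC{\min_j\slroot^{(j)}}=\max_j\hC{\slroot^{(j)}}$ (P.-S.-Wienhard \cite[Lemma 5.1]{PSW2}, applied $m-1$ times) yields the upper bound; the lower bound is immediate since $\varXi(\bord\G)$ surjects with Lipschitz constant onto each factor $\xi_j(\bord\G)$, whose dimension is $\hC{\slroot^{(j)}}$ by Theorem \ref{hyperh=1}.

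Next I would identify $\xi^\t_\rho(\bord\G)$ with a suitable graph. For $\slroot_i\in\t$, the Tits-type representation $\Phi_{\slroot_i}$ has $\dim V_{\chi_{\Phi_{\slroot_i}}}=1$ and its highest weight is a multiple of $\peso_{\slroot_i}$, so the equivariant map $\xi^1_{\Phi_{\slroot_i}\circ\rho}$ is, up to the embedding $\zeta_{\Phi_{\slroot_i}}$ of Equation \eqref{maps}, nothing but the $\slroot_i$-component $\xi^{\{\slroot_i\}}_\rho$ of the limit map; moreover $\slroot^{(i)}$ (in the notation above, the functional whose growth governs $\Phi_{\slroot_i}\circ\rho$) agrees on $\calL_\rho$ with $l_{\slroot_i}\peso_{\slroot_i}$, which is proportional to $\peso_{\slroot_i}$ and hence $\hC{\slroot^{(i)}}=\hC{\peso_{\slroot_i}}$. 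Composing all the $\zeta_{\Phi_{\slroot_i}}$ with the natural map $\cal F_\t\to\prod_{i}\cal F_{\{\slroot_i\}}$ (which is an embedding, since $\t=\bigcup_i\{\slroot_i\}$ and a flag is determined by its images in the one-step flag manifolds), one sees that $\xi^\t_\rho(\bord\G)$ is bi-Lipschitz to the graph $\varXi(\bord\G)$ of the tuple $(\Phi_{\slroot_i}\circ\rho)_{\slroot_i\in\t}$, so the two sets have equal Hausdorff dimension.

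Combining these two steps, $\Hff(\xi^\t_\rho(\bord\G))=\Hff(\varXi(\bord\G))=\max_{\slroot_i\in\t}\hC{\slroot^{(i)}}=\max_{\slroot_i\in\t}\hC{\slroot_i}$, where in the last equality I write $\hC{\slroot_i}$ for $\hC{\peso_{\slroot_i}}$ as is implicitly done throughout when the representation is understood (equivalently $\hC{\slroot_i}=\lim_t\frac1t\log\#\{\g:\peso_{\slroot_i}(\cartan(\g))\leq t\}$). The main obstacle I anticipate is bookkeeping rather than conceptual: one must check carefully that the metric on $\cal F_\t$ induced by a Riemannian metric is bi-Lipschitz equivalent to the one pulled back from $\prod_i\P(V_{\slroot_i})$ via the $\zeta_{\Phi_{\slroot_i}}$'s on the (compact) limit set — this is where the requirement $\dim V_{\chi_{\Phi_{\slroot_i}}}=1$ and the properness/smoothness of the embedding \eqref{maps} enter — and that each $\Phi_{\slroot_i}\circ\rho$ is genuinely $\{\slroot_1\}$-Anosov and locally conformal so that Proposition \ref{p.coneinBall} and Theorem \ref{hyperh=1} apply; both are guaranteed by the hypothesis that $\Phi_{\slroot_i}\circ\rho$ is $(1,1,2)$-hyperconvex.
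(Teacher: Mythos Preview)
Your approach is exactly the one the paper has in mind: the paper's entire proof is the single sentence ``It is easy to generalize Proposition \ref{p.uppergeneral} to an arbitrary number of factors'', and you have spelled out that generalization correctly.

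There is, however, a bookkeeping slip in your identification of the functional $\slroot^{(i)}$. You write that $\slroot^{(i)}$ agrees on $\calL_\rho$ with $l_{\slroot_i}\peso_{\slroot_i}$, and then interpret $\hC{\slroot_i}$ as shorthand for $\hC{\peso_{\slroot_i}}$. Neither is correct. The highest weight $l_{\slroot_i}\peso_{\slroot_i}$ of $\Phi_{\slroot_i}$ governs the operator norm via Equation \eqref{eq:normayrep}, i.e.\ it computes $\cartan_1$, not $\slroot_1$. The first simple root of $\Phi_{\slroot_i}\circ\rho$ is the gap between the highest weight and the next one, namely $l_{\slroot_i}\peso_{\slroot_i}-(l_{\slroot_i}\peso_{\slroot_i}-\slroot_i)=\slroot_i$; this is exactly the computation used in the proof of Corollary \ref{nonC1}, where it is stated that $\slroot_1\big(\lambda(\Fund_\sroot g)\big)=\sroot\big(\lambda(g)\big)$. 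Hence $\hC{\slroot^{(i)}}=\hC{\slroot_i}$ directly, with $\hC{\slroot_i}$ meaning the entropy of the simple root $\slroot_i$ of $\SL(d,\K)$ --- not of the fundamental weight. With this correction your argument goes through verbatim.
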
	

\subsection{Proof of Theorem \ref{Hffconical}}\label{proofHffconical} The first inequality is established in \S\,\ref{s.lower}, the second inequality is proven in Proposition \ref{upper}, the third inequality follows from Lemma \ref{phi>min} and the fourth from Theorem \ref{hyperh=1}. The last equality was stablished in Proposition \ref{p.uppergeneral}.

\section{$\hol$-concavity and $\hol$-conicality: Final steps for the proof of Theorem \ref{LCdiff}}\label{teoLCdiff}

The goal of this section is to prove the following more general version of Theorem \ref{LCdiff}. As before, fix $\{\K,\ov\K\}\subset\{\R,\C,\H\}$ together with locally conformal representations $\rho:\G\to\SL(d,\K)$ and $\ov\rho:\G\to\SL(\ov d,\ov\K)$ of an arbitrary word-hyperbolic group $\G$. For $\hol\in(0,1]$ recall that $\Xi:\xi(\bord\G)\to\ov\xi(\bord\G)$ is \emph{$\hol$-concave} at $x\in\bord\G$ if there exists $y_k\to x$ such that the incremental quotients \begin{equation}\label{incrlimit}\frac{d_{\P}\big(\ov\xi(x),\ov\xi(y_k)\big)}{d_{\P}\big(\xi(x),\xi(y_k)\big)^\hol}\end{equation} are bounded away from $0$ and $\infty$ (independently of $k$). We also let $\Ext_{(\rho,\ov\rho)}^\hol$ be the set of $x\in\bord\G$  that are $\hol$-concavity points of $\Xi$. Finally, recall that $\rho$ and $\ov\rho$ are \emph{not gap-isospectral} if there exists $\g\in\G$ such that $\slroot(\cartan(\g))\neq\ov\slroot(\cartan(\ov\g)).$

\begin{thm}\label{tutti.LCdiff}
	Let $\rho, \ov\rho$ be locally conformal representations acting irreducibly, on $\K^d$ and ${\ov\K}{}^{\ov d}$ respectively, as real vector spaces, and that are not gap-isospectral. Consider any $\hol\in(0,1]$ with $\II_{\slroot}(\ov\slroot)>\hol>(\II_{\ov\slroot}(\slroot))^{-1}$, then
		\begin{itemize}\item[-] if $\{\K,\ov\K\}\subset\{\R,\C\}$ one has 
			\begin{alignat}{2}\label{holdineq}
				\hol\hC{\infty,\hol}\leq\Hff({\Ext}_{\rho,\ov\rho}^\hol)&\leq\min\{\hC{\infty,\hol},\hol\hC{\infty,\hol}+(1-\hol) \}\nonumber\\&<\min\{\hJ{\ov\slroot},\hJ{\slroot}/\hol\}\nonumber\\&\leq\Hff(\varXi(\bord\G))\\&=\max\{\hJ{\slroot},\hJ{\ov\slroot}\};\end{alignat}
	
			\item[-] if $\K=\H$ (resp. $\ov\K=\H$), Equation \eqref{holdineq} holds if we further assume that the real Zariski closure of $\rho(\G)$ (resp. of $\ov\rho(\G)$) does not have compact factors. \end{itemize}
\end{thm}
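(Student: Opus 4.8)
The plan is to reduce Theorem~\ref{tutti.LCdiff} to Theorem~\ref{Hffconical} by establishing two facts: first, that $\Ext_{\rho,\ov\rho}^\hol$ coincides (up to a null set for the relevant Patterson-Sullivan measure, and indeed up to countable decompositions for Hausdorff dimension purposes) with the set of $\hol$-conical points of the product representation $(\rho,\ov\rho)$; and second, that under the irreducibility and no-gap-isospectrality hypotheses the hypothesis of Theorem~\ref{Hffconical}, namely density in $\R^2$ of the group generated by $\{(\slroot(\lambda(\g)),\ov\slroot(\lambda(\ov\g))):\g\in\G\}$, is automatically satisfied. Granting these two reductions, the chain of inequalities in \eqref{holdineq} is exactly the content of Theorem~\ref{Hffconical}, so nothing further is needed.

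First I would prove the identification $\varXi(\Ext_{\rho,\ov\rho}^\hol)=\varXi(\{\hol\text{-conical}\})$. The key dictionary is Proposition~\ref{conetypesBalls}: for a geodesic ray $(\alpha_n)\to x$, the set $\xi(\alpha_n\cone^c_\infty(\alpha_n))$ is coarsely a ball of radius $e^{-\slroot_1(\cartan(\alpha_n))}=e^{-\slroot(\cartan(\alpha_n))}$ about $\xi(x)$, and similarly $\ov\xi(\alpha_n\cone^c_\infty(\alpha_n))$ is coarsely a ball of radius $e^{-\ov\slroot(\cartan(\ov\alpha_n))}$ about $\ov\xi(x)$. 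Choosing $y_k$ to be endpoints of geodesic rays passing through $\alpha_{n_k}$ for a well-chosen sequence of indices, the incremental quotient \eqref{incrlimit} is comparable to $e^{-\ov\slroot(\cartan(\ov\alpha_{n_k}))}/e^{-\hol\slroot(\cartan(\alpha_{n_k}))}=e^{-(\ov\slroot(\cartan(\ov\alpha_{n_k}))-\hol\slroot(\cartan(\alpha_{n_k})))}$; boundedness of this ratio away from $0$ and $\infty$ is exactly the defining inequality $|\hol\slroot(\cartan(\alpha_{n_k}))-\ov\slroot(\cartan(\ov\alpha_{n_k}))|\leq R$ of $\hol$-conicality (Definition~\ref{flat}). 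Conversely, for a genuine concavity point one must pass from the given sequence $(y_k)$ to a controlling geodesic ray: since $d_\P(\xi(x),\xi(y_k))\to 0$, the ray from $\id$ to $x$ meets, at the appropriate distances dictated by Proposition~\ref{conetypesBalls}, elements $\alpha_{n_k}$ whose Cartan projections control the two metric quantities up to uniform multiplicative error; the hypothesis that \eqref{incrlimit} stays bounded then forces the $\hol$-conical inequality along $(\alpha_{n_k})$. One should be mildly careful that a priori the $y_k$ need not lie in the cone type $\alpha_{n_k}\cone^c_\infty(\alpha_{n_k})$, but this is handled exactly as in the passage from conical sequences to geodesic rays (Lemma~\ref{conGeo}, Remark~\ref{r.conicalgeodesic}).

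Next I would handle the arithmeticity hypothesis. Here I would invoke the statement flagged in the introduction as Proposition~\ref{nonA}, whose role is precisely to deduce, from irreducibility and no-gap-isospectrality, that $\{(\slroot(\lambda(\g)),\ov\slroot(\lambda(\ov\g))):\g\in\G\}$ generates a dense subgroup of $\R^2$. When $\K$ or $\ov\K$ equals $\H$ one must rule out compact factors in the Zariski closure for this density to survive (a compact factor would not contribute to the Jordan projection and could destroy Zariski-density of the image in the non-compact part needed to apply Benoist's Theorem~\ref{densidad}); this is exactly why the extra hypothesis is imposed in the second bullet. With density in hand, Theorem~\ref{Hffconical} applies verbatim: its hypothesis ``$\II_\slroot(\ov\slroot)>\hol>1/\II_{\ov\slroot}(\slroot)$'' is our standing assumption on $\hol$, and its conclusion is literally \eqref{holdineq}.

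The main obstacle I anticipate is the careful bookkeeping in the first reduction: matching the ``two-sided incremental quotient bounded away from $0$ and $\infty$'' formulation of $\hol$-concavity with the ``exists an infinite subsequence along a geodesic ray'' formulation of $\hol$-conicality, making sure the coarse constants from Proposition~\ref{conetypesBalls} (which involve the inner radius $k_1$, outer radius $k_2$, and the minimal length $L$) interact correctly with the choice of the approximating points $y_k$ and that no point is accidentally lost or gained in passing between the two descriptions. Since Hausdorff dimension is insensitive to bi-Lipschitz changes of coordinate and the Riemannian metric on the flag space is bi-Lipschitz to the product projective metric on $\xi(\bord\G)\times\ov\xi(\bord\G)$, the transition from the product-metric statement of Theorem~\ref{Hffconical} to the Riemannian statement here is harmless and can be dispatched in one line.
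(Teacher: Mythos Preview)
Your proposal is correct and follows essentially the same approach as the paper: the paper's proof (\S\ref{ProofA}) is precisely the combination of Lemma~\ref{generalcase} (the identification $\Ext_{\rho,\ov\rho}^\hol=\{\hol\text{-conical points}\}$, proved exactly via Proposition~\ref{conetypesBalls} as you outline), Proposition~\ref{nonA} (non-arithmeticity from irreducibility and no-gap-isospectrality, with the compact-factor caveat over $\H$), and Theorem~\ref{Hffconical}. One small over-hedge: the paper establishes exact set equality in Lemma~\ref{generalcase}, not merely equality up to null sets or countable decompositions, so your ``up to'' qualifiers are unnecessary.
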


\subsection{Hyperplane conicality and the concavity condition}

We commence with a lemma relating $\hol$-conicality to the desired concavity properties of the equivariant map $\Xi:\xi(\bord\G)\to\ov\xi(\bord\G)$.

\begin{lemma}\label{generalcase} Let $\rho$ and $\ov\rho$ be locally conformal representations over $\K$ and $\ov\K$ respectively, and $\hol\in(0,1]$. Then one has $\{\hol\mathrm{-conical\ points\ of\ }(\rho,\ov\rho)\}=\Ext_{\rho,\ov\rho}^\hol.$
\end{lemma}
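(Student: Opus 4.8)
The plan is to show both inclusions by translating the metric incremental quotient \eqref{incrlimit} into a statement about the Cartan projection of the product representation, using Proposition \ref{conetypesBalls}, which says that pushed coarse cone types are coarsely balls of radius $e^{-\slroot_1(\cartan(\alpha_n))}$ (respectively $e^{-\ov\slroot_1(\cartan(\ov\alpha_n))}$) on the two projective spaces. First I would fix $x\in\bord\G$, a geodesic ray $(\alpha_n)_0^\infty$ with $\alpha_n\to x$, and record that since $\rho$ and $\ov\rho$ are locally conformal one has $\slroot_1=\slroot$ and $\ov\slroot_1=\ov\slroot$ on the relevant limit cone, so the cone types $\xi(\alpha_n\cone_\infty^c(\alpha_n))$ and $\ov\xi(\alpha_n\cone_\infty^c(\alpha_n))$ are coarsely balls of radii $e^{-\slroot(\cartan(\alpha_n))}$ and $e^{-\ov\slroot(\cartan(\ov\alpha_n))}$ about $\xi(x)$ and $\ov\xi(x)$ respectively.

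For the inclusion $\{\hol\text{-conical}\}\subset\Ext_{\rho,\ov\rho}^\hol$, suppose $x$ is $\hol$-conical, so by Definition \ref{flat} there is a subsequence $(n_k)$ with $\big|\hol\slroot(\cartan(\alpha_{n_k}))-\ov\slroot(\cartan(\ov\alpha_{n_k}))\big|\leq R$ for all $k$. Pick, using the first (inner) inclusion of Proposition \ref{conetypesBalls}, points $y_k\in\cone_\infty^c(\alpha_{n_k})$ with $\xi(\alpha_{n_k}y_k)$ roughly at distance $e^{-\slroot(\cartan(\alpha_{n_k}))}$ from $\xi(x)$; more precisely one should take $y_k$ realizing (up to the uniform constants $k_1,k_2$) the radius of the ball, so that $d_\P(\xi(x),\xi(\alpha_{n_k}y_k))\asymp e^{-\slroot(\cartan(\alpha_{n_k}))}$ and simultaneously $d_\P(\ov\xi(x),\ov\xi(\alpha_{n_k}y_k))\asymp e^{-\ov\slroot(\cartan(\ov\alpha_{n_k}))}$, the second comparison again coming from Proposition \ref{conetypesBalls} applied to $\ov\rho$ (here one uses that $\alpha_{n_k}y_k$ still lies in the pushed cone type, so both coarse-ball estimates apply to the same sequence of boundary points). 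Then the incremental quotient \eqref{incrlimit} is comparable to $e^{-\ov\slroot(\cartan(\ov\alpha_{n_k}))+\hol\slroot(\cartan(\alpha_{n_k}))}=e^{\pm O(R)}$, hence bounded away from $0$ and $\infty$ uniformly in $k$, and $\alpha_{n_k}y_k\to x$; this exhibits $x$ as an $\hol$-concavity point.

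For the reverse inclusion I would argue contrapositively: if $x$ is \emph{not} $\hol$-conical, then for every $R$ one has $\big|\hol\slroot(\cartan(\alpha_n))-\ov\slroot(\cartan(\ov\alpha_n))\big|> R$ for all large $n$ (using Lemma \ref{conGeo} to pass between conical sequences and subsequences of the geodesic ray, and the uniform boundedness of $\slroot(\cartan(\alpha_{n+1}))-\slroot(\cartan(\alpha_n))$ from Proposition \ref{p.Ben} to see the sign of the difference is eventually constant). Then for any $y_k\to x$, writing $y_k$ inside a cone type $\alpha_{n_k}\cone_\infty^c(\alpha_{n_k})$ with $n_k\to\infty$ maximal such that $y_k\in\alpha_{n_k}\cone_\infty^c(\alpha_{n_k})$, Proposition \ref{conetypesBalls} gives $d_\P(\xi(x),\xi(y_k))\asymp e^{-\slroot(\cartan(\alpha_{n_k}))}$ and $d_\P(\ov\xi(x),\ov\xi(y_k))\asymp e^{-\ov\slroot(\cartan(\ov\alpha_{n_k}))}$ up to uniform multiplicative constants, so the quotient \eqref{incrlimit} is comparable to $e^{\hol\slroot(\cartan(\alpha_{n_k}))-\ov\slroot(\cartan(\ov\alpha_{n_k}))}$, which tends to $0$ or $\infty$ as $n_k\to\infty$ by the failure of $\hol$-conicality; hence $x\notin\Ext_{\rho,\ov\rho}^\hol$. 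The main obstacle is the bookkeeping in this last step: making sure that for an \emph{arbitrary} sequence $y_k\to x$ one can choose the approximating indices $n_k$ so that \emph{both} coarse-ball comparisons of Proposition \ref{conetypesBalls} apply with constants independent of $k$, i.e. that $y_k$ genuinely sits near the ``shell'' of the cone type $\alpha_{n_k}\cone_\infty^c(\alpha_{n_k})$ in both projective factors simultaneously; this is handled by choosing $n_k$ as the last index for which $y_k\in\alpha_{n_k}\cone_\infty^c(\alpha_{n_k})$ and invoking the nested structure of cone types together with the uniform step bound on $\cartan$.
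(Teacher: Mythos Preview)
Your proposal is correct and follows essentially the same approach as the paper. The paper resolves your stated ``bookkeeping obstacle'' by working directly with the shell $\alpha_n\cone_\infty^{c}(\alpha_n)\setminus\alpha_{n+L}\cone_\infty^{c}(\alpha_{n+L})$: any $y\neq x$ lies in exactly one such shell, and Proposition~\ref{conetypesBalls} then gives simultaneous two-sided bounds $d_\P(\xi(x),\xi(y))\asymp e^{-\slroot(\cartan(\alpha_n))}$ and $d_\P(\ov\xi(x),\ov\xi(y))\asymp e^{-\ov\slroot(\cartan(\ov\alpha_n))}$ with uniform constants, which is exactly the device your ``maximal $n_k$'' is reaching for.
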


\begin{proof}  Let $(\alpha_i)_{i\in\N}$ denote a geodesic ray converging to $x$. Proposition \ref{conetypesBalls} gives constants $C_1,C_2, \ov C_1, \ov C_2$ and $L\in\N$ such that, for every $n\in\N$ and every $y_n\in \alpha_n\cone_{\infty}^{c}(\alpha_n)\setminus\alpha_{n+L}\cone_\infty^{c}(\alpha_{n+L})$, it holds \begin{alignat}{3}\label{e.nestcone} C_1e^{-\slroot(\cartan(\alpha_n))} & <d_\P\big(\xi(y_n),\xi(x)\big) &<C_2e^{-\slroot(\cartan(\alpha_n))}, \nonumber \\ \ov C_1e^{-\ov{\slroot}(\cartan(\ov\alpha_n))}& <d_\P\big(\ov\xi(y_n),\ov\xi(x)\big) &<\ov C_2e^{-\ov\slroot(\cartan(\ov\alpha_n))}.\end{alignat}

Assume first that $x$ is $\hol$-conical. By Definition \ref{flat} we obtain a geodesic ray $(\alpha_i)_0^\infty,$ an infinite set of indices $\I\subset\N$ and a number $R,$ such that for all $k\in\I$ one has 
\begin{equation}\label{e.conical}
|\hol\slroot(\cartan(\alpha_k))-\ov\slroot(\cartan(\ov\alpha_k))|<R.
\end{equation} 

For each such $k$ we choose a point $y_k\in \alpha_k\cone_\infty^{c}(\alpha_k)\setminus \alpha_{k+L}\cone_\infty^{c}(\alpha_{k+L})$. By construction $y_k$ converges to $x$. Combining both equations, for every $k\in \I$ it holds $$ e^{-R}\frac{\ov C_1}{ {C_2}^\hol}\leq \frac{d_\P\big(\ov\xi(y_k),\ov\xi(x)\big)}{d_\P\big(\xi(y_k),\xi(x)\big)^\hol} \leq e^{R}\frac{\ov C_2}{ {C_1}^\hol},$$ so the incremental quotient \eqref{incrlimit} is uniformly far from $0$ and $\infty$. Whence $\{\hol-\textrm{conical points}\}\subset\Ext_{\rho,\ov\rho}^\hol$.

Conversely, assume that $x$ is not $\hol$-conical. The Cartan projections of two consecutive elements $\alpha_i$ and $\alpha_{i+1}$  make uniformly bounded gaps (Proposition  \ref{p.Ben}), and thus there exists $C$ such that for all $n\in\N$ one has $$\big|\slroot\big(\cartan(\alpha_{n+1})\big)-\slroot\big(\cartan(\alpha_{n})\big)\big|<C.$$ As a consequence, we can assume,  up to switching the roles of $\rho$ and $\ov\rho$, that for any $R$ there exists $n_R$ such that for every  $n> n_R$ one has $$ \hol\slroot\big(\cartan(\alpha_{n})\big)-\ov{\slroot}\big(\cartan(\ov\alpha_{n})\big)>R.$$ 
In turn this implies, thanks to Equation \eqref{e.nestcone}, that for every $y\in \alpha_{n_R}\cone_\infty^{c}(\alpha_{n_R})$, 

$$ \frac{d_\P\big(\ov\xi(y),\ov\xi(x)\big)}{d_\P\big(\xi(y),\xi(x)\big)^\hol}\leq e^{-R}\frac {\ov C_2}{  {C_1}^\hol}.$$

Since $R$ is arbitrary, and the sets $\alpha_{n_R}\cone_\infty^{c}(\alpha_{n_R})$ form a system of neighborhoods of the point $x$, we deduce that the limit in Equation \eqref{incrlimit} exists and equals $0$. This concludes the proof. \end{proof}

\subsection{Non-arithmeticity of periods}

In this section we establish a non-arithme\-ti\-ci\-ty condition, necessary to apply later Theorem \ref{Hffconical}. This is established in a rather general setting. Recall that a subgroup $\grupo<\SL(d,\K)$ is \emph{$\K$-proximal} if it contains a $\K$-proximal element, i.e. there exists $g\in\grupo$ such that $\slroot_1(\lambda(g))>0$.

\begin{prop}\label{nonA} Let $\grupo$ be a finitely generated group. Let $\rho:\grupo\to\SL(d,\K)$ and $\ov\rho:\grupo\to\SL(\ov d,\ov\K)$ be two $\K$-proximal representations that act irreducibly on $\K^d$ and $\ov\K{}^{\ov d}$ respectively, as real vector spaces. Assume there exists $\g\in\grupo$ such that $\slroot_1(\lambda(\rho \g))\neq\ov\slroot_1(\lambda(\ov \rho \g))$. If  $\{\K,\ov\K\}\subset\{\R,\C\}$, then the group generated by the pairs $$\Big\{\Big(\slroot_1\big(\lambda(\rho \g)\big),\ov\slroot_1\big(\lambda(\ov \rho \g)\big)\Big):\g\in\grupo\Big\}$$ is dense in $\R^2$.  If $\K=\H$ we further assume that the Zariski closure over $\R$ of $\rho(\grupo)$ has no compact factors, and the same for $\ov\rho(\grupo)$ if moreover $\ov\K=\H$, then the same conclusion holds.\end{prop}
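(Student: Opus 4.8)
The plan is to pass to Zariski closures, invoke Benoist's density theorem for Jordan projections (Theorem~\ref{densidad}), and reduce the statement to the elementary fact that two proportional simple roots of a semisimple group coincide; the one‑element hypothesis will only be used on the last line.

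First I would set up the closures. Let $\sf G_\rho$ and $\sf G_{\ov\rho}$ be the Zariski closures over $\R$ of $\rho(\grupo)$ and $\ov\rho(\grupo)$. Irreducibility over $\R$ forces them to be reductive (a normal unipotent subgroup would act trivially, hence be trivial); they have no central torus (the centre of an irreducibly acting subgroup of $\SL$ is finite) and, when $\K\in\{\R,\C\}$, no compact factor (such a factor would act on a subspace of complex dimension $\geq 2$ with unimodular spectrum, making $\slroot_1(\lambda(\rho\g))\equiv 0$ and contradicting $\K$‑proximality); when $\K=\H$ the extra hypothesis supplies the same conclusion. So $\sf G_\rho,\sf G_{\ov\rho}$ are semisimple of non‑compact type, and so is the Zariski closure $\sf H\subseteq\sf G_\rho\times\sf G_{\ov\rho}$ of $(\rho,\ov\rho)(\grupo)$: it surjects onto both factors, and any normal unipotent, central torus or compact factor of $\sf H$ would have trivial image in the two non‑compact semisimple projections, hence be trivial. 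Write $\ell_{\sf H}(\g)=\lambda_{\sf H}\big((\rho,\ov\rho)\g\big)\in\a_{\sf H}$; by construction $(\rho,\ov\rho)(\grupo)$ is Zariski‑dense in $\sf H$.

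Next I would express the ``first gap'' as a piecewise‑linear function of $\ell_{\sf H}(\g)$. Viewing $V=\K^d$ as an $\sf G_\rho$‑representation, $\K$‑proximality gives $\dim V_{\chi_V}=1$, so by \eqref{eq:spectralrep} and the combinatorics of restricted weights, with $\t_V=\{\sroot\in\simple:\langle\chi_V,\sroot\rangle\neq0\}$ as in Definition~\ref{trep},
\[
f(\g):=\slroot_1\big(\lambda(\rho\g)\big)=\chi_V(\ell_{\sf H}\g)-\max_{\chi\neq\chi_V}\chi(\ell_{\sf H}\g)=\min_{\sroot\in\t_V}\sroot(\ell_{\sf H}\g),
\]
the pullback to $\a_{\sf H}$ of a concave, positively homogeneous, piecewise‑linear function whose linear pieces are simple roots of $\sf H$ (those supported on the $\sf G_\rho$‑part); in the situations relevant to this paper $\t_V$ is a singleton and $f$ is genuinely linear, but in general it is a minimum. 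Define $\bar f$ from $\ov\rho$ in the same way. Then the subgroup $\Gamma\leq\R^2$ of the statement is generated by $\{(f(\g),\bar f(\g)):\g\in\grupo\}$, and $f,\bar f\not\equiv 0$ by $\K$‑, resp.\ $\ov\K$‑proximality.

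Finally I would close the argument. A closed subgroup of $\R^2$ that is not all of $\R^2$ lies in $\psi^{-1}(\text{cyclic})$ for some $\psi\neq0$, so if $\Gamma$ is not dense there are $(a,b)\neq0$ and $t\geq0$ with $af(\g)+b\bar f(\g)\in t\Z$ for every $\g$. By Theorem~\ref{densidad} the limit cone $\cal L$ of $(\rho,\ov\rho)(\grupo)$ in $\a_{\sf H}$ has non‑empty interior, and (Benoist, e.g.\ through Zariski‑dense Schottky sub‑semigroups) the Jordan projections $\{\ell_{\sf H}(\g)\}$ are dense in $\inte\cal L$. Choose a cell $C$ of the common refinement of the two polyhedral subdivisions with $\inte C\cap\inte\cal L\neq\emptyset$; on $C$ one has $af+b\bar f=a\,\sroot+b\,\bb$ for simple roots $\sroot,\bb$ of $\sf H$, and the $\ell_{\sf H}(\g)$ lying in $\inte C$ are dense in the open set $\inte C\cap\inte\cal L$ while being contained in $(a\,\sroot+b\,\bb)^{-1}(t\Z)$; since level sets of a nonzero linear form are nowhere dense, $a\,\sroot+b\,\bb\equiv0$, so $a,b\neq0$ and $\sroot,\bb$ are proportional, hence equal (distinct simple roots are linearly independent), hence $b=-a$ and $f=\bar f$ on $C$. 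Running the same argument on every cell meeting $\inte\cal L$ and using that $\inte\cal L$ is connected forces $f=\bar f$ on all of $\cal L$, i.e.\ $\slroot_1(\lambda(\rho\g))=\ov\slroot_1(\lambda(\ov\rho\g))$ for all $\g$, contradicting the hypothesis; hence $\Gamma$ is dense. The fiddliest points — and the place I expect the real work — are establishing that the Jordan projections are dense in $\inte\cal L$ and checking that the linear pieces of $f$ are simple roots of $\sf H$ itself (not merely of $\sf G_\rho$), since it is proportionality‑implies‑equality for simple roots of $\sf H$ that pins the constant to $1$; the quaternionic no‑compact‑factor hypothesis is exactly what keeps $\sf H$ of non‑compact type so that Benoist's theorem applies.
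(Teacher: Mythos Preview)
Your overall plan matches the paper's: pass to the semisimple Zariski closures, express each first gap as the piecewise-linear $\min$ of simple roots, restrict to a cone cell where both are linear, and feed in Benoist's density results. The paper does this directly via Lemma~\ref{semi} after an explicit Goursat decomposition $\sf G=\sf L\times\sf H$, $\ov{\sf G}=\sf L\times\ov{\sf H}$; you argue by contradiction. There is, however, a genuine gap.

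The assertion that ``the Jordan projections $\{\ell_{\sf H}(\g)\}$ are dense in $\inte\cal L$'' is false in general: already for any convex-cocompact subgroup of a rank-one group the length spectrum is discrete, and in higher rank Jordan projections need not fill open sets either. What Benoist actually gives (Theorem~\ref{densidad}) is that the \emph{group spanned} by the Jordan projections is dense in $\a_{\sf H}$. To exploit this you must first, as the paper does, invoke Benoist \cite[Prop.~5.1]{limite} to produce a Zariski-dense Schottky sub-semigroup $\grupo'$ whose limit cone lies inside a single cell $C$ of the fan; only then does $(f,\bar f)=( \sroot,\bb)$ hold on all of $\{\ell_{\sf H}(\g'):\g'\in\grupo'\}$, and density of the group this set spans in $\a_{\sf H}$ forces either $a\sroot+b\bb\equiv0$ (your contrapositive route) or directly that $(\sroot,\bb):\a_{\sf H}\to\R^2$ has dense image when $\sroot\neq\bb$ (the paper's route). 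Once repaired this way, your argument becomes the paper's, phrased contrapositively.

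The second point you flag --- that the linear pieces of $f,\bar f$ are simple roots of your $\sf H$ --- is precisely where the paper's work lies, and it is not a side issue: without it you only conclude $a\sroot+b\bb\equiv0$ for two forms that might be proportional with ratio $\neq1$, yielding $f=c\,\bar f$ for some $c>0$ and no contradiction with the single inequality in the hypothesis. The paper's decomposition $\scr r:\grupo\to\sf L\times\ov{\sf H}\times\sf H$ with Zariski-dense image \emph{is} the Goursat analysis for your $\sf H$, and the bookkeeping with $\Theta,\Theta_{\sf L},\ov\Theta_{\sf L}$ is exactly what guarantees that, on a well-chosen cone, the relevant $\sroot$ and $\bb$ lie in \emph{disjoint} subsets of $\simple_{\sf L\times\ov{\sf H}\times\sf H}$, hence are linearly independent.
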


To prove the proposition we need Lemmas \ref{semisimple} and \ref{semi} below.

\begin{lemma}\label{semisimple} Let $\K$ be either $\R$ or $\C$. Let $\grupo<\SL(d,\K)$ be a subgroup acting irreducibly on $\K^d$ as a real vector space and assume $\grupo$ contains a $\K$-proximal element. Then the real Zariski closure of $\grupo$ is semi-simple, has finite center and without compact factors. 
\end{lemma}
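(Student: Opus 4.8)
The plan is to work with $G$, the real Zariski closure of $\grupo$ in $\SL(d,\K)$, and with $\ge=\lie(G)$. As $\grupo$ is Zariski-dense in $G$ and acts $\R$-irreducibly on $\K^{d}$ (viewed as a real vector space), so does $G$; moreover $G\hookrightarrow\SL(d,\K)$ acts faithfully. The first step is reductivity: the unipotent radical $R_{u}(G)$ is normal and unipotent, hence has a non-zero fixed subspace in $\K^{d}$; this subspace is $G$-invariant, so it is everything by irreducibility, so $R_{u}(G)=\{\id\}$ by faithfulness, and $G$ is reductive.

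The crucial device is to extract from the $\K$-proximal element $g$ a \emph{complex} irreducible $G$-module on which $g$ is still proximal. Set $W:=\K^{d}$ when $\K=\C$ and $W:=\K^{d}\otimes_{\R}\C$ when $\K=\R$; in either case $W$ is a faithful complex $G$-module with $\ge\hookrightarrow\mathfrak{sl}(W)$, and I write $\Fund\colon G\to\GL(W)$ for this action. The assumption $\slroot_{1}(\lambda(g))>0$ says exactly that $g$ has on $\K^{d}$ a unique eigenvalue $z$ of maximal modulus, of algebraic multiplicity one, with one-dimensional eigenspace $L$ (and $z\in\R$ when $\K=\R$, since non-real eigenvalues of a real matrix come in conjugate pairs of equal modulus). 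I would then show that the real commutant $\operatorname{End}_{G}(\K^{d})$ (of $\K^{d}$ as a real $G$-module) is minimal, i.e.\ $\R$ if $\K=\R$ and $\C$ if $\K=\C$: by Schur it is $\R$, $\C$ or $\H$, and were it larger one could choose a $G$-equivariant $\R$-linear $J$ with $J^{2}=-\id$ (conjugate-$\C$-linear, when $\K=\C$); such a $J$ commutes with $g$, hence preserves $L$ — for $\K=\C$ one first notes $\bar z=z$, forced by uniqueness of $z$ and conjugate-linearity of $J$ — so $J|_{L}$ is an $\R$-linear (resp.\ conjugate-$\C$-linear) self-map of a line squaring to $-\id$, which is absurd. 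Consequently $W$ is complex irreducible and $\Fund(g)$ is proximal on $W$.

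From $W$ the first two conclusions follow quickly. \emph{Finite center:} by Schur each $z\in Z(G)$ acts on $W$ by a scalar, which must be a $d$-th root of unity as $z\in\SL(d,\K)$ (and is real, hence $\pm1$, when $\K=\R$); so $Z(G)$ is finite. \emph{Semisimplicity:} in the reductive decomposition $\ge=\mathfrak z(\ge)\oplus[\ge,\ge]$, any $Z\in\mathfrak z(\ge)$ commutes with the $\ge$-action on $W$, hence acts by a scalar $c$; since $Z\in\mathfrak{sl}(W)$, its trace $d\,c$ vanishes, so $Z$ acts trivially on $W$ and $Z=0$ by faithfulness; thus $\ge=[\ge,\ge]$ is semisimple.

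The step I expect to demand the most care is the absence of compact factors. Write $\ge=\bigoplus_{j}\ge_{j}$ into simple ideals and suppose some $\ge_{i}$ is of compact type. Fix a Cartan subspace $\a\subset\ge$ with Weyl chamber $\a^{+}$; a compact ideal meets $\a$ trivially and commutes with it, so $\ge_{i}$ preserves every restricted weight space of $\Fund$, in particular the highest weight space $W_{\chi_\Fund}$. Proximality of $\Fund(g)$ forces $\dim_{\C}W_{\chi_\Fund}=1$: for each weight $\chi$ of $\Fund$ one has $\chi_\Fund-\chi=\sum_{\sroot\in\simple}n_{\sroot}\sroot$ with $n_{\sroot}\in\N$, and $\lambda(g)\in\ap$, so $\chi(\lambda(g))\le\chi_\Fund(\lambda(g))$; since the modulus of every eigenvalue of $\Fund(g)$ equals $e^{\chi(\lambda(g))}$ for some weight $\chi$ (use the Jordan decomposition $g=g_{e}g_{ss}g_{u}$), the maximal-modulus eigenspace of $\Fund(g)$ — a line, by proximality — contains $W_{\chi_\Fund}$. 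A simple Lie algebra acting on a line acts trivially, so $\ge_{i}$ kills $W_{\chi_\Fund}$; and since $\ge_{i}$ commutes with every negative restricted root space $\ge_{-\beta}$, $\beta\in\roots^{+}$ (these lie in the non-compact part of $\ge$), and $W$ is spanned by the iterated images of $W_{\chi_\Fund}$ under such root spaces (the level-by-level recovery of restricted weights recalled in \S\ref{representaciones}), $\ge_{i}$ acts trivially on all of $W$, contradicting faithfulness — so there are no compact factors. The bookkeeping I would watch is the $\K=\R$ versus $\K=\C$ split (in particular that the contradiction above is extracted on the \emph{real} eigenline $L$), and — unless one first replaces $G$ by $G^{\circ}$ — that the weight-space arguments are unaffected by disconnectedness, which holds because the components of $G$ merely permute the extremal weight lines of $W$, each still one-dimensional.
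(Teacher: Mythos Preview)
Your proof is correct and largely parallel to the paper's for reductivity, finite center, and semisimplicity (the paper is terser, invoking Schur directly on $\C^d$ and referring to \cite{entropia} for $\K=\R$, while you extract an absolutely irreducible complex module $W$ via an explicit commutant argument, which is a clean way to unify the two cases). The genuine divergence is in the \emph{absence of compact factors}. The paper argues at the group level: writing a power of the proximal element as $g^n=kh$ with $k$ in the compact part $K$ and $h$ in the non-compact part $H$, one sees that $K$ fixes the attracting line $h_+=g_+$; by $G$-irreducibility $\C^d$ is spanned by conjugates of such $K$-invariant lines, so $K$ is simultaneously diagonalized, hence abelian, hence central, hence trivial. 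You instead work infinitesimally: a compact ideal $\ge_i$ commutes with the Cartan subspace and therefore preserves the highest restricted weight space, which proximality forces to be one-dimensional; a simple ideal acting on a line acts trivially, and commutation with $\mathfrak n^-$ propagates this to all of $W$. Both routes are valid. The paper's argument is more elementary and is insensitive to disconnectedness of $G$ without further comment, whereas yours is more structural and --- as you correctly flag --- needs a short extra step when $W$ is not $G^\circ$-irreducible: the component group $G/G^\circ$ permutes the $G^\circ$-irreducible summands, and since the dimension of the highest weight space is a conjugation invariant, each summand has one-dimensional highest weight space, so $\ge_i$ kills every summand and hence all of $W$.
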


\begin{proof} If $\K=\R$ the Lemma is the content of S. \cite[Lemma 8.6]{entropia} and the proof over $\C$ is a slight modification of the latter. Indeed, let $\sf G$ be the Zariski closure of $\rho(\grupo)$ over the reals, by the irreducibility assumption it is a reductive (real-algebraic) group. By Schur's Lemma the elements commuting with $\grupo$ consist only on homotheties, but since we're in special linear group one has that the center of $\sf G$ is finite.

The group $\sf G$ is then semi-simple and we let $K$ be the identity component of the product of all the compact simple factors of $\sf G$. We also let $H$ be the identity component of the product of all the non-compact simple factors of $\sf G$. The groups $H$ and $K$ commute and one has $HK$ has finite index in $\sf G$.

Consider a proximal $g\in\sf G$, up to a fixed power we may write $g=kh$ with $k\in K$ and $h\in H$. Since $K$ is compact, its eigenvalues have modulus one so we conclude that $h$ is proximal and that $g_+=h_+$. The attracting line of $h$ is thus invariant under $K$. Since $K$ is connected, an element of $ K$ acts on $h_+$ as multiplication by some element of $\circle$. 

By irreducibility we may find a basis of $\C^d$ consisting on fixed attracting lines of proximal elements of $H$. This basis simultaneously diagonalizes $K$, so we get an injective map from $K$ to a compact group isomorphic to a $d$-dimensional torus. Consequently $K$ is abelian, and since it commutes with $H$ we conclude that $K$ is contained in the identity component of the center of $\sf G$, which we proved earlier to be trivial.\end{proof}

\begin{lemma}\label{semi} Let $G$ be a semi-simple real-algebraic Lie group with finite center and no compact factors. Fix $\vartheta,\ov\vartheta\subset\simple_G$ two non-empty subsets with $\vartheta\cap\ov\vartheta=\emptyset$. Let $\grupo$ be a group and $\scr r:\grupo\to G$ a representation with Zariski-dense image. Then, for every closed cone with non-empty interior $\scr C\subset\inte\calL_{\scr r(\grupo)}$, the group spanned by the pairs $$\Big\{\big(\min_{\sigma\in\vartheta}\sigma\big(\lambda(\scr r g)\big),\min_{\ov\sigma\in\ov\vartheta}\ov\sigma\big(\lambda(\scr r g)\big)\big):g\in\grupo\textrm{ and }\lambda(\scr rg)\in\scr C\Big\}$$ is dense in $\R^2$.
\end{lemma}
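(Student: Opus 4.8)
The plan is to reduce the statement to a known density criterion for the Jordan projections of a Zariski-dense subgroup, namely the second part of Theorem \ref{densidad} (Benoist), applied not to $G$ itself but to an auxiliary product. First I would observe that the two functionals $\sigma_\vartheta := \min_{\sigma\in\vartheta}\sigma$ and $\ov\sigma_{\ov\vartheta} := \min_{\ov\sigma\in\ov\vartheta}\ov\sigma$ are piecewise-linear on $\a^+$, and that since $\vartheta\cap\ov\vartheta=\emptyset$ and $G$ has no compact factors, these functionals are ``independent'' in the strong sense that the linear map $\a^+\to\R^2$, $v\mapsto(\sigma_\vartheta(v),\ov\sigma_{\ov\vartheta}(v))$, restricted to the interior of any chamber-face decomposition, has image with non-empty interior; this uses that $\inte\calL_{\scr r(\grupo)}$ has non-empty interior (Theorem \ref{densidad}) so we can fix a single chamber-like region $\scr C$ on which both functionals are honestly linear, say $\sigma_\vartheta|_{\scr C}=\sigma_{i_0}$ for a fixed $\sigma_{i_0}\in\vartheta$ and $\ov\sigma_{\ov\vartheta}|_{\scr C}=\ov\sigma_{j_0}$ for a fixed $\ov\sigma_{j_0}\in\ov\vartheta$. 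After shrinking $\scr C$ I may thus assume we are computing the group generated by $\{(\sigma_{i_0}(\lambda(\scr rg)),\ov\sigma_{j_0}(\lambda(\scr rg))):\lambda(\scr rg)\in\scr C\}$.

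Next I would invoke the strong form of Benoist's density theorem: for a Zariski-dense subgroup $\Delta<G$, not only does the group generated by $\{\lambda(g):g\in\Delta\}$ lie densely in $\a$ (Theorem \ref{densidad}), but in fact one can arrange the Jordan projections to lie densely in $\a$ \emph{while remaining inside any prescribed open subcone of $\inte\calL_\Delta$} --- this follows by combining Theorem \ref{densidad} with the openness of the limit cone and a standard ping-pong/products-of-proximals argument (Benoist \cite{benoist2}, or the argument reproduced in the proof of Lemma \ref{semisimple} and around Remark \ref{wallsAnosov}). Concretely: given any two elements $g_1,g_2$ with Jordan projections in $\scr C$ generating directions that span a $2$-plane transverse to $\ker\sigma_{i_0}\cap\ker\ov\sigma_{j_0}$, one considers long words $g_1^{n}g_2^{m}$; their Jordan projections are $n\lambda(g_1)+m\lambda(g_2)+O(1)$ and stay in $\scr C$ for $n,m$ large in a fixed ratio range, so the pair $(\sigma_{i_0},\ov\sigma_{j_0})$ evaluated on them realizes a sub-semigroup of $\R^2$ of rank $2$ whose generators can be taken arbitrarily close together, forcing density of the generated group. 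The existence of a pair $g_1,g_2$ with the required transversality is exactly where Zariski-density and the no-compact-factor hypothesis enter: if all Jordan projections landed on a single line modulo $\ker\sigma_{i_0}\cap\ker\ov\sigma_{j_0}$, the limit cone would be contained in a hyperplane, contradicting Theorem \ref{densidad}; and the no-compact-factor hypothesis ensures $\a$ is genuinely the Cartan subspace of the semisimple part so that $\sigma_{i_0},\ov\sigma_{j_0}$ do not both vanish on $\calL_{\scr r(\grupo)}$.

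The main obstacle I anticipate is the bookkeeping in the previous paragraph: making precise the ``stays inside $\scr C$'' control for products of proximal elements and verifying that the two linearized functionals $\sigma_{i_0},\ov\sigma_{j_0}$ are linearly independent on the span of $\calL_{\scr r(\grupo)}$ (equivalently, that their common kernel does not contain the limit cone). For the latter one uses $\vartheta\cap\ov\vartheta=\emptyset$: if $\sigma_{i_0}$ and $\ov\sigma_{j_0}$ were proportional as functionals on $\a$ they would have to be proportional as simple roots of $G$, impossible for distinct simple roots; and if they were merely dependent on the subspace spanned by $\calL_{\scr r(\grupo)}$ --- which for Zariski-dense $\scr r(\grupo)$ is all of $\a$ by Theorem \ref{densidad} --- they would be dependent on $\a$, the same contradiction. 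Thus the generated subgroup of $\R^2$ is not contained in any line; combined with the density of a suitable rank-$2$ sub-semigroup with arbitrarily small generators (a one-dimensional Kronecker argument applied twice, or directly: a sub-semigroup of $\R^2$ not contained in a line whose generators accumulate at $0$ generates a dense subgroup), the lemma follows. Finally one records that the ``$\min$'' over $\vartheta$ and over $\ov\vartheta$ only makes the set of available $g$'s larger than the ones with $\lambda(\scr rg)\in\scr C$, so density of the smaller generating set suffices, completing the proof.
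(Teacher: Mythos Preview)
Your high-level plan matches the paper's: linearize the two piecewise-linear functionals on a subcone $\scr C'$ where they agree with fixed simple roots $\sroot\in\vartheta$ and $\ov\bb\in\ov\vartheta$, observe that $(\sroot,\ov\bb):\a\to\R^2$ is surjective since distinct simple roots are linearly independent, and then push Benoist's density of Jordan projections through this linear map. The paper executes this by citing Benoist \cite[Proposition 5.1]{limite} to produce a Zariski-dense Schottky sub-semigroup $\grupo'<\grupo$ with $\calL_{\scr r(\grupo')}=\scr C'$, and then applies Theorem \ref{densidad} to $\grupo'$ to get density in $\a$, hence in $\R^2$.

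Your ``Concretely'' paragraph, however, does not work as a substitute for this citation. With only two elements $g_1,g_2$, even granting the approximate additivity $\lambda(g_1^ng_2^m)=n\lambda(g_1)+m\lambda(g_2)+O(1)$ (which itself requires Schottky-type transversality to justify), the images under $(\sroot,\ov\bb)$ generate, up to bounded error, the \emph{lattice} $\Z v_1+\Z v_2$ in $\R^2$, which is discrete, not dense. Your claim that the generators ``accumulate at $0$'' has no mechanism behind it: the elements $g_1^ng_2^m$ have Jordan projections going to infinity, not to zero, and there is no Kronecker-type irrationality available from just two generators. Density of the group generated by $\{\lambda(\scr r\g):\g\in\grupo'\}$ in $\a$ is a genuinely nontrivial fact about Zariski-dense semigroups (this is exactly the content of Theorem \ref{densidad}), and one needs the sub-semigroup to be Zariski-dense, which two elements generically will not give. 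So the missing ingredient is precisely Benoist's Proposition 5.1: replace your two-element construction by the Zariski-dense Schottky sub-semigroup with prescribed limit cone, and then Theorem \ref{densidad} applied to \emph{that} sub-semigroup, followed by the surjection $(\sroot,\ov\bb)$, finishes the proof in one line.
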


\begin{proof} Define the piecewise linear maps $\slroot,\ov\slroot:\a^+\to\R$ by:
\begin{alignat*}{2}\slroot(v) & =\min\big\{\sigma(v):\sigma\in\vartheta\big\}\nonumber\\ \ov\slroot(v) & =\min\big\{\ov\sigma(v):\ov\sigma\in\ov\vartheta\big\}.\end{alignat*}

The vanishing set of the difference $\slroot-\ov\slroot$ is contained the union of $\ker(\sroot-\bb)$ for arbitrary $\sroot\in\vartheta$ and $\bb\in\ov\vartheta$. Since $\vartheta$ and $\ov\vartheta$ are disjoint, this is a union of hyperplanes of $\a$, from which we deduce that the set of zeroes of $\slroot-\ov\slroot$ has empty interior.

Since $\scr C\subset \inte\calL_{\scr r(\grupo)}$ has non-empty interior, the difference $\slroot-\ov\slroot$ does not identically vanish on $\scr C$. Since $\slroot$ and $\ov\slroot$ are piecewise linear, we can choose a possibly smaller closed cone with non-empty interior $$\scr C'\subset\scr C,$$ and $\sroot\in\vartheta$, $\ov\bb\in\ov\vartheta$ such that for all $v\in\scr C'$ one has $$\slroot\times\ov\slroot(v):=(\slroot(v),\ov\slroot(v))=(\sroot(v),\ov\bb(v)).$$ Since $\sroot$ and $\bb$ are distinct simple roots the map $ (\sroot,\bb):\a\to\R^2$ is surjective.

By Benoist \cite[Proposition 5.1]{limite} there exists a sub-semigroup $\grupo'<\grupo$ such that $\scr r(\grupo')$ is a Zariski-dense Schottky  semi-group with $\calL_{\scr r(\grupo')}=\scr C'$. In particular, for all $\g\in\grupo'$ one has $$\slroot\times\ov\slroot\big(\lambda(\scr r\g)\big)=\big(\sroot\big(\lambda(\scr r\g)\big),\ov\bb\big(\lambda(\scr r\g)\big)\big).$$ By Benoist's Theorem \ref{densidad}, stating that the group generated by the Jordan projections $\lambda(\scr r\g)$, for $\g\in\grupo'$, is dense in $\a$, we conclude that the group spanned by $$\Big\{\Big(\big(\sroot\big(\lambda(\scr r\g)\big),\ov\bb\big(\lambda(\scr r\g)\big)\big)\Big):\g\in\grupo'\Big\}$$ is dense in $\R^2$, giving in turn the desired conclusion.\end{proof}

\begin{proof}[Proof of Proposition \ref{nonA}]
Denote by $\sf G$ and $\ov{\sf G}$ the Zariski closures of $\rho(\grupo)$ and $\ov\rho(\grupo)$ respectively. Both $\sf G$ and $\ov{\sf G}$ are semi-simple, have finite center, and don't have compact factors: if $\{\K,\ov\K\}\subset\{\R,\C\}$ then this is the content of Lemma \ref{semisimple}, if either $\K$ and/or $\ov\K$ equals $\H$ then this is an assumption. We let $\iota:\grupo\to\sf G$ and $\ov\iota:\grupo\to\ov{\sf G}$ be the respective inclusions.

If we let $\phi:\sf G\to\SL(d,\K)$ and $\ov\phi:\ov{\sf G}\to\SL(\ov d,\ov\K)$ be the associated real representations, so that $\rho=\phi\circ\iota$ and $\ov\rho=\ov\phi\circ\ov\iota$, we have from \S\ref{representaciones}  two subsets of simple roots $\t:=\t_\phi$ and $\ov\t:=\t_{\ov\phi}$ such that for all $a\in\a_\sf G^+$ and $b\in\a_{\ov{\sf G}}^+$ one has \begin{alignat}{2}\label{piecewise}\slroot(a):=\slroot_1(\phi(a)) & =\min\big\{\sroot(a):\sroot\in\t\big\}\nonumber\\ \ov\tau(b):=\ov\slroot_1(\ov\phi (b)) & =\min\big\{\ov\sroot(b):\ov\sroot\in\ov\t\big\}.\end{alignat} In particular, for every $\g\in\grupo$ one has  $\slroot_1(\lambda(\g))=\slroot(\lambda_\sf G(\iota\g))$, and similarly for $\ov\rho$.

Since $\phi$ and $\ov\phi$ are faithful, $\t$ and $\ov\t$ contain at least one root of each factor of, respectively, $\sf G$ and $\ov{\sf G}$. If $\vartheta\subset\t$ then we let $$\slroot^{\vartheta}(v)=\min_{\sigma\in\vartheta}\sigma(v),\ v\in\a_{\sf G}.$$

If $\sf H$ is a non-trivial product of simple factors of $\sf G$ then we let $\iota_\sf H:\grupo\to\sf H$ be the composition of $\iota$ with the projection of $\sf G$ onto $\sf H$. By Zarisk-density of $\iota(\grupo)$, each representation $\iota_\sf H$ has Zariski-dense image (though unlikely to be discrete). We also let $$\t_{\sf H}  =\t\cap\simple_{\sf H}.$$ Each $\t_\sf H$ is non-empty. We analogously define $\ov\iota_{\ov{\sf H}}$, $\t^{\ov{\sf H}}$ and $\ov\slroot^{\ov{\sf H}}$.

We now let $\sf L$ be the largest product of simple factors, simultaneously of $\sf G$ and $\ov{\sf G}$, so that $\iota_\sf L$ is conjugated (up to finite index) to $\ov\iota_\sf L$. Let $\sf H$ and $\ov{\sf H}$ be the remaining factors of $\sf G$ and $\ov{\sf G}$ respectively, i.e. $$\sf G=\sf L\times \sf H\textrm{ and }\ov{\sf G}=\sf L\times\ov{\sf H},$$ and moreover, by definition of $\sf L$, the representation $\scr r:\grupo\to \sf L\times\ov{\sf H}\times\sf H$ \begin{equation}\label{repsZ1} \scr r:g\mapsto \big(\iota_{\sf L}(g),\ov\iota_{\ov{\sf H}}(g),\iota_\sf H(g)\big)\end{equation} has Zariski-dense image, see for example Bridgeman-Canary-Labourie-S. \cite[Corollary 11.6]{pressure}. We remark that we are not assuming that any of $\sf L$, $\ov{\sf H}$ or $\sf H$ is non-trivial (they can't, of course, be all trivial).

If $(u,v,w)\in\a_{\sf L}\times\a_{\ov{\sf H}}\times\a_{\sf H}$ we naturally think of $(u,v)$ as an element of $\a_{\ov{\sf G}}$ and of $(u,w)$ as an element of $\a_{\sf G}$.  We now write \begin{alignat*}{2}\Theta & =\t_\sf L\cap\ov\t_\sf L,\\ \Theta_{\sf L} &=\t_\sf L\setminus\Theta,\\\ov\Theta_{\sf L} & =\ov\t_{\sf L}\setminus\Theta.\end{alignat*}

One has, for all $(u,v,w)\in\a_{\sf L}\times\a_{\ov{\sf H}}\times\a_{\sf H}$ that \begin{alignat}{2}\label{tauL} \slroot(u,w) & =\min\big\{\slroot^{\Theta_\sf L}(u),\slroot^{\Theta}(u),\slroot^{\t_\sf H}(w)\big\}\nonumber\\ \ov\slroot(u,v) & =\min\big\{\slroot^{\ov\Theta_\sf L}(u),\slroot^{\Theta}(u),\ov\slroot^{\t_{\ov{\sf H}}}(v)\big\}.\end{alignat}

By assumption, there exists $g\in\grupo$ such that $\rho(g)$ and $\ov\rho(g)$ are proximal and 
$\slroot(\lambda_{\sf G}(\iota g))\neq\ov\slroot(\lambda_{\ov{\sf G}}(\ov\iota g)).$ Assume, without loss of generality, that \begin{equation}\label{eqA}  \slroot(\lambda_{\sf G}(\iota g))<\ov\slroot(\lambda_{\ov{\sf G}}(\ov\iota g)).\end{equation} By means of Equations \eqref{tauL} we see that in this situation one has $$\slroot^{\Theta_{\sf L}\cup\t_\sf H}(\lambda_\sf G(\iota g))=\slroot(\lambda_\sf G(\iota g))<\ov\slroot(\lambda_{\ov{\sf G}}(\ov\iota g)),$$ in particular the union $\Theta_\sf L\cup\t_\sf H$ must be non-empty. Moreover, this strict inequality yields the existence of a small closed cone with non-empty interior $\scr C_0\subset\calL_\rho\subset\a_{\sf G}^+$ about $\R_+\lambda_\sf G(\rho g) $ such that  \begin{equation}\label{tauH=tau1}\slroot^{{\Theta_{\sf L}\cup\t_\sf H}}(a)=\slroot_1(a)\,\forall a\in\scr C_0.\end{equation}

Consider now the representation $\scr r:\grupo\to\sf L\times\ov{\sf H}\times\sf H$ from \eqref{repsZ1} and a closed cone with non-empty interior $\scr C \subset\calL_{\scr r(\grupo)}\subset\a_{\sf L}^+\times\a_{\ov{\sf H}}^+\times\a_{\sf H}^+$ whose natural projection onto $\a_\sf G^+=\a_{\sf L}^+\times\a_{\sf H}^+$ is $\scr C_0$. 

Lemma \ref{semi} applied to the group $G=\sf L\times\ov{\sf H}\times\sf H$, the representation $\scr r$, the disjoint non-empty subsets $\vartheta=\Theta_\sf L\cup\t_\sf H$ and $\ov\vartheta=\ov\t_\sf L\cup\t_{\ov{\sf H}}$ and the cone $\scr C$, provides the desired conclusion.\end{proof}

We conclude with the following Corollary that we don't need but is of independent interest.

\begin{cor}\label{corraiz}Let $\rho:\G\to\SL(d,\K)$ and $\ov\rho:\G\to\SL(\ov d,\ov\K)$ be $\R$-irreducible and $\{\slroot_1,\slroot_2\}$-Anosov and $\{\ov\slroot_1,\ov\slroot_2\}$-Anosov respectively. If $\K=\H$ assume moreover the Zariski closure of $\rho(\G)$ does not contain compact factors, and analogously for $\ov\rho$. If $\rho$ and $\ov\rho$ are not gap-isospectral then $$\II_{\ov\slroot_1}(\slroot_1)>{\hJ{ \ov\slroot_1}}/{\hJ{ \slroot_1}}.$$
\end{cor}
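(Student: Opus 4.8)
The plan is to deduce this from Proposition \ref{ineq} together with the non-arithmeticity statement of Proposition \ref{nonA}. Recall from Proposition \ref{ineq} that for any two real-valued H\"older cocycles with non-negative periods and finite entropy one always has $\II(\kappa,c)\geq\hJ\kappa/\hJ c$, with equality forcing $\hJ\kappa\,\ell_\kappa(\g)=\hJ c\,\ell_c(\g)$ for every $\g\in\G$. We apply this with $\kappa=\peso_{\ov\slroot_1}\circ\rfr_{\ov\rho}$ (equivalently, the real cocycle with periods $\ov\slroot_1(\lambda(\ov\g))$, using $\ov\slroot_1=\ov\slroot$ on $\calL_{\ov\rho}$) and $c=\slroot_1\circ\rfr_\rho$ (with periods $\slroot_1(\lambda(\g))$). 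Both have positive periods and finite entropy since $\rho$ and $\ov\rho$ are $\{\slroot_1,\slroot_2\}$- resp. $\{\ov\slroot_1,\ov\slroot_2\}$-Anosov, so the inequality $\II_{\ov\slroot_1}(\slroot_1)\geq\hJ{\slroot_1}/\hJ{\ov\slroot_1}$ holds; it remains to rule out equality.

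First I would observe that under the stated hypotheses Proposition \ref{nonA} applies: $\rho$ and $\ov\rho$ are $\K$-proximal (being Anosov they contain proximal elements), act $\R$-irreducibly, and since they are not gap-isospectral there exists $\g$ with $\slroot_1(\lambda(\g))\neq\ov\slroot_1(\lambda(\ov\g))$; the quaternionic case is covered by the extra assumption on Zariski closures. Hence the group generated by the pairs $\big(\slroot_1(\lambda(\g)),\ov\slroot_1(\lambda(\ov\g))\big)$, $\g\in\G$, is dense in $\R^2$. In particular there is no constant $\lambda_0>0$ with $\slroot_1(\lambda(\g))=\lambda_0\,\ov\slroot_1(\lambda(\ov\g))$ for all $\g$: such a relation would confine all the pairs to a line, contradicting density.

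Now suppose for contradiction that $\II_{\ov\slroot_1}(\slroot_1)=\hJ{\slroot_1}/\hJ{\ov\slroot_1}$. The equality case of Proposition \ref{ineq} then gives $\hJ{\ov\slroot_1}\,\ell_{\ov\slroot_1}(\g)=\hJ{\slroot_1}\,\ell_{\slroot_1}(\g)$ for every $\g\in\G$, i.e. $\hJ{\ov\slroot_1}\,\ov\slroot_1(\lambda(\ov\g))=\hJ{\slroot_1}\,\slroot_1(\lambda(\g))$ for all $\g$. This is precisely a linear relation of the excluded type, with $\lambda_0=\hJ{\ov\slroot_1}/\hJ{\slroot_1}>0$, contradicting the density conclusion of Proposition \ref{nonA}. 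Therefore the inequality is strict, $\II_{\ov\slroot_1}(\slroot_1)>\hJ{\ov\slroot_1}\cdot\hJ{\slroot_1}^{-1}$... wait, let me restate: the contradiction yields $\II_{\ov\slroot_1}(\slroot_1)>\hJ{\slroot_1}/\hJ{\ov\slroot_1}$, which is the claimed inequality $\II_{\ov\slroot_1}(\slroot_1)>\hJ{\ov\slroot_1}/\hJ{\slroot_1}$ only after correctly tracking which entropy sits where; the main (and essentially only) technical point to be careful about is this bookkeeping of the roles of $\kappa$ and $c$ in Proposition \ref{ineq}, since $\II(\kappa,c)$ is not symmetric and one must feed in $\kappa=\ov\slroot_1\circ\rfr_{\ov\rho}$ and $c=\slroot_1\circ\rfr_\rho$ in exactly that order to land on $\II_{\ov\slroot_1}(\slroot_1)\geq\hJ{\kappa}/\hJ{c}=\hJ{\ov\slroot_1}/\hJ{\slroot_1}$, whose strictness is what we want.
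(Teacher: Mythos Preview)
Your proof is correct and follows essentially the same route as the paper: apply Proposition~\ref{nonA} to get density of the pairs $\big(\slroot_1(\lambda(\g)),\ov\slroot_1(\lambda(\ov\g))\big)$, then use Proposition~\ref{ineq}, whose equality case would force a linear relation among the periods incompatible with that density. The only cosmetic issues are (i) the cocycle should be $\ov\slroot_1\circ\rfr_{\ov\rho}$ rather than $\peso_{\ov\slroot_1}\circ\rfr_{\ov\rho}$ (you want the root, not the fundamental weight; this is where the $\{\slroot_2\}$-Anosov hypothesis enters, ensuring $\slroot_1\in(\a_\t)^*$), and (ii) the last paragraph's self-correction about bookkeeping should be removed---your initial setup with $\kappa$ for $\ov\slroot_1$ and $c$ for $\slroot_1$ already gives $\II_{\ov\slroot_1}(\slroot_1)\geq\hJ{\ov\slroot_1}/\hJ{\slroot_1}$ directly.
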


\begin{proof} Since both representations are projective-Anosov they are $\K$-proximal. Pro\-po\-si\-tion \ref{nonA} implies then that, since they are not gap-isospectral, the group spanned by the pairs $\big\{\big(\slroot_1(\lambda(\g)),\ov\slroot_1(\lambda(\ov\g))\big):\g\in\G\big\}$ is dense. Since both representations are also Anosov with respect to 2-dimensional stabilizers, the functionals $\slroot_1$ and $\ov\slroot_1$ lie in the Anosov-Levy space of $\rho$ and $\ov\rho$ respectively, we can apply Proposition \ref{ineq} to obtain the desired strict inequality.
\end{proof}

\subsection{Proof of Theorem \ref{tutti.LCdiff}}\label{ProofA} Theorem \ref{tutti.LCdiff} follows from Proposition \ref{nonA} giving the desired non-arithmecity of periods, Lemma \ref{generalcase} identifying the set $ \Ext_{\rho,\ov\rho}^\hol$ with the set of $\hol$-conical points of $(\rho,\ov\rho)$ and Theorem \ref{Hffconical} computing the Hausdorff dimension of the latter when the periods are non-arithmetic. The last equality is a direct consequence of Proposition \ref{p.uppergeneral}.\qed

\section{Theorem \ref{t.Zcl}: Zariski closures of real-hyperconvex surface-group representations}\label{s.6}

In this section we prove Theorem \ref{t.Zcl} giving a preliminary classification of Zariski closures of irreducible real $(1,1,2)$-hyperconvex representations of surface groups. For most of the section we work with a pair of $(1,1,2)$-hyperconvex representations and eventually reduce the proof of Theorem \ref{t.Zcl} to a situation like this; we will crucially use Theorem \ref{t.C1}.

\subsection{When $\Xi$ has oblique derivative}\label{s.6.1}We prove here a result of independent interest, albeit possibly known to experts. This subsection only requires \S\,\ref{cont} and \S\,\ref{s.Anosov} and will be needed not only for Theorem \ref{t.Zcl} but also for Theorems \ref{tutti} and \ref{thm.tuttiC}.

Either we let $\G$ have boundary homeomorphic to a circle, either we let it be a Kleinian group. In the first case we let $$\rho,\ov\rho:\G\to\Diff^{1+\nu}(\circle)$$ be  H\"older conjugated to action of $\G$ on its boundary; if instead $\G<\PSL(2,\C)$ is a Kleinian group we let $\rho,\ov\rho:\G\to\PSL(2,\C)$ be two convex co-compact representations that lie in the same connected component of the subset of the character variety $\frak X(\G,\PSL(2,\C))$ consisting of convex cocompact representations. 

We let $X$ be either the circle or $\bord\H^3.$ To simplify notation we will denote the action  of $\g\in\G$ on $X$ via $\rho$  by $\g,$  the action via $\ov\rho$ by $\ov \g$, and the limit sets of $\rho$ and $\ov\rho$ by $\bord\G,\ov{\bord\G}\subset X$  respectively.

In both  situations there exists a H\"older-continuous map $$\Xi:X\to X$$ conjugating $\rho$ and $\ov\rho.$ Indeed while in the surface case this holds by definition, in the Kleinian case this is a theorem by Marden \cite{Marden}, see also Anderson's survey \cite[page 32]{SurveyAnderson}:  the equivariant limit map $\Xi:\bord\G\to\bord\G$ conjugating the actions $\rho$ and $\ov\rho$ on their respective limit sets extends to a $\G$-equivariant, Hölder continuous homeomorphism of the whole Riemann sphere $\bord\H^3.$ We study differentiability points of $\Xi$ with oblique derivative.

We let $d$ be either a visual distance on $X$ (in the complex case) or a distance inducing the chosen $\class^1$ structure on the circle $\circle.$ 

\begin{defi}\label{assuA} An action $\rho$ admits a \emph{Lipschitz-compatible cover} if there exists a finite open cover $\cal B$ of $X$ and a map $\G\to\cal B$, $\gamma\mapsto\cal B_\infty(\g)$ such that
	\begin{enumerate}
		\item for any $a,b\in \G$ so that $|ab|=|a|+|b|$ one has 
				\begin{enumerate}
			\item$b\cal B_{\infty}(ab)\subset \cal B_\infty(a)$, 
			\item $\cal B_{\infty}(ab)\subset\cal B_{\infty}(b)$;
		\end{enumerate}
	\item 	there exist   $\lambda>0,$ $C$ and $L\in\N$ such that if 
	$|\g|\geq L$ and $x,y\in\cal B_\infty(\g)$ then $$d(\g x,\g y)\leq Ce^{-|\g|\lambda}d(x,y);$$
	\item there exist constants $r_1,r_2$ and a function $\slroot:\G\to\RR$ with $\slroot(\g)\geq\lambda|\g|$ such that for every $\g\in\G$ and every $x\in\cone_\infty(\g)$, 
	$$B(x,r_1e^{-\slroot(\g)})\subset\g\cal B_\infty(\g)\subset B(x, r_2e^{-\slroot(\g)}).$$ 
	\end{enumerate}	

\end{defi}

The goal of the subsection is to prove the following result, similar arguments can be found in Guizhen \cite{cui} in the context of conjugacies of expanding circle maps.

\begin{prop}\label{c.indepPer} Let $\rho,\ov\rho$ be as above and assume both admit a Lipschitz compatible cover. If there exists $p\in\bord\G$ such that $\Xi$ has a finite non-vanishing derivative (complex derivative in the Kleinian case) at $p$ then $\Xi|\bord\G$ is bi-Lipschitz.
\end{prop}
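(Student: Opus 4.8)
The plan is to use the Lipschitz-compatible covers of both $\rho$ and $\ov\rho$ to transport the single point $p$ of non-vanishing differentiability to \emph{every} point of $\bord\G$, using the density of the $\G$-orbit of $p$ in $\bord\G$ and equivariance of $\Xi$. First I would observe that the chain rule still applies in this setting: if $\Xi$ has finite nonzero derivative $D_p\Xi$ at $p$, then for every $\g\in\G$ the map $\Xi=\ov\g^{-1}\circ\Xi\circ\g$ (by equivariance) has, at the point $\g^{-1}p$, derivative $(D_{\ov\g^{-1}p\cdot\text{something}}\ov\g^{-1})\cdot D_p\Xi\cdot D_{\g^{-1}p}\g$, which is again finite and nonzero because $\rho(\g),\ov\rho(\g)\in\Diff^{1+\nu}(\circle)$ (resp.\ are M\"obius) act diffeomorphically. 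Hence $\Xi$ has finite nonzero derivative at every point of the orbit $\G\cdot p$, which is dense in $\bord\G$.

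The heart of the argument is to upgrade this to a \emph{uniform} two-sided Lipschitz bound on $\bord\G$, and this is where properties (1)--(3) of Definition \ref{assuA} enter. Fix a point $q\in\G\cdot p$, say $q=\g_0^{-1}p$, where $\Xi$ is differentiable with derivative of modulus in $[m,M]$ for some $0<m\le M<\infty$. Given any two nearby points $x,y\in\bord\G$ close to $q$, I would choose $\g\in\G$ with $|\g|$ large so that $x,y\in\g\cal B_\infty(\g)$ — using property (3), which says the pushed cones $\g\cal B_\infty(\g)$ form a neighbourhood basis comparable to balls of radius $e^{-\slroot(\g)}$ — and then pull back by $\g^{-1}$ into the fixed compact piece $\cal B_\infty(\g)$, applying property (2) (the contraction/expansion estimate $d(\g x,\g y)\le Ce^{-\lambda|\g|}d(x,y)$, and its inverse obtained by running the estimate for $\ov\rho$ and for $\g^{-1}$ in place of $\g$). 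The point is that on the fixed compact set $\cal B_\infty(\g)$ one can compare $d(\Xi x,\Xi y)$ and $d(x,y)$ using the differentiability of $\Xi$ at points of a dense orbit sitting inside that compact set, and then push forward by $\ov\g$. Matching the two exponential rates $e^{-\slroot(\g)}$ for $\rho$ and $e^{-\ov\slroot(\g)}$ for $\ov\rho$ forces the ratio $d(\Xi x,\Xi y)/d(x,y)$ to stay between two positive constants independent of $x,y,\g$; the nonvanishing of the derivative at $q$ is exactly what prevents the lower constant from degenerating to $0$.

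The main obstacle I anticipate is the bookkeeping needed to make the comparison at the ``base level'' rigorous and uniform: one must know that the derivative bounds $[m,M]$ obtained at $q$ propagate, with controlled constants, to a whole neighbourhood of $q$ inside $\cal B_\infty(\g_0)$, and that the diffeomorphisms $\rho(\g),\ov\rho(\g)$ used to move pieces around have distortion controlled purely in terms of $|\g|$ (this is what the $C^{1+\nu}$, resp.\ conformal, hypothesis buys us via property (2), and is the reason one cannot work with a mere homeomorphism). A clean way to organize this is: (a) show $\Xi$ is Lipschitz on $\bord\G$ by covering an arbitrary pair $x,y$ by a pushed piece $\g\cal B_\infty(\g)$, pulling back into $\cal B_\infty(\g)$ where $\Xi$ restricted to $\bord\G$ is Lipschitz on the compact piece by the dense-orbit differentiability argument, and pushing forward by $\ov\g$; (b) run the symmetric argument with the roles of $\rho$ and $\ov\rho$ exchanged to get that $\Xi^{-1}$ is Lipschitz; (c) combine. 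I would expect the write-up to reduce, after the reductions above, to a two-page estimate chaining (2) and (3) for $\rho$, the local bi-Lipschitz behaviour of $\Xi$ near $q$, and (2) and (3) for $\ov\rho$, with the final inequality $\ov\slroot(\g)-\slroot(\g)=O(1)$ on the relevant cones being what is actually extracted — but since the statement only claims bi-Lipschitzness, that refinement can be left implicit.
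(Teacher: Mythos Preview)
Your plan has a genuine gap at its central step. You correctly observe that equivariance plus the chain rule give a finite non-vanishing derivative at every point of the dense orbit $\G\cdot p$. But then you assert that on a fixed compact piece $\cal B_\infty(\g)$ one can ``compare $d(\Xi x,\Xi y)$ and $d(x,y)$ using the differentiability of $\Xi$ at points of a dense orbit''. This does not follow: a map can be differentiable with non-zero derivative on a dense set and still fail to be Lipschitz, because the derivative values at the orbit points $\g^{-1}p$ are $|\ov\g'(\Xi p)|^{-1}\,|D_p\Xi|\,|\g'(\g^{-1}p)|$, and you have no a priori control on the ratio $|\g'|/|\ov\g'|$ as $\g$ ranges over $\G$. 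Indeed, bounding that ratio is equivalent to the estimate $\ov\slroot(\g)-\slroot(\g)=O(1)$ you mention at the end, which is essentially the conclusion. So the argument as outlined is circular: step (a), ``$\Xi$ restricted to $\bord\G$ is Lipschitz on the compact piece'', is what you are trying to prove.

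The paper's proof sidesteps this with a renormalisation argument you are missing. One fixes a geodesic ray $(\alpha_n)$ to $p$ and zooms in at $p$ and at $\Xi(p)$ by affine rescalings $g_n,\tilde g_n$ of factor $s_n\asymp e^{-\slroot(\alpha_n)}$ and $\tilde s_n$; differentiability at $p$ with finite non-zero derivative says precisely that $\tilde g_n^{-1}\Xi g_n\to\id$ uniformly on compacta. On the other hand equivariance lets one rewrite this as $(\tilde g_n^{-1}\ov\alpha_n)\circ\Xi\circ(\alpha_n^{-1}g_n)$. The key lemma (a bounded-distortion estimate from the $\class^{1+\nu}$ hypothesis, proved by telescoping the chain rule along the ray and using property~(ii) of the Lipschitz-compatible cover) shows that the maps $f_n=\alpha_n^{-1}g_n$ and $\tilde f_n=\tilde g_n^{-1}\ov\alpha_n$ are \emph{uniformly} bi-Lipschitz. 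Arzel\`a--Ascoli then gives subsequential limits $f,\tilde f$, and one reads off $\Xi=\tilde f^{-1}\circ\id\circ f$ near $p$, hence bi-Lipschitz there; equivariance spreads this to all of $\bord\G$. The crucial point is that the single differentiability datum at $p$ is used only to control the \emph{limit} of the zoomed maps, while the uniform bi-Lipschitz control on $f_n,\tilde f_n$ comes entirely from bounded distortion along one geodesic ray, not from any global comparison of $\slroot$ and $\ov\slroot$.
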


We work under the assumptions of Proposition \ref{c.indepPer} and  begin its proof with the following lemma. For $\g\in\G$ we denote its derivative at $x\in X$ by $\g'(x)\in\K$ defined, according our two situations, by 
\begin{itemize}
	\item[$X=\mathbb S^1$\phantom{$\partial$}:] the derivative $\tilde\g'(\tilde x)$ of a lift of $\g$ to the universal cover $\R$ of $\mathbb S^1$, and a lift $\tilde x\in\R$ of $x$, the number $\tilde \g'(\tilde x)$ is independent of these choices;
	\item[$X=\bord\H^3$:] we fix an arbitrary point $\infty\notin\bord\G$, identify $X-\{\infty\}$ with $\K$ via the stereographic projection and let $\g'(x)$ be the standard complex derivative.
\end{itemize}

\begin{lemma}\label{controlC} 
	Let $\rho:\G\to\Diff^{1+\nu}(X)$ admit a Lipschitz compatible cover. There exists a constant $\kappa>0$ and $N\in\N$ such that for all $\g\in\G$ with $|\g|\geq N$ and $x,y\in\cal B_\infty(\g)$ one has  $$\big|\log|\g'(x)|-\log|\g'(y)|\big|\leq \kappa d(x,y)^\nu.$$\end{lemma}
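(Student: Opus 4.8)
The plan is to prove this bounded-distortion estimate by applying the chain rule along a geodesic word, estimating each factor by means of the exponential contraction built into the cover, and summing a geometric series. Fix $\g\in\G$ with $|\g|=n$ and write $\g=w_1w_2\cdots w_n$ geodesically, each $w_i$ in the finite symmetric generating set. For $0\leq k\leq n$ set $q_k=w_{k+1}\cdots w_n$, so that $q_n=e$, $q_0=\g$, $|q_k|=n-k$, and $\g=(w_1\cdots w_k)\,q_k$ with $|\g|=|w_1\cdots w_k|+|q_k|$. The inclusion $\cal B_\infty(ab)\subset\cal B_\infty(b)$ from Definition \ref{assuA}, applied to this factorisation, gives $\cal B_\infty(\g)\subset\cal B_\infty(q_k)$ for every $k$; hence, if $x,y\in\cal B_\infty(\g)$, then $x,y\in\cal B_\infty(q_k)$ for all $k$.

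Since the diffeomorphism $\rho(\g)=\rho(w_1)\circ\cdots\circ\rho(w_n)$, the chain rule gives $\g'(x)=\prod_{k=1}^{n}\rho(w_k)'(q_kx)$, and therefore
\[
\bigl|\log|\g'(x)|-\log|\g'(y)|\bigr|\leq\sum_{k=1}^{n}\bigl|\log|\rho(w_k)'(q_kx)|-\log|\rho(w_k)'(q_ky)|\bigr|.
\]
Because the generating set is finite and each $\rho(s)$ is a $\class^{1+\nu}$ diffeomorphism, there is a uniform constant $\kappa_0$, depending only on $\rho$, with $\bigl|\log|\rho(s)'(u)|-\log|\rho(s)'(v)|\bigr|\leq\kappa_0\,d(u,v)^\nu$ for every generator $s$ and all $u,v$ in the region of $X$ relevant below, so each summand is at most $\kappa_0\,d(q_kx,q_ky)^\nu$. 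For the indices with $|q_k|=n-k\geq L$, the contraction property of Definition \ref{assuA} gives $d(q_kx,q_ky)\leq C\,e^{-(n-k)\lambda}d(x,y)$, so these terms contribute at most $\kappa_0 C^\nu\bigl(\sum_{j\geq L}e^{-j\nu\lambda}\bigr)d(x,y)^\nu$, a fixed multiple of $d(x,y)^\nu$. For the remaining at most $L$ indices, $q_k$ lies in the finite set $\{h\in\G:|h|<L\}$, so $d(q_kx,q_ky)\leq M\,d(x,y)$ for a uniform $M$, and these terms contribute at most $L\kappa_0 M^\nu d(x,y)^\nu$. Adding the two bounds yields the lemma, with $\kappa$ the sum of the two resulting constants and $N$ as large as the next step requires.

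The one genuine subtlety is the Kleinian case, where $\g'(x)$ is the complex derivative in the fixed chart $X\setminus\{\infty\}\cong\K$ and $\log|\rho(s)'|$ is a priori unbounded near the pole $\rho(s)^{-1}(\infty)$. Here I would first observe, from the inclusion $\cal B_\infty(ab)\subset\cal B_\infty(b)$ with $b=e$, that $\cal B_\infty(\g)\subset\cal B_\infty(e)$ for every $\g$, and, from the third property of Definition \ref{assuA} together with $\slroot(\g)\geq\lambda|\g|$, that $\rho(\g)\,\cal B_\infty(\g)$ lies in the $r_2e^{-\lambda|\g|}$-neighbourhood of the compact limit set. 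Choosing $\infty$ and $N$ so that, for $|\g|\geq N$, this neighbourhood avoids $\infty$ and every pole $\rho(s)^{-1}(\infty)$ (and hence so that $\g'(x)$ is finite and non-zero on $\cal B_\infty(\g)$), one checks that all the points $q_kx$ and $\rho(w_k)(q_kx)$ occurring above remain in a fixed compact subset $K_0$ on which the finitely many generators act as diffeomorphisms with non-vanishing derivative; on $K_0$ the functions $\log|\rho(s)'|$ are Lipschitz, hence $\nu$-Hölder, with a uniform constant, which is the $\kappa_0$ used above. (Equivalently, one may run the whole argument with the conformal derivative of the spherical metric, for which $\PSL(2,\C)$ acts without poles, and convert at the end.) Keeping track that the entire orbit segment $q_nx,q_{n-1}x,\dots,q_0x$ stays in this good region is the main, if mild, obstacle; the rest is the elementary chain-rule and geometric-series computation above.
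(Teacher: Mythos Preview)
Your proof is correct and follows essentially the same strategy as the paper: a chain-rule decomposition followed by the contraction estimate from Definition~\ref{assuA}(ii) and a geometric series. The only cosmetic difference is granularity: the paper groups $\g$ into blocks of length $L$ and argues by induction on the number of blocks (establishing Equation~\eqref{induc} for $|\g|=kL$ and then handling the remainder $|\g|=mL+t$ separately), whereas you decompose letter by letter into generators and sum directly. Both routes lead to the same geometric series; yours is arguably more direct, while the paper's block decomposition makes the use of property~(ia) versus (ib) slightly more visible. Your discussion of the Kleinian subtlety (poles of $\log|\rho(s)'|$ in the affine chart) is actually more explicit than the paper's, which simply asserts the H\"older bound~\eqref{derivativeGen} for all $u,w\in X$ without comment.
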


\begin{proof} We consider $L$ from Definition \ref{assuA}, 	so that for every  $\eta\in\G$ with $|\eta|\geq L$ and $x,y\in\cal B_\infty(\eta)$ one has 
	\begin{equation}\label{lip}
		d(\eta x,\eta y)\leq Ce^{-|\eta|\lambda}d(x,y).
	\end{equation} 

Since the action is $\class^{1+\nu}$ we can find a positive $K$ such that for every $\beta$ with $|\beta|\leq L$ and $u,w\in X$ one has \begin{equation}\label{derivativeGen}
	\big|\log|\beta'(u)|-\log|\beta'(w)|\big|\leq Kd(u,w)^\nu.\end{equation}

We let then  $K'=\max\{K,KC^\nu\}.$ We begin by showing, by induction on $k$, that if $|\g|=kL$ then for all  $x,y\in\cal B_\infty(\g),$ one has \begin{equation}\label{induc}
	\big|\log|\g'(x)|-\log|\g'(y)|\big|\leq K'\big(\sum_{i=0}^{k-1}e^{-\nu\lambda L i}\big)d(x,y)^\nu.
\end{equation}

Equation \eqref{derivativeGen} gives the base case, so assume that the result holds up to $k-1$. 
We write $\g=\beta\eta$ with $|\beta|=L$, $|\eta|=(k-1)L$. By Definition \ref{assuA} (ib) we have

\begin{equation}\label{conocont}
	\cal B_\infty(\g)\subseteq\cal B_\infty(\eta).
\end{equation}
Applying the chain rule gives that for every $u\in X$ one has $$ \log|\g'(u)|=\log|(\beta)'(\eta u)|+\log|(\beta )'(u)|$$ and thus, when $x,y\in\cal B_\infty(\g),$

\begin{alignat*}{2}
	\big|\log|\g'(x)|-\log|\g'(y)|\big| &  \leq \big|\log|\beta'(\eta x)|-\log|\beta'(\eta y)|\big|+\big|\log|\eta'(x)| -\log|\eta'(y)|\big|\\ 
	& \leq K d(\eta x,\eta y)^\nu  + K'\big(\sum_{i=0}^{k-2}e^{-\nu\lambda L i}\big)d(x,y)^\nu\quad (\textrm{by \eqref{derivativeGen} and induction})\\ 
	& \leq KC^\nu e^{-|\eta|\nu\lambda}d(x,y)^\nu+ K'\big(\sum_{i=0}^{k-2}e^{-\nu\lambda L i}\big)d(x,y)^\nu\quad (\textrm{by  \eqref{conocont} and \eqref{lip}}).
\end{alignat*}

This shows Equation \eqref{induc} which implies that for $\kappa_0=K'/(1-e^{-\nu\lambda L})$,  every $\g\in\G$ whose word-length is an integer multiple of $L$, and $x,y\in\cal B_\infty(\g)$ one has $$\big|\log|\g'(x)|-\log|\g'(y)|\big|\leq \kappa_0 d(x,y)^\nu.$$

To conclude the lemma we consider an arbitrary $\g$ with $|\g|=mL+t$ and $t< L$. We write  $\g=\beta \eta$ with $|\beta|=mL$. By Definition \ref{assuA} (ia) it holds
\begin{equation}\label{e.6.6}
	\eta \cal B_\infty(\g)\subset\cal B_\infty(\beta).
	\end{equation}
Applying the chain rule gives then
\begin{alignat*}{2}
	\big|\log|\g'(x)|-\log|\g'(y)|\big|& \leq \big|\log|\beta'(\eta x)|-\log|\beta'(\eta y)|\big| +\big|\log|\eta'(x)| -\log|\eta'(y)|\big|\\ 
	& \leq \kappa_0d(\eta x,\eta y)^\nu  + Kd(x,y)^\nu \qquad (\textrm{by \eqref{derivativeGen} and \eqref{e.6.6}})\\ 
	& \leq (\kappa_0C^\nu e^{-mL\lambda}+K) d(x,y)^\nu \qquad(\textrm{by \eqref{lip}})
\end{alignat*}
so taking $\kappa=K+\kappa_0C^\nu e^{-L\lambda}$ we conclude the proof.\end{proof}

\begin{proof}[Proof of Proposition \ref{c.indepPer}]
Let $p\in\bord\G$ be such that $\Xi$ has a derivative at $p$ that is neither horizontal nor vertical. Fix a geodesic ray $(\alpha_n)_{0}^\infty$ through the identity with $\alpha_{n}\to p$. By definition for all $n$ one has $p\in\alpha_n\cone_\infty(\alpha_n)$. Without loss of generality we may also assume that $$p=0=\Xi(0)$$ and we may write the derivative as the incremental limit 
$$\Xi'(0)=\lim_{y\to 0}\frac{\Xi(y)}{y}\in\K-\{0\}.$$ 
For each $n$ we let $s_n=r_1e^{-\slroot(\alpha_n)}$, so that by Definition \ref{assuA} (iii), $$B(0,s_n)\subset\alpha_n\cal B_\infty (\alpha_n).$$ 

We consider the scaling map $$g_n:B(0,1)\to \alpha_n\cal B_\infty(\alpha_n)$$ defined by $g_n(z)=s_n z.$ 

Let $a_n$ be an arbitrary point at distance $s_n$ from $0$ and let $\tilde s_n=\Xi(a_n)$. Observe that since $\Xi$ is differentiable at zero, for $n$ big enough the image  $\Xi(B(0,s_n))$  is coarsely a ball around zero of size comparable to that of $\ov\alpha_n\cone_\infty(\ov\alpha_n)$, and in particular we can assume, since the cover $\{\cal B_\infty(\ov \g)\}$ is Lipschitz compatible (Definition \ref{assuA} (iii)), that $\Xi(B(0,s_n))$ is contained in $\ov\alpha_n\cal B_\infty(\ov\alpha_n)$. Furthermore we deduce that there exist positive constants $d,D$ such that for every $n$ 
$$d<\frac{\ov r_2e^{-\ov\slroot(\ov\alpha_n)}}{|\tilde s_n|}<D.$$
 
Here we denote by $\ov r_i, \ov \lambda, \ov C, \ov\slroot$ the  constants and function associated to the Lipschitz compatible cover $\{\cal B_\infty(\ov \g)\}$ for the action $\ov \rho$. 
We consider the scaling map 
$$\tilde g_n:B(0,D)\to B(0,|\tilde s_n|D)$$ by $z\mapsto z\tilde s_n$.

 Since $s_n\to0$ and $\Xi'(0)\notin\{0,\infty\}$ exists, the composition 
$$\tilde g_n^{-1}\Xi g_n(z)=\frac{\Xi(zs_n)}{\tilde s_n}\cdot\frac{s_nz}{s_nz}=\frac{\Xi(zs_n)}{s_nz}\cdot\frac{s_n}{\tilde s_n}\cdot z=\frac{\Xi(zs_n)}{s_nz}\cdot\frac{s_n}{\Xi(s_n)}\cdot z$$ 
converges uniformly on compact subsets to the identity map.

On the other hand, one has 
$$\tilde g_n^{-1}\Xi g_n=\tilde g_n^{-1}\overline\alpha_{n}\Xi\alpha_{n}^{-1} g_n.$$  We now study the maps $f_n:=\alpha_{n}^{-1}\circ g_n$ and $\tilde f_n:=\tilde g_n^{-1}\circ\overline\alpha_{n}$. Since the coverings $\cal B$ and $\ov {\cal B}$ are finite, we can assume, up to extracting a subsequence that there exists sets $\cal B_\infty\in \cal B$, $\ov{\cal B}_\infty\in \ov {\cal B}$ so that, for every $n$, $\cal B_\infty(\alpha_n)=\cal B_\infty$ (resp. $\cal B_\infty(\ov\alpha_n)=\ov{\cal B}_\infty$).

Observe that for every $x\in B(0,1)$ one has $$\log| f_n'(x)|=\log|(\alpha_n^{-1})'(g_nx)|+\log|s_n|= -\log|\alpha_n'(\alpha_n^{-1}g_nx)|+\log|s_n|.$$ 
Now by definition of $g_n$, we have that $g_nx \in\alpha_n\cal B_\infty(\alpha_n)$ and thus $\alpha_n^{-1}(g_nx)\in\cal B_\infty(\alpha_n)$. For $n$ large enough we can apply Lemma \ref{controlC} to $\alpha_n$ to obtain $\kappa$ so that for every pair $x,y\in B(0,1)$ it holds  $$\big|\log|f_n'(x)|-\log|f_n'(y)|\big|\leq \kappa d(x,y)^\nu.$$

\noindent
We conclude that the family of maps $(f_n)$ is uniformly bi-Lipschitz on $B(0,1)$ and thus, since $(f_n0)$ is bounded, Arzela-Ascoli's Theorem applies to give a subsequence (still denoted by $f_n$) that converges to a bi-Lipschitz map $f$ defined on $B(0,1)$.

A similar reasoning applies to the maps $\tilde f$ defined on $\ov{\cal B}_\infty$, and we obtain that, about $0$, $\Xi$ can be written as a composition of bi-Lipschitz maps and is thus bi-Lipschitz. Using the action of $\G$ we extend the Lipschitz property of $\Xi$ to the whole $\bord\G,$ concluding the proof.
\end{proof}

The following Lemma guarantees we can later apply the results of this section to the situation of our interest.

\begin{lemma}\label{usar} \item\begin{itemize}\item[-] Assume $\bord\G$ is homeomorphic to a circle and let $\rho:\G\to\SL(d,\R)$ be $(1,1,2)$-hyperconvex. Then the induced action of $\rho(\G)$ on the $\class^{1+\nu}$ circle $\xi(\bord\G)$ admits a Lipschitz compatible covering. \item[-] If $\G$ is a convex-co-compact Kleinian group then the action of $\G$ on $\bord_\infty\H^3$ admits a Lipschitz compatible cover.\end{itemize}
\end{lemma}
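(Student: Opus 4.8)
The plan is to verify, for each of the two situations, the three requirements in Definition~\ref{assuA}, using in the circle case the fact that a $(1,1,2)$-hyperconvex representation is projective Anosov with $\class^1$ limit set (Theorem~\ref{t.C1}), and in the Kleinian case the classical theory of convex-co-compact groups in $\PSL(2,\C)$. In both cases the natural candidate for the cover is (a slight thickening of) the image of the cone types at infinity: for $\g\in\G$ we set $\cal B_\infty(\g)$ to be the $\delta_\rho/2$-neighbourhood of $\xi^1\big(\cone_\infty(\g)\big)$, exactly the ``thickened cone type'' $X_\infty(\g)$ used in \cite{PSW1} and recalled in the proof of Proposition~\ref{conetypesBalls}; since $\G$ has finitely many cone types (Cannon \cite{CannonCones}), these sets form a finite family $\cal B$, giving a well-defined map $\g\mapsto\cal B_\infty(\g)$. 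The function $\slroot$ will be $\slroot(\g)=\slroot_1(\cartan(\rho\g))$.

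First I would check (i): property (ib) $\cal B_\infty(ab)\subset\cal B_\infty(b)$ is immediate from the inclusion of cone types $\cone(ab)\subset\cone(b)$ when $|ab|=|a|+|b|$ (this is the defining property of cone types, cf.\ \cite[p.~455]{BH}), and property (ia) $b\cal B_\infty(ab)\subset\cal B_\infty(a)$ follows after observing that $b\cone_\infty(ab)\subset\cone_\infty(a)$ together with $\rho(b)$-equivariance of $\xi^1$ and the fact that left translation by $\rho(b)$ distorts the $\delta_\rho/2$-neighbourhood by a controlled amount — here one uses the quantitative transversality bound from Corollary~\ref{c.fundamentalConstant} (every point of $\xi^1(\cone_\infty(\g))$ lies in the Cartan basin $B_{\{\slroot_1\},\alpha}(\g)$) so that the contraction estimate of Proposition~\ref{Lipschitz-compatibleB} applies, making $\rho(b)$ Lipschitz on the relevant neighbourhood. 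Property (ii), uniform exponential contraction of $\rho(\g)$ on $\cal B_\infty(\g)$ with rate $\lambda|\g|$, is precisely Proposition~\ref{Lipschitz-compatibleB} combined with the Anosov domination inequality $\slroot_1(\cartan(\rho\g))\geq\mu|\g|-C$: any two points of $\cal B_\infty(\g)$ are, up to enlarging $\alpha$, in the basin $B_{\{\slroot_1\},\alpha}(\g)$, i.e.\ far from $U_{d-1}(\rho(\g)^{-1})$, so $\rho(\g)$ contracts distances by $Ce^{-\slroot_1(\cartan(\rho\g))}\leq Ce^{-\mu|\g|}$. Property (iii), the two-sided comparison $B(x,r_1e^{-\slroot(\g)})\subset\g\cal B_\infty(\g)\subset B(x,r_2e^{-\slroot(\g)})$ for $x\in\cone_\infty(\g)$, is exactly the content of Equation~\eqref{bola} in the proof of Proposition~\ref{conetypesBalls} (the outer inclusion being Proposition~\ref{p.coneinBall} and the inner one \cite[Corollary~5.10]{PSW1}), noting that for a $(1,1,2)$-hyperconvex representation $\slroot$ of Definition~\ref{flat} coincides with $\slroot_1$ since $p=2$.

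For the Kleinian case, the representation $\rho:\G\to\PSL(2,\C)$ is convex-co-compact, hence $\{\slroot_1\}$-Anosov into $\PSL(2,\C)\cong\PSO(3,1)$, so literally the same construction works: $\cal B_\infty(\g)$ is the thickened cone type inside $\bord_\infty\H^3=\P(\C^2)$, and properties (i)--(iii) follow verbatim from the projective-Anosov machinery of \S\ref{s.Anosov} (Propositions~\ref{p.coneinBall} and \ref{Lipschitz-compatibleB}, Corollary~\ref{c.fundamentalConstant}), with $d$ the visual metric — which is bi-Lipschitz to the restriction to the limit set of the round metric on $\bord_\infty\H^3$, and with $\slroot(\g)=\slroot_1(\cartan(\rho\g))$, comparable to twice the distance $d_{\H^3}(o,\rho(\g)o)$. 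Alternatively one can take $\cal B_\infty(\g)$ to be the shadow on $\bord_\infty\H^3$ of a ball of fixed radius around $\rho(\g)o$ and invoke the standard shadow lemma estimates. The main obstacle — and the only point requiring genuine care rather than citation — is property (ia): one must control how left translation by $\rho(b)$ moves the \emph{thickened} cone type, i.e.\ one needs the thickening radius $\delta_\rho/2$ to be chosen uniformly so that $\rho(b)\big(\cal B_\infty(ab)\big)$ still lands inside $\cal B_\infty(a)$ and not merely inside a larger neighbourhood; this is handled by the uniform transversality constant $\delta_\rho$ from Definition~\ref{FConstant} together with the observation that $\cone_\infty(ab)$, viewed from $a$, consists of geodesic rays passing through $b$ hence staying in $\cone_\infty(a)$, so that after applying $\rho(b)$ the image lies in the Cartan basin of $\rho(a)$ with a constant independent of $a,b$, which is exactly what Proposition~\ref{Lipschitz-compatibleB} and Corollary~\ref{c.fundamentalConstant} are designed to supply.
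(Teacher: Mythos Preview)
Your proposal is correct and follows essentially the same approach as the paper: both take $\cal B_\infty(\g)$ to be the thickened cone type $X_\infty(\g)$ from \cite{PSW1}, set $\slroot(\g)=\slroot_1(\cartan(\rho\g))$, invoke finiteness of cone types for finiteness of $\cal B$, and deduce properties (ii) and (iii) from Proposition~\ref{Lipschitz-compatibleB} and \cite[Corollary~5.10]{PSW1} respectively, with the same remark that in the Kleinian case no intersection with the limit set is needed because the action is conformal on all of $\bord_\infty\H^3$. You are in fact more careful than the paper on property~(ia): the paper simply writes ``Property (i) holds since the same property holds for $\cone_\infty(\g)$'', whereas you correctly note that passing from $b\cone_\infty(ab)\subset\cone_\infty(a)$ to the \emph{thickened} inclusion $b\cal B_\infty(ab)\subset\cal B_\infty(a)$ requires controlling how $\rho(b)$ distorts the $\delta_\rho/2$-neighbourhood, which your invocation of the contraction estimate (Proposition~\ref{Lipschitz-compatibleB} together with Corollary~\ref{c.fundamentalConstant}) handles.
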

	
\begin{proof} Recall from Section \ref{cont} that we have fixed a word metric on $\G$ and we denote by $\cone_\infty(\g)\subset\partial\G\subset X$ the set of endpoints of geodesic rays contained in the cone type $\cone(\g)$. 

Let $\delta_\rho$ be the fundamental constant  of $\rho$ from Definition \ref{FConstant}, and let $\cal B_{\infty}(\g)=X_\infty(\gamma)$  be the $\delta_\rho/2$-neighbourhood of $\cone_\infty(\g)$ inside $\circle$. This is the thickened cone type at infinity considered in \cite[Section 5]{PSW1} (see also the proof of Proposition \ref{conetypesBalls}). It is a proper subset of $\circle$ by Corollary \ref{c.fundamentalConstant}. The cover $\cal B$ is finite since there are only finitely many cone types \cite[p. 455]{BH}.

Property (i) holds since the same property holds for $\cone_\infty(\g)$, Property (ii) is a consequence of Proposition \ref{Lipschitz-compatibleB}. Finally, Property (iii) was proven in \cite[Corollary 5.10]{PSW1} choosing $\slroot(\g):=\slroot_1(\cartan\rho(\g))$ (see also the proof of Proposition \ref{conetypesBalls}). Observe that in the real case by considering $X=\circle$ we are implicitly considering only the intersection with the limit set, while in the Kleinian group case it is not necessary to intersect with the limit set since the $\G$-action on the whole $X$ is conformal.
\end{proof}

We now establish the following corollary that will be used in the sequel.

\begin{cor}\label{diffpointreal} Assume $\bord\G$ is homeomorphic to a circle. Let $\rho:\G\to\PGL(d,\R)$ and $\ov\rho:\G\to\PGL(\ov d,\R)$ be $(1,1,2)$-hyperconvex, consider the map between $\class^{1+\nu}$ circles $$\Xi=\ov\xi\circ\xi^{-1}:\xi(\bord\G)\to\ov\xi(\bord\G).$$ If $\Xi$ has a differentiability point with finite non-vanishing derivative then $\rho$ and $\ov\rho$ are gap-isospectral.
\end{cor}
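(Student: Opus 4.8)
The strategy is to combine Proposition~\ref{c.indepPer} with Lemma~\ref{usar} and then invoke Proposition~\ref{c.indepPer}'s conclusion that $\Xi$ is bi-Lipschitz in order to force the periods of the two refraction cocycles to coincide. First I would observe that by Lemma~\ref{usar}, both actions of $\G$ on the $\class^{1+\nu}$ circles $\xi(\bord\G)$ and $\ov\xi(\bord\G)$ admit Lipschitz-compatible covers, with $\slroot(\g)=\slroot_1(\cartan\rho(\g))$ and $\ov\slroot(\g)=\slroot_1(\cartan\ov\rho(\g))$ the associated functions. A differentiability point of $\Xi$ with finite non-vanishing derivative is, after identifying the $\class^1$ circles with a neighborhood in $\R$, precisely a point where the derivative is neither horizontal nor vertical (an ``oblique'' derivative), so Proposition~\ref{c.indepPer} applies and yields that $\Xi|_{\xi(\bord\G)}=\ov\xi\circ\xi^{-1}$ is bi-Lipschitz.

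Next I would translate the bi-Lipschitz property into an equality of Cartan projections along geodesic rays, and hence of Jordan projections. Using Proposition~\ref{conetypesBalls} (cone types are coarsely balls), for a geodesic ray $(\alpha_n)$ converging to $x$ the set $\xi(\alpha_n\cone_\infty^{c}(\alpha_n))$ is coarsely a ball of radius $e^{-\slroot(\cartan(\alpha_n))}$ about $\xi(x)$, and $\ov\xi(\alpha_n\cone_\infty^{c}(\alpha_n))$ is coarsely a ball of radius $e^{-\ov\slroot(\cartan(\ov\alpha_n))}$ about $\ov\xi(x)$. Since $\Xi$ maps the former coarsely onto the latter and is bi-Lipschitz, the two radii are comparable uniformly in $n$ and in $x$; that is, there is a constant $R$ with
\begin{equation*}
\big|\slroot_1(\cartan(\rho\alpha_n))-\slroot_1(\cartan(\ov\rho\alpha_n))\big|\leq R\qquad\text{for all }n\text{ and all rays}.
\end{equation*}
Specializing to periodic rays, i.e. taking $\alpha_n$ to run through powers $\g^n$ of a hyperbolic element $\g\in\G$ and using that $\tfrac1n\slroot_1(\cartan(\rho\g^n))\to\slroot_1(\lambda(\rho\g))$ (and similarly for $\ov\rho$), dividing by $n$ and letting $n\to\infty$ gives $\slroot_1(\lambda(\rho\g))=\ov\slroot_1(\lambda(\ov\rho\g))$ for every hyperbolic $\g$. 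Since $\slroot=\slroot_1$ on $\calL_\rho$ and $\ov\slroot=\ov\slroot_1$ on $\calL_{\ov\rho}$ (as recorded after Definition~\ref{locConf}, here $p=2$ so this holds on all of $\a$), this says exactly that $\rho$ and $\ov\rho$ are gap-isospectral.

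\textbf{Main obstacle.} The delicate point is the uniformity in $x$ (and in the ray) of the comparison constant $R$ between the two radii: Proposition~\ref{c.indepPer} gives a global bi-Lipschitz constant for $\Xi|_{\xi(\bord\G)}$, and Proposition~\ref{conetypesBalls} gives coarse-ball descriptions with constants depending only on the representations, so in principle the chain of inequalities closes up; but one must be careful that the ``coarsely a ball'' statement (inner and outer radii differing only by multiplicative constants) survives composition with a bi-Lipschitz map, so that $e^{-\slroot_1(\cartan(\rho\alpha_n))}$ and $e^{-\slroot_1(\cartan(\ov\rho\alpha_n))}$ are genuinely comparable rather than merely one-sidedly bounded. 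Once that is in hand the passage to periods via $\g^n$ is routine. Note also that no irreducibility is needed for this corollary — it is a direct consequence of Proposition~\ref{c.indepPer} and the coarse geometry of cone types.
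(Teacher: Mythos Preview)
Your argument is correct and follows the paper's approach for the main step: invoking Lemma~\ref{usar} to ensure both actions admit Lipschitz-compatible covers, then applying Proposition~\ref{c.indepPer} to conclude that $\Xi$ is bi-Lipschitz. The only difference lies in how you extract gap-isospectrality from the bi-Lipschitz conclusion.

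The paper does this in one stroke via Lemma~\ref{lambda2}: for a proximal element $g$ and a point $v$ on the limit curve (with nonzero $V_{\lambda_2(g)}$-component, guaranteed by the tangency $\sf T_{\xi(\g_+)}\xi(\bord\G)=\xi^2(\g_+)$ from Theorem~\ref{t.C1}), one has $\tfrac1n\log d_\P(g^n v,g_+)\to-\slroot_1(\lambda(g))$. Since $\Xi$ is bi-Lipschitz and equivariant, the corresponding rates for $\rho(\g)$ and $\ov\rho(\g)$ coincide. Your route instead goes through Proposition~\ref{conetypesBalls} to compare Cartan projections along rays, then passes to Jordan projections via $\tfrac1n\cartan(\g^n)\to\lambda(\g)$. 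This is a legitimate alternative; the uniformity concern you flag is indeed handled by the global bi-Lipschitz constant together with the uniform inner and outer radii in Proposition~\ref{conetypesBalls}. One small point to tidy: powers $\g^n$ form a quasi-geodesic, not a geodesic, so to feed them into Proposition~\ref{conetypesBalls} you should either invoke the Morse lemma and Proposition~\ref{p.Ben} to transfer the bound from a nearby geodesic ray, or note that the coarse cone-type version of the argument already accommodates quasi-geodesics. Either way the conclusion stands. The paper's route via Lemma~\ref{lambda2} is shorter and avoids the cone-type machinery for this step, but your approach has the virtue of reusing tools already central to the paper.
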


\begin{proof}By Lemma \ref{usar} we can apply Proposition \ref{c.indepPer} to obtain that $\Xi$ is bi-Lipschitz. The following standard lemma from linear algebra (see for example Benoist \cite{convexes1} and S. \cite[Lemma 3.4]{exponential}) gives the period computation completing the proof.\end{proof}

\begin{lemma}\label{lambda2} 
Let $g\in\PGL(d,\R)$ be proximal with attracting point $g_+\in\mathbb P(\R^d)$ and repelling hyperplane $g_-\in\mathbb P((\R^d)^*)$. Let $V_{\lambda_2(g)}$ be the sum of the  characteristic spaces of $g$ whose associated eigenvalue is of modulus $\exp\lambda_2(g),$  Then for every $v\notin\P(g_-),$ with non-zero component in $V_{\lambda_2(g)},$ one has 
$$\lim_{n\to\infty}\frac{\log d_\P(g^n(v),g_+)}n= -\slroot_1(\lambda(g)).$$
\end{lemma}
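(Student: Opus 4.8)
\textbf{Proof plan for Lemma \ref{lambda2}.}

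The plan is to diagonalize $g$ over an appropriate extension and read off the asymptotics of $g^n$ coordinate-wise. First I would put $g$ in a normal form adapted to the modulus of its eigenvalues: write $\R^d=V_{\lambda_1(g)}\oplus V_{\lambda_2(g)}\oplus W$, where $V_{\lambda_1(g)}$ is the attracting line (here $g_+$), $V_{\lambda_2(g)}$ is the sum of characteristic subspaces with eigenvalue of modulus $e^{\lambda_2(g)}$, and $W$ collects the remaining characteristic subspaces, all with eigenvalue of modulus $<e^{\lambda_2(g)}$. Since $g$ is proximal, $\dim V_{\lambda_1(g)}=1$ and $g$ acts on it by the scalar $e^{\lambda_1(g)}$ (up to sign), while $\P(g_-)$ is exactly $\P(V_{\lambda_2(g)}\oplus W)$.

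Next, writing a vector $v\notin\P(g_-)$ as $v=v_1+v_2+v_W$ with $v_1\in V_{\lambda_1(g)}\setminus\{0\}$ (nonzero precisely because $v\notin\P(g_-)$) and $v_2\in V_{\lambda_2(g)}\setminus\{0\}$ by hypothesis, I would estimate $g^n v$. The component in $V_{\lambda_1(g)}$ grows like $e^{n\lambda_1(g)}$, and the component in $V_{\lambda_2(g)}$ grows like $e^{n\lambda_2(g)}$ up to a subexponential (polynomial) factor coming from possible nilpotent parts in the characteristic subspaces; the $W$-component is negligible by comparison. Since $d_\P(g^n(v),g_+)$ is comparable (by elementary trigonometry in projective space) to the ratio $\|(\text{component of }g^nv\text{ transverse to }g_+)\|/\|g^nv\|$, and the numerator is dominated by the $V_{\lambda_2(g)}$-part while the denominator is dominated by the $V_{\lambda_1(g)}$-part, one gets
\[
d_\P(g^n(v),g_+)\asymp \frac{e^{n\lambda_2(g)}\cdot(\text{poly in }n)}{e^{n\lambda_1(g)}}.
\]
Taking $\tfrac1n\log$ and letting $n\to\infty$, the polynomial factor dies and one is left with $\lambda_2(g)-\lambda_1(g)=-(\lambda_1(g)-\lambda_2(g))=-\slroot_1(\lambda(g))$, using the definition \eqref{e.slroot} of $\slroot_1$ for $\PGL(d,\R)$.

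The main obstacle — though a mild one — is handling the non-diagonalizable case cleanly: if the characteristic subspace for an eigenvalue of modulus $e^{\lambda_2(g)}$ carries a nontrivial Jordan block, then $g^n$ restricted there grows like $n^{k}e^{n\lambda_2(g)}$ for some $k\ge 0$, and one must check that the transverse component of $g^nv$ does not accidentally cancel to lower order. This is avoided by noting that the leading term in the Jordan expansion of $g^nv_2$ is a fixed nonzero vector (times $n^ke^{n\lambda_2(g)}$ times a bounded oscillating phase) lying in $V_{\lambda_2(g)}\subset g_-$, hence genuinely transverse to $g_+$; so the estimate $\|g^nv - (\text{its }g_+\text{-component})\|\gtrsim n^ke^{n\lambda_2(g)}$ holds for all large $n$, and combined with the matching upper bound the logarithmic limit is unaffected. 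This is exactly the elementary computation already invoked in Benoist \cite{convexes1} and S. \cite[Lemma 3.4]{exponential}, so I would cite those for the details rather than reproduce them.
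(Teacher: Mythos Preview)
Your proposal is correct and matches the paper exactly: the paper does not prove Lemma \ref{lambda2} at all but simply labels it a ``standard lemma from linear algebra'' and cites Benoist \cite{convexes1} and S.\ \cite[Lemma 3.4]{exponential}, precisely the references you invoke. Your outline is the standard characteristic-subspace computation underlying those citations, so there is nothing to add.
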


\subsection{Limit curves in non-maximal flags} We proceed with another intermediate step for the proof of Theorem \ref{t.Zcl} describing differentiability points of boundary maps in partial flag manifolds $\cal F_{\{\sroot,\bb\}}$ for $\{\sroot,\bb\}$-Anosov representations. This step follows from the combination of Theorem \ref{t.C1} and Corollary \ref{diffpointreal}.

Let $\sf G$ be real-algebraic and semi-simple. Let $\{\sroot,\bb\}\subset\simple$ be two distinct simple roots. The partial flag space $\cal F_{\{\sroot,\bb\}}$ carries two transverse foliations that are the level sets of the natural projections $\cal F_{\{\sroot,\bb\}}\to\cal F_{\{\sroot\}}$ and $\cal F_{\{\sroot,\bb\}}\to\cal F_{\{\bb\}}.$ We will refer to these as the \emph{canonical foliations} of $\cal F_{\{\sroot,\bb\}}.$

\begin{cor}\label{nonC1} Let $\sf G$ be real-algebraic and semi-simple and let $\{\sroot,\bb\}\subset\simple$ distinct. Let $\rho:\piS\to\sf G$ be Zariski-dense and $\{\sroot,\bb\}$-Anosov. If both curves $\xi^\sroot(\bord\piS)$ and $\xi^{\bb}(\bord\piS)$ are $\class^1$ then every differentiability point of $\xi^{\{\sroot,\bb\}}(\bord\piS)$ is tangent to one of the canonical foliations of $\cal F_{\{\sroot,\bb\}}.$
\end{cor}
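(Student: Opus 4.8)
The strategy is to reduce to Corollary \ref{diffpointreal} by applying two well-chosen Tits representations of $\sf G$. Recall from Proposition \ref{prop:titss} that for each simple root there is a Tits representation $\Fund_\sroot\colon\sf G\to\PSL(V_\sroot)$ with $\chi_{\Fund_\sroot}=l_\sroot\peso_\sroot$ and $\dim V_{\chi_{\Fund_\sroot}}=1$; composing with $\rho$ gives representations $\Fund_\sroot\circ\rho$ and $\Fund_\bb\circ\rho$ of $\piS$ into (projective) special linear groups, and since $\dim V_{\chi_{\Fund_\sroot}}=1$ the $\{\slroot_1\}$-limit map of $\Fund_\sroot\circ\rho$ is $\zeta_{\Fund_\sroot}\circ\xi^\sroot$. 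First I would argue that $\Fund_\sroot\circ\rho$ and $\Fund_\bb\circ\rho$ are $(1,1,2)$-hyperconvex: this is exactly the content of Theorem \ref{t.C1}, once we know the limit curves $\zeta_{\Fund_\sroot}(\xi^\sroot(\bord\piS))$ and $\zeta_{\Fund_\bb}(\xi^\bb(\bord\piS))$ are $\class^1$ submanifolds. But $\zeta_{\Fund_\sroot}$ is a proper embedding which is a homeomorphism onto its image (Equation \eqref{maps}), indeed a smooth embedding of flag manifolds, so the $\class^1$ hypothesis on $\xi^\sroot(\bord\piS)$ transports to $\class^1$-ness of its image under $\zeta_{\Fund_\sroot}$; together with $\piS$ being a surface group (so $\bord\piS\cong\circle$) and irreducibility of $\Fund_\sroot\circ\rho$ (which follows from $\rho$ being Zariski-dense and $\Fund_\sroot$ irreducible), the second bullet of Theorem \ref{t.C1} applies.

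Next I would use the two-root Tits embedding. Combining the maps for $\sroot$ and for $\bb$ gives an embedding $\cal F_{\{\sroot,\bb\}}(\sf G)\hookrightarrow\P(V_\sroot)\times\P(V_\bb)$ which intertwines the two canonical foliations of $\cal F_{\{\sroot,\bb\}}$ with the two coordinate foliations of the product, and which is a diffeomorphism onto a smooth submanifold. Thus differentiability of $\xi^{\{\sroot,\bb\}}(\bord\piS)$ at a point, and tangency to a canonical foliation, can be read off from the graph curve $\varXi=(\zeta_{\Fund_\sroot}\circ\xi^\sroot,\zeta_{\Fund_\bb}\circ\xi^\bb)$ inside $\P(V_\sroot)\times\P(V_\bb)$, which is precisely the graph of $\Xi=\ov\xi\circ\xi^{-1}$ in the notation of \S\ref{s.6.1} with $\rho\rightsquigarrow\Fund_\sroot\circ\rho$, $\ov\rho\rightsquigarrow\Fund_\bb\circ\rho$. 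A point where $\varXi(\bord\piS)$ is differentiable but \emph{not} tangent to either coordinate foliation is exactly a point where $\Xi$ has finite non-vanishing derivative in the sense of Corollary \ref{diffpointreal}. That corollary then forces $\Fund_\sroot\circ\rho$ and $\Fund_\bb\circ\rho$ to be gap-isospectral.

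Finally I would derive a contradiction from gap-isospectrality. By Equation \eqref{eq:spectralrep} the first gap of $\Fund_\sroot\circ\rho$ at $\g$ is $\slroot_1(\lambda(\Fund_\sroot\rho\g))=l_\sroot\peso_\sroot(\lambda_\sf G(\rho\g))/(\text{second weight's value})$; more precisely, since $\dim V_{\chi_{\Fund_\sroot}}=1$ and the $\sroot$-weight-string structure (see \S\ref{representaciones}) shows the next weight below $\chi_{\Fund_\sroot}$ is $\chi_{\Fund_\sroot}-\sroot$, one gets $\slroot_1(\cartan(\Fund_\sroot g))=l_\sroot\sroot(\cartan_\sf G(g))$ for $g$ with $\sroot(\cartan(g))>0$, and analogously for $\bb$. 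Hence gap-isospectrality of the two composed representations would read $l_\sroot\sroot(\lambda_\sf G(\rho\g))=l_\bb\bb(\lambda_\sf G(\rho\g))$ for all $\g$, i.e. $\lambda_\sf G(\rho(\piS))$ would lie in the hyperplane $\{l_\sroot\sroot=l_\bb\bb\}$ of $\a$, which is a proper subspace since $\sroot\neq\bb$. But Zariski-density of $\rho(\piS)$ and Benoist's Theorem \ref{densidad} give that the limit cone $\calL_\rho$ has non-empty interior, so $\lambda_\sf G(\rho(\piS))$ cannot lie in a hyperplane — a contradiction. Therefore every differentiability point of $\xi^{\{\sroot,\bb\}}(\bord\piS)$ is tangent to one of the canonical foliations.

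The main obstacle I anticipate is making the "read off differentiability from the Tits embedding" step genuinely rigorous: one must check that the smooth embedding $\cal F_{\{\sroot,\bb\}}\hookrightarrow\P(V_\sroot)\times\P(V_\bb)$ carries the canonical foliations to the coordinate foliations \emph{and} that it is transverse enough that differentiability/tangency of the image curve is equivalent to differentiability/tangency of $\xi^{\{\sroot,\bb\}}(\bord\piS)$ itself — a purely differential-geometric verification, but one that needs the local product structure of both flag manifolds to be spelled out. The computation of the first singular-value gap of $\Fund_\sroot\circ\rho$ in terms of $\sroot(\cartan_\sf G(\cdot))$ is the other place where care is needed, though it is essentially the $\sroot$-string argument of \S\ref{representaciones} applied to a one-dimensional highest-weight space, hence routine.
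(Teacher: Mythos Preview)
Your proposal is correct and follows essentially the same route as the paper: pass to the Tits representations $\Fund_\sroot,\Fund_\bb$, use the Zhang--Zimmer direction of Theorem \ref{t.C1} (together with irreducibility from Zariski-density and analyticity of $\zeta_{\Fund_\sroot},\zeta_{\Fund_\bb}$) to obtain $(1,1,2)$-hyperconvexity, embed $\cal F_{\{\sroot,\bb\}}$ into $\P(V_\sroot)\times\P(V_\bb)$ so that the canonical foliations become the coordinate foliations, and then combine Corollary \ref{diffpointreal} with Benoist's Theorem \ref{densidad}. One minor slip: the first singular-value gap of $\Fund_\sroot\circ\rho$ is $\sroot(\lambda_\sf G(\cdot))$, not $l_\sroot\sroot(\lambda_\sf G(\cdot))$, since the weight immediately below $l_\sroot\peso_\sroot$ is $l_\sroot\peso_\sroot-\sroot$; this does not affect your contradiction, as $\{\sroot=\bb\}$ is still a proper hyperplane of $\a$.
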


\begin{proof} By Benoist's Theorem \ref{densidad} the limit cone of $\rho$ has non-empty interior, in particular there exists $\g\in\piS$ such that \begin{equation}\label{isospe}\sroot(\lambda(\g))\neq\bb(\lambda(\g)).\end{equation}

Consider the Tits representations $\Fund_\sroot$ and $\Fund_\bb$ associated to $\sroot$ and $\bb.$ Since $\rho(\piS)$ is Zariski-dense, both representation $\Fund_\sroot\rho$ and $\Fund_\bb\rho$ are irreducible and since $\rho$ is $\{\sroot, \bb\}$-Anosov  both representation $\Fund_\sroot\rho$ and $\Fund_\bb\rho$ are projective Anosov. Recall that by definition of $\Fund_\sroot,$ for every $g\in\sf G$ one has $$\slroot_1\big(\lambda\big(\Fund_\sroot(g)\big)\big)=\sroot\big(\lambda(g)\big),$$ so by Equation \eqref{isospe} the representations $\Fund_\sroot\rho$ and $\Fund_\bb\rho$ are not gap-isospectral.

Since the maps $\zeta_\sroot$ and $\zeta_\bb$ are analytic, both projective curves $\zeta_\sroot\xi^{\sroot}(\bord\piS)$ and $\zeta_\bb\xi^{\bb}(\bord\piS)$ are $\class^1$ and thus by Zhang-Zimmer's Theorem \ref{t.C1} the representations $\Fund_\sroot\rho$ and $\Fund_\bb\rho$ are $(1,1,2)$-hyperconvex.

The natural embedding $\cal F_{\{\sroot,\bb\}}\to\P(V_\sroot)\times\P(V_\bb)$ sends $\xi^{\{\sroot,\bb\}}$ to the graph of the map $\Xi$ from Corollary \ref{diffpointreal} and thus the corollary implies the result.\end{proof}

\subsection{Proof of Theorem \ref{t.Zcl}}\label{s.pB}\label{s.Zd}
The goal of the section is to prove Theorem \ref{t.Zcl}, stating that the Zariski closure $\sf G$ of the image of an irreducible $(1,1,2)$-hyperconvex representation $\rho:\piS\to\PGL(d,\R)$ is simple and the highest weight of the induced representation $\Fund:\sf G\to\PGL(d,\R)$ is a multiple of a fundamental weight associated to a root whose root-space is one-dimensional.

It is known   that an irreducible subgroup $\sf G<\PGL(d,\R)$ containing a proximal element is semi-simple without compact factors (see S. \cite[Lemma 8.6]{entropia} for an explicit argument following a suggestion by Quint).

We consider the induced representation $\rho_0:\G\to\sf G$ and denote by $\Fund:\sf G\to\PGL(d,\R)$ the linear representation so that $\rho=\Fund\rho_0.$ Let $\chi=\chi_\Fund\in\frak a^*$ be the  highest weight of $\Fund$. As in Definition \ref{trep} we consider
$$\t=\t_\Fund=\{\sroot\in\Delta: \chi-\sroot \text{ is a weight of }\Fund \}=\{\sroot\in\simple:\<\chi,\sroot\>\neq0\}.$$ It is enough to show that $\t$ is reduced to a single root $\{\sroot_0\};$ indeed, if this is the case, upon writing $\chi$ in the basis of fundamental weights $\{\peso_\sroot:\sroot\in\simple\}$ (recall their defining Equation \eqref{pesoFund}) one has $$\chi=\sum_{\sroot\in\simple}\<\chi,\sroot\>\peso_\sroot=\<\chi,\sroot_0\>\peso_{\sroot_0},$$ Moreover this gives: 
\begin{itemize}
	\item[-]  $\sf G$ is simple by Lemma \ref{simple};
	\item[-] the weights on the first level consist solely on $\chi-\sroot$ and its associated weight space is $\phi(\ge_{-\sroot})V_{\chi_\Fund}$. Since $\rho(\G)$ is $\{\slroot_2\}$-Anosov one has that $\phi(\ge_{-\sroot})V_{\chi_\Fund}$ is one-dimensional, but by Lemma \ref{not} no element of $\ge_{-\sroot}$ acts trivially on $V_{\chi_\Fund}$ so $\ge_{-\sroot}$ is $1$-dimensional, as desired.
\end{itemize}

We proceed now to show that in the present situation $\t$ consists of only one element. By definition of $\t$ one has, for every $g\in\sf G$, that $$\slroot_1\big(\lambda\big(\Fund(g)\big)=\min_{\sroot\in\theta}\big\{\sroot(\lambda_\sf G(g))\big\}.$$ Consequently, the limit cone $\calL_{\rho_0}\subset\a^+_\sf G$ does not intersect the walls of elements in $\t$ and, since $\rho_0:\G\to\sf G$ is a quasi-isometry, Remark \ref{wallsAnosov} implies that the representation $\rho_0$ is $\t$-Anosov.

Recall from Equation \eqref{maps} that we have a $\Fund$-equivariant analytic embedding $\zeta_\t:\sf G/\sf P_{\theta}\to \P(\R^{d}).$ One has moreover that $\xi^1_\rho=\zeta_\t\circ\xi^\t_{\rho_0}.$ In particular the boundary map $\xi^\theta$ has $\class^1$-image. Composing with the projections $\cal F_\t\to\cal F_{\t'}$ one sees that, for any $\t'\subset\t$  the curve $\xi_{\rho_0}^{\t'}(\bord\G)$ is a $\class^1$ circle.

Assume now there exists two distinct roots $\sroot,\bb$ in $\t.$ By the previous paragraph the curve $\xi^{\{\sroot,\bb\}}(\bord\G)$ is $\class^1.$ Corollary \ref{nonC1} gives then that $\xi^{\{\sroot,\bb\}}(\bord\G)$ is necessarily contained in one of the leaves of the canonical foliations of $\cal F_{\{\sroot,\bb\}},$ thus giving that one of the maps $\xi^{\sroot}$ or $\xi^\bb$ is constant, achieving a contradiction. \qed

\section{Non-differentiability and $1$-conicality: The proof of Theorem \ref{tutti}}\label{s.Hffproof}

\subsection{Non-differentiability and $1$-conicality}

By means of \S\,\ref{s.6.1} we can improve Lemma \ref{generalcase} when we deal with a pair of real hyperconvex representations of surface groups, this is the missing ingredient for Theorem \ref{tutti}:

\begin{cor}\label{nondiffbconicalR} Assume $\bord\G$ is homeomorphic to a circle. Let $\rho,\ov\rho$ two $(1,1,2)$-hyperconvex representations over $\R$ of $\G$ that are not gap-isospectral. Then, the set of non-differentiability points of $\Xi$ coincides with the set of $1$-conical points.
\end{cor}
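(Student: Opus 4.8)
The statement asks to identify, for a pair of non-gap-isospectral $(1,1,2)$-hyperconvex representations of a surface group, the non-differentiability points of $\Xi = \ov\xi\circ\xi^{-1}$ with the $1$-conical points. By Lemma \ref{generalcase} we already know that the set of $1$-conical points equals $\Ext_{\rho,\ov\rho}^1$, the set of $1$-concavity points of $\Xi$, so it suffices to prove that the non-differentiability points of the Lipschitz curve $\varXi(\bord\G)\subset\xi(\bord\G)\times\ov\xi(\bord\G)$ coincide with $\Ext^1_{\rho,\ov\rho}$. Equivalently, after identifying $\xi^{\{\slroot_1,\ov\slroot_1\}}$ with the graph of $\Xi$ inside the $\class^{1+\nu}$ torus, one must show that $x$ is a $1$-concavity point of $\Xi$ if and only if $\varXi$ is non-differentiable at $\varXi(x)$.

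\textbf{The two inclusions.} First I would prove that if $x$ is \emph{not} $1$-conical, then $\varXi$ is differentiable at $\varXi(x)$, with derivative tangent to one of the two canonical foliations of the torus (horizontal or vertical). This is exactly the argument sketched in \S\ref{corAProof}, case (i): the Cartan projections of consecutive points $\alpha_n$ along a geodesic ray to $x$ make uniformly bounded gaps (Proposition \ref{p.Ben}), so $\slroot_1(\cartan(\alpha_n))-\ov\slroot_1(\cartan(\ov\alpha_n))$ changes by a bounded amount from $n$ to $n+1$; if $x$ is not $1$-conical this difference tends to $+\infty$ or to $-\infty$, and the sign is eventually constant. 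Using Proposition \ref{conetypesBalls} to see the images $\varXi(\alpha_n\cone^{c}_\infty(\alpha_n))$ as coarse rectangles of sides $e^{-\slroot_1(\cartan(\alpha_n))}$ and $e^{-\ov\slroot_1(\cartan(\ov\alpha_n))}$, one of the sides becomes negligible with respect to the other at every scale, forcing $\varXi$ to have a horizontal (resp. vertical) derivative at $\varXi(x)$ — in particular $\varXi$ is differentiable there. Conversely, suppose $x$ is $1$-conical but $\varXi$ is differentiable at $\varXi(x)$. Since the incremental quotients $d_\P(\ov\xi(x),\ov\xi(y_k))/d_\P(\xi(x),\xi(y_k))$ stay bounded away from $0$ and $\infty$ along a subsequence (because $x\in\Ext^1_{\rho,\ov\rho}$ by Lemma \ref{generalcase}), the derivative of $\varXi$ at $\varXi(x)$ cannot be horizontal nor vertical: it is \emph{oblique}. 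Reading $\varXi$ as the graph of $\Xi$ between the two $\class^{1+\nu}$ circles $\xi(\bord\G)$ and $\ov\xi(\bord\G)$, differentiability of the graph with oblique tangent means $\Xi$ has a finite non-vanishing derivative at $\xi(x)$. Now Lemma \ref{usar} tells us that the actions of $\rho(\G)$ and $\ov\rho(\G)$ on their $\class^{1+\nu}$ limit circles admit Lipschitz-compatible covers, so Proposition \ref{c.indepPer} (via Corollary \ref{diffpointreal}) applies and yields that $\Xi$ is bi-Lipschitz on $\xi(\bord\G)$; then Lemma \ref{lambda2} forces $\slroot_1(\lambda(\g))=\ov\slroot_1(\lambda(\ov\g))$ for every $\g\in\G$, i.e. $\rho$ and $\ov\rho$ are gap-isospectral, contradicting the hypothesis. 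Hence a $1$-conical point cannot be a differentiability point, and combined with the first inclusion this gives the claimed equality.

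\textbf{Main obstacle.} The delicate point is the conversion, in the ``$x$ not $1$-conical $\Rightarrow$ differentiable'' direction, from the coarse geometric statement about the nested family of rectangles $\varXi(\alpha_n\cone^{c}_\infty(\alpha_n))$ to an honest statement about the differential of the Lipschitz curve at the single point $\varXi(x)$: one has to check that the coarse constants of Proposition \ref{conetypesBalls}, which are uniform in $x$, together with the monotone structure of the graph (so the curve meets the coarse rectangle in exactly the way needed) really do pin down a two-sided limit of the incremental quotient, not merely a limit along the sequence $y_k$ exiting the successive cone types. This requires knowing that the cone types $\alpha_n\cone^{c}_\infty(\alpha_n)$ form a genuine neighbourhood basis of $x$ and that the aspect ratio of the rectangle is eventually monotone (not just divergent), which is where the bounded-gap property of Proposition \ref{p.Ben} is essential. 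The remaining steps are applications of results already established in the paper. To keep the exposition self-contained I would organize the proof as: (1) invoke Lemma \ref{generalcase} to reduce to identifying $\Ndiff_{\rho,\ov\rho}$ with $\Ext^1_{\rho,\ov\rho}$; (2) prove $\bord\G\setminus\Ext^1_{\rho,\ov\rho}\subset\bord\G\setminus\Ndiff_{\rho,\ov\rho}$ by the coarse-rectangle argument; (3) prove $\Ext^1_{\rho,\ov\rho}\subset\Ndiff_{\rho,\ov\rho}$ by the oblique-derivative/bi-Lipschitz/gap-isospectrality contradiction, using Lemma \ref{usar}, Corollary \ref{diffpointreal} and Lemma \ref{lambda2}.
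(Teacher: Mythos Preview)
Your proposal is correct and follows essentially the same route as the paper: reduce via Lemma \ref{generalcase} to comparing $\Ext^1_{\rho,\ov\rho}$ with the non-differentiability set, use the ``not $1$-conical $\Rightarrow$ incremental quotient $\to 0$ or $\infty$'' half of the proof of Lemma \ref{generalcase} (together with monotonicity of $\Xi$) to get horizontal/vertical tangents at non-conical points, and rule out oblique derivatives at $1$-conical points via Corollary \ref{diffpointreal}. The ``obstacle'' you flag is already handled inside the proof of Lemma \ref{generalcase} (the cone types give a neighbourhood basis and the quotient is controlled uniformly on each shell), so no extra work is needed there; also, Lemma \ref{lambda2} is used only inside Corollary \ref{diffpointreal}, not as a separate step.
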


\begin{proof} We choose a $\class^1$ identification of the $\class^1$ torus $\xi(\bord\G)\times\ovxi(\bord\G)\subset\P(\R^d)\times \P(\R^{\ov d})$ with the quotient of the square $[-1,1]\times[-1,1]$ preserving the product structure, and such that the point $(x,\Xi(x))$ corresponds to $(0,0)$. In these coordinates the graph of $\Xi$ is a monotone curve $[-1,1]\to[-1,1]$ passing through the origin. Since the chosen identification is $\class^1$, it is in particular $K$-bi-Lipschitz for some $K$, so we can write (coarsely in a small neighbourhood of $x$) $d(\xi(y),\xi(x))=|y|$ and $d(\ov\xi(y),\ov\xi(x))=|\Xi(y)|$.

From Lemma \ref{generalcase} we know that $x$ is $1$-conical if and only if either $\lim_{y\to x}\frac{|\Xi(y)|}{|y|}$ exists and is far from $0$ and $\infty$, either it does not exist. The proposition is settled if we show that the first situation cannot happen, so let's assume it does. However, since $\Xi$ is monotone we can remove the $|\,|$ and we get that $x$ is a differentiability point of $\Xi$ with oblique derivative. Corollary \ref{diffpointreal} implies then that for all $\g\in\G$ one has $\slroot_1(\lambda(\g))=\ov\slroot_1(\lambda(\ov\g))$, contradicting our assumption.\end{proof}

\subsection{Proof of Theorem \ref{tutti} and an analogous for Kleinian groups}
We begin with the proof of Theorem \ref{tutti} by recalling the following result from Beyrer-P. \cite{BeyP2}

\begin{cor}[Beyrer-P. \cite{BeyP2}]\label{l.weaklyirred} Assume $\bord\G$ is homeomorphic to a circle and let $\rho:\G\to \PGL(d,\R)$ be $(1,1,2)$-hyperconvex. Then there exists an irreducible $(1,1,2)$-hyperconvex representation $\rho_0:\G\to \PGL(m,\R)$ such that, for every $\g\in\G$ one has
$$\slroot_1\big(\lambda(\g)\big)=\slroot_1\big(\lambda(\rho_0\g)\big).$$	
\end{cor}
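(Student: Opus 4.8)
The statement to be proved, Corollary~\ref{l.weaklyirred} from Beyrer--P.~\cite{BeyP2}, asserts that every $(1,1,2)$-hyperconvex representation $\rho:\G\to\PGL(d,\R)$ of a group whose boundary is a circle has the same first-root-spectrum as an irreducible $(1,1,2)$-hyperconvex representation $\rho_0:\G\to\PGL(m,\R)$. Since this is quoted as an external input, the natural task is to reconstruct the argument. The plan is to decompose $\R^d$ under $\rho(\G)$ and isolate the unique irreducible constituent that "carries" the limit curve, then verify that this constituent is itself $(1,1,2)$-hyperconvex and records the correct periods.

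\textbf{Step 1: Semisimplification and decomposition.} First I would pass to the semisimplification of $\rho$: since $\rho$ is $\{\slroot_1\}$-Anosov it is in particular proximal, so $\rho(\G)$ contains a proximal element, and the Jordan projection $\lambda(\g)$ is unchanged under semisimplification. Hence we may assume $\R^d=\bigoplus_{i=1}^{N} W_i$ with each $W_i$ a $\G$-irreducible subspace, with associated representations $\rho_i:\G\to\PGL(W_i)$. Because $\rho$ is projective Anosov, for every hyperbolic $\g$ the top eigenvalue $\slroot_1(\lambda(\g))$ is attained with multiplicity one, so it is attained on exactly one of the blocks $W_{i(\g)}$; moreover the attracting line $\xi^1_\rho(\g^+)$ lies in that block. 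Using continuity of the limit map $\xi^1_\rho$ and minimality of the limit set one shows $i(\g)$ is independent of the hyperbolic $\g$: indeed the set of $\g^+$ whose attracting line lies in a fixed $W_i$ is closed and $\G$-invariant in $\bord\G$, hence either empty or all of $\bord\G$. Call this block $W_{i_0}=:\R^m$ and let $\rho_0=\rho_{i_0}$. Then $\xi^1_\rho(\bord\G)\subset\P(\R^m)$, so $\xi^1_\rho=\xi^1_{\rho_0}$, and by construction $\slroot_1(\lambda(\g))=\slroot_1(\lambda(\rho_0\g))$ for all hyperbolic $\g$, hence for all $\g$ by density of hyperbolic conjugacy classes together with the cocycle/spectral continuity.

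\textbf{Step 2: $\rho_0$ is projective Anosov and $(1,1,2)$-hyperconvex.} Since $\slroot_1(\cartan(\rho_0\g))$ differs from $\slroot_1(\cartan(\rho\g))$ by a quantity controlled by the norms of the projections to the other blocks — which are themselves bounded by $\slroot_1(\cartan(\rho\g))$ — one gets a domination inequality $\slroot_1(\cartan(\rho_0\g))\geq\mu'|\g|-C'$, so $\rho_0$ is projective Anosov (alternatively invoke Remark~\ref{wallsAnosov} using that $\rho_0$ is still a quasi-isometric embedding and its limit cone avoids $\ker\slroot_1$). The limit maps $\xi^1_{\rho_0}=\xi^1_\rho$ and $\xi^{d-2}$ (which also lives in $\R^m$: the repelling hyperplane direction of each $\g$ within $W_{i_0}$) satisfy the hyperconvexity relation \eqref{hypdefi} because the relation already holds for $\rho$ in the larger space $\R^d$ and restricting to the subspace $\R^m$ only makes the intersection with a smaller flag smaller; concretely $(\xi^1_{\rho_0}(x)+\xi^1_{\rho_0}(y))\cap\xi^{m-2}_{\rho_0}(z)\subset(\xi^1_\rho(x)+\xi^1_\rho(y))\cap\xi^{d-2}_\rho(z)=\{0\}$. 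Thus $\rho_0$ is $(1,1,2)$-hyperconvex and irreducible.

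\textbf{Main obstacle.} The genuinely delicate point is Step~1: showing that a single irreducible block carries the whole limit curve, and in particular ruling out that the limit line lies sometimes in one block and sometimes in another, or that $\slroot_1(\lambda(\g))$ is attained on several blocks simultaneously. The clean argument is that projective Anosov forces a uniform proximality gap, so the top singular value has a definite margin over the second one, and along a geodesic ray the Cartan attractor $U_1(\rho\alpha_n)$ converges; the $\G$-invariant closed subset of $\bord\G$ where $\xi^1_\rho$ lands in $W_i$ is then forced to be everything for a single $i$ by minimality of the $\G$-action on $\bord\G$. Verifying that the \emph{second} exponent datum needed for hyperconvexity (the $\slroot_2$-direction, i.e. the hyperplane map $\xi^{d-2}$) also lies in $\R^m$ requires noting that $(1,1,2)$-hyperconvexity of $\rho$ already implies $\rho$ is $\{\slroot_2\}$-Anosov with $\xi^{d-2}(x)\supset\xi^{d-3}(x)$ transverse to $\xi^1$, and a block-diagonal computation shows the relevant eigenspaces for the second eigenvalue sit inside $W_{i_0}$; this is routine linear algebra once Step~1 is in place.
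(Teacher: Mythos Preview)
The paper does not give its own proof of this corollary; it is quoted as an external input from \cite{BeyP2}. So the question is whether your reconstruction stands on its own.

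Your overall plan---isolate an irreducible constituent carrying the limit curve and show it inherits $(1,1,2)$-hyperconvexity---is the right skeleton. But there is a genuine gap at the hinge between Step~1 and Step~2. After you replace $\rho$ by its semisimplification $\rho^{ss}=\bigoplus\rho_i$, the inclusion you invoke,
\[
(\xi^1_{\rho_0}(x)+\xi^1_{\rho_0}(y))\cap\xi^{m-2}_{\rho_0}(z)\ \subset\ (\xi^1_{\rho}(x)+\xi^1_{\rho}(y))\cap\xi^{d-2}_{\rho}(z),
\]
requires that the right-hand side be computed for $\rho^{ss}$ (so that the limit maps are block-diagonal) \emph{and} that $\rho^{ss}$ be $(1,1,2)$-hyperconvex. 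Neither is established: the limit maps $\xi^1_\rho,\xi^{d-2}_\rho$ of the original $\rho$ are in general \emph{not} block-diagonal (for a block-upper-triangular $\rho$ the attracting eigenline has components in lower blocks), so they do not coincide with those of $\rho^{ss}$; and hyperconvexity is an open, not closed, condition on limit maps, so it does not pass to the limit $\rho_t\to\rho^{ss}$ along the obvious path. Your ``routine linear algebra'' at the end---that the second eigenvalue sits in $W_{i_0}$---is exactly the nontrivial point: this amounts to $\xi^2_\rho(x)\subset W_{i_0}$, which you have not argued.

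A cleaner route, and the one naturally suggested by the tools already in the paper, is to use Theorem~\ref{t.C1} in both directions. From $(1,1,2)$-hyperconvexity of $\rho$ one gets that $\xi^1_\rho(\bord\G)$ is a $\class^1$ curve with tangent $\xi^2_\rho(x)$ at $\xi^1_\rho(x)$. Let $W$ be the span of the curve; since tangents lie in $W$ one has $\xi^2_\rho(x)\subset W$, which is what forces $\lambda_1$ \emph{and} $\lambda_2$ (hence $\slroot_1(\lambda)$) to be unchanged upon restriction or projection. One then passes to an irreducible subquotient $\rho_0$ on which the curve projects to a $\class^1$ embedded circle (the tangent condition $\xi^2_\rho\subset W$ is exactly what prevents the projection from collapsing the derivative), and applies the Zhang--Zimmer direction of Theorem~\ref{t.C1} to $\rho_0$ to recover $(1,1,2)$-hyperconvexity. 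This replaces your unjustified inclusion argument by the $\class^1$-characterisation, which is the actual mechanism making the statement work.
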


We now prove Theorem \ref{tutti}. Since there exists $\g\in\G$ with $\slroot_1\big(\lambda(\g)\big)\neq\ov{\slroot}_1\big(\lambda(\ov\g)\big),$ Corollary \ref{l.weaklyirred} allows us to apply Proposition \ref{nonA} to obtain the density assumption in Theorem \ref{Hffconical}, so one has $$\Hff\Xi\big(\{1\textrm{-conical points}\}\big)=\hC{\max\{\slroot,\ov{\slroot}\}}.$$ Corollary \ref{nondiffbconicalR} states that the set of $1$-conical points coincides with the set of non-differentiability points of $\Xi.$ The inequality $\hC\infty <1$ follows from the strict convexity of the critical hypersurface $\cal Q_\cc$, where $\cc$ is the cocycle studied in Section \ref{s.lower}. This completes the proof of Theorem \ref{tutti}.

\subsection{Proof of Corollary \ref{cor.B}}
We conclude the paper proving Corollary \ref{cor.B}. Recall from Section \ref{representaciones} that for every simple root $\sroot$ of $\sf G$ we chose a Tits representation $\Phi_\sroot:\sf G\to\PSL(V_\sroot)$. 

\begin{cor}\label{c.Hk} Assume $\bord\G$ is homeomorphic to a circle and let $\sf G$ be a simple Lie group. Let $\rho:\G\to\sf G$ have Zariski-dense image. If for $\sroot,\bb\in\simple$ the representations $\Phi_\sroot\circ\rho$ and $\Phi_\bb\circ \rho$ are $(1,1,2)$-hyperconvex,
then 
\begin{enumerate}
		\item the image of the limit curve $\xi^{\{\sroot,\bb\}}:\bord\G\to\cal F_{\{\sroot,\bb\}}$ is Lipschitz   and the Hausdorff dimension of the points where it is non-differentiable is $\hC{\max\{\sroot,\bb\}}.$ 
		\item If the opposition involution $\ii$ on $\ge$ is non-trivial and $\bb=\ii\sroot$ then $$\hC{\max\{\sroot,\bb\}}=\hC{(\sroot+\bb)/2}.$$
		\end{enumerate}
\end{cor}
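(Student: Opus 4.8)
\textbf{Proof of Corollary \ref{cor.B}.}

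The plan is to reduce Corollary \ref{cor.B} to Corollary \ref{c.Hk}, whose statement is literally identical except that $\sf G$ is already assumed simple and the hyperconvex hypotheses are phrased in terms of the Tits representations $\Fund_\sroot$ and $\Fund_\bb$. Since in Corollary \ref{cor.B} we are given that $\sf G$ is simple and that $\Fund_\sroot\circ\rho$ and $\Fund_\bb\circ\rho$ are $(1,1,2)$-hyperconvex, the hypotheses of Corollary \ref{c.Hk} hold verbatim, and conclusions (1) and (2) of Corollary \ref{c.Hk} are exactly conclusions (1) and (2) of Corollary \ref{cor.B}. So the content of the proof is really the proof of Corollary \ref{c.Hk}, which I outline now.

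First I would establish that the limit curve $\xi^{\{\sroot,\bb\}}\colon\bord\G\to\cal F_{\{\sroot,\bb\}}$ is Lipschitz and identify its non-differentiability set. Using the $\Fund_\sroot$- and $\Fund_\bb$-equivariant analytic embeddings $\zeta_\sroot,\zeta_\bb$ from Equation \eqref{maps}, one has $\xi^1_{\Fund_\sroot\rho}=\zeta_\sroot\circ\xi^{\sroot}$ and $\xi^1_{\Fund_\bb\rho}=\zeta_\bb\circ\xi^{\bb}$, so the natural analytic embedding $\cal F_{\{\sroot,\bb\}}\hookrightarrow\P(V_\sroot)\times\P(V_\bb)$ carries $\xi^{\{\sroot,\bb\}}$ to the graph map $\varXi=\bigl(\xi^1_{\Fund_\sroot\rho},\xi^1_{\Fund_\bb\rho}\bigr)$ of the two $(1,1,2)$-hyperconvex representations $\Fund_\sroot\rho$ and $\Fund_\bb\rho$. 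By Theorem \ref{t.C1} both factor curves are $\class^1$, hence $\varXi(\bord\G)$ lies in a $\class^{1+\nu}$ torus, is the graph of a monotone homeomorphism $\Xi$ between $\class^{1+\nu}$ circles, and is therefore Lipschitz and differentiable a.e.; since the embedding and the local $\class^1$ identifications are bi-Lipschitz, differentiability of $\xi^{\{\sroot,\bb\}}$ at a point is equivalent to differentiability of $\varXi$ there. Now by Benoist's Theorem \ref{densidad} the limit cone of $\rho$ has nonempty interior, so there is $\g$ with $\sroot(\lambda(\g))\neq\bb(\lambda(\g))$; since $\slroot_1(\lambda(\Fund_\sroot(g)))=\sroot(\lambda(g))$ and likewise for $\bb$, the representations $\Fund_\sroot\rho$ and $\Fund_\bb\rho$ are not gap-isospectral. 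Thus Corollary \ref{nondiffbconicalR} applies (with $d=\dim V_\sroot$, $\ov d=\dim V_\bb$): the non-differentiability points of $\Xi$ coincide with the $1$-conical points, and Theorem \ref{tutti} (equivalently Corollary \ref{flatsurface} together with the identification $\hC{\infty,1}=\hC{\max\{\slroot,\ov\slroot\}}$ from Lemma \ref{phi>min} and Proposition \ref{p.uppergeneral}) gives that the Hausdorff dimension of this set equals $\hC{\max\{\slroot,\ov\slroot\}}$. Here $\slroot=\peso_\sroot/l_\sroot$ restricted to the limit cone is $\sroot$-proportional data, but more cleanly: $\slroot_1\circ\cartan\circ\Fund_\sroot=\sroot\circ\cartan$ by Equation \eqref{eq:normayrep} and Proposition \ref{prop:titss} (since $\dim V_{\chi_{\Fund_\sroot}}=1$ forces $\slroot_1(\cartan\Fund_\sroot(g))=l_\sroot\peso_\sroot(\cartan(g))$ and $\peso_\sroot=\sroot$ after the relevant normalization is irrelevant for critical exponents), whence $\hC{\max\{\slroot,\ov\slroot\}}=\hC{\max\{\sroot,\bb\}}$ as critical exponents with respect to the Cartan projection of $\sf G$. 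This proves (1).

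For (2), suppose the opposition involution $\ii$ on $\ge$ is nontrivial and $\bb=\ii\sroot$. The claim $\hC{\max\{\sroot,\bb\}}=\hC{(\sroot+\bb)/2}$ is a statement purely about the critical hypersurface $\cal Q_{\t,\rho}$ (with $\t=\{\sroot,\bb\}$) and the $\ii$-invariance of the relevant norm, paralleling the $\PSL(3,\R)$ parenthesis in \S\,\ref{corAProof}. I would argue as follows: put $\sf H=(\sroot+\bb)/2$; since $\bb=\ii\sroot$, the functional $\sf H$ is $\ii$-invariant. By Remark \ref{r.entropy=crit}, $\hC{\max\{\sroot,\bb\}}$ is the entropy of $\max\{\sroot,\bb\}$, which equals $\|\varphi^\infty\|^1$ where $\varphi^\infty\in\cal Q_{\t,\rho}$ is the form minimizing the operator norm $\|\cdot\|^1$ dual to $\|v\|_\infty=\max\{|\sroot(v)|,|\bb(v)|\}$, via Quint \cite{Quint-Div} (exactly as in Equation \eqref{vi=norm}). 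Since by Corollary \ref{strictly} the convergence domain $\cal D_{\t,\rho}$ is strictly convex with boundary the analytic curve $\cal Q_{\t,\rho}$, and both $\cal Q_{\t,\rho}$ and $\|\cdot\|^1$ are invariant under the opposition involution $\ii$ (which swaps $\sroot\leftrightarrow\bb$), the minimizer $\varphi^\infty$ is $\ii$-invariant, hence a nonnegative multiple of $\sf H$: $\varphi^\infty=\hC{\sf H}\,\sf H$, using that $\|\sf H\|^1=1$ since $\sf H=\frac12\sroot+\frac12\bb$. Therefore $\hC{\max\{\sroot,\bb\}}=\|\varphi^\infty\|^1=\hC{\sf H}=\hC{(\sroot+\bb)/2}$, as claimed.

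I expect the main obstacle to be bookkeeping rather than any deep difficulty: one must be careful that all the auxiliary hypotheses of Corollary \ref{nondiffbconicalR}, Theorem \ref{tutti}/Corollary \ref{flatsurface}, and Proposition \ref{nonA} are genuinely met — in particular the non-gap-isospectrality of $\Fund_\sroot\rho$ and $\Fund_\bb\rho$ (handled above via Zariski-density and Benoist), the density of periods (Proposition \ref{nonA}, using that $\Fund_\sroot\rho$, $\Fund_\bb\rho$ are $\R$-irreducible because $\rho$ is Zariski-dense in the simple group $\sf G$, so no compact-factor subtlety arises since $\sf G$ has none), and the precise matching of the normalizing constants $l_\sroot$ relating $\slroot_1\circ\cartan\circ\Fund_\sroot$ to a multiple of $\sroot\circ\cartan$ — but multiplicative normalizations do not affect critical exponents of the associated homogeneous functions, so this last point is harmless. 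The only genuinely substantive point is part (2), where one needs the strict convexity of $\cal D_{\t,\rho}$ together with $\ii$-invariance to conclude uniqueness and hence $\ii$-invariance of the norm-minimizer; this is the same mechanism used in the $\PSL(3,\R)$ case and goes through verbatim.

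\qed
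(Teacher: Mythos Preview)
Your approach to part (i) is essentially identical to the paper's: embed $\cal F_{\{\sroot,\bb\}}$ in $\P(V_\sroot)\times\P(V_\bb)$ via the Tits representations, use Theorem \ref{t.C1} to see the factor curves are $\class^1$, deduce the graph is Lipschitz, invoke Zariski-density and Benoist (Theorem \ref{densidad}) to get non-gap-isospectrality, and apply Theorem \ref{tutti}. Your remark that $\slroot_1(\cartan(\Fund_\sroot g))=\sroot(\cartan(g))$ (since $\t_{\Fund_\sroot}=\{\sroot\}$ forces the unique level-$1$ weight to be $\chi_{\Fund_\sroot}-\sroot$) is correct and dispels the normalization worry.

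For part (ii) your strategy---$\ii$-invariance of $\cal Q_\cc$ and of $\|\cdot\|^1$, strict convexity forcing uniqueness and hence $\ii$-invariance of the minimizer $\varphi^\infty$, so $\varphi^\infty=\hC{\sf H}\,\sf H$---is again exactly the paper's. There is, however, one step you gloss over: you write that $\hC{\max\{\sroot,\bb\}}=\|\varphi^\infty\|^1$ ``via Quint \cite{Quint-Div} (exactly as in Equation \eqref{vi=norm})''. The relevant result is Quint's \cite[Proposition 3.3.3]{Quint-Div}, which requires the counting measure $\nu=\sum_{\g\in\G}\delta_{\Pi\cartan_\t(\g)}$ on $V$ to be of \emph{concave growth}. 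In the $\PSL(3,\R)$ sketch this is immediate from Quint's own \cite[Proposition 2.3.1]{Quint-Div} since one is working with the full Cartan projection; here, however, one is counting in the quotient $V=\a_\t/\Ann(V^*)$, and the concave-growth hypothesis needs a separate check. The paper does this via Lemma \ref{l.Quint}, which transports Quint's generic-product argument to the projected Cartan data and uses the $\{\sroot,\bb\}$-Anosov property to guarantee that the sets $\{\g:\|\Pi\cartan_\t(\g)\|<R'\}$ are finite. This is not difficult, but it is a genuine verification that your sketch omits; once you add it, your proof matches the paper's.
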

\begin{proof}\ 

(i) Since the map $\Phi_\sroot:\calF_{\sroot}\to\P(V_{\sroot})$ is analytic, and $\Phi_\sroot\circ\xi^\sroot(\bord\G)$ is a $\class^1$-submanifold (Theorem \ref{t.C1}), $\xi^\sroot(\bord\G)$ is a $\class^1$ submanifold as well. The curve $\xi^{\{\sroot,\bb\}}:=\cal F_{\{\sroot,\bb\}}\cap (\xi^{\sroot}(\bord\G)\times\xi^{\bb}(\bord\G))$ is the graph of the homeomorphism $\Xi$ and is thus a Lipschitz curve. The second claim is then a direct consequence of Theorem \ref{tutti}.

(ii) Assume the opposition involution $\ii$ of $\ge$ is non-trivial and that $\bb=\ii\sroot$. Using notation from Section \ref{s.lower} with $\sroot=\slroot$ and $\bb=\ii\sroot=\ov{\slroot}$ we let $V^*=\spa\{\sroot,\bb\}$, $V=\a_\t/\Ann(V^*)$, $\Pi:\a_\t\to V$ the quotient projection, $\|\,\|_\infty=\max\{|\sroot|,|\bb|\},$ $\|\,\|^1$ its dual norm on $V^*$ and $\vi\in\cal Q_\cc$ the only form minimizing $\|\,\|^1$.

Since $\ii\sroot=\bb$, the space $V^*$ is preserved by $\ii$ and the fact that $\lambda(g^{-1})=\ii\lambda(g)$ (for all $g\in \sf G$) implies that $\cal Q_\cc$ is $\ii$-invariant. Moreover, the norm $\|\,\|^1$ is $\ii$-invariant and by definition of $\vi$ one has $\ii\vi=\vi$. However, $(\sroot+\bb)/2$ is also $\ii$-invariant and $\hC{(\sroot+\bb)/2}(\sroot+\bb)/2\in\cal Q_\cc$ whence $$\vi=\hC{(\sroot+\bb)/2}(\sroot+\bb)/2.$$

In order to prove the result it is thus enough to show that \begin{equation}\label{e.Quint}\hC{\max\{\sroot,\bb\}}=\|\vi\|^1.\end{equation} We conclude the proof deducing this equality from Quint's \cite[Proposition 3.3.3]{Quint-Div}.

We consider  the counting measure $$\nu=\sum_{\g\in\G}\delta_{\Pi\cartan_\t(\g)}$$ on the vector space $\mathcal E=V$ and the norm $N=\|\,\|_\infty$. We then have, in the notation of \cite[\S 3]{Quint-Div}, that $\tau^N_\nu=\hC{\max\{\sroot,\bb\}}$ and, by Remark \ref{r.entropy=crit},  $\sigma^N_\nu=\inf_{\varphi\in\cal Q_\cc}\|\varphi\|^1$. Thus, in order to deduce Equation \eqref{e.Quint} from \cite[Proposition 3.3.3]{Quint-Div} it is enough to verify that the counting measure $\nu$ is of concave growth as in \cite[\S 3.2]{Quint-Div}. In turn this is a consequence of Lemma \ref{l.Quint} below, an adaptation of \cite[Proposition 2.3.1]{Quint-Div} (see also Kim-Oh-Wang \cite[Lemma 3.8]{KimOhWang} where similar arguments are explained for the $\a_\t$ counting measure). 
\end{proof}	
\begin{lemma}\label{l.Quint}
Let $\|\,\|$ be a norm on $V$. Let $\grupo<\sf G$ be Zariski-dense and $\{\sroot,\bb\}$-Anosov. Then there exists a product map $m:\grupo\times\grupo\to\grupo$ with the following properties:
\begin{enumerate}
	\item there exists a real number $\kappa\geq 0$ such that, for all $\g_1,\g_2\in\grupo$,
$$\|\Pi\cartan_\t(m(\g_1,\g_2))-\Pi\cartan_\t(\g_1)-\Pi\cartan_\t(\g_2)\|\leq\kappa;$$	 
	\item for every real $R\geq 0$ there exists a finite subset $H$ of $\grupo$ such that, for $\g_1,\g_2\g_1',\g_2'$ in $\grupo$ with $\|\Pi\cartan_\t(\g_i)-\Pi\cartan_\t(\g_i')\|<R$ for $i=1,2$, then
	$$m(\g_1,\g_2)=m(\g_1',\g_2') \Rightarrow \g_i'\in\g_i H, \text{ for $i=1,2$}.$$
\end{enumerate}		
\end{lemma}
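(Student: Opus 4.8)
The plan is to follow the strategy of Quint's construction of a product map realizing sub-additivity of the Cartan projection up to bounded error, adapted to the $\t$-Cartan projection and projected via $\Pi$. The main tool is the theory of $\t$-Anosov representations together with the quantitative transversality statements recalled in \S\,\ref{s.Anosov}; concretely Proposition \ref{fundamentalConstant}, Corollary \ref{c.fundamentalConstant}, the comparison between Cartan projections and Iwasawa cocycles in Equation \eqref{comparision-shadow}, and Benoist's uniform continuity Proposition \ref{p.Ben}.

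First I would fix a Zariski-dense $\{\sroot,\bb\}$-Anosov $\grupo<\sf G$ and, using that $\grupo$ is word-hyperbolic and the orbit map is a quasi-isometric embedding, choose for each pair $(\g_1,\g_2)$ a point $m(\g_1,\g_2)=\g_1 h_0^{n} \g_2$ where $h_0\in\grupo$ is a fixed hyperbolic element whose axis is ``generic'' (its repelling flag transverse to $\xi^\t(x)$ for $x$ in a suitable region) and $n$ is a fixed large integer — or more simply $m(\g_1,\g_2)=\g_1\g_2$ if one can arrange that concatenating geodesic representatives of $\g_1$ and $\g_2$ at a controlled bounded-geometry junction produces a uniform quasi-geodesic. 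In Quint's original argument the role of $h_0^n$ is precisely to force the concatenation $\g_1 h_0^n \g_2$ to be a $(c,c)$-quasi-geodesic with $c$ independent of $\g_1,\g_2$, because the pieces fellow-travel the axis of $h_0$ near the junction. Then Proposition \ref{fundamentalConstant} applied to this uniform quasi-geodesic gives uniform transversality of the relevant Cartan attractors and repellers along it, and hence, via Equation \eqref{comparision-shadow} (or its underlying linear-algebra lemma) applied in each Tits representation $\Fund_\sroot$, $\Fund_\bb$, one obtains $\peso_\sroot\cartan_\t(m(\g_1,\g_2)) = \peso_\sroot\cartan_\t(\g_1) + \peso_\sroot\bus_\t(\g_1 h_0^n, \cdots) + O(1)$, and iterating the cocycle identity for $\bus_\t$ together with the basin estimate yields the additivity $\|\cartan_\t(m(\g_1,\g_2)) - \cartan_\t(\g_1) - \cartan_\t(\g_2)\| = O(1)$, where the bound absorbs $\|\cartan_\t(h_0^n)\|$ as a fixed constant; projecting by the $1$-Lipschitz map $\Pi$ gives item (i). This is where I expect the main difficulty: making the ``uniform quasi-geodesic at a controlled junction'' construction fully rigorous in a general word-hyperbolic $\grupo$, since unlike in a free group one cannot just concatenate reduced words; the fix is exactly the insertion of $h_0^n$, whose Morse/local-to-global properties for quasi-geodesics in hyperbolic spaces supply the needed uniformity, but one must check the attractor/repeller transversality is genuinely uniform in $\g_1,\g_2$, which is what Proposition \ref{fundamentalConstant} delivers once the quasi-geodesic constant is fixed.

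For item (ii), suppose $m(\g_1,\g_2)=m(\g_1',\g_2')=:\delta$ with $\|\Pi\cartan_\t(\g_i)-\Pi\cartan_\t(\g_i')\|<R$. Writing $\delta=\g_1 h_0^n \g_2 = \g_1' h_0^n \g_2'$ we get $\g_1^{-1}\g_1' = h_0^n \g_2 \g_2'^{-1} h_0^{-n}$, so it suffices to bound $|\g_1^{-1}\g_1'|$ (equivalently $d_\G(\g_1,\g_1')$) in terms of $R$ and the representation, since then $\g_1^{-1}\g_1'$ ranges over a finite set $H$ and $\g_2'\in H'\g_2$ for a correspondingly finite $H'$, which can be enlarged to a single finite $H$. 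To bound $d_\G(\g_1,\g_1')$: by Proposition \ref{p.Ben} (or directly from the quasi-isometric embedding property of the Anosov representation), control on $\|\cartan_\t(\g_1)-\cartan_\t(\g_1')\|$ — which follows from the $\Pi$-projected hypothesis only up to the kernel of $\Pi$, so here one needs that $\{\sroot,\bb\}$ actually span, modulo $\ann(V^*)$, enough of $\a_\t$ to control word length; but in fact $|\g|$ is comparable to $\sroot(\cartan(\g))$ alone by the Anosov inequality $\sroot(\cartan(\g))\geq\mu|\g|-C$, and $\sroot$ is recovered from $\Pi\cartan_\t$ — gives $\big||\g_1|-|\g_1'|\big| = O(R)$, and more: the quasi-geodesics representing $m(\g_1,\g_2)=m(\g_1',\g_2')$ fellow-travel, forcing $\g_1,\g_1'$ to be boundedly close in $\sf{Cay}(\grupo,S)$, with the bound depending only on $R$, the quasi-geodesic constant, and $\delta$-hyperbolicity. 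I would make this precise by the standard argument that two geodesics (here uniform quasi-geodesics) with the same endpoints and with prescribed-up-to-$R$ progress at a common intermediate scale must stay within distance $O(R+\delta)$, then translate that into $d_\G(\g_1,\g_1')\leq$ (function of $R$), whence finiteness of $H$. The expected obstacle here is purely bookkeeping: transferring ``bounded $\Pi\cartan_\t$-difference'' to ``bounded word-length difference'' cleanly, which reduces, as just indicated, to the one-sided Anosov inequality for the single root $\sroot$ plus the quasi-isometric embedding, and then to the hyperbolic-geometry fellow-traveling lemma.
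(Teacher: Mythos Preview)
Your approach reconstructs Quint's generic-product map from scratch using word-hyperbolic geometry and the Anosov toolbox of \S\,\ref{s.Anosov}, whereas the paper takes a much shorter route: it simply invokes Quint's original construction \cite[Proposition~2.3.1]{Quint-Div}, which already furnishes a map $m:\grupo\times\grupo\to\grupo$ satisfying (i) and (ii) for the \emph{full} Cartan projection $\cartan:\sf G\to\a$ of any Zariski-dense subgroup. Property (i) for $\Pi\cartan_\t$ then follows by composing with the linear projection $\Pi\circ\pi_\t$, which one may take to be norm non-increasing. For (ii) the paper notes that in Quint's construction the set $H$ can be taken inside $\{\g\in\grupo:\|\Pi\cartan_\t(\g)\|<R'\}$ for a suitable $R'$, and the only new ingredient is that this set is \emph{finite}: since $\sroot$ factors through $\Pi$, a bound on $\|\Pi\cartan_\t(\g)\|$ bounds $\sroot(\cartan(\g))$, whence the Anosov inequality $\sroot(\cartan(\g))\geq\mu|\g|-C$ bounds $|\g|$. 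That is the entire proof.

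Your route is in principle workable and has the merit of staying inside the paper's framework, but two points need tightening. First, a \emph{single} hyperbolic element $h_0$ will not make $\g_1 h_0^n \g_2$ a uniform quasi-geodesic for all $\g_1,\g_2$: if the geodesic from $e$ to $\g_1$ ends pointing toward $h_0^+$ the concatenation backtracks. Quint uses a finite family of inserted elements, and your version would need the same (e.g.\ indexed by pairs of cone types). Second, in your argument for (ii) the step ``$\big||\g_1|-|\g_1'|\big|=O(R)$'' does not follow from the two-sided comparison $\mu|\g|-C\leq\sroot(\cartan(\g))\leq C_1|\g|$ alone: a coarse bi-Lipschitz map need not send $R$-close values to $O(R)$-close arguments. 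Fellow-traveling of the two quasi-geodesics to $\delta$ gives closeness of the \emph{paths}, not of the markers $\g_1,\g_1'$ on them; to pin down the markers you need that $i\mapsto\sroot(\cartan(\alpha_i))$ is coarsely \emph{strictly} increasing along each path (which does follow once you feed back the additivity from (i) along sub-segments, combined with bounded increments from Proposition~\ref{p.Ben}), so that close $\sroot$-values force close indices. This can be made rigorous, but the paper sidesteps all of it by citing Quint.
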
	
\begin{proof}
It is enough to consider the \emph{generic product} map $\pi:\grupo\times\grupo\to\grupo$ constructed in \cite[Proposition 2.3.1]{Quint-Div}, which satisfies the analogous properties with respect to the Cartan projection $\cartan:\sf G\to\frak a$ and a norm $\|\,\|$ on $\frak a$. The first property is satisfied since we can assume that the projection $\Pi\circ\pi_\theta:\frak a\to V$ is norm non-increasing. The second follows from the Anosov property: by the construction in \cite[Proposition 2.3.1]{Quint-Div}  one can choose $H$ to be the set of elements $\g$ such that $\|\Pi\cartan_\t(\g)\|<R'$ for some $R'$ depending on $R$. Such set is finite because, by definition of $\Pi$, there exists $R''$ depending on $R'$ and the norm $\|\,\|$ such that if   $\|\Pi\cartan_\t(\g)\|<R'$ then $\sroot(\cartan(\gamma))<R''$, which in turn implies by Definition \ref{AnosovDefi} that $|\gamma|<R''/\mu+C$, and thus $\g$ belongs to a finite subset. 
\end{proof}	
%There are many examples of representations satisfying the hypotheses of Corollary \ref{c.Hk}: on the one hand this is the case for Hitchin representations in any split real Lie group with respect to any root \cite[\S\,5.8]{clausurasPos}, as well as $\Theta$-positive representations in $\SO(p,q)$ \cite{PSW2}. This is furthermore the case for representations in $\SL_d(\R)$ satisfying property $H_k$ and $H_l$, this includes deformations of many Fuchsian embeddings which don't necessarily form connected components of character varieties \cite{PSW1,BeyP2}.

\subsection{The $\PSL(2,\C)$-case}\label{klein}

If $\rho,\ov\rho:\G\to\PSL(2,\C)$ are convex co-compact representations that are connected by convex-co-compact representations, it was proven by Marden \cite{Marden} that the natural map $\Xi:\Lambda_\rho\to\Lambda_{\ov\rho}$  conjugating the respective actions extends to a Hölder homeomorphism $\Xi:\C\P^1\to\C\P^1$ that is $(\rho,\ov\rho)$-equivariant. We consider in this case the complex derivative of such an extension $\Xi$ and say that $\Xi$ is $\C$-\emph{differentiable} at a given $x\in\Lambda_\rho$ if, conformally  identifying $\bord\H^3-\{\textrm{point}\}$ to $\C$, the limit $$\Xi'(x):=\lim_{y\to x} \frac{\Xi(x)-\Xi(y)}{x-y}$$ exists or is infinite. We let now $\Ndiff_{\rho,\ov\rho}$ be the set of points $x\in\Lambda_\rho$ where the extended conjugating map $\Xi$ is not $\C$-differentiable and let
%$$\hC\infty={\displaystyle\lim_{t\to\infty}}\frac1t\log\#\big\{\g\in\G:\max\big\{d(o,\g o), d(o,\ov\g o) \big\}\leq t\big\}.$$

The proof of the following works verbatim as in Corollary \ref{nondiffbconicalR}.
\begin{prop}\label{nondiffbconicalC}
	Let $\rho,\ov\rho:\G\to\PSL(2,\C)$ be non-gap-isospectral and in the same connected component of $$\big\{\varrho:\G\to\PSL(2,\C):\,\varrho\mathrm{\ is\ convex\ co-compact}\big\}.$$ Then, the set of non-$\C$-differentiability points of $\Xi$ coincides with the set of $1$-conical points.
\end{prop}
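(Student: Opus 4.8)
The statement we must establish is Proposition \ref{nondiffbconicalC}, the $\PSL(2,\C)$ analogue of Corollary \ref{nondiffbconicalR}, and the assertion of the paper is that \emph{the proof works verbatim as in Corollary \ref{nondiffbconicalR}}. Let me describe how that argument transfers. First I would invoke Lemma \ref{usar}: since $\G<\PSL(2,\C)$ is a convex co-compact Kleinian group, its action on $\bord_\infty\H^3$ admits a Lipschitz-compatible cover, and the same holds for $\ov\rho(\G)$. Since by hypothesis $\rho$ and $\ov\rho$ lie in the same connected component of the convex co-compact representations, Marden's theorem \cite{Marden} provides the $\G$-equivariant H\"older homeomorphism $\Xi:\C\P^1\to\C\P^1$ conjugating the two actions, and it is this extension whose $\C$-differentiability points we study.

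\textbf{Key steps.} The core dichotomy is the complex analogue of the one in Lemma \ref{generalcase}: for $x\in\bord\G$, picking a geodesic ray $(\alpha_n)$ through the identity converging to $x$, Proposition \ref{conetypesBalls} (applied to $\rho$ and $\ov\rho$ realized as conformal actions, or rather its hyperconvex-free version for Kleinian groups built from Definition \ref{assuA} (iii)) shows that for $y_n\in \alpha_n\cone^{c}_\infty(\alpha_n)\setminus\alpha_{n+L}\cone^{c}_\infty(\alpha_{n+L})$ the distances $d(\Xi(x),\Xi(y_n))$ and $d(x,y_n)$ are comparable to $e^{-\ov\slroot_1(\cartan(\ov\alpha_n))}$ and $e^{-\slroot_1(\cartan(\alpha_n))}$ respectively. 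Hence $x$ is $1$-conical if and only if either the incremental quotient $d(\Xi(y),\Xi(x))/d(x,y)$ stays bounded away from $0$ and $\infty$ along some sequence $y\to x$, or it fails to converge; in the complementary (non-$1$-conical) case, one direction dominates uniformly and the quotient tends to $0$ (or, switching $\rho$ and $\ov\rho$, to $\infty$), so $\Xi$ or $\Xi^{-1}$ has a vanishing derivative at $x$ and is in particular $\C$-differentiable there. It remains to rule out the first alternative: if $\Xi$ had a finite nonzero complex derivative at some $x$, then since both actions admit Lipschitz-compatible covers, Proposition \ref{c.indepPer} forces $\Xi|\bord\G$ to be bi-Lipschitz. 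Then Lemma \ref{lambda2} applied to a loxodromic $\g\in\G$ at its attracting fixed point gives $\slroot_1(\lambda(\g))=\ov\slroot_1(\lambda(\ov\g))$ for all $\g$, i.e. $\rho$ and $\ov\rho$ are gap-isospectral, contradicting the hypothesis. Therefore the $\C$-differentiability points of $\Xi$ are exactly the non-$1$-conical points, which is the claim.

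\textbf{Main obstacle.} The one genuinely new point compared to Corollary \ref{nondiffbconicalR} — and the reason the authors say "verbatim" rather than "identical" — is that one is now dealing with complex derivatives of a planar homeomorphism rather than ordinary derivatives of a monotone circle map. In the circle case monotonicity let one drop absolute values and pass freely between $|\Xi(y)/y|\to c>0$ and $\Xi'(x)=c$; here no monotonicity is available, so one must argue that the two-sided bound on $|\Xi(y)-\Xi(x)|/|y-x|$ obtained from $1$-conicality, together with the bi-Lipschitz rigidity from Proposition \ref{c.indepPer}, still produces a genuine complex derivative where needed — but in fact we only need the \emph{converse} direction (non-$1$-conical $\Rightarrow$ $\C$-differentiable with derivative $0$ or $\infty$, which is immediate from the squeeze on incremental quotients) and the rigidity statement (finite nonzero $\Xi'(x)$ $\Rightarrow$ bi-Lipschitz $\Rightarrow$ gap-isospectral), both of which are already packaged in \S\ref{s.6.1} in exactly the generality needed, including the Kleinian case. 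So the transfer is indeed routine; the only care required is bookkeeping the conformal identification of $\bord_\infty\H^3-\{\mathrm{pt}\}$ with $\C$ so that "$\C$-differentiable" is well-defined independently of the chosen point at infinity, which is handled exactly as in Definition \ref{assuA} and the discussion preceding Lemma \ref{controlC}.
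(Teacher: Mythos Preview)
Your proposal is correct and follows exactly the approach the paper indicates: it is the argument of Corollary \ref{nondiffbconicalR} transported to the Kleinian setting via Lemma \ref{usar}, Proposition \ref{c.indepPer}, and Lemma \ref{lambda2}, with Marden's extension supplying the globally defined conjugacy $\Xi$. Your discussion of the ``main obstacle'' is also on point; the only extra care needed beyond the circle case is that the incremental quotients must be controlled for all $y\in\C\P^1$ approaching $x$ rather than just $y\in\bord\G$, and you correctly note that this is handled by the Lipschitz-compatible cover of Definition \ref{assuA} (iii), which in the Kleinian case covers all of $\bord_\infty\H^3$ and not just the limit set.
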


Density of the group generated by the pairs $\{(\lambda(\g),\lambda(\bar\g)):\g\in\G\}$ follows readily from Benoist \cite{benoist2} (see Theorem \ref{densidad}), from this point on the exact same proof of Theorem \ref{tutti} gives the following.

\begin{thm}\label{thm.tuttiC}
	Let $\rho,\ov\rho:\G\to\PSL(2,\C)$ be non-gap-isospectral convex co-compact representations that are connected by convex co-compact representations. Assume without loss of generality that $\hJ{\ov\slroot}\geq \hJ{\slroot}$. If $\II_{\slroot}(\ov\slroot)>1$, then $\Hff(\Ndiff_{\rho,\ov\rho})=\hC\infty.$
\end{thm}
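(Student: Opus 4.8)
\textbf{Proof of Theorem \ref{thm.tuttiC}.}
The plan is to run the same scheme used for Theorem \ref{tutti}, replacing each surface-group ingredient by its Kleinian-group counterpart, which is available thanks to \S\,\ref{s.6.1} and \S\,\ref{klein}. First I would record that, since $\rho$ and $\ov\rho$ are convex co-compact in $\PSL(2,\C)$, they are $\{\slroot_1\}$-Anosov (equivalently $(1,1,2)$-hyperconvex in the trivial sense that $\a$ is one-dimensional), so the general machinery of \S\,\ref{s.Hffbcon} applies with $\slroot=\slroot_1$, $\ov\slroot=\ov\slroot_1$; here $\G$ is word-hyperbolic (a convex co-compact Kleinian group is), but $\bord\G$ need not be a circle, so it is Theorem \ref{Hffconical} rather than Corollary \ref{flatsurface} that is invoked. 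The non-arithmeticity hypothesis of Theorem \ref{Hffconical}, namely density in $\R^2$ of the group generated by $\{(\lambda(\g),\lambda(\ov\g)):\g\in\G\}$, follows from Benoist's Theorem \ref{densidad}: both $\rho(\G)$ and $\ov\rho(\G)$ are Zariski-dense (an infinite discrete subgroup of $\PSL(2,\C)$ with non-elementary limit set has Zariski closure all of $\PSL(2,\C)$, since $\PSL(2,\R)$ and the solvable subgroups are ruled out by convex co-compactness and non-elementarity), and non-gap-isospectrality gives a $\g$ with $\lambda(\g)\neq\lambda(\ov\g)$; then an argument as in Proposition \ref{nonA} (or directly Benoist \cite{benoist2}) yields the density. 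With the density in hand and the standing assumption $\II_{\slroot}(\ov\slroot)>1$ (which forces $\hol=1\in(1/\II_{\ov\slroot}(\slroot),\II_\slroot(\ov\slroot))$, using $\II_{\ov\slroot}(\slroot)>1$ from Proposition \ref{ineq} together with $\hJ{\ov\slroot}\ge\hJ{\slroot}$ and non-gap-isospectrality), Theorem \ref{Hffconical} applied with $\hol=1$ gives
\begin{equation*}
\Hff\big(\varXi(\{1\textrm{-conical points}\})\big)=\hC{\infty,1}=\hC\infty,
\end{equation*}
the first line of inequalities collapsing to an equality at $\hol=1$; the strict convexity of $\cal Q_\cc$ from Corollary \ref{strictly} gives $\hC\infty<1$.

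Next I would identify $\varXi(\{1\textrm{-conical points}\})$ with $\Ndiff_{\rho,\ov\rho}$. This is exactly Proposition \ref{nondiffbconicalC}: its hypotheses are the running hypotheses of the theorem, and it asserts that the set of points of $\Lambda_\rho$ where the extended conjugating homeomorphism $\Xi:\C\P^1\to\C\P^1$ fails to be $\C$-differentiable coincides with the set of $1$-conical points of $(\rho,\ov\rho)$. Under the Marden extension, $\Lambda_\rho$ is identified with $\bord\G$ via $\xi$ and the $1$-conical points transport to $\varXi(\{1\textrm{-conical points}\})$ under the graph map; since the visual/round metric on $\C\P^1$ and the product metric on $\xi(\bord\G)\times\ov\xi(\bord\G)$ induce the same Hausdorff dimension on the relevant sets (one is a bi-Lipschitz image of a subset of the other, $\varXi(\bord\G)$ being the graph of a homeomorphism between compact subsets of $\C\P^1$), we get
\begin{equation*}
\Hff(\Ndiff_{\rho,\ov\rho})=\Hff\big(\varXi(\{1\textrm{-conical points}\})\big)=\hC\infty,
\end{equation*}
which is the assertion. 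The only place where I would be careful is the comparison of metrics in the last displayed step: one must check that passing from the square $[-1,1]^2$-style coordinates around a point of $\Lambda_\rho$ (as in the proof of Corollary \ref{nondiffbconicalR}) to the ambient round metric on $\C\P^1$ does not change the Hausdorff dimension, which is immediate from bi-Lipschitz invariance since near any point of $\Lambda_\rho$ the conformal identification $\bord\H^3\setminus\{\textrm{pt}\}\simeq\C$ is smooth with non-vanishing derivative.

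The main obstacle I anticipate is not in the Hausdorff-dimension count—that is a verbatim application of Theorem \ref{Hffconical}—but in pinning down Proposition \ref{nondiffbconicalC} rigorously, i.e. checking that \S\,\ref{s.6.1} genuinely applies to the Kleinian setting. Concretely, one must verify (via Lemma \ref{usar}, second bullet) that the $\G$-action on $\bord_\infty\H^3$ admits a Lipschitz-compatible cover: here the relevant ``boundary maps'' are the inclusions $\Lambda_\rho\hookrightarrow\C\P^1$, $\Lambda_{\ov\rho}\hookrightarrow\C\P^1$, the cone types are built from the word metric on the convex co-compact group $\G$, and the conformality of Möbius transformations is what makes Property (ii) of Definition \ref{assuA} hold on the \emph{whole} sphere rather than only on the limit set. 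Granting that, Corollary \ref{diffpointreal}'s Kleinian analogue—if $\Xi$ has a finite non-vanishing $\C$-derivative at some point then $\Xi$ is bi-Lipschitz on $\Lambda_\rho$, whence by Lemma \ref{lambda2} applied in $\PSL(2,\C)$ the two representations are gap-isospectral—goes through by Proposition \ref{c.indepPer}, and the dichotomy ``$1$-conical $\iff$ non-$\C$-differentiable'' of Proposition \ref{nondiffbconicalC} follows exactly as in Corollary \ref{nondiffbconicalR}, using monotonicity replaced by the fact that an oblique complex derivative is automatically finite and non-zero. This completes the proof. \qed
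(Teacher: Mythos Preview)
Your proposal is correct and follows essentially the same route as the paper: density of periods from Benoist's Theorem \ref{densidad}, then Theorem \ref{Hffconical} at $\hol=1$ for the Hausdorff dimension of $1$-conical points, then Proposition \ref{nondiffbconicalC} to identify these with the non-$\C$-differentiability points. One minor caution: your parenthetical claim that strict convexity of $\cal Q_\cc$ gives $\hC\infty<1$ is not part of the theorem and is not obviously true in the Kleinian setting (unlike the surface case, the entropies $\hJ\slroot,\hJ{\ov\slroot}$ are Hausdorff dimensions of limit sets in $\bord\H^3$ and can exceed $1$); strict convexity only yields $\hC\infty<\min\{\hJ\slroot,\hJ{\ov\slroot}\}$, so you should drop that remark.
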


%\bibliography{newbib}
\bibliographystyle{plain}

\smallskip
\author{\vbox{\footnotesize\noindent 
	Beatrice Pozzetti\\
Ruprecht-Karls Universit\"at Heidelberg\\ Mathematisches Institut, Im
Neuenheimer Feld 205, 69120 Heidelberg, Germany\\
	\texttt{pozzetti@uni-heidelberg.de}
\bigskip}}

\author{\vbox{\footnotesize\noindent 
	Andr\'es Sambarino\\
	Sorbonne Universit\'e \\ IMJ-PRG (CNRS UMR 7586)\\ 
	4 place Jussieu 75005 Paris France\\
	\texttt{andres.sambarino@imj-prg.fr}
\bigskip}}

\end{document}